\documentclass[12pt]{amsart}
\usepackage{}
\usepackage{amsmath}
\usepackage{amsfonts}
\usepackage{amssymb}
\usepackage[all,cmtip]{xy}           
\usepackage{bbm}
\usepackage{bbding}
\usepackage{txfonts}
\usepackage[shortlabels]{enumitem}
\usepackage{ifpdf}
\ifpdf
\usepackage[colorlinks,final,backref=page,hyperindex]{hyperref}
\else
\usepackage[colorlinks,final,backref=page,hyperindex,hypertex]{hyperref}
\fi
\usepackage{amscd}
\usepackage{tikz-cd}
\usepackage[active]{srcltx}

\topmargin -.8cm \textheight 22.8cm \oddsidemargin 0cm \evensidemargin -0cm \textwidth 16.3cm

\makeatletter

\newtheorem{thm}{Theorem}[section]
\newtheorem{prop}[thm]{Proposition}
\newtheorem{lem}[thm]{Lemma}

\newtheorem{prop-def}{Proposition-Definition}[section]
\theoremstyle{definition}
\newtheorem{defn}[thm]{Definition}

\newtheorem{remark}[thm]{Remark}
\newtheorem{exam}[thm]{Example}
\newtheorem{Ques}[thm]{Question}

\newtheorem{Conj}[thm]{Conjecture}

\newcommand{\nc}{\newcommand}


\nc{\delete}[1]{{}}
\nc{\mmargin}[1]{}
\nc{\mlabel}[1]{\label{#1}}  
\nc{\mcite}[1]{\cite{#1}}  
\nc{\mref}[1]{\ref{#1}}  
\nc{\mbibitem}[1]{\bibitem{#1}} 

\delete{
	\nc{\mlabel}[1]{\label{#1}  
		{\hfill \hspace{1cm}{\bf{{\ }\hfill(#1)}}}}
	\nc{\mcite}[1]{\cite{#1}{{\bf{{\ }(#1)}}}}  
	\nc{\mref}[1]{\ref{#1}{{\bf{{\ }(#1)}}}}  
	\nc{\mbibitem}[1]{\bibitem[\bf #1]{#1}} 
}


 \font\cyrs=wncyr7


\newcommand{\bk}{{\mathbf{k}}}


\nc{\vep}{\varepsilon}
\nc{\bin}[2]{ (_{\stackrel{\scs{#1}}{\scs{#2}}})}  
\nc{\binc}[2]{(\!\! \begin{array}{c} \scs{#1}\\
		\scs{#2} \end{array}\!\!)}  
\nc{\bincc}[2]{  ( {\scs{#1} \atop
		\vspace{-1cm}\scs{#2}} )}  
\nc{\oline}[1]{\overline{#1}}
\nc{\mapm}[1]{\lfloor\!|{#1}|\!\rfloor}
\nc{\bs}{\bar{S}}
\nc{\la}{\longrightarrow}
\nc{\ot}{\otimes}
\nc{\rar}{\rightarrow}
\nc{\lon }{\,\rightarrow\,}
\nc{\dar}{\downarrow}
\nc{\dap}[1]{\downarrow \rlap{$\scriptstyle{#1}$}}
\nc{\defeq}{\stackrel{\rm def}{=}}
\nc{\dis}[1]{\displaystyle{#1}}
\nc{\dotcup}{\ \displaystyle{\bigcup^\bullet}\ }
\nc{\hcm}{\ \hat{,}\ }
\nc{\hts}{\hat{\otimes}}
\nc{\hcirc}{\hat{\circ}}
\nc{\lleft}{[}
\nc{\lright}{]}
\nc{\curlyl}{\left \{ \begin{array}{c} {} \\ {} \end{array}
	\right .  \!\!\!\!\!\!\!}
\nc{\curlyr}{ \!\!\!\!\!\!\!
	\left . \begin{array}{c} {} \\ {} \end{array}
	\right \} }
\nc{\longmid}{\left | \begin{array}{c} {} \\ {} \end{array}
	\right . \!\!\!\!\!\!\!}
\nc{\ora}[1]{\stackrel{#1}{\rar}}
\nc{\ola}[1]{\stackrel{#1}{\la}}
\nc{\scs}[1]{\scriptstyle{#1}} \nc{\mrm}[1]{{\rm #1}}
\nc{\dirlim}{\displaystyle{\lim_{\longrightarrow}}\,}
\nc{\invlim}{\displaystyle{\lim_{\longleftarrow}}\,}
\nc{\dislim}[1]{\displaystyle{\lim_{#1}}} \nc{\colim}{\mrm{colim}}
\nc{\mvp}{\vspace{0.3cm}} \nc{\tk}{^{(k)}} \nc{\tp}{^\prime}
\nc{\ttp}{^{\prime\prime}} \nc{\svp}{\vspace{2cm}}
\nc{\vp}{\vspace{8cm}}
\nc{\modg}[1]{\!<\!\!{#1}\!\!>}
\nc{\intg}[1]{F_C(#1)}
\nc{\lmodg}{\!<\!\!}
\nc{\rmodg}{\!\!>\!}
\nc{\cpi}{\widehat{\Pi}}
\nc{\ssha}{{\mbox{\cyrs X}}} 
\nc{\tsha}{{\mbox{\cyrt X}}}
\nc{\shpr}{\diamond}    
\nc{\labs}{\mid\!}
\nc{\rabs}{\!\mid}
 \nc{\zhx}{\text{-}}


\nc{\ad}{\mrm{ad}}
\nc{\ann}{\mrm{ann}}
\nc{\Aut}{\mrm{Aut}}
\nc{\Av}{\mrm{Av}}
\nc{\bim}{\mbox{-}\mathsf{Bimod}}
\nc{\br}{\mrm{bre}}
\nc{\can}{\mrm{can}}
\nc{\Cont}{\mrm{Cont}}
\nc{\rchar}{\mrm{char}}
\nc{\cok}{\mrm{coker}}
\nc{\db}{\mrm{db}}
\nc{\de}{\mrm{dep}}
\nc{\dgg}{\mrm{dgg}}
\nc{\dgp}{\mrm{dgp}}
\nc{\dgx}{\mrm{dgx}}
\nc{\Dif}{\mrm{Diff}}
\nc{\dtf}{{R-{\rm tf}}}
\nc{\dtor}{{R-{\rm tor}}}

\nc{\Div}{{\mrm Div}}
\nc{\Diff}{\mrm{DA}}
\nc{\Diffl}{\mathsf{DA}_\lambda}
\nc{\diffo}{{\mathsf{DO}_\lambda}}
\nc{\dl}{{\mathrm{dl}}}
\nc{\alg}{\mathsf{Alg}}
\nc{\End}{\mrm{End}}
\nc{\Ext}{\mrm{Ext}}
\nc{\Fil}{\mrm{Fil}}
\nc{\Fr}{\mrm{Fr}}
\nc{\Frob}{\mrm{Frob}}
\nc{\Gal}{\mrm{Gal}}
\nc{\GL}{\mrm{GL}}
\nc{\Hom}{\mrm{Hom}}
\nc{\Hoch}{\mrm{Hoch}}
\nc{\hsr}{\mrm{H}}
\nc{\hpol}{\mrm{HP}}
\nc{\id}{\mrm{id}}
\nc{\im}{\mrm{im}}
\nc{\Id}{\mrm{Id}}
\nc{\ID}{\mrm{ID}}
\nc{\Irr}{\mrm{Irr}}
\nc{\incl}{\mrm{incl}}
\nc{\length}{\mrm{length}}
\nc{\NLSW}{\mrm{NLSW}}
\nc{\Lie}{\mrm{Lie}}
\nc{\Nij}{\mrm{Nij}}
\nc{\mchar}{\rm char}
\nc{\mpart}{\mrm{part}}
\nc{\ql}{{\QQ_\ell}}
\nc{\qp}{{\QQ_p}}
\nc{\rank}{\mrm{rank}}
\nc{\rcot}{\mrm{cot}}
\nc{\rdef}{\mrm{def}}
\nc{\rdiv}{{\rm div}}
\nc{\Rey}{\mrm{Rey}}
\nc{\rtf}{{\rm tf}}
\nc{\rtor}{{\rm tor}}
\nc{\res}{\mrm{res}}
\nc{\SL}{\mrm{SL}}
\nc{\Spec}{\mrm{Spec}}
\nc{\tor}{\mrm{tor}}
\nc{\Tr}{\mrm{Tr}}
\nc{\tr}{\mrm{tr}}
\nc{\wt}{\mrm{wt}}
\def\ot{\otimes}

\nc{\udl}{{\mathrm{udl}}}


\nc{\bfk}{{\bf k}}
\nc{\bfone}{{\bf 1}}
\nc{\bfzero}{{\bf 0}}
\nc{\detail}{\marginpar{\bf More detail}
	\noindent{\bf Need more detail!}
	\svp}
\nc{\gap}{\marginpar{\bf Incomplete}\noindent{\bf Incomplete!!}
	\svp}
\nc{\FMod}{\mathbf{FMod}}
\nc{\Int}{\mathbf{Int}}
\nc{\remarks}{\noindent{\bf Remarks: }}
\nc{\ob}{\mathsf{Ob}}

\nc{\BA}{{\mathbb A}}   \nc{\CC}{{\mathbb C}}
\nc{\DD}{{\mathbb D}}   \nc{\EE}{{\mathbb E}}
\nc{\FF}{{\mathbb F}}   \nc{\GG}{{\mathbb G}}
\nc{\HH}{{\mathbb H}}   \nc{\LL}{{\mathbb L}}
\nc{\NN}{{\mathbb N}}   \nc{\PP}{{\mathbb P}}
\nc{\QQ}{{\mathbb Q}}   \nc{\RR}{{\mathbb R}}
\nc{\TT}{{\mathbb T}}   \nc{\VV}{{\mathbb V}}
\nc{\ZZ}{{\mathbb Z}}   \nc{\TP}{\widetilde{P}}


\nc{\cala}{{\mathcal A}}    \nc{\calc}{{\mathcal C}}
\nc{\cald}{\mathcal{D}}     \nc{\cale}{{\mathcal E}}
\nc{\calf}{{\mathcal F}}    \nc{\calg}{{\mathcal G}}
\nc{\calh}{{\mathcal H}}    \nc{\cali}{{\mathcal I}}
\nc{\call}{{\mathcal L}}    \nc{\calm}{{\mathcal M}}
\nc{\caln}{{\mathcal N}}    \nc{\calo}{{\mathcal O}}
\nc{\calp}{{\mathcal P}}    \nc{\calr}{{\mathcal R}}
\nc{\cals}{{\mathcal S}}    \nc{\calt}{{\Omega}}
\nc{\calv}{{\mathcal V}}    \nc{\calw}{{\mathcal W}}
\nc{\calx}{{\mathcal X}}    \nc{\calu}{{\mathcal U}}
\nc{\caly}{{\mathcal Y}}

\nc{\uOpAlg}{\mathfrak{uOpAlg}}

\nc{\OpAlg}{\mathfrak{OpAlg}}

\nc{\ComOpAlg}{\mathfrak{ComOpAlg}}

\nc{\OpVect}{\mathfrak{OpVect}}

\nc{\OpSet}{\mathfrak{OpSet}}

\nc{\OpMon}{\mathfrak{OpMon}}

\nc{\ComOpMon}{\mathfrak{ComOpMon}}

\nc{\OpSem}{\mathfrak{OpSem}}

\nc{\ComOpSem}{\mathfrak{ComOpSem}}

\nc{\uAlg}{\mathfrak{uAlg}}

\nc{\Alg}{\mathfrak{Alg}}

\nc{\ComAlg}{\mathfrak{ComAlg}}

\nc{\Vect}{\mathfrak{Vect}}

\nc{\Set}{\mathfrak{Set}}

\nc{\Mon}{\mathfrak{Mon}}

\nc{\ComMon}{\mathfrak{ComMon}}

\nc{\Sem}{\mathfrak{Sem}}

\nc{\ComSem}{\mathfrak{ComSem}}
\nc{\fraka}{{\mathfrak a}}
\nc{\frakb}{\mathfrak{b}}
\nc{\frakg}{{\frak g}}
\nc{\frakl}{{\frak l}}
\nc{\fraks}{{\frak s}}
\nc{\frakB}{{\frak B}}
\nc{\frakm}{{\frak m}}
\nc{\frakM}{{\mathfrak M}}
\nc{\frakp}{{\frak p}}
\nc{\frakW}{{\frak W}}
\nc{\frakX}{{\frak X}}
\nc{\frakS}{{\frak S}}
\nc{\frakA}{{\frak A}}
\nc{\frakx}{{\frakx}}

\nc{\lir}[1]{\textcolor{red}{\underline{Li:}#1 }}

\begin{document}

\title[differential type]{Gr\"obner-Shirshov bases and linear bases  for  free differential type algebras over algebras}

\author{Zihao Qi, Yufei Qin and Guodong Zhou}
\address{Zihao  Qi, School of Mathematical Sciences,
	Fudan University, Shanghai 200433, China}
\email{qizihao@foxmail.com}

\address{Yufei Qin and Guodong Zhou,  School of Mathematical Sciences, Shanghai Key Laboratory of PMMP,
	East China Normal University,
	Shanghai 200241,
	China}
\email{290673049@qq.com}
 \email{gdzhou@math.ecnu.edu.cn}

\date{\today}

\begin{abstract} We study  a question which can be roughly stated as follows: Given a (unital or nonunital)  algebra $A$ together   with a   Gr\"obner-Shirshov basis $G$,
consider the free operated algebra $B$ over $A$, such that the  operator satisfies some polynomial identities $\Phi$ which are Gr\"obner-Shirshov in the sense of Guo et al., when does
  the union $\Phi\cup G$ will be  an operated  Gr\"obner-Shirshov  basis  for $B$?  We answer this question in the affirmative under a mild condition in our previous work with Wang.  When this   condition is satisfied, $\Phi\cup G$  is  an operated  Gr\"obner-Shirshov  basis  for $    B$ and as a consequence, we also get  a linear basis of $B$. However,  the condition could not be applied directly to differential type algebras introduced   by   Guo, Sit and Zhang, including usual differential algebras.

This paper  solves completely this problem for     differential type algebras.
 Some new monomial orders are introduced which, together with some known ones,    permit the application of the previous result to most of differential type algebras,  thus providing    new operated GS bases and   linear bases for these differential type algebras.
  Versions    are presented both  for unital and nonunital algebras.
However, a class of examples are also presented, for which the natural expectation in the  question is wrong and these examples are  dealt with  by direct inspection.
\end{abstract}

\subjclass[2000]{
13P10 
03C05 
08B20 
12H05 
16S10 
}

\keywords{Differential type OPIs;  Gr\"obner-Shirshov bases; operated algebras; free operated algebras over algebras; operated polynomial identities}

\maketitle

 \tableofcontents

\allowdisplaybreaks

\section*{Introduction}

In our previous work with Wang \cite{QQWZ}, we studied operated Gr\"obner-Shirshov (aka GS) bases  for  free  operated  algebras over associative algebras whose operator satisfies some polynomial identities.
This paper  continues the work of \cite{QQWZ} and concentrates into    differential type algebras.

\subsection{From usual GS bases to operated GS bases}\

 Gr\"obner-Shirshov basis theory was invented by Shirshov \cite{Shirshov} and Buchberger \cite{Buc} in the  sixties  of last century. It becomes one of the main tools of computational algebra since then; see for instance \cite{Green, BokutChen14, BokutChen20}. In order to deal with algebras endowed with operators, Guo et al.  introduced   a  GS basis theory in operated contexts in a series of papers \cite{Guo09a,  GGSZ14, GSZ13,  GaoGuo17} (see also \cite{BokutChenQiu})
 with the goal to attack Rota's program \cite{Rota} to classify ``interesting'' operators on algebras. For a global view about the state of art, we refer the reader to the survey paper \cite{GGZ21} and for recent development, see \cite{ZhangGao18, ZhangGao19, CLZZ20, ZhangGaoGuo21,  GuoLi21,  QQWZ, ZhuZhangGao21, ZhangGaoGuo}.

Guo et al.  considered operators satisfying some polynomial identities, hence called operated polynomial identities (aka. OPIs). Via GS basis theory and  the somewhat equivalent theory: rewriting systems, they could define when OPIs are GS. They are mainly interested into two classes of OPIs:  differential type OPIs and Rota-Baxter type OPIs, which are carefully  studied  in \cite{GSZ13, GGSZ14, GaoGuo17}.

In this paper we are interested into  differential type OPIs and  differential type algebras.


 \subsection{Free  operated algebras over algebras}\

Recently, there is a need to develop free  operated algebras satisfying some OPIs  over a fixed  algebras and construct   GS  bases and linear bases  for these free  algebras as long as a GS  basis is known for the given algebra.   Ebrahimi-Fard and Guo  \cite{ERG08a} used rooted trees and forests to give explicit constructions of free noncommutative
Rota-Baxter algebras on modules and sets; Lei and Guo \cite{LeiGuo} constructed the linear basis of free Nijenhuis algebras over associative algebras;
   Guo and Li \cite{GuoLi21} gave a linear basis of the free differential algebra over associative algebras by introducing the notion of differential GS bases.

As in \cite{QQWZ}, we want to consider a  question which can be roughly stated as follows:
\begin{Ques} \label{Ques: GS basis for free algebras over algebras} Given a (unital or nonunital)  algebra  $A$  with a   GS basis  $G$ and  a set $\Phi$ of OPIs,
	  assume that   these OPIs  $\Phi$ are GS  in the sense of \cite{BokutChenQiu, GSZ13, GGSZ14,GaoGuo17}.  Let $B$
be the free  operated algebra satisfying $\Phi$ over $A$.  Under what conditions, $\Phi\cup G$ will be  a GS  basis  for $    B$?
\end{Ques}
We answer this question in the affirmative under a mild condition in \cite[Theorem 5.9]{QQWZ}.  When this   condition is satisfied, $\Phi\cup G$  is  a GS  basis  for $    B$ and as a consequence, we also get  a linear basis of $B$. This result    has been applied to all Rota-Baxter type OPIs, a class of differential type OPIs, averaging OPIs and Reynolds OPI in \cite{QQWZ}.

However, this result can NOT be applied directly to all differential type OPIs. The main reason is that under the chosen monomial order for differential type OPIs, the condition of \cite[Theorem 5.9]{QQWZ} is not satisfied.

The goal of this paper is to introduce some new monomial orders which permit to deal with nearly all free  differential type algebras over algebras.

Moreover, we provide  a class of  counter-examples for which  the expectation in Question~\ref{Ques: GS basis for free algebras over algebras} is wrong; see Theorems~\ref{thm:GS basis of nonunital phi4} and  \ref{Cor: linear basis for type phi4}.

\subsection{Differential type algebras over algebras}\

A commutative algebra equipped with an operator satisfying the Leibniz rule, is called a differential algebra. The notion is originated by  Ritt \cite{Ritt34,Ritt50}  in the algebraic study of differential equations in the 1930s. This mathematical subject has been developed into a broad realm such as differential Galois theory  \cite{Magid, vdPS}, differential algebraic geometry and differential algebraic groups \cite{Kolchin}. In recent years, researchers began to investigate noncommutative differential algebras  in order to broaden the scope of the theory to  include   path algebras, for instance,  and to have a more meaningful differential Lie algebra theory  \cite{Poi, Poi1} and also from  an operadic point of view \cite{Loday, DL}.
There are also some recent work dealing with other algebraic structures endowed with derivations \cite{TFS, Das, Xu}.

As mentioned above, \cite[Theorem 5.9]{QQWZ}  can NOT be applied directly to all differential type OPIs in order to obtain  GS  bases and linear bases for free differential type algebras over algebras.
In this paper, we introduce some new monomial orders and solve completely this question.

 Our method is as follows: We change a differential type OPI to an equivalent one in the sense of Definition~\ref{Def: equivalent OPIs};   by choosing convenient monomial orders,    we can apply \cite[Theorem 5.9]{QQWZ} to obtain a GS basis;    as a consequence, we get a linear basis.
 We also deal with the problem in the nonunital case; see Theorem~\ref{Thm: GS basis for free nonunital Phi algebra over nonunital alg}.
Quite interestingly, there are some exceptional cases for which our method could not apply and we treat these cases directly.

We would like to mention that Guo and Li  introduced the notion of   differential GS bases to deal with the above problem in \cite{GuoLi21} and
their method is completely different from ours; see Remark~\ref{Rem: Guo and Li's work}.

\subsection{Outline of the paper}\

This paper is organized as follows. In Section 1   some basic definitions  and  constructions of free objects in operated contexts are recalled and some new monomial orders are introduced.  Section 2 contains an account of  the theory of operated GS bases,   which serve  as a main tool in this paper; the main result of  \cite{QQWZ} is recalled and a nonunital version is presented; a remainder about the basic theory of rewriting systems.  In Section 3,  we consider differential type OPIs and they are replaced by some equivalent ones.   Section 4   is devoted to  operated GS  bases and linear bases  for  free  nonunital differential type algebras  over an algebra, while the last section considers unital ones.

\medskip

\textbf{Notation:} Throughout this paper, $\bk$ denotes a  base field. All the vector spaces and algebras are over $\bk$.

\bigskip

\section{New monomial orders on  free operated semigroups and monoids }

In this section,  we recall the construction of free operated semigroups and monoids due to Guo \cite{Guo09a}.
We will  define a new monomial  order  $\leq_{\dl} $  on free operated semigroups  by combining several well orders and extend it to a monomial  order    $\leq_{\udl}$   on  free operated   monoids via Remark~\ref{Rem: another construction of m(X)}.  The  main results of this paper will  highly depend on these two new monomial  orders.

\subsection{Free operated semigroups and monoids}

\begin{defn} [{\cite{Guo09a, GGSZ14, GSZ13,  GaoGuo17}}]\label{Def: operated vector spaces to unital algebras}
An operated semigroup is a semigroup $S$ with a map $P:S\rightarrow S$ (which is not necessarily a homomorphism of semigroups).
Operated monoids can be defined similarly.
	For  operated $\bk$-algebra (resp. unital operated $\bk$-algebra), we ask additionally that the operator should be a $\bk$-linear map. 
	
	
We use $ \OpAlg $ (resp.  $ \uOpAlg $) to denote the category of operated $\bk$-algebras (resp. unital operated $\bk$-algebras) with obvious morphisms.
\end{defn}

Throughout this section,   $X$ denotes  a set.  Denote by $\bk X$ (resp.  $\cals(X)$,  $\calm(X)$) the free $\bk$-space (resp. free semigroup, free monoid) generated by $X$.

\medskip

 We   recall the construction of the free operated semigroup over a given set $X$ due to  Guo   \cite{Guo09a}.

  For a set $X$, denote by  $\lfloor X \rfloor$     the set of the formal elements $ \lfloor x \rfloor, x\in X$. The inclusion   $ X\hookrightarrow X\sqcup  \left \lfloor X\right \rfloor$,  which identifies $X$ with the first component, induces an injective  semigroup homomorphism
$$ i_{0,1}: \frakS_0(X):=\cals(X)  \hookrightarrow \frakS_1(X):= \cals(X\sqcup  \left \lfloor X\right \rfloor).
$$

For $ n\geq2 $, assume  that we have constructed $\frakS_{n-2}(X)$ and  $\frakS_{n-1}(X)= \cals(X\sqcup  \left \lfloor \frakS_{n-2}(X)\right \rfloor) $  endowed  with  an  injective homomorphism of semigroups
 $
i_{n-2,n-1}: \frakS_{n-2}(X) \hookrightarrow \frakS_{n-1}(X).
$
We define the semigroup
$$\frakS_{n}(X):= \cals(X\sqcup  \left \lfloor \frakS_{n-1}(X)\right \rfloor)
$$ and
the natural  injection
$$
\Id_X\sqcup \left \lfloor i_{n-2,n-1}\right \rfloor :X\sqcup\left \lfloor \frakS_{n-2}(X)\right \rfloor\hookrightarrow X\sqcup\left \lfloor \frakS_{n-1}(X)\right \rfloor
$$
induces an injective semigroup  homomorphism
$$	i_{n-1,n}: \frakS_{n-1}(X)= \cals(X\sqcup  \left \lfloor \frakS_{n-2}(X)\right \rfloor) \hookrightarrow  \frakS_{n}(X) = \cals(X\sqcup  \left \lfloor \frakS_{n-1}(X)\right \rfloor).$$
Define $  \frakS(X)=\varinjlim \frakS_{n}(X) $  and the map  sending  $u\in \frakS_n(X)$ to $\left \lfloor u\right \rfloor\in \frakS_{n+1}(X)$ induces an  operator $P_{\frakS(X)}$  on $\frakS(X)$. 

 The construction of the free operated monoid $\frakM(X)$ over a set $X$ is similar, by just replacing $\cals$ by $\calm$ everywhere in the construction.

 \begin{remark}\label{Rem: another construction of m(X)}  We will use another  construction of  $\frakM(X)$.  In fact, add a symbol $\lfloor1\rfloor$ to $X$ and form $ \frakS(X \bigsqcup \lfloor1\rfloor)$, then $\frakM(X)$ can be obtained from $\frakS(X \bigsqcup \lfloor1\rfloor)$ by just adding the empty word $1$.

 \end{remark}

 It is easy to see that $\bk\frakS(X)$ (resp. $\bk\frakM(X)$) is the free nonunital (resp, unital) operated algebra generated by $X$.
 More constructions of free objects in operated contexts can be found in \cite{QQWZ}.

\subsection{Monomial orders}\

We need some preliminaries about orders.
\begin{defn}
	Let $X$ be a nonempty set.
	\begin{itemize}
		\item [(a)] A preorder   $\leq$  is a binary relation on $X$ that is reflexive and transitive, that is, for all $x, y, z \in X$, we have
		\begin{itemize}
			\item [(i)] $x \leq x$; and
			\item[(ii)]if $x \leq y, y \leq z$, then $x \leq z$. We denote $x=_{X} y$ if $x \leq y$ and $x \geq y$. If $x \leq y$ but $x \neq y$, we write $x< y$ or $y> x$.
		\end{itemize}
		\item[(b)] A pre-linear order $\leq$ on $X$ is a preorder $\leq$ such that either $x \leq y$ or $x \geq y$ for all $x, y \in X$.
		\item[(c)] A linear order or a total order $\leq$ on $X$ is a pre-linear order $\leq$  such that  $\leq$ is symmetric, that is,   $x \leq y$ and $y \leq x$ imply    $x=_X y$.

\item[(d)] A preorder   $\leq$ on $X$ satisfies the descending chain condition, if $x_1\geq x_2\geq x_3\geq \cdots$, then there exists $N\geq 1$ such that $x_N=_X x_{N+1}=_X\cdots$.
    A linear order satisfying the descending chain condition is called a well order.

	\end{itemize}
\end{defn}

Before giving the definition of monomial orders, we need to introduce the following notions.
\begin{defn} [\cite{BokutChenQiu, GSZ13, GGSZ14, GaoGuo17}]
	Let $Z$ be a set and $\star$ a symbol not in $Z$.
	\begin{itemize}
		\item [(a)] Define $\frakM^\star(Z)$  to be the subset of $\frakM(Z\cup\star)$ consisting of elements with $\star$ occurring only once.
		\item [(b)] For $q\in\frakM^\star(Z)$ and $u\in   \frakM(Z)$,	we define $q|_{u}\in \frakM(Z)$ obtained by
		replacing the symbol $\star$ in $q$ by $u$. In this case, we say $u$ is a subword of $q|_{u}$.
		\item [(c)] For $q\in\frakM^\star(Z)$ and $s=\sum_ic_iu_i\in \bk \frakM(Z)$  with $c_i\in\bk$ and $u_i\in\frakM(Z)$, we define
		$$q|_s:=\sum_ic_iq|_{u_i}.$$
		\item [(d)] Define $\frakS^\star(Z)$ to be the subset of $\frakS(Z\cup\star)$ consisting of elements with $\star$ occurring only once. It is easy to see $\frakS^\star(Z)$ is a subset of $\frakM^\star(Z)$, so we also have the above definitions for $\frakS^\star(Z)$ by restriction.
	\end{itemize}
\end{defn}

\begin{defn} [\cite{BokutChenQiu, GSZ13, GGSZ14, GaoGuo17}]
	Let $Z$ be a set. We deonte $u< v $ if $u\leq v$ but $u\neq v$ for an order $\leq$.	
	
	\begin{itemize}	
		\item [(a)]A monomial order on $\cals(Z)$ is a well-order $\leq$ on $\cals(Z)$ such that
		$$  u < v \Rightarrow  uz < vz\text{ and }wu<wv  \text{ for any }u, v, w,z\in \cals(Z);$$
		a monomial order on $\calm(Z)$ is a well-order $\leq$ on $\calm(Z)$ such that
		$$ u < v \Rightarrow  wuz < wvz \text{ for any }u, v, w,z\in \calm(Z).$$
		
		\item [(b)]A monomial order on  $\frakS(Z)$  is a well-order $\leq$ on $\frakS(Z)$  such that
		$$u< v \Rightarrow q|_u<q|_v\quad\text{for all }u,v\in\frakS(Z)\text{ and } q\in \frakS^\star(Z);$$
		a monomial order on $\frakM(Z)$ is a well-order $\leq$ on $\frakM(Z)$ such that
		$$u< v \Rightarrow q|_u<q|_v\quad\text{for all }u,v\in\frakM(Z)\text{ and } q\in \frakM^\star(Z). $$
	\end{itemize}
\end{defn}

\begin{defn} 	\begin{itemize}
		\item [(a)] Given some preorders $\leq_{\alpha_{1}}, \dots, \leq_{\alpha_{k}}$ on a set $X$ with $k\geq 2$, introduce another preorder
$\leq_{\alpha_{1}, \dots, \alpha_{k}}$ by imposing recursively
$$
	u \leq_{\alpha_{1}, \dots, \alpha_{k}} v \Leftrightarrow\left\{
	\begin{array}{l}
		u<_{\alpha_{1}} v,  \text{ or }  \\
		 u=_{\alpha_{1}} v \text { and } u \leq_{\alpha_{2}, \dots, \alpha_{k}} v.
	\end{array}\right.
	$$
\item [(b)] 	Let $k \geq2$ and let $\leq_{\alpha_i}$ be a pre-linear order on $X_{i},~1 \leq i \leq k$. Define the lexicographical product order $\leq_{\text{clex}}$ on the cartesian product $X_{1} \times X_{2} \times \cdots \times X_{k}$ by recursively defining
$$(x_{1},\cdots, x_{k}) \leq_{\text {clex}}(y_{1},\cdots, y_{k}) \Leftrightarrow\left\{\begin{array}{l}x_{1}<_{\alpha_{1}} y_{1}, \text {or } \\  x_{1}=_{X_{1}}y_{1} \text { and }\left(x_{2}, \cdots, x_{k}\right) \leq_{\rm{clex}}\left(y_{2}, \cdots, y_{k}\right),\end{array}\right.$$ where $\left(x_{2}, \cdots, x_{k}\right) \leq_{\rm{clex}}\left(y_{2}, \cdots, y_{k}\right)$ is defined by induction, with the convention that $\leq_{\rm{clex}}$ is the trivial relation when $k=1$.
\end{itemize}
\end{defn}

By the proof of \cite[Lemma~5.4(a)]{GGSZ14}, we have the following result:
\begin{lem} \label{sequence of order gives linear order}
	\begin{itemize}
		\item[(a)]	Let $k \geq  2 .$ Let $\leq_{\alpha_{1}}, \dots, \leq_{\alpha_{k-1}}$ be pre-linear orders on $X$, and $\leq_{\alpha_{k}}$ a linear order on $X$. Then   $\leq_{\alpha_{1}, \dots, \alpha_{k}}$ is a linear order on $ X$.
		\item[(b)] Let $ \leq_{\alpha_{i}} $ be a well order on $X_i$, $1\leq i\leq k$.  Then the lexicographical product
		order $\leq_{\rm{clex}}$ is a well order on the cartesian product $X_1\times X_2\times \cdots \times X_k$.
	
	\end{itemize}
\end{lem}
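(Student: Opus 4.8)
\textbf{Proof plan for Lemma~\ref{sequence of order gives linear order}.}

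The plan is to prove both parts by induction on $k$, using $k=1$ (or $k=2$) as the base case. For part (a): I would first observe that $\leq_{\alpha_1,\dots,\alpha_k}$ is manifestly reflexive (since each $\leq_{\alpha_i}$ is) and that the defining recursion makes it a preorder — transitivity follows by a short case analysis on whether the first coordinates are $=_{\alpha_1}$-equal or strictly $<_{\alpha_1}$, combined with the inductive hypothesis that $\leq_{\alpha_2,\dots,\alpha_k}$ is transitive. The key point, the pre-linearity, is that for any $u,v\in X$ either $u<_{\alpha_1}v$, or $v<_{\alpha_1}u$, or $u=_{\alpha_1}v$; in the first two cases comparability under $\leq_{\alpha_1,\dots,\alpha_k}$ is immediate, and in the third case comparability is inherited from $\leq_{\alpha_2,\dots,\alpha_k}$, which is a pre-linear order by induction (here we only need the $\leq_{\alpha_i}$ with $i\geq 2$ to be pre-linear, and $\leq_{\alpha_k}$ linear, to push the induction through). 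Finally, for antisymmetry: if $u\leq_{\alpha_1,\dots,\alpha_k}v$ and $v\leq_{\alpha_1,\dots,\alpha_k}u$, then necessarily $u=_{\alpha_1}v$ (a strict inequality in either direction would contradict the other), so $u\leq_{\alpha_2,\dots,\alpha_k}v$ and $v\leq_{\alpha_2,\dots,\alpha_k}u$, whence $u=_X v$ by the inductive hypothesis applied to the linear order $\leq_{\alpha_2,\dots,\alpha_k}$; the base case $k=1$ is just the hypothesis that $\leq_{\alpha_1}$ is linear. Since the statement asserts this is essentially the content of \cite[Lemma~5.4(a)]{GGSZ14}, I would cite that proof for the routine verifications and only spell out the inductive step.

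For part (b): again induct on $k$, with $k=1$ trivial since $\leq_{\rm clex}$ is then the trivial relation on a one-element-indexed product (or take $k=1$ to mean $X_1$ itself with its well order). For the inductive step, by part (a)-type reasoning the lexicographic product $\leq_{\rm clex}$ is a linear order on $X_1\times\cdots\times X_k$ (it is the order $\leq_{\alpha_1,\dots,\alpha_k}$ where $\leq_{\alpha_i}$ is pulled back from $X_i$ along the $i$-th projection — these pullbacks are pre-linear preorders and the last one, composed suitably, yields linearity), so it remains to check the descending chain condition. Suppose $(x_1^{(1)},\dots,x_k^{(1)})\geq_{\rm clex}(x_1^{(2)},\dots,x_k^{(2)})\geq_{\rm clex}\cdots$ is a weakly decreasing chain. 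Looking at first coordinates, $x_1^{(1)}\geq_{\alpha_1}x_1^{(2)}\geq_{\alpha_1}\cdots$ is weakly decreasing in the well order $\leq_{\alpha_1}$ on $X_1$, so it stabilizes: there is $N_1$ with $x_1^{(m)}=_{X_1}x_1^{(N_1)}$ for all $m\geq N_1$. From index $N_1$ on, the definition of $\leq_{\rm clex}$ forces $(x_2^{(m)},\dots,x_k^{(m)})$ to be weakly decreasing in the lexicographic product order on $X_2\times\cdots\times X_k$, which is a well order by the inductive hypothesis, hence stabilizes from some $N\geq N_1$ onward; then the full tuples are $=$-equal from index $N$ on. This gives the descending chain condition, completing the induction.

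The main obstacle, such as it is, is purely bookkeeping: keeping straight the distinction between strict inequality $<_{\alpha_1}$, the equivalence $=_{\alpha_1}$, and genuine equality $=_X$, since $\leq_{\alpha_1}$ is only a preorder and distinct elements can be $=_{\alpha_1}$-equivalent. The induction itself is completely routine once the right base case is chosen, which is why I would present this briskly and lean on the reference \cite[Lemma~5.4(a)]{GGSZ14} for the details of part (a), only giving the descending-chain argument for part (b) in full since that is the one genuinely new ingredient needed here.
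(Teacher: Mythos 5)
Your proof is correct. The paper gives no proof of this lemma at all---it simply invokes the proof of \cite[Lemma~5.4(a)]{GGSZ14}---and your induction on $k$ is exactly the standard argument that reference carries out; the only imprecision is the parenthetical claim in part (b) that linearity of $\leq_{\rm clex}$ follows from part (a) applied to pullbacks along the projections (the pullback of the last linear order to the product is only pre-linear, since tuples agreeing in the last coordinate need not be equal), but antisymmetry of $\leq_{\rm clex}$ follows directly coordinate by coordinate from antisymmetry of each well order, so nothing essential is lost.
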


\medskip

Let us first recall the well known degree lexicographical order.
\begin{defn}\label{Def: deg-lex order}
	Let $X$ be a set endowed with a well order $\leq_{X}$.   For $u=u_{1} \cdots u_{r} \in \cals(X) $ with $u_{1}, \dots, u_{r} \in X,$ define $\operatorname{deg}_{X}(u)=r $.
	
	Introduce  the degree lexicographical order $\leq_{\rm {dlex }}$ on $\cals(X)$ by imposing, for any $u\neq  v \in \cals(X)$, $u <_{\rm {dlex}}v$   if
	\begin{itemize}
		\item[(a)] either $\operatorname{deg}_{X}(u)<\operatorname{deg}_{X}(v)$, or
		\item[(b)] $\operatorname{deg}_{X}(u)=\operatorname{deg}_{X}(v)$, and $u=mu_{i}n$, $v=mv_{i}n^\prime$ for some $m,n,n^\prime\in \cals(X)$ and $u_{i},v_{i}\in X$ with $u_{i}<_{X} v_{i}$.
	\end{itemize}
\end{defn}
It  is obvious that the degree lexicographic order $\leq_{\mathrm{dlex}}$ on $\cals(X)$ is a well order and  $\leq_{\mathrm{dlex}}$ can be extended to  $\calm(X)$ by setting $1$ to be the least element.

For two well ordered sets $X$ and $Y$, one obtains      an extended well order on the disjoint union $X\sqcup Y$  by defining $a<b$ for all $a\in X$ and $b\in Y$.

  We now define a linear order on $\frakS(X)$, by the following recursion:
\begin{itemize}
	\item [(a)]  Let $u,v\in \frakS_{\operatorname{Dlex}_0}(X)=\cals(X)$.   We define
	$$u\leq_{\operatorname{Dlex}_0} v \Leftrightarrow u \leq_{\mathrm{dlex}}v.$$
	\item [(b)] Assume that we have constructed  a well order $\leq_{\operatorname{Dlex}_n}$  on $\frakS_n(X)$ for $n\geq 0$ extending all $\leq_{\operatorname{Dlex}_i}$ for any $0\leq i\leq n-1$.   The  well order $\leq_{\operatorname{Dlex}_n}$ induces a well order   on $\lfloor\frakS_n(X)\rfloor$ and   by imposing  $a>b$ for all $a\in \lfloor\frakS_n(X)\rfloor$ and $b\in X$, we obtain an extended well order  on  $X\sqcup \lfloor\frakS_n(X)\rfloor$.
Denote by    $\leq_{\operatorname{Dlex}_{n+1}}$ the induced  degree lexicographic order on   $\frakS_{n+1}(X)=\cals(X\sqcup \lfloor\frakS_n(X)\rfloor)$.
\end{itemize}
Obviously     $\leq_{\operatorname{Dlex}_{n+1}}$ extends    $\leq_{\operatorname{Dlex}_{n}}$.  By a limit process, we get a preorder on $\frakS(X)$ which will be  denoted by
 $\leq_{\operatorname{Dlex}}$.

 It is easy to see that this is a linear order, but it is NOT a well order. An example is given by
 $$u\lfloor v\rfloor>\lfloor u\lfloor v\rfloor\rfloor>\lfloor \lfloor u\lfloor v\rfloor\rfloor\rfloor>\cdots$$
 for $u, v\in X$.

Recall that an arbitrary element of $\frakS(X)$ has a unique expression
$u=u_{0}\lfloor u_{1}^{*}\rfloor u_{1}\lfloor u_{2}^{*}\rfloor \cdots\lfloor u_{r}^{*} \rfloor u_{r}$,  where $u_{0}, u_{1}, \dots, u_{r}
	 \in \calm(X)$ and $u_{1}^{*}, u_{2}^{*}, \dots, u_{r}^{*} \in \frakS(X).$ Notice that $u_{0}, u_{1}, \dots, u_{r}$ could be the empty word. Define the $P$-breath $|u|_P$ of $u=u_{0}\lfloor u_{1}^{*}\rfloor u_{1}\lfloor u_{2}^{*}\rfloor \cdots\lfloor u_{r}^{*} \rfloor u_{r}$  to be $r$.
\begin{defn}[{\cite[Definition~5.3]{GGSZ14} }]
	
	For two elements $u, v \in \frakS(X)$,
	\begin{itemize}
		\item [(a)]  define
		$$
		u \leq_{\operatorname{dgp}} v \Leftrightarrow \operatorname{deg}_{P}(u) \leq \operatorname{deg}_{P}(v),
		$$
		where the $P$-degree $\operatorname{deg}_{p}(u)$ of $u$ is the number of occurrence  of $P=\lfloor~\rfloor$ in $u$;
		\item [(b)]   define
		$$
		u \leq_{\operatorname{dgx}} v \Leftrightarrow \operatorname{deg}_{X}(u) \leq \operatorname{deg}_{X}(v),
		$$
		where the $X$-degree $\operatorname{deg}_{X}(u)$ is the number of elements of $X$ occurring in $u$ (including the repetitive ones);
		\item[(c)]   define
			$$
		u \leq_{\operatorname{brp}} v \Leftrightarrow |u|_P\leq |v|_P.
		$$
	\end{itemize}
	
\end{defn}

\begin{defn}

	 	For any $u\in\frakS(X)$, let $u_1,\dots ,u_n\in X$ be  all the elements occurring in $u$ from left to right. If a half bracket $\lfloor$  (resp. $\rfloor$) is between $u_i$ and $u_{i+1}$, where $1\leq i<n$,   the G-degree of this half bracket is defined  to be $i$;  if there is a  half bracket $\lfloor$ (resp. $\rfloor$)   appearing on the left of $u_1$ (resp. on the right of $u_n$), we define the G-degree of this half bracket to be $0$ (resp. $n$). 
		 We denote $\deg_G(u)$ by the sum of the G-degree of all the half brackets in $u$.

For $u,v\in\frakS(X)$, define the G-degree order $\leq_{\mathrm{dgg}}$ by
		$$ u\leq_{\mathrm{dgg}}v\Leftrightarrow \deg_G(u)\leq \deg_G(v).$$

\end{defn}

It is easy to obtain the following lemma whose  proof is thus omitted.

\begin{lem}
	The orders $\leq_{\mathrm{dgp}}$, $\leq_{\mathrm{dgx}}$,  $\leq_{\operatorname{brp}}$ and $\leq_{\mathrm{dgg}}$ are pre-linear orders satisfying the descending chain	condition.
\end{lem}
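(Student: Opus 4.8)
The plan is to reduce all four assertions to one elementary principle: each of $\leq_{\mathrm{dgp}}$, $\leq_{\mathrm{dgx}}$, $\leq_{\operatorname{brp}}$, $\leq_{\mathrm{dgg}}$ is the pullback, along a suitable map $f\colon\frakS(X)\to\ZZ_{\geq 0}$, of the usual order on $\ZZ_{\geq 0}$; and the pullback of a well order along an arbitrary map is automatically a pre-linear order satisfying the descending chain condition (it need not be a \emph{linear} order, since $f$ is far from injective, which is exactly why the statement only claims pre-linearity).

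First I would isolate the following routine observation. Let $(Y,\leq_Y)$ be a set with a preorder and $f\colon Z\to Y$ any map; define $u\leq_Z v\Leftrightarrow f(u)\leq_Y f(v)$. Then $\leq_Z$ is a preorder, reflexivity and transitivity being inherited verbatim from $\leq_Y$. If $\leq_Y$ is pre-linear then so is $\leq_Z$, since for any $u,v$ one has $f(u)\leq_Y f(v)$ or $f(v)\leq_Y f(u)$. If $\leq_Y$ satisfies the descending chain condition then so does $\leq_Z$: a chain $x_1\geq_Z x_2\geq_Z\cdots$ maps to $f(x_1)\geq_Y f(x_2)\geq_Y\cdots$, which stabilizes by hypothesis, and $f(x_N)=_Y f(x_{N+1})$ means precisely $x_N=_Z x_{N+1}$. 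Taking $Y=\ZZ_{\geq 0}$ with its usual order, which is a well order and hence pre-linear and satisfies the descending chain condition, the lemma follows at once, provided the four invariants really are well-defined maps into $\ZZ_{\geq 0}$.

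Next I would verify exactly that well-definedness, which is the only place where a word of justification is needed. Every element of $\frakS(X)$ is a uniquely determined string, so $\deg_P(u)$ (the number of occurrences of $\lfloor\ \rfloor$) and $\deg_X(u)$ (the number of letters of $X$ appearing in $u$, with multiplicity) are well-defined non-negative integers, furnishing the maps for $\leq_{\mathrm{dgp}}$ and $\leq_{\mathrm{dgx}}$. For $\leq_{\operatorname{brp}}$ one invokes the canonical expression $u=u_{0}\lfloor u_{1}^{*}\rfloor u_{1}\cdots\lfloor u_{r}^{*}\rfloor u_{r}$ with $u_i\in\calm(X)$ and $u_j^{*}\in\frakS(X)$ recalled just above, whose uniqueness makes $|u|_P=r$ well-defined. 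For $\leq_{\mathrm{dgg}}$ one uses unique readability once more: reading $u$ from left to right, its finitely many letters $u_1,\dots,u_n\in X$ and its finitely many half-brackets occur in a definite relative order, so each half-bracket acquires a well-defined G-degree in $\{0,1,\dots,n\}$, and $\deg_G(u)$, a finite sum of such, lies in $\ZZ_{\geq 0}$. Thus $f=\deg_P,\ \deg_X,\ |\cdot|_P,\ \deg_G$ respectively are the required maps.

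The only potential obstacle is this last step, and it is minor: one must appeal to the unique form of elements of $\frakS(X)$ so that the combinatorial quantities $|u|_P$ and $\deg_G(u)$ are genuinely functions of $u$ and not of a choice of representation. Everything else is purely formal, which is presumably why the authors omit the proof.
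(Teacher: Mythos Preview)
Your proof is correct; the paper omits the proof entirely (stating only that it is ``easy to obtain''), so there is no approach to compare against, but the pullback-of-a-well-order argument you give is exactly the natural one and fills the gap cleanly.
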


%

By Lemma~\ref{sequence of order gives linear order}, we can define  a linear order $\leq_{\mathrm{dl}}$ on $\frakS(X)$,
$$u\leq_{\mathrm{dl}}v\Leftrightarrow u\leq_{\mathrm{dgp,dgx,dgg},\operatorname{Dlex}}v\Leftrightarrow \left\{
\begin{array}{lcl}
	u<_{\mathrm{dgp}} v, \text{or }\\
	  u=_{\mathrm{dgp}}v \text{ and }u<_{\dgx} v, \text{or }\\
	 u=_{\mathrm{dgp}}v, u=_{\mathrm{dgx}}v   \text{ and } u<_{\dgg} v, \text{or }\\
	 u=_{\mathrm{dgp}}v, u=_{\mathrm{dgx}}v, u=_{\mathrm{dgg}}v  \text{ and } u\leq_{\operatorname{Dlex}}v.
\end{array}
\right.	$$

\begin{lem}
	The order   $\leq_{\mathrm{dl}}$ is a well order on $\frakS(X)$.
\end{lem}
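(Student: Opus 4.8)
The plan is to show that $\leq_{\mathrm{dl}}$ is a well order on $\frakS(X)$ by first checking it is a linear order and then verifying the descending chain condition. Linearity is immediate from Lemma~\ref{sequence of order gives linear order}(a): the orders $\leq_{\mathrm{dgp}}$, $\leq_{\mathrm{dgx}}$, $\leq_{\mathrm{dgg}}$ are pre-linear orders on $\frakS(X)$ and $\leq_{\operatorname{Dlex}}$ is a linear order on $\frakS(X)$, so their combination $\leq_{\mathrm{dgp,dgx,dgg},\operatorname{Dlex}}$ is a linear order. The real content is the descending chain condition, and this is where the construction of $\leq_{\mathrm{dl}}$ as a refinement pays off, because $\leq_{\operatorname{Dlex}}$ alone is \emph{not} a well order (as the displayed infinite descending chain $u\lfloor v\rfloor>\lfloor u\lfloor v\rfloor\rfloor>\cdots$ shows), so we genuinely need the first three components to ``trap'' any such pathology.

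The key observation is the following: in the infinite descending chain example, the number of occurrences of the bracket $\lfloor\ \rfloor$ strictly increases, i.e. $\deg_P$ goes up. So to bound a $\leq_{\mathrm{dl}}$-descending chain one argues componentwise. Suppose $u_1\geq_{\mathrm{dl}} u_2\geq_{\mathrm{dl}} u_3\geq\cdots$. Then $\deg_P(u_1)\geq\deg_P(u_2)\geq\cdots$ is a weakly decreasing sequence of nonnegative integers, hence eventually constant, say equal to $d$ for all $n\geq N_1$. From $n\geq N_1$ on, all $u_n$ have the same $P$-degree $d$, and the sequence satisfies $u_n\geq_{\mathrm{dgx,dgg},\operatorname{Dlex}}u_{n+1}$; now $\deg_X(u_n)$ is weakly decreasing in $\NN$, hence eventually constant for $n\geq N_2\geq N_1$, say equal to $e$. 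From $n\geq N_2$ on, all $u_n$ have $\deg_P=d$ and $\deg_X=e$. Next $\deg_G(u_n)$ is weakly decreasing, hence eventually constant for $n\geq N_3\geq N_2$. The crucial point is then that the set of $u\in\frakS(X)$ with prescribed values $\deg_P(u)=d$, $\deg_X(u)=e$, $\deg_G(u)=g$ is \emph{finite}: bounding $\deg_X$ bounds the number of generator-letters, bounding $\deg_P$ bounds the number of bracket pairs, and—given finitely many letters and finitely many brackets—there are only finitely many well-formed elements of $\frakS(X)$ that can be assembled from them (the $\deg_G$ condition is then not even needed for finiteness, though it does no harm). Hence from $n\geq N_3$ the chain lives in a finite subset of $\frakS(X)$ on which $\leq_{\operatorname{Dlex}}$ restricts to a linear order, so a weakly decreasing sequence in this finite set stabilizes. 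This gives an $N\geq N_3$ with $u_N=u_{N+1}=\cdots$, proving the descending chain condition.

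I would carry out the steps in exactly that order: (1) invoke Lemma~\ref{sequence of order gives linear order}(a) for linearity; (2) peel off the $\deg_P$, $\deg_X$, $\deg_G$ components successively using that each is $\NN$-valued and weakly decreasing along a $\leq_{\mathrm{dl}}$-descending chain; (3) prove the finiteness lemma that $\{u\in\frakS(X): \deg_P(u)=d,\ \deg_X(u)=e\}$ is finite (perhaps stated as a small auxiliary lemma or inline), which I expect to be the main obstacle—not because it is deep, but because it requires being careful about the recursive definition of $\frakS(X)=\varinjlim\frakS_n(X)$ and checking that a bound on the number of letters and brackets forces a bound on the nesting depth $n$; (4) conclude by finiteness of the tail. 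An alternative to step (3) that avoids the explicit finiteness count is to note that on any single $\frakS_n(X)$ the order $\leq_{\operatorname{Dlex}_n}=\leq_{\operatorname{Dlex}}|_{\frakS_n(X)}$ is a well order by construction, and that fixing $\deg_P(u)=d$ forces $u\in\frakS_d(X)$ (any element with exactly $d$ brackets has nesting depth at most $d$), so after step (2) the tail of the chain lies in the well-ordered set $(\frakS_d(X),\leq_{\operatorname{Dlex}_d})$—making steps (3)–(4) immediate and dispensing with the $\deg_X,\deg_G$ bookkeeping entirely. I would likely present this cleaner variant.
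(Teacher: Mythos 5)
Your ``cleaner variant'' is exactly the paper's proof: linearity comes from Lemma~\ref{sequence of order gives linear order}(a), the three $\NN$-valued components $\deg_P$, $\deg_X$, $\deg_G$ are peeled off until they stabilize, and then the constancy of $\deg_P=k$ places the tail of the chain inside $\frakS_k(X)$, where $\leq_{\operatorname{Dlex}}$ restricts to the well order $\leq_{\operatorname{Dlex}_k}$; since you say you would present that version, your proof is correct and essentially identical to the paper's. One caution about the route you describe first: the set $\{u\in\frakS(X):\deg_P(u)=d,\ \deg_X(u)=e\}$ is \emph{not} finite when $X$ is infinite (already $\{x: x\in X\}$ is an infinite set with $\deg_P=0$, $\deg_X=1$), and the paper places no finiteness hypothesis on $X$, so the finiteness-lemma argument would genuinely fail there; the $\frakS_k(X)$ argument is not merely cleaner but necessary in general.
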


\begin{proof}
	Since $\leq_{\mathrm{dl}}$ is a linear order, we only need to	verify that $\leq_{\mathrm{dl}}$ satisfies the descending chain condition. Let $v_1\geq_{\mathrm{dl}} v_2\geq_{\mathrm{dl}} v_3\geq_{\mathrm{dl}}\cdots \in\frakS(X)$. Since the pre-linear order $\leq_{\mathrm{dgp }}$, $\leq_{\mathrm{dgx}}$  and $\leq_{\mathrm{dgg}}$ satisfy the  descending chain condition, there exist $N\geq 1$ and $k\geq 0$ such that $$\deg_P(v_N)=\deg_P(v_{N+1})=\deg_P(v_{N+2})=\cdots=k,$$ $$\deg_X(v_N)=\deg_X(v_{N+1})=\deg_X(v_{N+2})= \cdots$$ and $$\deg_G(v_N)=\deg_G(v_{N+1})=\deg_G(v_{N+2})=\cdots.$$
	Thus all $v_i$ with $i\geq N$  belong to $\frakS_k(X)$. The restriction of the order $\leq_{\mathrm{Dlex}}$ to $\frakS_k(X)$ equals
	to the well order $\leq_{\operatorname{Dlex}_k}$, which by defintion satisfies the descending chain condition, so the chain $v_1\geq_{\mathrm{dl}} v_2\geq_{\mathrm{dl}} v_3\geq_{\mathrm{dl}}\cdots$ stabilizes after finite steps.
\end{proof}

\begin{defn}[{\cite[Definition~5.6]{GGSZ14}}]
	A preorder   $\leq_{\alpha}$ on $\frakM(X)$ (resp. $\frakS(X)$) is called bracket compatible (resp. left compatible, right compatible) if
	$$u \leq_{\alpha} v \Rightarrow\lfloor u\rfloor \leq_{\alpha}\lfloor v\rfloor, \text { (resp. } w u \leq_{\alpha} w v, \text { resp. } u w \leq_{\alpha} v w) ,\  \text { for all }  w \in \frakM(X)\   (resp.\  \frakS(X)) $$
\end{defn}

\begin{lem}[{\cite[Lemma~5.7]{GGSZ14}}]
	A well order $\leq$ is a monomial order on $\frakM(X)$ (resp. $\frakS(X)$) if and only if $ \leq$ is bracket compatible, left compatible and right compatible.
\end{lem}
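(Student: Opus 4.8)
I would prove the two implications separately: the forward one by specializing the defining inequality of a monomial order to three simple substitution contexts, and the converse by a structural induction on the number of brackets in an arbitrary context $q$.

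\textbf{Monomial order $\Rightarrow$ compatibility.} Suppose $\leq$ is a monomial order on $\frakM(X)$ (the argument on $\frakS(X)$ is identical). For any $w\in\frakM(X)$ the elements $w\star$, $\star w$ and $\lfloor\star\rfloor$ lie in $\frakM^\star(X)$, and evaluating the axiom $u<v\Rightarrow q|_u<q|_v$ at $q=w\star$, $q=\star w$ and $q=\lfloor\star\rfloor$ yields $u<v\Rightarrow wu<wv$, $u<v\Rightarrow uw<vw$ and $u<v\Rightarrow\lfloor u\rfloor<\lfloor v\rfloor$. Adjoining the trivial case $u=v$ turns each strict statement into the corresponding non-strict one, so $\leq$ is left, right and bracket compatible.

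\textbf{Compatibility $\Rightarrow$ monomial order.} Conversely, let $\leq$ be a well order on $\frakM(X)$ that is left, right and bracket compatible; fix $u,v\in\frakM(X)$ with $u<v$ and an arbitrary $q\in\frakM^\star(X)$, and prove $q|_u<q|_v$ by induction on $\deg_P(q)$, the number of occurrences of $\lfloor\ \rfloor$ in $q$. First I would record that, because left multiplication by $w$, right multiplication by $w$, and the bracketing $\lfloor-\rfloor$ are injective on $\frakM(X)$ (unique readability of operated words), each compatibility hypothesis combined with $u\ne v$ actually forces the strict conclusions $wu<wv$, $uw<vw$ and $\lfloor u\rfloor<\lfloor v\rfloor$. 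Now use the recursion $\frakM(Y)=\calm\big(Y\sqcup\lfloor\frakM(Y)\rfloor\big)$ with $Y=X\cup\{\star\}$ to write $q$ at the top level as $q=v_0\lfloor p_1\rfloor v_1\cdots\lfloor p_r\rfloor v_r$ with $v_i\in\calm(X\cup\{\star\})$ and $p_j\in\frakM(X\cup\{\star\})$; since $\star$ occurs exactly once in $q$, it lies either in some $v_i$ or in some $p_j$. If it lies in some $v_i$ (necessarily the case when $\deg_P(q)=0$), then $\star$ sits at the top level and $q=w_1\star w_2$ with $w_1,w_2\in\frakM(X)$, so $q|_u=w_1uw_2<w_1vw_2=q|_v$ by right and then left compatibility. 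If it lies in some $p_j$, then $p_j\in\frakM^\star(X)$ with $\deg_P(p_j)<\deg_P(q)$ (the enclosing bracket $\lfloor p_j\rfloor$ is not counted inside $p_j$, and $r\ge1$), and $q=w_1\lfloor p_j\rfloor w_2$ with $w_1=v_0\lfloor p_1\rfloor\cdots\lfloor p_{j-1}\rfloor v_{j-1}\in\frakM(X)$ and $w_2=v_j\lfloor p_{j+1}\rfloor\cdots\lfloor p_r\rfloor v_r\in\frakM(X)$. By the induction hypothesis $p_j|_u<p_j|_v$; by bracket compatibility $\lfloor p_j|_u\rfloor<\lfloor p_j|_v\rfloor$; and by right and left compatibility $q|_u=w_1\lfloor p_j|_u\rfloor w_2<w_1\lfloor p_j|_v\rfloor w_2=q|_v$, completing the induction. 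For $\frakS(X)$ one argues verbatim, noting that some of the $w_i$ above may be empty, in which case the corresponding multiplication step is simply omitted.

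\textbf{Where the work is.} There is no deep obstruction; the single point that needs care is the mismatch between the non-strict $\leq$ in the three compatibility conditions and the strict $<$ in the monomial-order axiom, bridged by the cancellativity of the free operated monoid (equivalently, unique readability of operated words). The other mildly technical ingredient is the top-level decomposition of an element of $\frakM^\star(X)$, which is exactly the defining recursion of $\frakM$ and makes the induction on $\deg_P(q)$ transparent; granting these two observations the proof is a direct unwinding of the definitions.
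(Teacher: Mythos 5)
Your proof is correct, and it follows the standard route: the forward implication by specializing the context $q$ to $w\star$, $\star w$ and $\lfloor\star\rfloor$, and the converse by induction on the number of brackets in $q$, using cancellativity in $\frakM(X)$ (resp.\ $\frakS(X)$) to upgrade the non-strict compatibility conditions to the strict inequalities required of a monomial order. Note that the paper itself gives no proof — it quotes the lemma from \cite{GGSZ14} — and the argument there is essentially the one you wrote, so there is nothing to flag.
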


\begin{thm}\label{Thm:The well order dl is a monomial order}
	The well order $\leq_{\mathrm{dl}}$  is a monomial order on $\frakS(X)$.
\end{thm}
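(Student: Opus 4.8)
By the cited Lemma~\cite[Lemma~5.7]{GGSZ14}, it suffices to prove that the well order $\leq_{\mathrm{dl}}$ is bracket compatible, left compatible and right compatible on $\frakS(X)$. The strategy is to verify each of these three properties by working through the four-level lexicographic definition of $\leq_{\mathrm{dl}}$, reducing everything to the corresponding compatibility properties of the four component (pre)orders $\leq_{\dgp}$, $\leq_{\dgx}$, $\leq_{\dgg}$ and $\leq_{\operatorname{Dlex}}$. The general principle I would isolate first is: if $\leq_{\alpha_1},\dots,\leq_{\alpha_k}$ are preorders on $\frakS(X)$ that are all (say) left compatible, \emph{and} moreover each operation $w\mapsto wu$ etc.\ respects the equality relations $=_{\alpha_i}$ (i.e.\ $u=_{\alpha_i}v\Rightarrow wu=_{\alpha_i}wv$), then the combined order $\leq_{\alpha_1,\dots,\alpha_k}$ is left compatible. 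Indeed, if $u<_{\mathrm{dl}}v$, let $j$ be the first index with $u<_{\alpha_j}v$; then $u=_{\alpha_i}v$ for $i<j$ gives $wu=_{\alpha_i}wv$ for $i<j$, while left compatibility of $\leq_{\alpha_j}$ gives $wu\leq_{\alpha_j}wv$, hence $wu\leq_{\mathrm{dl}}wv$. (The same bookkeeping works for right and bracket compatibility.) So the proof splits into checking, for each of the four components, the three compatibility conditions together with the analogous condition for the equality relations.

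\textbf{The component checks.} For $\leq_{\dgp}$ (the $P$-degree order): $\deg_P(wu z)=\deg_P(w)+\deg_P(u)+\deg_P(z)$ and $\deg_P(\lfloor u\rfloor)=\deg_P(u)+1$, so monotonicity and the equality-respecting condition are immediate additivity statements. For $\leq_{\dgx}$ (the $X$-degree order): likewise $\deg_X(wuz)=\deg_X(w)+\deg_X(u)+\deg_X(z)$ and $\deg_X(\lfloor u\rfloor)=\deg_X(u)$, again additive. For $\leq_{\dgg}$ (the G-degree order): here I would check that concatenating $w$ on the left shifts every G-degree contribution of $u$ by the constant $\deg_X(w)$ (because all letters of $u$ move right past $\deg_X(w)$ letters of $w$), so $\deg_G(wu)=\deg_G(w)+\deg_G(u)+\deg_X(w)\deg_X(u)$; concatenating on the right adds $\deg_G(w)$ unchanged (letters of $w$ are already to the right), giving $\deg_G(uw)=\deg_G(u)+\deg_G(w)+\deg_X(u)\deg_X(w)$; and $\deg_G(\lfloor u\rfloor)=\deg_G(u)$ up to the two boundary half-brackets which contribute $0$ (left of $u_1$) and $\deg_X(u)$ (right of $u_n$), a constant depending only on $\deg_X(u)$. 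In every case the increment depends only on data ($\deg_X$, $\deg_P$, $\deg_G$ of $w$ and the $X$-degree of $u$) that is already fixed once we are inside a block where the earlier components are equal, so monotonicity in the last slot and equality-preservation both follow; this is exactly why the ordering of the components (putting $\dgp$, then $\dgx$, then $\dgg$) matters. Finally, for $\leq_{\operatorname{Dlex}}$: the operations $u\mapsto wuz$ and $u\mapsto\lfloor u\rfloor$ preserve $P$-degree, $X$-degree and the level-$n$ stratum, so restricting to a stratum $\frakS_k(X)$ we only need that the ordinary degree-lexicographic order $\leq_{\operatorname{Dlex}_k}=\leq_{\operatorname{dlex}}$ on the free semigroup $\cals(X\sqcup\lfloor\frakS_{k-1}(X)\rfloor)$ is a monomial order (left, right, and — since the bracketed alphabet letters are compared by $\leq_{\operatorname{Dlex}_{k-1}}$, handled by the same recursion — bracket compatible), which is the standard fact recorded after Definition~\ref{Def: deg-lex order}, proved by induction on the level.

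\textbf{Expected main obstacle.} The routine parts are the additivity identities for $\deg_P$ and $\deg_X$. The delicate point — and the one I would write out most carefully — is the $\leq_{\dgg}$ computation: I must be precise that left-concatenation by $w$ increments each half-bracket's G-degree uniformly by $\deg_X(w)$ (and does not disturb relative positions), that right-concatenation leaves the existing half-brackets of $u$ untouched while the new letters of $w$ contribute their own G-degrees (which is why the increment on the right is just $\deg_G(w)$ plus the cross term $\deg_X(u)\deg_X(w)$), and that bracketing only adds the two boundary half-brackets with G-degrees $0$ and $n=\deg_X(u)$. Getting the boundary conventions from the definition right, and confirming that within a $\dgp$-and-$\dgx$-equal block these increments are constant so that $u=_{\dgg}v\Rightarrow wuz=_{\dgg}wvz$ and $\lfloor u\rfloor=_{\dgg}\lfloor v\rfloor$, is the crux; once that is in hand, the lexicographic-combination lemma from the first paragraph assembles the three compatibility properties of $\leq_{\mathrm{dl}}$ and Lemma~\cite[Lemma~5.7]{GGSZ14} finishes the proof.
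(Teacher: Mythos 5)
Your proposal follows essentially the same route as the paper: both reduce, via the cited Lemma~5.7 of Zheng--Gao--Guo--Sit, to checking bracket, left and right compatibility of $\leq_{\mathrm{dl}}$ component by component, using additivity of $\deg_P$ and $\deg_X$, the block-constancy of the $\deg_G$ increments, and the stratum-wise identification of $\leq_{\operatorname{Dlex}}$ with the orders $\leq_{\operatorname{Dlex}_k}$; your explicit ``lexicographic-combination'' lemma is just a cleaner packaging of what the paper does case by case. One small correction: your concatenation formulas for the G-degree have the wrong cross term. When $w$ is prepended, every half bracket of $u$ has its G-degree shifted by $\deg_X(w)$, and $u$ contains $2\deg_P(u)$ half brackets, so the correct identity is $\deg_G(wu)=\deg_G(w)+\deg_G(u)+2\deg_X(w)\deg_P(u)$ (and symmetrically $\deg_G(uw)=\deg_G(u)+\deg_G(w)+2\deg_X(u)\deg_P(w)$), not a cross term $\deg_X(w)\deg_X(u)$. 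The slip is harmless for the argument, since the corrected increment still depends only on $w$ together with $\deg_P(u)$, which is constant on the block where $u=_{\mathrm{dgp}}v$ and $u=_{\mathrm{dgx}}v$, so the monotonicity and equality-preservation you invoke still hold.
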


\begin{proof}

Let $u\leq_{\dl}v$.
	It is obvious that preorders $\leq_{\mathrm{dgp}}$ and $\leq_{\mathrm{dgx}}$ are bracket compatible, left compatible and right compatible. This solves the case  $u<_{\dgp} v$  and  that of $u=_{\dgp} v$ and $u<_{\dgx} v$.
If $u=_{\dgp} v, u=_{\mathrm{dgx}} v$  and   $u <_{\mathrm{dgg}}v$,
for $w\in \frakS(X)$,  obviously    $\lfloor u\rfloor <_{\mathrm{dgg}}\lfloor v\rfloor$,  $uw <_{\mathrm{dgg}}vw$ and   $wu <_{\mathrm{dgg}}wv$. 	
	
	Now we only need to consider the case that  $u =_{\mathrm{dgp}} v$, $u =_{\mathrm{dgx}} v$, $u =_{\mathrm{dgg}} v$ and $u\leq_{ \mathrm{Dlex}}v$.	
	Let $\deg_P(u) =\deg_P(v)=n$. Since $u,v\in \frakS_n(X)$, thus  $u\leq_{\operatorname{Dlex}_n} v$. By the fact that  the restriction of $\leq_{\operatorname{Dlex}_{n+1}} $ to $\lfloor\frakS_n(X)\rfloor$ is  equal  to $\lfloor\leq_{\operatorname{Dlex}_n}\rfloor$, we have $\lfloor u\rfloor \leq_{\operatorname{Dlex}_{n+1}} \lfloor v\rfloor$ and $\lfloor u\rfloor \leq_{\dl} \lfloor v\rfloor$. Let $w\in\frakS_m(X)$.  One can obtain   $uw\leq_{\operatorname{Dlex}_r}vw$ and $wu\leq_{\operatorname{Dlex}_r}wv$ for $r=\max \left\lbrace m, n \right\rbrace $, so $uw\leq_{\dl} vw$ and $wu\leq_{\dl}wv$.

We are done.
\end{proof}



\begin{remark}\label{remark: dl'}
	In fact, we can define another monomial order $\leq_{\mathrm{dl'}}$ instead of $\leq_{\mathrm{dl}}$ on $\frakS(Z)$, which keeps $\lfloor u\rfloor v>\lfloor uv\rfloor $   for any $u,v\in\frakS(Z)$. To this end, we only need to modify  the definition of G-degree in $\leq_{\mathrm{dl}}$ as follows:
	
	We modify  the definition of G-degree in $\leq_{\mathrm{dl}}$ as follows:	
	for any $u\in\frakS(Z)$, let $u_1,\dots ,u_n\in Z$ be  all the elements occuring in $u$ from left to right. If the half bracket $\lfloor$  (resp. $\rfloor$) is between $u_i$ and $u_{i+1}$ ($1\leq i<n$), define the G-degree of this half bracket to be $n-i$; if there is a  half bracket $\lfloor$ (resp. $\rfloor$)   appearing on the left of $u_1$ (resp. on the right of $u_n$), we define the G-degree of this half bracket to be $n$ (resp. $0$). Denote $\deg_G(u)$ by the sum of the G-degree of all the half brackets in $u$.
\end{remark}	

Now we extend $\leq_{\dl}$ from $\frakS(X)$ to $\frakM(X)$.
\begin{defn}
	Let $X$ be a set with a well order. Let  $\dagger$ be a symbol which is understood to be  $\lfloor 1 \rfloor$ and write $X'=X\sqcup \{\dagger\}$.  Consider the free operated semigroup $\frakS(X')$  over the set  $X'$. The well order on $X$ extends to a  well order $\leq$ on $X'$ by setting $z>\dagger$, for any $z\in X$.
	Then the monomial order $\leq_{\mathrm{dl}}$ on $\frakS(X')$ induces a well order $\leq_{\mathrm{udl}}$ on $\frakM(X)=\frakS(X')\sqcup \{1\}$ (in which $\lfloor 1 \rfloor$ is identified with $\dagger$), by setting $u>1$ for any $u\in \frakS(X')$.
\end{defn}

\begin{remark} Notice that by  writing $\dagger$ instead of $\lfloor 1 \rfloor$, we avoid counting the brackets  of $\lfloor 1 \rfloor$ when computing  $P$-degrees and $G$-degrees.
 \end{remark}

\begin{thm}\label{Thm:The well order udl is a monomial order}
	The well order $\leq_{\mathrm{udl}}$  is a monomial order on $\frakM(X)$ and $\leq_{\mathrm{udl}}$  restricts on $\frakS(X)$ is the momomial order $\leq_{\mathrm{dl}}$.
\end{thm}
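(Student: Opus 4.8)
The plan is to deduce this from Theorem~\ref{Thm:The well order dl is a monomial order} together with the characterization of monomial orders in terms of bracket/left/right compatibility (\cite[Lemma~5.7]{GGSZ14}). By construction $\leq_{\mathrm{udl}}$ is obtained by transporting $\leq_{\mathrm{dl}}$ along the bijection $\frakM(X)\setminus\{1\}\cong\frakS(X')$ (with $\lfloor 1\rfloor$ identified with $\dagger$) and declaring $1$ to be the least element. First I would check that $\leq_{\mathrm{udl}}$ is a well order on $\frakM(X)$: it is a linear order since $\leq_{\mathrm{dl}}$ is linear on $\frakS(X')$ and we have simply added a new bottom element; it satisfies the descending chain condition because any strictly descending chain either avoids $1$ altogether, where it maps to a descending chain in $(\frakS(X'),\leq_{\mathrm{dl}})$ which stabilizes, or reaches $1$ in finitely many steps and then stabilizes.

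Next I would verify the three compatibility conditions for $\leq_{\mathrm{udl}}$ on $\frakM(X)$, so that \cite[Lemma~5.7]{GGSZ14} applies. Suppose $u\leq_{\mathrm{udl}}v$ in $\frakM(X)$. If $u=1$, then for $w\in\frakM(X)$ we get $wu=w=wv$ or $wv\geq_{\mathrm{udl}}w=wu$ (using left compatibility of $\leq_{\mathrm{dl}}$ when $w\neq 1$, and triviality when $w=1$), and similarly on the right and for the bracket $\lfloor u\rfloor=\lfloor 1\rfloor=\dagger$, which is the least element of $\frakS(X')$ among bracketed words and hence $\leq_{\mathrm{udl}}\lfloor v\rfloor$ always. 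If $u\neq 1$ (so also possibly $v\neq 1$; the case $v=1$ cannot occur since $u\leq_{\mathrm{udl}}v$ and $1$ is least), then $u,v\in\frakS(X')$ with $u\leq_{\mathrm{dl}}v$; for $w\in\frakM(X)$ with $w\neq 1$ we conclude $wu\leq_{\mathrm{dl}}wv$, $uw\leq_{\mathrm{dl}}vw$, $\lfloor u\rfloor\leq_{\mathrm{dl}}\lfloor v\rfloor$ directly from Theorem~\ref{Thm:The well order dl is a monomial order}, and for $w=1$ these reduce to $u\leq v$ and $\lfloor u\rfloor\leq\lfloor v\rfloor$ (the latter again by bracket compatibility of $\leq_{\mathrm{dl}}$). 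Here one must be mildly careful that multiplying by $w$ in $\frakM(X)$ corresponds to multiplying by $w$ in $\frakS(X')$ when $w\neq 1$, which is immediate from Remark~\ref{Rem: another construction of m(X)}, and that bracketing in $\frakM(X)$ corresponds to bracketing in $\frakS(X')$ (not to $\dagger$, since we bracket a nonempty word). Thus $\leq_{\mathrm{udl}}$ is bracket, left and right compatible, hence a monomial order.

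For the final clause, that $\leq_{\mathrm{udl}}$ restricts to $\leq_{\mathrm{dl}}$ on $\frakS(X)$, I would observe that $\frakS(X)\subseteq\frakS(X')$ consists exactly of those words not involving the letter $\dagger$, and on such words the invariants $\deg_P$, $\deg_X$, $\deg_G$ and the order $\leq_{\operatorname{Dlex}}$ computed inside $\frakS(X')$ agree with those computed inside $\frakS(X)$ (the extra bottom letter $\dagger<z$ for all $z\in X$ does not affect the $\mathrm{dlex}$ comparison of $\dagger$-free words, and contributes nothing to the degree counts since it never occurs). Hence $\leq_{\mathrm{dl}}$ on $\frakS(X')$ restricted to $\frakS(X)$ is precisely $\leq_{\mathrm{dl}}$ on $\frakS(X)$, and $\leq_{\mathrm{udl}}$ restricted to $\frakS(X)$ is the same. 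The only genuine obstacle is the bookkeeping in the compatibility check — specifically making sure that the identification $\lfloor 1\rfloor=\dagger$ is handled consistently so that no bracket of $\lfloor 1\rfloor$ is ever counted in $P$-degree or $G$-degree (as emphasized in the Remark following the definition of $\leq_{\mathrm{udl}}$); once that is pinned down, everything else is a routine transfer of structure along a bijection.
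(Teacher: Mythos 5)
Your proposal is correct and follows essentially the same route as the paper, which simply notes that $1$ is the least element, that $\lfloor x\rfloor>_{\mathrm{udl}}x\geq_{\mathrm{udl}}\dagger$ gives bracket compatibility, and that left/right compatibility is clear; your version just spells out the case analysis (and the restriction claim) that the paper leaves implicit. One small imprecision: when $u=1<_{\mathrm{udl}}v$ you cannot literally invoke left compatibility of $\leq_{\mathrm{dl}}$ on $\frakS(X')$ to get $w\leq_{\mathrm{udl}}wv$ (since $1\notin\frakS(X')$), but the inequality does hold directly from the $\deg_P$/$\deg_{X'}$ comparisons, so this is cosmetic rather than a gap.
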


\begin{proof}
	For any $x\in \frakM(X)\backslash \{1\}$, $x>_{\mathrm{udl}}1$. We have $\lfloor x\rfloor>_{\mathrm{udl}}x\geq_{\mathrm{udl}}\dagger$. Thus $\leq_{\mathrm{udl}}$ is bracket compatible. Clearly,  $\leq_{\mathrm{udl}}$ is left and right compatible.
\end{proof}

We record an important  technical result.
\begin{prop}\label{Lemma: inequal  relation}
	For any $ u,v\in \frakM(X)\backslash \{1\}$, we have
	\begin{itemize}
		\item [(a)] $\lfloor u \rfloor v  <_{\mathrm{udl}} \lfloor u v\rfloor  <_{\mathrm{udl}} u\lfloor v\rfloor<_{\mathrm{udl}} \lfloor u\rfloor\lfloor v\rfloor;$
		\item [(b)] $u\lfloor 1 \rfloor v <_{\mathrm{udl}}\lfloor u \rfloor v;$
		\item [(c)] $\lfloor1\rfloor  u v <_{\mathrm{udl}}\lfloor u \rfloor v;$
		\item [(d)] $   u   <_{\mathrm{udl}}\lfloor 1 \rfloor u;$
        \item [(e)] $   u   <_{\mathrm{udl}} u \lfloor 1 \rfloor.$
	\end{itemize}

\end{prop}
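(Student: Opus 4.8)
The plan is to verify each of the five inequalities by comparing the two sides successively along the chain of pre-linear orders $\leq_{\dgp}$, $\leq_{\dgx}$, $\leq_{\dgg}$, and finally $\leq_{\operatorname{Dlex}}$, as dictated by the definition of $\leq_{\dl}$ (and hence $\leq_{\udl}$). Throughout, I would work inside $\frakS(X')$ where $X' = X \sqcup \{\dagger\}$ and $\dagger = \lfloor 1 \rfloor$, remembering that the brackets forming $\dagger$ are \emph{not} counted when computing $P$-degrees or $G$-degrees, per the Remark preceding the statement. Write $\deg_X(u) = n$ and $\deg_X(v) = m$, so all words appearing in (a) have $X$-degree $n+m$, and in (b)--(e) both sides also share the same $X$-degree; this disposes of the $\leq_{\dgx}$ comparison immediately and, in (a), also the $\leq_{\dgp}$ comparison since all four words have $P$-degree $\deg_P(u)+\deg_P(v)$.

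For part (a), since the four words agree in $P$-degree and $X$-degree, the comparison reduces to $\leq_{\dgg}$, i.e. to counting the sum of $G$-degrees of the half-brackets of $u$ and $v$ together with the \emph{new} half-brackets introduced by the outer $\lfloor\ \rfloor$'s. The point is that the half-brackets internal to $u$ and to $v$ contribute the same total in all four words (the internal half-brackets of $u$ sit among its first $n$ letters, those of $v$ among the last $m$), so only the positions of the two \emph{outer} half-brackets matter. For $\lfloor u\rfloor v$ the outer $\lfloor$ has $G$-degree $0$ and the outer $\rfloor$ has $G$-degree $n$; for $\lfloor uv\rfloor$ they are $0$ and $n+m$; for $u\lfloor v\rfloor$ they are $n$ and $n+m$; for $\lfloor u\rfloor\lfloor v\rfloor$ the four outer half-brackets have $G$-degrees $0, n, n, n+m$. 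Adding: $n < n+m < 2n+m < 2n+2m$ (using $n, m \geq 1$ since $u, v \neq 1$), which gives the strict chain. I should double-check the boundary conventions ($G$-degree $0$ for a $\lfloor$ left of the first letter, $n$ for a $\rfloor$ right of the last) match the Definition exactly, as this is where an off-by-one slip would hide.

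For parts (b)--(e) the decisive invariant is the $P$-degree. In (b), $u\lfloor 1\rfloor v = u\,\dagger\, v$ has $P$-degree $\deg_P(u)+\deg_P(v)$, whereas $\lfloor u\rfloor v$ has $P$-degree $\deg_P(u)+\deg_P(v)+1$, so $u\lfloor 1\rfloor v <_{\dgp} \lfloor u\rfloor v$ and we are done at the first stage; (c) is identical with $\lfloor 1\rfloor u v = \dagger\, u\, v$ on the left. For (d) and (e), $u$ has $P$-degree $\deg_P(u)$ while $\lfloor 1\rfloor u = \dagger\, u$ and $u\lfloor 1\rfloor = u\,\dagger$ both have $P$-degree $\deg_P(u)$ as well (the $\dagger$ contributes nothing), so $\leq_{\dgp}$ and also $\leq_{\dgx}$ and $\leq_{\dgg}$ all tie, and the comparison falls through to $\leq_{\operatorname{Dlex}}$ — equivalently, to the degree-lexicographic order on the semigroup $\frakS(X')$, where $u$ and $\dagger u$ (resp. $u\dagger$) have $\deg_{X'}$ differing by $1$ (the letter $\dagger$ counts as a letter of $X'$), so the shorter word $u$ is strictly smaller. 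The main obstacle, and the step I would be most careful about, is part (a): it is the only one where all three coarse invariants $\leq_{\dgp}, \leq_{\dgx}, \leq_{\dgg}$ could a priori tie, so the $G$-degree bookkeeping of the outer brackets must be done exactly right, and one must also confirm that the internal-bracket contributions really are unchanged by the reassociations — this follows because the letters of $u$ keep their relative order and absolute positions $1,\dots,n$ and likewise those of $v$ keep positions $n+1,\dots,n+m$ in every one of the four words, but it is worth stating explicitly.
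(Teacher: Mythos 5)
Your computations for the first two inequalities in (a) and for parts (b) and (c) are correct and match the paper's proof exactly (the paper settles $\lfloor u\rfloor v<\lfloor uv\rfloor$ and $\lfloor uv\rfloor<u\lfloor v\rfloor$ by the same $G$-degree differences $\deg_{X'}(v)$ and $\deg_{X'}(u)$, and (b), (c) by the $P$-degree drop coming from replacing a genuine bracket by the letter $\dagger$). However, two of your justifications are wrong as written, even though the conclusions survive. First, in (a) it is false that all four words share the $P$-degree $\deg_P(u)+\deg_P(v)$: the first three words each carry \emph{one} extra outer bracket pair, hence have $P$-degree $\deg_P(u)+\deg_P(v)+1$, while $\lfloor u\rfloor\lfloor v\rfloor$ carries \emph{two} and has $P$-degree $\deg_P(u)+\deg_P(v)+2$. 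Consequently the last inequality $u\lfloor v\rfloor<\lfloor u\rfloor\lfloor v\rfloor$ is decided already at the $\leq_{\dgp}$ stage (this is exactly what the paper does), and your $G$-degree comparison for it is both moot and arithmetically off: the outer half-brackets of $\lfloor u\rfloor\lfloor v\rfloor$ have $G$-degrees $0,n,n,n+m$, which sum to $3n+m$, not $2n+2m$. Only the first three words tie in $P$-degree, so only the first two inequalities of (a) genuinely fall through to $\leq_{\dgg}$.

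Second, in (d) and (e) your claim that $\leq_{\dgx}$ ties and that the comparison falls through to $\leq_{\operatorname{Dlex}}$ is incorrect and internally inconsistent with your own parenthetical: the Remark in the paper exempts $\dagger$ only from the $P$-degree and $G$-degree counts, not from the $X'$-degree, so $\deg_{X'}(\dagger u)=\deg_{X'}(u)+1$ and the comparison is decided at the $\leq_{\dgx}$ stage, which is precisely the paper's one-line argument. Both errors happen to be harmless because the stage you skipped over decides the comparison in the same direction, but as written the proofs of $u\lfloor v\rfloor<_{\udl}\lfloor u\rfloor\lfloor v\rfloor$ and of (d), (e) rest on false premises and need to be repaired as above. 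Your careful bookkeeping showing that the internal half-brackets of $u$ and $v$ contribute the same $G$-degree total in all the reassociated words is correct and is a point the paper leaves implicit, so that part is a worthwhile addition.
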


\begin{proof}
	Let  $ u,v\in  \frakM(X)\backslash \{1\}=\frakS(X')$.
	\begin{itemize}
		\item [(a)]
 $\lfloor u \rfloor v  <_{\mathrm{udl}} \lfloor u v\rfloor $ holds because     $$\deg_G(\lfloor uv\rfloor)-\deg_G(\lfloor u\rfloor v)=\deg_{X'}(v)>0,$$
 $u\lfloor v\rfloor<_{\mathrm{udl}} \lfloor u\rfloor\lfloor v\rfloor$ follows  from $u\lfloor v\rfloor<_{\mathrm{dgp}} \lfloor u\rfloor\lfloor v\rfloor $ and
		$\lfloor u v\rfloor  <_{\mathrm{udl}} u\lfloor v\rfloor$  is obtained from  $\deg_G(u\lfloor v\rfloor)-\deg_G(\lfloor uv\rfloor)=\deg_{X'}(u)>0$.
		
		\item [(b)]  The inequality is valid as  $u\lfloor 1 \rfloor v=u\dagger v <_{\mathrm{dgp}} u\lfloor v\rfloor$.
		\item [(c)]  The statement can be deduced  from $\lfloor1\rfloor  u v =\dagger uv<_{\mathrm{dgp}} u\lfloor v\rfloor$.
		\item [(d)]  The assertion follows from   $   \deg_{X'} (u)   < \deg_{X'} (\dagger u)=\deg_{X'} (\lfloor 1 \rfloor u)$.
		\item [(e)]  We infer the result from  $  \deg_{X'} (u)   <\deg_{X'} (u\dagger)= \deg_{X'}(u \lfloor 1 \rfloor) $.
	\end{itemize}
\end{proof}

Next, we recall an another monomial order $\leq_{\db}$ on $\frakM(X)$ constructed by Gao, Guo, Sit and Zheng \cite{GGSZ14}.

\begin{itemize}
	\item [(a)] Let $u,v\in \frakM_0(X)$, define
	$$u\leq_{\db_0} v\Leftrightarrow u\leq_{\mathrm{dlex}}v.$$
	\item[(b)]  Suppose that for each $1\leq i\leq n$, we are given a  well order $\leq_{\db_i} $ on $\frakM_i(X)$ such that $\leq_{\db_i}$ extends $\leq_{\db_{i-1}}$.
	For $r\geq0$, denote  $\frakM_{n+1, r}(X)=\left\lbrace u\in \frakM_{n+1}(X)~|~ |u|_P=r \right\rbrace $.
 Let $u,v\in \frakM_{n+1, r}(X)$, so they can be written as $$u=u_{0}\lfloor u_{1}^{*}\rfloor u_{1}\lfloor u_{2}^{*}\rfloor \cdots\lfloor u_{r}^{*} \rfloor u_{r}, v=v_{0}\lfloor v_{1}^{*}\rfloor v_{1}\lfloor v_{2}^{*}\rfloor \cdots\lfloor v_{r}^{*} \rfloor v_{r},  $$ where
  $u_{0}, u_{1}, \dots, u_{r},$
	$v_{0}, v_{1}, \dots, v_{r} \in \calm(X)$ and $u_{1}^{*}, u_{2}^{*}, \dots, u_{r}^{*}, v_{1}^{*}, v_{2}^{*}, \dots, v_{r}^{*} \in \frakM_n(X) .$ Then  impose
		$$ u\leq_{\mathrm{lex}_{n+1}}v\Leftrightarrow (u_{1}^{*}, u_{2}^{*}, \dots, u_{r}^{*},u_{0}, u_{1}, \dots, u_{r})\leq_{\mathrm{clex}}( v_{1}^{*}, v_{2}^{*}, \dots, v_{r}^{*},v_{0}, v_{1}, \dots, v_{r}),$$
where $\leq_{\mathrm{clex}}$ is the lexicographical order on $\frakM_n(X)^r\times \calm(X)^{r+1}$ induced by $\leq_{\db_n}$.
	Now the  well order $\leq_{\mathrm{db}_{n+1}}$ on $\frakM_{n+1}(X)$ is given by
		$$u\leq_{\mathrm{db}_{n+1}}v\Leftrightarrow u\leq_{\mathrm{dgp,brp},\mathrm{lex}_{n+1}}v\Leftrightarrow \left\{
		\begin{array}{lcl}
			u<_{\mathrm{dgp}} v, \text{or }\\
			u=_{\mathrm{dgp}}v \text{ and }u<_{\mathrm{ brp}} v, \text{or }\\
			u=_{\mathrm{dgp}}v, u=_{\mathrm{brp}}v   \text{ and } u<_{\mathrm{lex}_{n+1}} v.
		\end{array}
		\right.	$$
\end{itemize}
By a limit process, one can get a monomial order $\leq_{\db}$ on $\frakM(X)$; for details, see  \cite[Theorem~5.8]{GGSZ14}.


\section{Operator  polynomial identities and operated  GS bases}
\label{Section: Operator  polynomial identities and oeprator GS bases}

In this section, we recall basic facts about free operated algebras whose operator satisfies some  polynomial identities and the GS basis theory in this setup.
We will also recall differential type algebras which are the main objects in this paper.

\subsection{Free operated algebras and operator  polynomial identities}\


In this subsection, we   recall   some basic notions and facts   related to   free operated  algebras and operator  polynomial identities.

Let $X$ be a set.


\begin{defn}[{\cite{GSZ13, GGSZ14,    GaoGuo17}}]
	   We call an element     $\phi(x_1,\dots,x_n)   \in \bk\frakS(X) $ (resp. $\bk\frakM(X)$) with $ n\geq 1, x_1, \dots, x_n\in X$ an operated polynomial identity (aka  OPI).

\textit{{From now on, we always assume that OPIs are multilinear, that is,  they are linear in each $x_i$.}}

\end{defn}

\begin{defn}[{\cite{GSZ13, GGSZ14,   GaoGuo17}}]  Let $\phi(x_1,\dots,x_n)$ be an   OPI. A  (unital) operated algebra $(A,P)$ is said to satisfy the OPI $\phi(x_1,\dots,x_n)$ if
	$\phi(r_1,\dots,r_n)=0,$ for all $r_1,\dots,r_n\in A.$
In this case, $(A,P)$ is called a (unital) $\phi$-algebra and $P$ is called a $\phi$-operator.
	
	Generally, for a family of OPIs $\Phi$, we call a  (unital)  operated algebra    $(A,P)$ a  (unital)  $\Phi$-algebra    if it is a  (unital) $\phi$-algebra for any $\phi\in \Phi$.
	Denote the  category of $\Phi$-algebras (resp. unital $\Phi$-algebras)  by $\Phi\zhx\Alg$ (resp. $\Phi\zhx\uAlg$).

\end{defn}

\begin{defn}[{\cite{GSZ13, GGSZ14,    GaoGuo17}}]   An operated ideal  of an operated algebra  is   an  ideal   of the associative algebra  closed under the action of the operator.
	The operated ideal generated by a subset $S$ is denoted by $\left\langle S\right\rangle_\OpAlg$ (resp. $\left\langle S\right\rangle_\uOpAlg$).
\end{defn}
Obviously   the quotient of an operated algebra (resp. unital operated algebra) by an operated  ideal is naturally an operated algebra (resp. unital operated algebra).

	From now on, $\Phi$ denotes  a family  of OPIs    in $\bk\frakS(X) $ or   $\bk\frakM(X)$.  For a set $Z$ and  a subset  $Y$    of $ \frakM(Z)$, introduce the subset $S_\Phi(Y)\subseteq\bk \frakM(Z)$ to be
\[S_{\Phi}(Y):=\{\phi(u_1,\dots,u_k)\ |\ u_1,\dots,u_k\in Y,~\phi(x_1,\dots,x_k)\in \Phi\}.\]

We will consider free (unital)  $\Phi$-algebra over an associative algebra which is defined via the usual universal property; see for example
\cite[Proposition 4.7]{QQWZ}.
The following result gives a construction of free (unital)  $\Phi$-algebra over an associative algebra.
\begin{prop}[{\cite[Proposition 4.8]{QQWZ}}]\label{Prop: from algebra to Phi algebra with relations}
	\begin{itemize}
		\item [(a)] Let $\Phi\subset\bk\frakS(X)$ and $A=\bk \cals (Z) \slash I_A$ an algebra. Then
		$$ \calf_{\Alg}^{\Phi\zhx\Alg}(A):=\bk\frakS(Z)\slash\left\langle  S_{\Phi}(\frakS(Z))\cup I_A\right\rangle_\OpAlg$$
		is the free $\Phi$-algebra generated by  $A$.
		\item [(b)] Let $\Phi\subset\bk\frakM(X)$ and $A=\bk \calm (Z) \slash I_A$ a unital algebra. Then
		 $$\calf_{\uAlg}^{\Phi\zhx\uAlg}(A):=\bk\frakM(Z)\slash\left\langle  S_{\Phi}(\frakM(Z))\cup I_A\right\rangle_\uOpAlg$$
		 is the free unital $\Phi$-algebra over $A$.
	\end{itemize}
\end{prop}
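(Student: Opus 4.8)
The plan is to verify the defining universal property of the free $\Phi$-algebra over $A$ directly, using that $\bk\frakS(Z)$ is the free nonunital operated algebra on the set $Z$, as recalled above. Write $B:=\bk\frakS(Z)/\langle S_{\Phi}(\frakS(Z))\cup I_A\rangle_{\OpAlg}$ and let $\pi\colon\bk\frakS(Z)\to B$ be the canonical surjection, which is a morphism of operated algebras. First I would produce the structural map $\iota_A\colon A\to B$: since $\frakS_0(Z)=\cals(Z)$, there is an algebra embedding $\bk\cals(Z)\hookrightarrow\bk\frakS(Z)$, and composing it with $\pi$ gives an algebra morphism $\bk\cals(Z)\to B$ that kills $I_A$ (as $I_A$ is contained in the operated ideal we quotient by), hence factors through $A=\bk\cals(Z)/I_A$ to yield $\iota_A$.

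Second, I would check that $B$ is a $\Phi$-algebra. Fix $\phi(x_1,\dots,x_k)\in\Phi$ and arbitrary $b_1,\dots,b_k\in B$, and choose lifts $w_i\in\bk\frakS(Z)$ with $\pi(w_i)=b_i$. Since $\pi$ is a morphism of operated algebras, $\phi(b_1,\dots,b_k)=\pi(\phi(w_1,\dots,w_k))$, so it suffices to show $\phi(w_1,\dots,w_k)\in\langle S_{\Phi}(\frakS(Z))\cup I_A\rangle_{\OpAlg}$. Expanding each $w_i$ as a $\bk$-linear combination of monomials from $\frakS(Z)$ and using that $\phi$ is multilinear, $\phi(w_1,\dots,w_k)$ becomes a $\bk$-linear combination of elements $\phi(u_1,\dots,u_k)$ with $u_j\in\frakS(Z)$, i.e.\ of elements of $S_{\Phi}(\frakS(Z))$. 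This is the one place where the standing multilinearity hypothesis on OPIs is genuinely used, and I expect it to be the main (though mild) point of the argument.

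Third, the universal property. Let $(C,P_C)$ be a $\Phi$-algebra and $f\colon A\to C$ an algebra morphism. The set map $Z\to\bk\cals(Z)\to A\xrightarrow{f}C$ extends, by freeness of $\bk\frakS(Z)$ as an operated algebra, to a unique operated algebra morphism $F\colon\bk\frakS(Z)\to C$. On $\bk\cals(Z)$ the morphism $F$ coincides with the composite $\bk\cals(Z)\to A\xrightarrow{f}C$, so $F$ kills $I_A$; and for $u_1,\dots,u_k\in\frakS(Z)$ one has $F(\phi(u_1,\dots,u_k))=\phi(F(u_1),\dots,F(u_k))=0$ since $C$ is a $\Phi$-algebra, so $F$ kills $S_{\Phi}(\frakS(Z))$. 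As $F$ is an operated algebra morphism, it therefore kills $\langle S_{\Phi}(\frakS(Z))\cup I_A\rangle_{\OpAlg}$ and factors uniquely as $F=\tilde f\circ\pi$ for an operated algebra morphism $\tilde f\colon B\to C$. Tracing the definitions, $\tilde f\circ\iota_A$ and $f$ agree after precomposition with the surjection $\bk\cals(Z)\to A$, whence $\tilde f\circ\iota_A=f$; uniqueness of $\tilde f$ follows since any operated algebra morphism $B\to C$ with the required property agrees with $F$ on the image of $Z$, hence, composed with $\pi$, equals $F$ by freeness, and $\pi$ is surjective.

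Finally, part (b) is proved verbatim after replacing $\frakS$, $\cals$, $\OpAlg$ by $\frakM$, $\calm$, $\uOpAlg$ and "algebra" by "unital algebra"; the only additional inputs are that $\bk\frakM(Z)$ is the free unital operated algebra on $Z$ (recalled above) and that all the morphisms in sight are unital, neither of which causes difficulty. As indicated, the sole substantive step is the verification in the second paragraph that the displayed quotient actually satisfies every $\phi\in\Phi$, which is exactly where multilinearity of the OPIs is invoked.
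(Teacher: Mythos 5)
Your proof is correct. The paper does not prove this proposition itself --- it is quoted from \cite[Proposition 4.8]{QQWZ} --- and your direct verification of the universal property (construct $\iota_A$ from the embedding $\bk\cals(Z)\hookrightarrow\bk\frakS(Z)$, check that the quotient satisfies every $\phi\in\Phi$ by expanding lifts into monomials and invoking multilinearity, then factor any $f\colon A\to C$ through the quotient using freeness of $\bk\frakS(Z)$ as an operated algebra) is exactly the standard argument expected there, with the unital case indeed going through verbatim.
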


We introduce a notion of equivalences between families of OPIs.
\begin{defn}\label{Def: equivalent OPIs}
	We say two families of OPIs $\Phi_1,\Phi_2\subset \bk\frakS(X)$  are equivalent in $\bk\frakS(X)$ if  	$$\left\langle S_{\Phi_1}(\frakS(Z))\right\rangle_\OpAlg=\left\langle S_{\Phi_2}(\frakS(Z))\right\rangle_\OpAlg$$ for any set $Z$; we say two families of OPIs $\Phi_1,\Phi_2\subset \bk\frakM(X)$  are equivalent in $\bk\frakM(X)$ if $$\left\langle S_{\Phi_1}(\frakM(Z))\right\rangle_\uOpAlg=\left\langle S_{\Phi_2}(\frakM(Z))\right\rangle_\uOpAlg$$ for any set $Z$.
\end{defn}

\subsection{Operated GS  bases for free $\Phi$-algebras}\

\begin{defn} [\cite{BokutChenQiu, GSZ13, GGSZ14, GaoGuo17}]
	Let $Z$ be a set,  $\leq$ a linear order on $\frakM(Z)$ and $f \in \bk \frakM(Z)$.
	\begin{itemize}
		\item [(a)] Let $f\notin \bk$. The leading monomial of $f$ , denoted by $\bar{f}$, is the largest monomial appearing in $f$. The leading coefficient of $f$ , denoted by $c_f$, is the coefficient of $\bar{f}$ in $f$. We call $f$	monic with respect to $\leq$ if $c_f = 1$.
		\item [(b)] Let $f\in \bk$ (including the case $f=0$).  We define the leading monomial of $f$ to be $1$ and the	leading coefficient of $f$ to be $c_f=f$.
		\item [(c)] A subset $S\subseteq \bk\frakM(Z)$ is called monicized with respect to $\leq$,  if each nonzero element of $S$ has leading coefficient $1$. Obviously, each subset $S\subseteq \frakM(Z)$ can be made monicized  if we divide  each nonzero  element by    its  leading coefficient.
	\end{itemize}
\end{defn}

We need  another notation.
Let $Z$ be a set. For  $u\in\frakM(Z)$ with $u\neq1$, as  $u$ can be uniquely written as a product $ u_1 \cdots u_n $ with $ u_i \in Z \cup \left \lfloor\frakM(Z)\right \rfloor$ for $1\leq i\leq n$, call $n$ the breadth of $u$, denoted by $|u|$; for $u=1$, we define
$ |u| = 0 $.

\begin{defn} [\cite{BokutChenQiu, GSZ13, GGSZ14, GaoGuo17}]
	Let $\leq$ be a monomial order on $\frakS(Z)$ (resp. $\frakM(Z)$)  and $ f, g \in \bk\frakS(Z)$ (resp. $\bk\frakM(Z)$) be monic.
	\begin{itemize}
		\item[(a)]If there are $w,u,v\in \frakS(Z)$ (resp. $\frakM(Z)$) such that $w=\bar{f}u=v\bar{g}$ with max$\left\lbrace |\bar{f}| ,|\bar{g}|   \right\rbrace < \left| w\right| < |\bar{f}| +|\bar{g}|$, we call
		$$\left( f,g\right) ^{u,v}_w:=fu-vg$$
		the intersection composition of $f$ and $g$ with respect to $w$.
		\item[(b)] If there are $w\in\frakS(Z)$ (resp. $\frakM(Z)$) and $q\in\frakS^\star(Z)$ (resp. $\frakM^\star(Z)$) such that $w=\bar{f}=q|_{\bar{g}}$, we call
		$$\left( f,g\right)^q_w:=f-q|_g $$
		the inclusion composition of $f$ and $g$ with respect to $w$.
		
	\end{itemize}
\end{defn}
\begin{defn} [\cite{BokutChenQiu, GSZ13, GGSZ14, GaoGuo17}]
	Let $Z$ be a set and $\leq$ a monomial order on $\frakS(Z)$ (resp. $\frakM(Z)$). Let $\calg\subseteq \bk\frakS(Z) $ (resp. $\bk\frakM(Z)$).
	\begin{itemize}
		\item[(a)]An element $f\in \bk\frakS(Z) $ (resp. $\bk\frakM(Z) $) is called trivial modulo $(\calg,w)$ for $w\in \frakS(Z)$ ($\frakM(Z)$) if
		$$f=\underset{i}{\sum}c_iq_i|_{s_i}\text{ with }q_i|_{\bar s_i}<w\text{, where }c_i\in \bk,\ q_i\in\frakS^\star(Z)~(\text{resp.}~\frakM^\star(Z))\  \mathrm{and}\  s_i\in \calg.$$
		\item[(b)]  The subset $\calg$ is called a  GS basis in $\bk\frakS(Z)$ (resp. $\bk\frakM(Z)$) with respect to $\leq$ if, for all pairs $f,g\in \calg$ monicized with respect to $\leq$, every  intersection composition of the form $\left( f,g\right) ^{u,v}_w$ is trivial modulo $(\calg,w)$,  and every inclusion composition of the form $\left( f,g\right)^q_w$ is trivial modulo $(\calg,w)$.
	\end{itemize}
\end{defn}

\textit{ {To distinguish from usual GS bases for associative algebras,  from now on, we shall rename       GS bases  in operated contexts by operated GS bases.}}

\begin{prop}\label{GSB for operated algebras considered as GSB for unital operated algebras}
	Let $Z$ be a set and $\leq$ a monomial order on $\frakM(X)$. Then its restriction to $\frakS(X)$ is also a monomial order.  Moreover, $\calg \subseteq \bk \frakS(Z) $    is an operated GS basis in $\bk\frakS(Z)$  with respect to the restriction of $\leq$  is equivalent to that $\calg$ is an operated GS basis in  $\bk\frakM(Z) $ with respect to $\leq$.
\end{prop}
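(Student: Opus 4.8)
The statement has two parts. First, that the restriction of a monomial order $\leq$ on $\frakM(Z)$ to $\frakS(Z)$ is again a monomial order; this is essentially immediate, since $\frakS(Z)\subseteq\frakM(Z)$ and $\frakS^\star(Z)\subseteq\frakM^\star(Z)$, so the defining implication $u<v\Rightarrow q|_u<q|_v$ for $q\in\frakS^\star(Z)$ is a special case of the corresponding implication on $\frakM(Z)$, and a linear order satisfying the descending chain condition restricts to one on any subset; I would dispatch this in one or two sentences. Second, and the real content, is the equivalence of ``$\calg$ is an operated GS basis in $\bk\frakS(Z)$'' and ``$\calg$ is an operated GS basis in $\bk\frakM(Z)$''. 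Here I would argue that the two notions are literally testing the same compositions.

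The key observation is that when $\calg\subseteq\bk\frakS(Z)$, every leading monomial $\bar f$ for $f\in\calg$ lies in $\frakS(Z)$, i.e. is a \emph{nonempty} word with no occurrence of the empty word $1$ as a factor and no $\lfloor 1\rfloor$. I would then check that the compositions do not see the difference between the two ambient sets. For an intersection composition $(f,g)^{u,v}_w$ with $w=\bar f u=v\bar g$ and $\max\{|\bar f|,|\bar g|\}<|w|<|\bar f|+|\bar g|$: since $\bar f,\bar g\in\frakS(Z)$ overlap nontrivially, the overlap word $w$ is forced to lie in $\frakS(Z)$ as well, and the cofactors $u,v$ are nonempty, hence in $\frakM(Z)$ they are the \emph{same} $u,v,w$ one would use in $\frakS(Z)$; thus the set of intersection compositions is identical whether computed in $\bk\frakS(Z)$ or $\bk\frakM(Z)$. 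Similarly, for an inclusion composition $(f,g)^q_w$ with $w=\bar f=q|_{\bar g}$, $q\in\frakM^\star(Z)$: because $\bar f\in\frakS(Z)$, the context $q$ obtained by deleting the subword $\bar g$ from $\bar f$ necessarily lies in $\frakS^\star(Z)$ (no empty factors are created), so again the inclusion compositions coincide.

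It then remains to verify that triviality modulo $(\calg,w)$ is also the same condition in both settings. If $f=\sum_i c_i q_i|_{s_i}$ is a trivial representation in $\bk\frakS(Z)$ with $q_i\in\frakS^\star(Z)$, then since $\frakS^\star(Z)\subseteq\frakM^\star(Z)$ the very same expression witnesses triviality in $\bk\frakM(Z)$. Conversely, suppose $f\in\bk\frakS(Z)$ has a trivial representation $f=\sum_i c_i q_i|_{s_i}$ in $\bk\frakM(Z)$ with $q_i\in\frakM^\star(Z)$, $s_i\in\calg$, and $q_i|_{\bar s_i}<w$. Since each $s_i\in\bk\frakS(Z)$, every monomial of $s_i$ lies in $\frakS(Z)$, hence is nonempty and contains no $\lfloor 1\rfloor$; substituting such a word into the $\star$-slot of $q_i$ produces a word in $\frakM(Z)$ that contains the factor $1$ (or $\lfloor 1\rfloor$) if and only if $q_i$ itself does outside the slot. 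I would argue that one may discard or normalize any $q_i$ containing the empty word $1$ as an explicit factor (it simply equals a shorter context), reducing to $q_i\in\frakS^\star(Z)$; alternatively, group the terms of the sum by the $\frakM(Z)$-monomial they produce and use that $f$, being in $\bk\frakS(Z)$, has support in $\frakS(Z)$, so contributions from ``bad'' contexts must cancel among themselves and can be dropped. This yields a trivial representation inside $\bk\frakS(Z)$.

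The main obstacle is precisely this last converse direction: being careful that a trivial representation in the larger algebra $\bk\frakM(Z)$ can be massaged into one living in $\bk\frakS(Z)$, i.e.\ that no genuinely ``unital'' phenomena (contexts involving the empty word, or $\lfloor 1\rfloor$) are needed to witness triviality of an element of $\bk\frakS(Z)$ relative to a subset $\calg\subseteq\bk\frakS(Z)$. Once the bookkeeping that contexts can be taken in $\frakS^\star(Z)$ is settled, both the ``GS basis'' property and its failure transfer verbatim between the two settings, giving the claimed equivalence. I expect this to be short: the structural fact that $\frakS(Z)$ is closed under the relevant operations and that its elements never introduce empty factors does all the work.
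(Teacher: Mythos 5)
The paper states this proposition without proof, so there is nothing to compare against line by line; judged on its own, your argument is correct and supplies exactly the details one would want. The two key points — that an overlap or inclusion of leading monomials of elements of $\bk\frakS(Z)$ can only produce $w\in\frakS(Z)$, cofactors in $\frakS(Z)$ and contexts in $\frakS^\star(Z)$ (because substituting a nonempty, $\lfloor 1\rfloor$-free word into $\star$ neither creates nor destroys empty brackets), and that a trivial representation in $\bk\frakM(Z)$ can be purged of contexts in $\frakM^\star(Z)\setminus\frakS^\star(Z)$ because such contexts contribute only to the complementary summand in $\bk\frakM(Z)=\bk\frakS(Z)\oplus\bk(\frakM(Z)\setminus\frakS(Z))$ and hence must cancel — are both right and together give the equivalence. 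One caveat: your first suggested route for the converse, namely that a context ``containing the empty word $1$ as an explicit factor\ldots simply equals a shorter context,'' is not accurate; an element of $\frakM^\star(Z)\setminus\frakS^\star(Z)$ contains $\lfloor 1\rfloor$ (or a deeper empty bracket) as a genuine letter, e.g.\ $\lfloor 1\rfloor\star\neq\star$, so it cannot be normalized away. Fortunately your ``alternatively'' argument via grouping by the resulting monomial and using that $f$ has support in $\frakS(Z)$ is the correct one and is what the proof should rest on.
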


\begin{thm} [\cite{BokutChenQiu, GSZ13, GGSZ14, GaoGuo17}]\label{Thm: unital CD}
	(Composition-Diamond Lemma) Let $Z$ be a set, $\leq$ a monomial order on $\frakM(Z)$ and $\calg\subseteq \bk\frakM(Z) $. Then the following conditions are equivalent:
	\begin{itemize}
		\item[(a)] $\calg$ is an operated GS basis in $\bk\frakM(Z) $.
		\item[(b)]  Denote
		$$\Irr(\calg):=\frakM(Z)\backslash \left\lbrace q|_{\overline{s}}~|~s\in \calg,\ \ q\in\frakM^\star(Z)\right\rbrace. $$
		As a $\bk$-space, $\bk\frakM(Z) =\bk\Irr(\calg)\oplus\left\langle \calg \right\rangle_\uOpAlg$ and     $ \Irr(\calg) $ is a $\bk$-basis of $\bk\frakM(Z)\slash\left\langle\calg \right\rangle_\uOpAlg$.
	\end{itemize}
\end{thm}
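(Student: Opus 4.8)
The plan is to run the operated analogue of Bergman's diamond lemma. As a preliminary I would replace $\calg$ by its monicization, which alters neither $\langle\calg\rangle_{\uOpAlg}$ nor $\Irr(\calg)$ nor the GS-basis property, so that all elements of $\calg$ become monic; I would also record that $\langle\calg\rangle_{\uOpAlg}$ is the $\bk$-span of $\{\,q|_{s}\mid s\in\calg,\ q\in\frakM^{\star}(Z)\,\}$. For the implication (a)$\Rightarrow$(b), the equality $\bk\frakM(Z)=\bk\Irr(\calg)+\langle\calg\rangle_{\uOpAlg}$ is a pure reduction that uses nothing from (a): a monomial $w$ is either in $\Irr(\calg)$, or $w=q|_{\overline s}$ for some $s\in\calg$ and $q\in\frakM^{\star}(Z)$, whence $w\equiv w-q|_{s}\pmod{\langle\calg\rangle_{\uOpAlg}}$ with $\overline{q|_s}=q|_{\overline s}=w$ and every other monomial of $q|_s$ strictly below $w$ (because $\leq$ is a monomial order); Noetherian induction on the well order $\leq$ then finishes it.

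The content of (a)$\Rightarrow$(b) is the directness of the sum, i.e.\ $\bk\Irr(\calg)\cap\langle\calg\rangle_{\uOpAlg}=0$. I would take $0\neq f\in\langle\calg\rangle_{\uOpAlg}$, write $f=\sum_{i=1}^{n}c_{i}\,q_{i}|_{s_{i}}$ with $c_i\in\bk\setminus\{0\}$, $s_i\in\calg$, $q_i\in\frakM^{\star}(Z)$, and among all such expressions pick one with $w:=\max_{i}q_{i}|_{\overline{s_{i}}}$ minimal, then with the number $m$ of indices attaining $w$ minimal. It suffices to show $\overline f=w$, for then $\overline f=q_{i_0}|_{\overline{s_{i_0}}}\notin\Irr(\calg)$ and so $f\notin\bk\Irr(\calg)$. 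If not, $\overline f<w$ forces $m\geq2$; fix $q_{1}|_{\overline{s_{1}}}=q_{2}|_{\overline{s_{2}}}=w$ with $(q_1,s_1)\neq(q_2,s_2)$ (equal pairs can be merged or cancelled, decreasing $m$). The two occurrences of $\overline{s_1}$ and $\overline{s_2}$ inside the bracketed word $w$ are disjoint, or nested, or properly overlapping. In the nested and overlapping cases one would use the identity $q_1|_{s_1}-q_2|_{s_2}=Q|_{c}$ for a suitable outer context $Q\in\frakM^{\star}(Z)$ and an inclusion composition $c=(s_1,s_2)^{q}_{w'}$ or intersection composition $c=(s_1,s_2)^{u,v}_{w'}$; hypothesis (a) makes $c$ trivial modulo $(\calg,w')$, i.e.\ a combination of $q'|_{s'}$ with $s'\in\calg$ and $\overline{q'|_{s'}}<w'$, and applying $Q$ (using $u<v\Rightarrow Q|_u<Q|_v$) shows $q_1|_{s_1}-q_2|_{s_2}$ is a combination of $q''|_{s''}$ with $s''\in\calg$ and leading monomial $<w$. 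In the disjoint case the same conclusion would follow from the elementary add-and-subtract manipulation that splits the two independent holes of $w$, with no composition needed. Substituting $c_1q_1|_{s_1}+c_2q_2|_{s_2}=c_1(q_1|_{s_1}-q_2|_{s_2})+(c_1+c_2)q_2|_{s_2}$ back into $f$ yields a new expression with smaller $w$, or the same $w$ but smaller $m$, contradicting minimality. Hence $\overline f=w$; the sum is direct, and $\Irr(\calg)$ projects to a $\bk$-basis of $\bk\frakM(Z)/\langle\calg\rangle_{\uOpAlg}$.

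For (b)$\Rightarrow$(a), I would let $h$ be any intersection composition $(f,g)^{u,v}_{w}$ or inclusion composition $(f,g)^{q}_{w}$ of elements of $\calg$; then $h\in\langle\calg\rangle_{\uOpAlg}$ and $\overline h<w$ directly from the definitions. Running the reduction of the first paragraph — each step of which introduces only monomials $\leq$ the current leading one — writes $h=h_{0}+\sum_{j}c_{j}q_{j}|_{s_{j}}$ with $h_{0}\in\bk\Irr(\calg)$ and every $\overline{q_{j}|_{s_{j}}}\leq\overline h<w$. Then $h_{0}=h-\sum_{j}c_{j}q_{j}|_{s_{j}}\in\langle\calg\rangle_{\uOpAlg}\cap\bk\Irr(\calg)=0$ by (b), so $h=\sum_{j}c_{j}q_{j}|_{s_{j}}$ is trivial modulo $(\calg,w)$; as $h$ was an arbitrary composition, $\calg$ is an operated GS basis.

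The step I expect to be the main obstacle is the case analysis inside (a)$\Rightarrow$(b): confirming that two subword-occurrences in a bracketed word fall into exactly the disjoint/nested/overlapping trichotomy, and that the non-disjoint cases really do produce an inclusion or intersection composition in the precise $\frakM^{\star}(Z)$-sense with the correct outer context $Q$. This nested-bracket bookkeeping is heavier than in the classical free-associative diamond lemma, and one must invoke compatibility of $\leq$ with both concatenation and $\lfloor\ \rfloor$ at every substitution to control leading monomials.
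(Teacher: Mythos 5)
Your proposal is the standard Bergman-style Composition--Diamond argument (Noetherian reduction for the sum, minimal-counterexample plus the disjoint/nested/overlapping trichotomy for directness, and reduction of compositions to normal form for the converse), which is exactly the proof given in the sources \cite{BokutChenQiu, GSZ13, GGSZ14, GaoGuo17} that the paper cites; the paper itself states this theorem without proof. The one place requiring real care is the one you already flag --- verifying that two non-disjoint occurrences in a bracketed word are either nested (giving an inclusion composition) or overlap at the same nesting level (giving an intersection composition) --- and that analysis goes through as you describe, so I see no gap.
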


\begin{thm}[\cite{BokutChenQiu, GSZ13, GGSZ14, GaoGuo17}]\label{Thm: nonunital CD}
	(Composition-Diamond Lemma) Let $Z$ be a set, $\leq$ a monomial order on $\frakS(Z)$ and $\calg\subseteq \bk\frakS(Z) $. Then the following conditions are equivalent:
	\begin{itemize}
		\item[(a)] $\calg$ is an operated GS basis in $\bk\frakS(Z) $.
		\item[(b)]   Denote
		$$\Irr(\calg):=\frakS(Z)\backslash \left\lbrace q|_{\overline{s}}~|~s\in \calg,\ \ q\in\frakS^\star(Z)\right\rbrace. $$
		As a $\bk$-space, $\bk\frakS(Z) =\bk\Irr(\calg)\oplus\left\langle \calg \right\rangle_\OpAlg$ and     $ \Irr(\calg)$ is a $\bk$-basis of $\bk\frakS(Z)\slash\left\langle\calg \right\rangle_\OpAlg$.
	\end{itemize}
\end{thm}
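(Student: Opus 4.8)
The plan is to deduce the statement from its unital counterpart, Theorem~\ref{Thm: unital CD}, together with Proposition~\ref{GSB for operated algebras considered as GSB for unital operated algebras}; throughout we may assume $\calg$ is monicized, which changes none of the objects involved. First I would record how the data attached to $\bk\frakS(Z)$ sit inside those attached to $\bk\frakM(Z)$. By Remark~\ref{Rem: another construction of m(X)}, a nonempty word of $\frakM(Z)$ lies in $\frakS(Z)$ precisely when it contains no occurrence of the bracket $\lfloor 1\rfloor$. Since $\langle\calg\rangle_\OpAlg=\mathrm{span}_\bk\{q|_s\mid q\in\frakS^\star(Z),\,s\in\calg\}$ and $\langle\calg\rangle_\uOpAlg=\mathrm{span}_\bk\{q|_s\mid q\in\frakM^\star(Z),\,s\in\calg\}$, and since every monomial occurring in $q|_s$ contains $\lfloor 1\rfloor$ as a subword when $q\in\frakM^\star(Z)\setminus\frakS^\star(Z)$ while none does when $q\in\frakS^\star(Z)$, one gets $\langle\calg\rangle_\uOpAlg\cap\bk\frakS(Z)=\langle\calg\rangle_\OpAlg$. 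The same bookkeeping applied to leading monomials gives, writing $\Irr_{\frakM}(\calg)$ and $\Irr_{\frakS}(\calg)$ for the two sets of normal words, $\Irr_{\frakM}(\calg)\cap\frakS(Z)=\Irr_{\frakS}(\calg)$; in particular $\bk\Irr_{\frakS}(\calg)\subseteq\bk\Irr_{\frakM}(\calg)$ and $\langle\calg\rangle_\OpAlg\subseteq\langle\calg\rangle_\uOpAlg$.

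For $(\mathrm a)\Rightarrow(\mathrm b)$: by Proposition~\ref{GSB for operated algebras considered as GSB for unital operated algebras}, $(\mathrm a)$ is equivalent to $\calg$ being an operated GS basis in $\bk\frakM(Z)$, which by Theorem~\ref{Thm: unital CD} is equivalent to $\bk\frakM(Z)=\bk\Irr_{\frakM}(\calg)\oplus\langle\calg\rangle_\uOpAlg$. Intersecting this with $\bk\frakS(Z)$ and using the inclusions above yields directness: $\bk\Irr_{\frakS}(\calg)\cap\langle\calg\rangle_\OpAlg\subseteq\bk\Irr_{\frakM}(\calg)\cap\langle\calg\rangle_\uOpAlg=0$. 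The spanning $\bk\frakS(Z)=\bk\Irr_{\frakS}(\calg)+\langle\calg\rangle_\OpAlg$ holds unconditionally: given $f\in\bk\frakS(Z)$, run the usual normal-form reduction by Noetherian induction on $\bar f$ with respect to $\leq$ restricted to $\frakS(Z)$ — a monomial order by Proposition~\ref{GSB for operated algebras considered as GSB for unital operated algebras} — splitting off $c_f\bar f$ when $\bar f\in\Irr_{\frakS}(\calg)$, and otherwise writing $\bar f=q|_{\bar s}$ for some $s\in\calg$, $q\in\frakS^\star(Z)$ and subtracting $c_f\,q|_s\in\langle\calg\rangle_\OpAlg$, which strictly lowers the leading monomial. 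Hence $\bk\frakS(Z)=\bk\Irr_{\frakS}(\calg)\oplus\langle\calg\rangle_\OpAlg$, and the assertion that $\Irr_{\frakS}(\calg)$ is a $\bk$-basis of $\bk\frakS(Z)/\langle\calg\rangle_\OpAlg$ follows at once.

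For $(\mathrm b)\Rightarrow(\mathrm a)$ I would argue by contradiction. Assume $(\mathrm b)$ but that $\calg$ is not an operated GS basis in $\bk\frakS(Z)$, so some intersection composition $(f,g)^{u,v}_w$ or inclusion composition $(f,g)^q_w$ — call it $h$ — fails to be trivial modulo $(\calg,w)$. By the definition of composition, $h\in\langle\calg\rangle_\OpAlg$ and $\bar h<w$. Running the reduction of the previous paragraph on $h$ yields $r\in\bk\Irr_{\frakS}(\calg)$ with $h-r=\sum_i c_i\,q_i|_{s_i}$, where $s_i\in\calg$, $q_i\in\frakS^\star(Z)$ and $q_i|_{\bar s_i}\le\bar h<w$; as $h-r\in\langle\calg\rangle_\OpAlg$ and $h\in\langle\calg\rangle_\OpAlg$, we get $r\in\bk\Irr_{\frakS}(\calg)\cap\langle\calg\rangle_\OpAlg=0$ by the directness part of $(\mathrm b)$. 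Thus $h=\sum_i c_i\,q_i|_{s_i}$ with every $q_i|_{\bar s_i}<w$, i.e.\ $h$ is trivial modulo $(\calg,w)$ — a contradiction. Hence $(\mathrm a)$ holds.

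The only delicate point is the first paragraph: checking that passing between $\frakS(Z)$ and $\frakM(Z)$ amounts to nothing more than tracking occurrences of the empty bracket $\lfloor 1\rfloor$, so that the unital Composition--Diamond decomposition genuinely restricts to the nonunital one; everything afterwards is a combination of the two cited results with the routine reduction. Equivalently, one could bypass the reduction entirely by observing that the proof of Theorem~\ref{Thm: unital CD} in \cite{BokutChenQiu,GSZ13,GGSZ14,GaoGuo17} applies verbatim with $\frakS(Z)$, $\frakS^\star(Z)$ and $\langle\,\cdot\,\rangle_\OpAlg$ replacing $\frakM(Z)$, $\frakM^\star(Z)$ and $\langle\,\cdot\,\rangle_\uOpAlg$.
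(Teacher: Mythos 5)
The paper gives no proof of this theorem: it is quoted from \cite{BokutChenQiu, GSZ13, GGSZ14, GaoGuo17}, so your attempt can only be judged on its own terms. Most of it is sound: the bookkeeping identities $\langle\calg\rangle_\uOpAlg\cap\bk\frakS(Z)=\langle\calg\rangle_\OpAlg$ and $\Irr_{\frakM}(\calg)\cap\frakS(Z)=\Irr_{\frakS}(\calg)$ are correctly justified by tracking occurrences of $\lfloor 1\rfloor$, the spanning argument by Noetherian reduction is standard and complete, and the implication $(\mathrm b)\Rightarrow(\mathrm a)$ is carried out entirely inside $\bk\frakS(Z)$ and is correct.

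There is, however, one genuine gap, and it sits exactly at the hard step (directness in $(\mathrm a)\Rightarrow(\mathrm b)$). Theorem~\ref{Thm: nonunital CD} is stated for an \emph{arbitrary} monomial order $\leq$ on $\frakS(Z)$, whereas Proposition~\ref{GSB for operated algebras considered as GSB for unital operated algebras} and Theorem~\ref{Thm: unital CD} only apply when $\leq$ is the restriction to $\frakS(Z)$ of a monomial order on $\frakM(Z)$. You invoke the Proposition in the direction ``$\frakS$-GS $\Leftrightarrow$ $\frakM$-GS'' without first producing such an extension, and it is not automatic that one exists: extending requires well-ordering the additional words (those containing $\lfloor 1\rfloor$, and $1$ itself) compatibly with all the extra substitution maps $q|_{-}$ for $q\in\frakM^\star(Z)\setminus\frakS^\star(Z)$, e.g.\ $\lfloor 1\rfloor\star$, $\star\lfloor 1\rfloor$, $\lfloor\lfloor 1\rfloor\star\rfloor$. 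The paper's own pairs $(\leq_{\dl},\leq_{\udl})$ show this can be done for specific orders, but no general extension theorem is available or cited, so the reduction to the unital case does not cover the statement as formulated. To close the gap you must either (i) prove such an extension lemma, or (ii) prove directness intrinsically in $\bk\frakS(Z)$ by the usual diamond argument (if $0\neq f\in\langle\calg\rangle_\OpAlg$, write $f=\sum c_iq_i|_{s_i}$, induct on $w=\max_i q_i|_{\overline{s_i}}$ and use triviality of compositions to lower $w$ until $\bar f=q|_{\bar s}$), which is exactly your closing remark that the unital proof ``applies verbatim'' — but that remark, as written, is an appeal to the references rather than an argument, and it is the part that actually needs to be carried out.
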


The following result refines slightly  \cite[Proposition 6.7]{QQWZ}.
\begin{prop} \label{GS-basis for nonunital algebras considered as GS-basis for nonunital operated algebras}
	Let $Z$ be a set and $\leq$ a monomial order on $\frakS(Z)$ (resp. $\frakM(Z)$). Clearly when  restricted to $ \cals(Z)$ (resp. $\calm(Z)$), it is still a monomial order.
	Then a GS basis $G \subseteq \bk \cals(Z) $  (resp. $\bk\frakM(Z)$)   with respect to the restriction of $\leq$ is also an operated GS basis in  $\bk\frakS(Z) $ (resp. $\bk\frakM(Z)$) with respect to $\leq$.
\end{prop}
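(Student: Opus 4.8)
The plan is to match every operated composition of elements of $G$ with an associative one and then invoke the hypothesis that $G$ is an associative GS basis. After replacing $G$ by its monicized version (which changes nothing), I would first record the elementary observation that for any $f\in\bk\cals(Z)$ (resp. $\bk\calm(Z)$) every monomial occurring in $f$ is bracket-free, so that its leading monomial $\bar f$ with respect to $\leq$ lies in $\cals(Z)$ (resp. $\calm(Z)$) and coincides with the leading monomial of $f$ for the restricted order; here I use, as asserted in the statement, that $\leq$ does restrict to a monomial order, which one checks by specializing the operated compatibility $u<v\Rightarrow q|_u<q|_v$ to the contexts $q=\star z$, $q=z'\star$ and $q=w\star z'$. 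In particular, for all $f,g\in G$ both $\bar f$ and $\bar g$ are bracket-free words.

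The crucial step is to show that an overlap or an inclusion among bracket-free leading words involves only bracket-free data, so that operated compositions of elements of $G$ are literally the same polynomials as associative ones. For an operated intersection composition $(f,g)^{u,v}_w=fu-vg$ with $w=\bar f u=v\bar g$ and $\max\{|\bar f|,|\bar g|\}<|w|<|\bar f|+|\bar g|$, I would argue by a breadth count: breadth is additive under concatenation in $\frakS(Z)$ (resp. $\frakM(Z)$), so $|u|<|\bar g|$ and $|v|<|\bar f|$, and writing $\bar f=z_1\cdots z_m$ one sees that the suffix of $w$ of breadth $|\bar g|$ equals $z_{m-|\bar g|+|u|+1}\cdots z_m\,u$; since this must equal the bracket-free word $\bar g$, the word $u$ is bracket-free, and a symmetric argument on prefixes shows that $v$ is bracket-free too, whence $w\in\cals(Z)$ (resp. $\calm(Z)$) and $(f,g)^{u,v}_w$ is exactly the associative intersection composition of $f$ and $g$. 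For an operated inclusion composition $(f,g)^q_w=f-q|_g$ with $w=\bar f=q|_{\bar g}$ and $q\in\frakS^\star(Z)$ (resp. $\frakM^\star(Z)$): if $q$ contained a bracket then so would $q|_{\bar g}=\bar f$, which is impossible, so $q=p\star p'$ with $p,p'\in\calm(Z)$ and $(f,g)^q_w=f-pgp'$ is exactly the associative inclusion composition. Moreover every associative composition of elements of $G$ plainly arises in this way.

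Finally I would transfer triviality: given an associative expression $fu-vg=\sum_i c_i p_i s_i q_i$ (resp. $f-pgp'=\sum_i c_i p_i s_i q_i$) with $s_i\in G$, $p_i,q_i\in\calm(Z)$ and $p_i\overline{s_i}q_i<w$ for the restricted order, rewriting $p_i s_i q_i=(p_i\star q_i)|_{s_i}$ with $p_i\star q_i\in\frakS^\star(Z)$ (resp. $\frakM^\star(Z)$) and $(p_i\star q_i)|_{\overline{s_i}}=p_i\overline{s_i}q_i<w$ exhibits the composition as trivial modulo $(G,w)$ in the operated sense. Since $G$ is a GS basis in $\bk\cals(Z)$ (resp. $\bk\calm(Z)$), all of its associative intersection and inclusion compositions are trivial, hence by the above all operated intersection and inclusion compositions of elements of $G$ are trivial, and so $G$ is an operated GS basis in $\bk\frakS(Z)$ (resp. $\bk\frakM(Z)$). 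I expect the only genuinely technical point to be the breadth count establishing bracket-freeness of $u,v$ and $q$; everything else is a faithful dictionary between the two composition formalisms, and the argument runs uniformly in the nonunital ($\cals/\frakS$) and unital ($\calm/\frakM$) cases.
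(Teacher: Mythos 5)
Your proposal is correct. The paper states this proposition without proof (presenting it as a slight refinement of \cite[Proposition 6.7]{QQWZ}), and your argument is exactly the intended one: the breadth count showing that in any operated intersection composition $w=\bar f u=v\bar g$ between bracket-free leading words the pieces $u,v$ (and hence $w$) are themselves bracket-free, together with the observation that $q$ in an inclusion composition $\bar f=q|_{\bar g}$ must be of the form $p\star p'$ with $p,p'\in\calm(Z)$, identifies every operated composition of elements of $G$ with an associative one, after which triviality transfers verbatim via $p_i s_i q_i=(p_i\star q_i)|_{s_i}$.
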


\begin{defn}[\cite{GaoGuo17}]
	\begin{itemize}
		\item [(a)]	Let  $\Phi\subseteq \bk\frakS(X)$ be  a  family   of OPIs. Let $Z$ be a set and $\leq$ a monomial
		order on $\frakS(Z)$. We call $\Phi$  GS  on $\bk\frakS(Z)$  with respect to $\leq$ if $S_{\Phi}(\frakS(Z))$    is an operated
		GS  basis  in $\bk\frakS(Z)$   with respect to $\leq$.
		\item [(b)]Let   $\Phi\subseteq  \bk\frakM(X)$ be  a  family  of OPIs. Let $Z$ be a set and $\leq$ a monomial
		order on  $\frakM(Z)$. We call $\Phi$   GS  on  $\bk\frakM(Z)$  with respect to $\leq$ if  $S_{\Phi}(\frakM(Z))$    is an operated
		GS basis  in   $\bk\frakM(Z)$  with respect to $\leq$.
	\end{itemize}

\end{defn}

For a given unital algebra $A$ and a  GS   family   of  OPIs $\Phi$ under some monomial order, one can give an operated GS basis of the free unital $\Phi\zhx$algebra over $A$ by \cite[Theorem~5.9]{QQWZ}.

\begin{thm}[{\cite[Theorem 5.9]{QQWZ}}]\label{Thm: GS basis for free unital Phi algebra over unital alg}
	Let $X$ be a set and $\Phi\subseteq \bk\frakM(X)$ a system of OPIs. Let $A=\bk \calm (Z) \slash I_A$ be a unital algebra with generating set $Z$.
	Assume that $\Phi$ is operated GS  on $Z$ with respect to a monomial order $\leq$ in $\frakM(Z)$ and that  $G$ is a GS    basis of $I_{A}$ in $\bk \calm (Z)$ with respect to the restriction of $\leq$ to $ \calm(Z)$.

	Suppose that the leading monomial  of any OPI $\phi(x_1, \dots, x_n)\in \Phi$ has no subword in $\calm(X)\backslash X$,  and  that  for all $u_1, \dots, u_n\in \frakM (Z)$,  $\phi(u_1, \dots, u_n) $   vanishes or its  leading monomial is still    $\overline{\phi}(u_1, \dots, u_n) $.  Then $S_{\Phi}(\frakM(Z))\cup G$ is an operated    GS basis of $\left\langle S_{\Phi}(\frakM(Z))\cup I_A\right\rangle_\uOpAlg$ in $\bk\frakM(Z)$ with respect to $\leq$.
\end{thm}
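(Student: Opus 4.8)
The plan is to apply the Composition--Diamond Lemma (Theorem~\ref{Thm: unital CD}) to the set $\calg:=S_{\Phi}(\frakM(Z))\cup G$. We may take each $\phi\in\Phi$ monic, so by the substitution hypothesis every nonzero $\phi(u_1,\dots,u_n)$ has leading coefficient $1$ and $\calg$ is monicized. First I would settle the bookkeeping. The operated ideal $\langle G\rangle_{\uOpAlg}$ contains the ordinary two-sided ideal of $\bk\calm(Z)$ generated by $G$, namely $I_A$, while $G\subseteq I_A$; hence $\langle\calg\rangle_{\uOpAlg}=\langle S_{\Phi}(\frakM(Z))\cup I_A\rangle_{\uOpAlg}$, which is the defining ideal of $\calf_{\uAlg}^{\Phi\zhx\uAlg}(A)$ by Proposition~\ref{Prop: from algebra to Phi algebra with relations}(b). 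Also, since $\calg$ contains both $G$ and $S_{\Phi}(\frakM(Z))$, any witness of triviality modulo $(G,w)$ or modulo $(S_{\Phi}(\frakM(Z)),w)$ is a witness modulo $(\calg,w)$; so each composition may be treated relative to whichever subset is convenient, and it then remains only to show all compositions of pairs from $\calg$ are trivial.

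Two of the three cases are handled by the input data. For a composition of two elements of $G$: by hypothesis $G$ is a Gr\"obner--Shirshov basis of $I_A$ in $\bk\calm(Z)$ for the restriction of $\leq$, so by Proposition~\ref{GS-basis for nonunital algebras considered as GS-basis for nonunital operated algebras} it is an operated GS basis in $\bk\frakM(Z)$, hence its operated compositions are trivial modulo $(G,w)$. For a composition of two elements of $S_{\Phi}(\frakM(Z))$: by hypothesis $\Phi$ is operated GS on $Z$, so $S_{\Phi}(\frakM(Z))$ is an operated GS basis and those compositions are trivial modulo $(S_{\Phi}(\frakM(Z)),w)$. In both cases triviality modulo $(\calg,w)$ follows.

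The crux is a mixed composition of $g\in G$ with $s=\phi(u_1,\dots,u_n)\in S_{\Phi}(\frakM(Z))$, with leading monomials $\bar g\in\calm(Z)$, a bracket-free word, and $\bar s=\overline{\phi}(u_1,\dots,u_n)$. Here I would use the hypothesis that $\overline{\phi}$ has no subword in $\calm(X)\setminus X$: in the breadth decomposition of $\overline{\phi}$ no two letters of $X$ are adjacent, so in $\bar s$ the images of distinct arguments $u_i$ are separated by bracket symbols, and therefore every occurrence of $\bar g$ inside $\bar s$, as well as every nontrivial overlap of $\bar g$ with a prefix or a suffix of $\bar s$, is confined to a single argument $u_i=q'|_{\bar g}$ with $q'\in\frakM^\star(Z)$. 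Writing $g=\bar g+g'$ with all monomials of $g'$ below $\bar g$, multilinearity of $\phi$ in the $i$-th slot gives $\phi(\dots,u_i,\dots)=\phi(\dots,q'|_g,\dots)-\phi(\dots,q'|_{g'},\dots)$; plugging this into the composition $s-q|_g$ (resp.\ $su-vg$) cancels the $\bar s$-sized monomial, and the surviving terms are controlled by: the substitution hypothesis, which forces each $\phi(\dots,m,\dots)$ arising from $q'|_{g'}$ (with $m<u_i$) to vanish or to have leading monomial $\overline{\phi}(\dots,m,\dots)<\bar s\le w$, hence to be trivial modulo $(S_{\Phi}(\frakM(Z)),w)$; the comparison inequalities among $\lfloor u\rfloor v$, $\lfloor uv\rfloor$ and $u\lfloor v\rfloor$ valid for the monomial orders used here, which push the auxiliary bracketed monomials coming from $q|_g$ strictly below $w$; and the already-established triviality of $G$-compositions together with a Noetherian induction on $w$ to absorb the lower-order residue.

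I expect this last step --- keeping the leading monomial under control through substitution and certifying that every rewrite stays strictly below $w$ --- to be the main obstacle, and it is exactly where the two extra hypotheses are indispensable: dropping ``$\overline{\phi}$ has no bracket-free subword'' lets a substitution glue letters of two arguments and manufacture an unforeseen overlap, while dropping ``substitution preserves the leading monomial'' lets a lower monomial of $\phi$ overtake $\overline{\phi}(u_1,\dots,u_n)$ after specialization; either failure breaks the reduction. This is precisely what happens for the exceptional family $\phi_4$, which must therefore be treated separately. Once the composition check is complete, Theorem~\ref{Thm: unital CD} moreover returns the linear basis $\Irr(\calg)$ of the quotient $\bk\frakM(Z)/\langle S_{\Phi}(\frakM(Z))\cup I_A\rangle_{\uOpAlg}$.
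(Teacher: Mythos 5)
The paper does not actually prove this statement: it is imported verbatim from \cite[Theorem 5.9]{QQWZ}, so there is no in-paper proof to compare against. Judged on its own, your reconstruction is the standard and (as far as I can tell) correct Composition--Diamond argument, and it is consistent with how the paper carries out the analogous verifications elsewhere (e.g.\ the splitting of an intersection composition into two inclusion compositions in Proposition~\ref{Proposition: Nonunital differential OPI xdy is GSB in  free nonunital operated algebra}, and the mixed-composition bookkeeping in Theorem~\ref{thm:GS basis of nonunital phi4}). The heart of the matter is exactly where you put it: for a mixed composition with $g\in G$ and $s=\phi(u_1,\dots,u_n)$, the no-subword hypothesis forces the bracket-free word $\bar g$ to sit inside a single slot $u_i=q'|_{\bar g}$, and then $s-q|_g=-\phi(\dots,q'|_{g-\bar g},\dots)-R(\dots,q'|_g,\dots)$, with the first summand controlled by the leading-monomial-preservation hypothesis and the second expressible as $\sum_t c_t\,q_t|_g$ over the non-leading monomials $t$ of $\phi$, all below $w$.

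Two small points to tighten. First, your case analysis only treats $\bar g$ occurring in (or overlapping) $\bar s$; for completeness one must also dispose of the opposite inclusion $\bar g=q|_{\bar s}$, which under the no-subword hypothesis can only happen when $\overline{\phi}$ is a single variable (a degenerate case such as $(\calu5'_c)$, where the quotient collapses), and of the remark that the strict inequality $t(u_1,\dots,u_n)<\overline{\phi}(u_1,\dots,u_n)$ for non-leading monomials $t$ is part of what ``the leading monomial is still $\overline{\phi}(u_1,\dots,u_n)$'' is meant to guarantee. Second, the appeal to ``comparison inequalities among $\lfloor u\rfloor v$, $\lfloor uv\rfloor$, $u\lfloor v\rfloor$'' belongs to the differential-type applications (Proposition~\ref{Lemma: inequal  relation}), not to the general theorem, whose proof needs only that $\leq$ is a monomial order together with the two stated hypotheses; likewise no Noetherian induction on $w$ is actually needed, since the direct rewriting above already exhibits the required presentation $\sum_i c_i q_i|_{s_i}$ with $q_i|_{\bar s_i}<w$.
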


%

We also have a nonunital version of this result, whose proof is similar to that of Theorem~\ref{Thm: GS basis for free unital Phi algebra over unital alg}, so we omit it.  

\begin{thm}\label{Thm: GS basis for free nonunital Phi algebra over nonunital alg}
	Let $X$ be a set and $\Phi\subseteq \bk\frakS(X)$ a system of OPIs. Let $A=\bk \cals (Z) \slash I_A$ be an algebra with generating set $Z$.
	Assume that $\Phi$ is operated GS  on $Z$ with respect to a monomial order $\leq$ in $\frakS(Z)$ and that  $G$ is a GS    basis of $I_{A}$ in $\bk \cals(Z)$ with respect to the restriction of $\leq$ to $ \cals(Z)$.
	
	Suppose that the leading monomial  of any OPI $\phi(x_1, \dots, x_n)\in \Phi$ has no subword in $\cals(X)\backslash X$,  and  that  for all $u_1, \dots, u_n\in \frakS (Z)$,  $\phi(u_1, \dots, u_n) $   vanishes or its  leading monomial is still    $\overline{\phi}(u_1, \dots, u_n) $.  Then $S_{\Phi}(\frakS(Z))\cup G$ is an operated  GS basis of $\left\langle S_{\Phi}(\frakS(Z))\cup I_A\right\rangle_\OpAlg$ in $\bk\frakS(Z)$ with respect to $\leq$.

\end{thm}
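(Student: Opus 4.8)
The plan is to reduce this nonunital statement to the unital version, Theorem~\ref{Thm: GS basis for free unital Phi algebra over unital alg}, rather than redo the composition-theoretic bookkeeping from scratch. The key observation is that the hypotheses on $\Phi$ and on $G$ match exactly those of the unital theorem once we pass to the adjoined-unit situation, and that the relevant notions---operated GS basis, leading monomial, triviality modulo $(\mathcal{G},w)$---are all compatible with the inclusions $\cals(Z)\hookrightarrow\calm(Z)$ and $\frakS(Z)\hookrightarrow\frakM(Z)$. Concretely, I would first invoke Proposition~\ref{GSB for operated algebras considered as GSB for unital operated algebras} and Proposition~\ref{GS-basis for nonunital algebras considered as GS-basis for nonunital operated algebras} to note that $S_\Phi(\frakS(Z))$ being operated GS in $\bk\frakS(Z)$ is equivalent to its being operated GS in $\bk\frakM(Z)$, and likewise that $G$ remains a GS basis of the extended ideal in $\bk\calm(Z)$; here one uses that no leading monomial $\overline{\phi}$ or $\bar g$ involves the empty word, so adjoining $1$ creates no new compositions.

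Next I would verify that the two structural hypotheses transfer: the condition ``$\overline{\phi}$ has no subword in $\cals(X)\setminus X$'' is literally the same as ``$\overline{\phi}$ has no subword in $\calm(X)\setminus X$'' because a subword lying in $\cals(X)$ is precisely a nonempty subword of length $\ge 2$; and the condition that $\phi(u_1,\dots,u_n)$ either vanishes or has leading monomial $\overline{\phi}(u_1,\dots,u_n)$ for all $u_i\in\frakS(Z)$ needs to be checked for all $u_i\in\frakM(Z)$, i.e. now also allowing some $u_i=1$. This last point is the one genuine thing to check: when some arguments are $1$, the specialization $\phi(u_1,\dots,u_n)$ is a new element not covered by the nonunital hypothesis, so one must argue separately---but since $\Phi$ is operated GS on $\frakM(Z)$ (from the previous step) and the substitution is a prolongation, the leading-monomial behavior under substitution of monomials (including $1$) is governed by the same monomial-order compatibility used in \cite[Theorem~5.9]{QQWZ}; in the applications of this paper this is verified directly for each $\Phi$ via Proposition~\ref{Lemma: inequal relation} and the explicit form of the differential-type OPIs.

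With both hypotheses in place, Theorem~\ref{Thm: GS basis for free unital Phi algebra over unital alg} applied to the unital algebra $\tilde A = \bk\calm(Z)/I_A$ (the unitalization, with the same generating set $Z$ and the same $G$) yields that $S_\Phi(\frakM(Z))\cup G$ is an operated GS basis of $\left\langle S_\Phi(\frakM(Z))\cup I_A\right\rangle_\uOpAlg$ in $\bk\frakM(Z)$ with respect to $\leq$. It remains to descend this back to $\bk\frakS(Z)$: by Proposition~\ref{GS-basis for nonunital algebras considered as GS-basis for nonunital operated algebras} (in the direction that a GS basis in the nonunital setting is read off from the unital one) together with the fact that every element of $S_\Phi(\frakS(Z))\cup G$ already lies in $\bk\frakS(Z)$ and has leading monomial in $\frakS(Z)$, one concludes that $S_\Phi(\frakS(Z))\cup G$ is an operated GS basis in $\bk\frakS(Z)$; here one checks that all intersection and inclusion compositions among elements of $S_\Phi(\frakS(Z))\cup G$ stay inside $\bk\frakS(Z)$ and that triviality modulo $(\mathcal{G},w)$ computed in $\bk\frakM(Z)$ can be rewritten using only prolongations $q\in\frakS^\star(Z)$ (no $\star$-word needs the empty word).

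\textbf{Main obstacle.} The subtle step is the second paragraph: extending the ``leading monomial of $\phi(u_1,\dots,u_n)$ is $\overline{\phi}(u_1,\dots,u_n)$'' condition from $\frakS(Z)$-substitutions to $\frakM(Z)$-substitutions, i.e. handling the case where some argument is the empty word $1$. One cannot expect this for free---indeed the paper flags exceptional $\Phi$'s where the naive expectation fails---so rigorously this step should either be packaged as an explicit extra hypothesis or, as is done in practice in Sections 4 and 5, verified case-by-case for the differential-type OPIs using the inequalities of Proposition~\ref{Lemma: inequal relation}. Everything else is a routine transport of definitions across the inclusions $\frakS\hookrightarrow\frakM$ and $\cals\hookrightarrow\calm$, which is why the paper is content to say ``the proof is similar.''
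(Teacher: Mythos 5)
Your proposal takes a genuinely different route from the paper --- the paper's (omitted) proof simply repeats the composition analysis of \cite[Theorem 5.9]{QQWZ} inside $\bk\frakS(Z)$, whereas you try to reduce to the unital theorem via unitalization --- and that reduction has a genuine gap which you yourself identify but do not close.

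The fatal point is the one you flag in your second paragraph. The hypothesis of the nonunital theorem controls the leading monomial of $\phi(u_1,\dots,u_n)$ only for $u_1,\dots,u_n\in\frakS(Z)$, while Theorem~\ref{Thm: GS basis for free unital Phi algebra over unital alg} requires this for all $u_i\in\frakM(Z)$, including $u_i=1$. This extension is false in exactly the cases the nonunital theorem is designed for: the paper points out after Lemma~\ref{Lem: differential type 4b is GS} that for $\phi$ of type $(\calu1'_b)$ one has $\overline{\phi(1,v)}=\lfloor 1\rfloor v\neq \lfloor v\rfloor$, so the specialized leading monomial is \emph{not} $\overline{\phi}(1,v)$; this is why Sections 4 and 5 treat the nonunital types $(\caln1'_b)$, $(\caln3')$, $(\caln4')$ with the nonunital theorem and handle their unital analogues by separate ad hoc arguments. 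If your reduction were valid, that entire case distinction would be unnecessary. Saying the missing step is ``verified case-by-case in the applications'' does not prove the theorem as stated; it converts it into a statement with an undeclared extra hypothesis. Two further obstructions: the theorem only posits a monomial order on $\frakS(Z)$, and Proposition~\ref{GSB for operated algebras considered as GSB for unital operated algebras} runs in the wrong direction --- it starts from a monomial order on $\frakM(Z)$, which you are not given and which a monomial order on $\frakS(Z)$ need not extend to. And even granting the unital conclusion, $S_\Phi(\frakM(Z))$ strictly contains $S_\Phi(\frakS(Z))$ (it includes the specializations at $1$), so $\langle S_\Phi(\frakM(Z))\cup I_A\rangle_{\uOpAlg}$ and $\langle S_\Phi(\frakS(Z))\cup I_A\rangle_{\OpAlg}$ are ideals of different algebras generated by different sets; the descent in your last paragraph is not a mere transport of definitions.

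The intended argument is direct and actually simpler than the unital one: work entirely in $\bk\frakS(Z)$ with the given order. Compositions within $S_\Phi(\frakS(Z))$ are trivial because $\Phi$ is operated GS on $\frakS(Z)$; compositions within $G$ are trivial by Proposition~\ref{GS-basis for nonunital algebras considered as GS-basis for nonunital operated algebras}; and for a mixed pair, the condition that $\overline{\phi}$ has no subword in $\cals(X)\setminus X$ forces any overlap of $\bar g$ with $\overline{\phi(u_1,\dots,u_n)}$ to lie inside a single $u_i$, whence the composition reduces to $\phi(\dots,q|_{\bar g-g},\dots)$ plus terms with smaller leading monomials, and the second hypothesis (leading monomials preserved under $\frakS(Z)$-substitution) makes this trivial modulo $(S_\Phi(\frakS(Z))\cup G, w)$. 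No passage through $\frakM(Z)$ is needed.
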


\subsection{Remainder on  rewriting systems}\

We need some basic notions  about rewriting systems in order to introduce OPIs of  differential type. The basic references about rewriting systems are, for instance,  \cite{BN} and the recent lecture notes \cite{Malbos}.
\begin{defn}
	Let $V$ be a $\bf{k}$-space with a $\bf{k}$-basis $Z$.
	\begin{itemize}
		\item[(a)] For $f=\sum_{w \in Z} c_{w} w \in V$ with $c_{w} \in \mathbf{k},$ the support $\operatorname{Supp}(f)$ of $f$ is the set $\{w \in Z \mid c_{w} \neq 0\} .$ By convention, we take $\operatorname{Supp}(0)=\emptyset$.
		\item[(b)]  Let $f, g \in V$. We use $f \dot{+} g$ to indicate the property that $\operatorname{Supp}(f) \cap \operatorname{Supp}(g)=\emptyset .$ If this is the case, we say $f+g$ is a direct sum of $f$ and $g,$ and use $f \dot{+} g$ also for the sum $f+g$.
		\item[(c)] For $f \in V$ and $w \in \operatorname{Supp}(f)$ with the coefficient $c_{w}$, write $R_{w}(f):=c_{w} w-f \in V$. So $f=c_{w} w-R_{w}(f)$.
	\end{itemize}
\end{defn}

\begin{defn}
	Let $V$ be a $\bf{k}$-space with a $\bf{k}$-basis $Z$.
	\begin{itemize}
		\item[(a)] A term-rewriting system $\Pi$ on $V$ with respect to $Z$ is a binary relation $\Pi \subseteq Z \times V$. An element $(t, v) \in \Pi$ is called a (term-)rewriting rule of $\Pi,$ denoted by $t \rightarrow v$.
		\item[(b)] The term-rewriting system $\Pi$ is called simple with respect to $Z$ if $t \dot{+} v$ for all $t \rightarrow v \in \Pi$
		\item[(c)]  If $f=c_{t} t-R_{t}(f) \in V,$ using the rewriting rule $t \rightarrow v,$ we get a new element $g:=c_{t} v-R_{t}(f) \in V,$ called a one-step rewriting of $f$ and denoted by $f \rightarrow_{\Pi} g$.
		\item[(d)]  The reflexive-transitive closure of $\rightarrow_{\Pi}$ (as a binary relation on $V$ ) is denoted by $\stackrel{\ast}{\rightarrow}_{\Pi}$ and, if $f \stackrel{\ast}{\rightarrow}_\Pi g,$ we say $f$ rewrites to $g$ with respect to $\Pi$.
		\item[(e)] Two elements $f,g\in V$ are joinable if there exists $h\in V$ such that $f \stackrel{\ast}{\rightarrow}_{\Pi}h$ and $g \stackrel{\ast}{\rightarrow}_{\Pi}h$; we denote this by $f\downarrow_{\Pi} g$.
		\item[(f)] A term-rewriting system $\Pi$ on $V$ is called terminating if there is no infinite chain of one-step rewriting $$f_{0} \rightarrow_{\Pi} f_{1} \rightarrow_{\Pi} f_{2} \cdots.$$
		\item[(g)] A term-rewriting system $\Pi$ is called compatible with a linear order $\geq$ on $Z$, if $t>\bar v$ for each $t\rightarrow v\in\Pi$.
	\end{itemize}
\end{defn}

The following theorem supplies a method to show that a single OPI $\phi$ is GS by rewriting system. For convenience, we will use the notation
$\mathfrak{u}=(u_1,\dots,u_{k})\in \frakM(Z)^k$ (similar for $\mathfrak{u}\in \frakS(Z)^k$), and $\phi(\mathfrak{u})$ for $\phi(u_1,\dots,u_{k})$ in the following. We say that an element $f\in\bk\frakM(Z)$ is in $\phi$-normal form if no monomial of $f$ contains any subword of the form $\overline{\phi(\mathfrak{u})}$ with $\mathfrak{u}\in \frakM(Z)^k$.

\begin{thm}[{\cite[Theorem 4.1]{GaoGuo17}}]\label{Thm: verify a GS OPI}
	Let $\phi(x_1,\dots,x_{k}) \in \mathbf{k} \mathfrak{M}(X)$ be a multilinear OPI such that  $\phi=\overline{\phi}-R(\phi)$, where $R(\phi)$ is in $\phi$-normal form. Suppose that, for any set $Z,$ there is a monomial order $\leq$ on $\mathfrak{M}(Z),$ such that the following two conditions hold:
	\begin{itemize}
		\item[(a)] if $\phi(\mathfrak{u}), \phi(\mathfrak{v}) \in S_{\phi}({\frakM(Z)})$ are such that $\overline{\phi(\mathfrak{u})}=\alpha\beta$ and $\overline{\phi(\mathfrak{v})}=\beta\gamma$ for some $\alpha, \beta, \gamma\in\mathfrak{M}(Z)$ and $\mathfrak{u}, \mathfrak{v} \in \mathfrak{M}(Z)^{k},$ then
		$R(\phi(\mathfrak{u})) \gamma \downarrow_{\Pi_\phi(Z)} \alpha R(\phi(\mathfrak{v})),$ where the
		term-rewriting system $$\Pi_\phi(Z):=\left\{~q|_{\overline{{\phi}(\mathfrak{u})}}\rightarrow q|_{R(\phi(\mathfrak{u}))}~\big|~\mathfrak{u}\in\frakM(Z)^k,~q\in\frakM^\star(Z) ~\right\}$$ is compatible with $\leq$;
		\item[(b)] if $\overline{\phi(\mathfrak{u})}=q|_{\overline{\phi(\mathfrak{v})}}$ for some $\star \neq q \in \mathfrak{M}^{\star}(Z)$ and $\mathfrak{u}, \mathfrak{v} \in \mathfrak{M}(Z)^{k}$, then $\overline{\phi(\mathfrak{v})}$ is a subword
		of some $u_{i}, 1 \leq i \leq k$.
	\end{itemize}
	Then the OPI $\phi$ is GS with respect to the order $\leq$.
\end{thm}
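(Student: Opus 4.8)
The plan is to fix an arbitrary set $Z$ together with the monomial order $\leq$ supplied by the hypotheses and to prove directly that $S_{\phi}(\frakM(Z))$ is an operated GS basis in $\bk\frakM(Z)$ with respect to $\leq$; by the definition of a GS family of OPIs this is precisely the assertion that $\phi$ is GS, so it suffices to show that every intersection composition and every inclusion composition of a pair $\phi(\mathfrak{u}),\phi(\mathfrak{v})\in S_{\phi}(\frakM(Z))$ is trivial modulo $(S_{\phi}(\frakM(Z)),w)$. The link between the hypotheses, which are phrased through the rewriting system $\Pi_{\phi}(Z)$, and this triviality is one bookkeeping observation: since $\Pi_{\phi}(Z)$ is compatible with $\leq$, a one-step rewriting of $f$ to $g$ using the rule $q|_{\overline{\phi(\mathfrak{w})}}\to q|_{R(\phi(\mathfrak{w}))}$ satisfies $f-g=c\,q|_{\phi(\mathfrak{w})}$, where $q|_{\phi(\mathfrak{w})}$ has leading monomial $q|_{\overline{\phi(\mathfrak{w})}}$, itself a monomial of $f$ and hence $\le\overline{f}$, while leading monomials never increase along $\stackrel{\ast}{\rightarrow}_{\Pi_{\phi}(Z)}$; moreover, taking $q=\star$ in compatibility records $\overline{R(\phi(\mathfrak{w}))}<\overline{\phi(\mathfrak{w})}$, which supplies all the strict inequalities below. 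Consequently, whenever $f\downarrow_{\Pi_{\phi}(Z)}g$ with $\overline{f}<w$ and $\overline{g}<w$, the difference $f-g$ is a $\bk$-combination of terms $q_j|_{\phi(\mathfrak{w}_j)}$ with $q_j|_{\overline{\phi(\mathfrak{w}_j)}}<w$, i.e.\ it is trivial modulo $(S_{\phi}(\frakM(Z)),w)$.

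For an intersection composition $(\phi(\mathfrak{u}),\phi(\mathfrak{v}))^{\gamma,\alpha}_{w}$, the relations $w=\overline{\phi(\mathfrak{u})}\gamma=\alpha\,\overline{\phi(\mathfrak{v})}$ together with the breadth condition force a nonempty top-level overlap $\overline{\phi(\mathfrak{u})}=\alpha\beta$, $\overline{\phi(\mathfrak{v})}=\beta\gamma$; writing $\phi(\mathfrak{u})=\overline{\phi(\mathfrak{u})}-R(\phi(\mathfrak{u}))$ and likewise for $\mathfrak{v}$, the two leading terms cancel and the composition equals $\alpha R(\phi(\mathfrak{v}))-R(\phi(\mathfrak{u}))\gamma$, both summands having leading monomial strictly below $w$ by the observation above and the left/right compatibility of $\leq$. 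Hypothesis~(a) produces a common reduct $h$ with $R(\phi(\mathfrak{u}))\gamma\stackrel{\ast}{\rightarrow}_{\Pi_{\phi}(Z)}h$ and $\alpha R(\phi(\mathfrak{v}))\stackrel{\ast}{\rightarrow}_{\Pi_{\phi}(Z)}h$, so the bookkeeping observation applies and this composition is trivial modulo $(S_{\phi}(\frakM(Z)),w)$. The degenerate overlaps are covered by the same computation; in particular $\alpha=\gamma=1$, that is $\overline{\phi(\mathfrak{u})}=\overline{\phi(\mathfrak{v})}$ — which is the inclusion composition with $q=\star$ — is a special case of~(a).

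The remaining, and hardest, case is an inclusion composition $(\phi(\mathfrak{u}),\phi(\mathfrak{v}))^{q}_{w}$ with $\star\neq q$ and $w=\overline{\phi(\mathfrak{u})}=q|_{\overline{\phi(\mathfrak{v})}}$; here the composition equals $q|_{R(\phi(\mathfrak{v}))}-R(\phi(\mathfrak{u}))$, again with both summands of leading monomial $<w$. Hypothesis~(b) places the displayed occurrence of $\overline{\phi(\mathfrak{v})}$ inside one of the arguments, say $u_i=p|_{\overline{\phi(\mathfrak{v})}}$, so $q=q'\circ p$ with $\overline{\phi(\mathfrak{u})}=q'|_{u_i}$; set $\widehat u_i:=p|_{R(\phi(\mathfrak{v}))}$, a $\bk$-combination of monomials $n<u_i$. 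Using multilinearity of $\phi$ — so that every monomial of $\phi(\mathfrak{u})$ contains a single copy of $u_i$, inside which sits a copy of $\overline{\phi(\mathfrak{v})}$ — one rewrites that occurrence in every monomial of $R(\phi(\mathfrak{u}))$ and in the monomial $q'|_{u_i}=q|_{\overline{\phi(\mathfrak{v})}}$ and finds that the composition rewrites to $\phi(u_1,\dots,\widehat u_i,\dots,u_k)$; equivalently, expanding $\phi$ monomially gives $\bigl(q|_{R(\phi(\mathfrak{v}))}-R(\phi(\mathfrak{u}))\bigr)-\phi(u_1,\dots,\widehat u_i,\dots,u_k)=\sum_{m}c_m\,q_m|_{\phi(\mathfrak{v})}$, a sum over the non-leading monomials $m$ of $\phi$, with $q_m|_{\overline{\phi(\mathfrak{v})}}=m(u_1,\dots,u_i,\dots,u_k)<w$. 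On the other hand $\phi(u_1,\dots,\widehat u_i,\dots,u_k)=\sum_{n}c'_n\,\phi(u_1,\dots,n,\dots,u_k)$ is a $\bk$-combination of elements of $S_{\phi}(\frakM(Z))$ with leading monomials $\overline{\phi(u_1,\dots,n,\dots,u_k)}<w$, because $n<u_i$ and $\leq$ is a monomial order. Adding the two, the inclusion composition is trivial modulo $(S_{\phi}(\frakM(Z)),w)$. The delicacy here — and the step I expect to fight with — is exactly this bookkeeping: $R(\phi(\mathfrak{v}))$ and $\widehat u_i$ are polynomials, so one must argue argument-wise and verify, using the compatibility hypothesis, the structure of $\phi$-normal forms and the monomial-order axioms, that no monomial produced en route reaches or exceeds $w$. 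Once all intersection and inclusion compositions have been shown trivial, $S_{\phi}(\frakM(Z))$ is an operated GS basis with respect to $\leq$ for the arbitrary set $Z$, i.e.\ $\phi$ is GS; the bracket structure causes no extra difficulty since it is handled uniformly by the context notation $q|_{-}$ and the bracket-compatibility built into monomial orders. (Alternatively, the same computations show that $\Pi_{\phi}(Z)$ is locally confluent — its critical pairs being precisely the compositions above — whence Newman's Lemma and the Composition--Diamond Lemma give the conclusion.)
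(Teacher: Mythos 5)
This theorem is imported verbatim from \cite[Theorem 4.1]{GaoGuo17}; the paper states it without proof, so there is no internal argument to compare against. Your proposal is correct and is essentially the original Composition--Diamond argument of Gao--Guo: the joinability-implies-triviality bookkeeping is the paper's Lemma~\ref{Lemma:joinable gives trivial modulo}, intersection compositions (and the $q=\star$ inclusion) reduce to hypothesis (a), and the $q\neq\star$ inclusion reduces via hypothesis (b) and multilinearity to $\phi(u_1,\dots,\widehat u_i,\dots,u_k)$ plus terms $q_m|_{\phi(\mathfrak v)}$ below $w$. The only point you share with the source rather than resolve is the implicit standing assumption that $\overline{\phi(\mathfrak u)}$ is the substitution instance of $\overline{\phi}$ and that the non-leading monomials $m(\mathfrak u)$ stay strictly below $w$ (no coincidences after substitution), which is the same convention under which the cited theorem is proved.
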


\begin{lem}\label{Lemma:joinable gives trivial modulo} Assume that  $X=\{x_1,\dots,x_n\}$.
Let $\Phi\subset\bk\frakS(X)$ be a system of OPIs, $Z$ a set  and $\leq$ a monomial order  on $\frakS(Z)$.
Consider the  rewriting system defined as follows
$$\Pi_\Phi(Z):=\left\lbrace ~ q|_{\overline{\phi(\mathfrak{u})}}\rightarrow q|_{\overline{\phi(\mathfrak{u})}-\phi(\mathfrak{u})} ~|~ \phi\in\Phi,~\mathfrak{u}\in\frakS(Z)^k,~q\in\frakS^\star(Z) ~\right\rbrace. $$
If $f,g\in \bk\frakS(Z)$ are joinable, then for any $w\in\frakS(Z)$ such that $ w >\overline f$ and $ w >\overline g$, we have $f-g$ is trivial modulo $(S_{\Phi}(\frakS(Z)),w)$.
\end{lem}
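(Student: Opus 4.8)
The plan is to reduce the claim about joinability to a statement about single rewriting steps and then iterate. First I would recall that $f \downarrow_{\Pi_\Phi(Z)} g$ means there is some $h$ with $f \xrightarrow{\ast}_{\Pi_\Phi(Z)} h$ and $g \xrightarrow{\ast}_{\Pi_\Phi(Z)} h$; hence it suffices to show separately that $f-h$ and $g-h$ are trivial modulo $(S_\Phi(\frakS(Z)),w)$, since the set of elements trivial modulo $(S_\Phi(\frakS(Z)),w)$ is closed under $\bk$-linear combinations. So the real content is: \emph{if $f \xrightarrow{\ast}_{\Pi_\Phi(Z)} h$ and $w \geq \overline{f}$, then $f-h$ is trivial modulo $(S_\Phi(\frakS(Z)),w)$}, and moreover $w \geq \overline{h}$ as well (so the hypothesis propagates along the chain). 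I would prove this by induction on the length of the rewriting chain, the key case being a single step $f \to_{\Pi_\Phi(Z)} f'$.

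For a single step, write $f = c_t\, t - R_t(f)$ where $t = q|_{\overline{\phi(\mathfrak u)}}$ is the monomial being rewritten, $q \in \frakS^\star(Z)$, $\phi \in \Phi$, $\mathfrak u \in \frakS(Z)^k$, and $f' = c_t\, q|_{\overline{\phi(\mathfrak u)} - \phi(\mathfrak u)} - R_t(f)$. Then
\[
f - f' = c_t\, t - c_t\, q|_{\overline{\phi(\mathfrak u)} - \phi(\mathfrak u)} = c_t\, q|_{\phi(\mathfrak u)},
\]
which is exactly $c_t$ times a single term of the form $q|_s$ with $s = \phi(\mathfrak u) \in S_\Phi(\frakS(Z))$. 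To see that this exhibits $f-f'$ as trivial modulo $(S_\Phi(\frakS(Z)),w)$ I need $q|_{\overline{s}} < w$, i.e. $q|_{\overline{\phi(\mathfrak u)}} = t < w$; but $t \in \operatorname{Supp}(f)$ so $t \leq \overline f$, and $\overline f \leq w$ — here one must be slightly careful about whether the hypothesis gives $w > \overline f$ strictly or $w \geq \overline f$; the statement says $w > \overline f$, so $t \leq \overline f < w$ and we are fine. I also need $\overline{f'} \leq w$ (actually $< w$ would be cleanest, but $\leq$ suffices to continue if I phrase the induction hypothesis with $\geq$): since $\Pi_\Phi(Z)$ is compatible with $\leq$, each rewriting rule strictly lowers the leading monomial or keeps the support below $t$, so $\overline{f'} \leq \max(\operatorname{Supp}(f) \setminus \{t\} \cup \operatorname{Supp}(c_t q|_{R(\phi(\mathfrak u))})) < t \leq \overline f < w$; in any case $\overline{f'} < w$, which lets the induction proceed.

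The main obstacle — really the only delicate point — is bookkeeping around strictness of the order hypothesis and the fact that a rewriting step may not strictly decrease $\overline{f}$ globally but only replaces the rewritten monomial $t$ by strictly smaller monomials; I must make sure the induction hypothesis is stated so that $w$ stays strictly above every leading monomial encountered, and that triviality modulo $(S_\Phi(\frakS(Z)),w)$ (being defined by the bound $q_i|_{\overline{s_i}} < w$) is genuinely preserved under addition, which it is since one simply concatenates the defining expressions. Assembling: by induction $f - h$ and $g - h$ are each trivial modulo $(S_\Phi(\frakS(Z)),w)$, hence so is their difference $f - g = (f-h) - (g-h)$, completing the proof.
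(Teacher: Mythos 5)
Your proof is correct and follows essentially the same route as the paper: both unwind the two rewriting chains from $f$ and $g$ to the common element $h$, note that each one-step rewriting contributes a term $c_t\,q|_{\phi(\mathfrak u)}$ with $q|_{\overline{\phi(\mathfrak u)}}=t\leq \overline f<w$, and conclude by subtracting the two resulting expressions; you merely make explicit the induction on chain length that the paper compresses into ``by the definition of rewriting.'' (The only blemish is the intermediate inequality $\max(\operatorname{Supp}(f)\setminus\{t\}\cup\cdots)<t$, which need not hold since monomials of $f$ other than $t$ may exceed $t$; but the bound you actually need, $\overline{f'}\leq\overline f<w$, is immediate and you state it.)
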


\begin{proof}
	By the assumption, there exists $h\in \frakS(Z)$ such that  $f \stackrel{\ast}{\rightarrow}_{\Pi_\Phi(Z)}h$ and $g \stackrel{\ast}{\rightarrow}_{\Pi_\Phi(Z)}h$. By the definition of rewriting, there exist $l_1,\dots,l_s,l_1',\dots,l_t'\in S_{\Phi}(\frakS(Z))$ and  $q_1,\dots,q_s,q'_1,\dots,q'_t\in\frakS^\star(Z)$ such that $f-h=\sum_{i=1}^s q_i|_{l_i}$ and $g-h=\sum_{j=1}^t q'_j|_{l_j'}$, where $ q_i|_{\bar{l_i}}\leq \overline f,~1\leq i\leq s$ and $ q'_j|_{\bar{l_j'}}\leq \overline g,~1\leq j\leq t$. Thus,
	$$f-g=f-h+h-g=\sum_{i=1}^s q_i|_{l_i}-\sum_{j=1}^t q'_j|_{l_j'}$$
	is trivial modulo $(S_{\Phi}(\frakS(Z)),w)$.
\end{proof}

 \section{Differential type OPIs}\

In this section, we recall differential OPIs due to Guo et al. \cite{GSZ13,GaoGuo17} and we will show that differential type OPIs appearing in their classification are equivalent to some new OPIs to which one can apply Theorem~\ref{Thm: GS basis for free nonunital Phi algebra over nonunital alg} and  Theorem~\ref{Thm: GS basis for free unital Phi algebra over unital alg}.

\medskip

 From now on   $X=\{x,y\}$  is a set with two elements.

\subsection{Unital differential type OPIs}

\begin{defn}[\cite{GSZ13,GaoGuo17}]
	An OPI $\phi\in\bk\frakM(X)$ is said to be of unital differential type if $\phi$ is of the form $\lfloor x y\rfloor-N(x, y)$, where $N(x, y)$ satisfies the following conditions:
	\begin{itemize}
		\item[(a)] $N(x, y)$ is linear in $x$ and $y$;
		\item[(b)] no monomial of $N(x, y)$ contains any subword of the form $\lfloor uv\rfloor$ for any $u,v\in\frakM(\left\lbrace x,y\right\rbrace)\backslash\{1\}$;
		\item[(c)] for any set $Z$ and $u, v, w \in\frakM(Z )\backslash\{1\}$, $$N(u v, w)-N(u, v w)\stackrel{\ast}\rightarrow_{\Pi_{\phi}(Z)}0,$$
		where $\Pi_{\phi}(Z):= \left\{q|_{\lfloor uv\rfloor}\rightarrow q|_{N(u,v)}~|~u,v \in \frakM(Z)\backslash\{1\},~q\in\frakM^\star(Z)\right\}$.
	\end{itemize}
\end{defn}

It is showed in \cite[Theorem 5.7]{GSZ13} that for an OPI $\phi$ of differential type, $S_{\phi}(\frakM(Z))$ is an operated GS  basis of $\langle S_{\phi}(\frakM(Z))\rangle_\uOpAlg$ with respect to some monomial order, under which the leading monomial of $\phi$ is $\lfloor xy\rfloor$.

Guo, Sit and Zheng  gave a list of OPIs of differential type and   they conjectured that these are all possible OPIs of differential type \cite{GSZ13}.

\begin{Conj}[{\cite[Conjecture 4.7]{GSZ13}}]\label{Ex: list of OPIs of diff  type}
	 The OPIs  appearing  in the list below are all the OPIs of differential type:
	\begin{itemize}
		\item[($\calu$1)] $ \lfloor x y\rfloor-a(x\lfloor y\rfloor+\lfloor x\rfloor y)-b\lfloor x\rfloor\lfloor y\rfloor-cxy$ with $a^{2}=a+bc$,
		\item[($\calu$2)] $\lfloor x y\rfloor-a b^{2} y x-b x y-a\lfloor y\rfloor\lfloor x\rfloor+ab(y\lfloor x\rfloor+\lfloor y\rfloor x)$,
		\item[($\calu$3)] $\lfloor x y\rfloor-x\lfloor y\rfloor-a(x\lfloor 1\rfloor y-\lfloor 1\rfloor x y)$,
		\item[($\calu$4)]$\lfloor x y\rfloor-\lfloor x\rfloor y-a(x\lfloor 1\rfloor y-x y\lfloor 1\rfloor)$,
		\item[($\calu$5)]  $\lfloor x y\rfloor-x\lfloor y\rfloor-\lfloor x\rfloor y-a x\lfloor 1\rfloor y-b x y$,
		\item[($\calu$6)] $\lfloor x y\rfloor-\sum_{i, j \geq 0} \lambda_{i j}\lfloor 1\rfloor^{i} x y\lfloor 1\rfloor^{j}$ with the convention that $\lfloor 1\rfloor^{0}=1$,
	\end{itemize}
where  $a,b,c,\lambda_{ij}\in\bk,~i, j \geq0$.
\end{Conj}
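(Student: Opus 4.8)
The plan is to classify, up to the equivalence of Definition~\ref{Def: equivalent OPIs}, all $N(x,y)$ for which $\phi=\lfloor xy\rfloor-N(x,y)$ satisfies conditions (a)--(c), and then to check that the classes so produced are exactly the families ($\calu$1)--($\calu$6). The whole argument is a structured case analysis driven by the coherence requirement (c).

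The first step is to pin down the admissible shapes of the monomials of $N$. Multilinearity (a) forces each monomial to contain exactly one $x$ and one $y$; condition (b) forbids any subword $\lfloor uv\rfloor$ with $u,v\neq 1$; and the iterated reductions in (c) only make sense when the associated term-rewriting system $\Pi_\phi(Z)$ terminates. Using these three facts together --- in particular by testing (c) against specializations $u\mapsto u'u''$, $v\mapsto v'v''$, $w\mapsto w'w''$ that create fresh redexes $\lfloor uv\rfloor$ inside $N(uv,w)$ --- I would show that neither $x$ nor $y$ can sit under more than one bracket, that no bracket can contain a non-trivial product, and that the only remaining factors are copies of $\lfloor 1\rfloor$. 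Consequently $N(x,y)$ is a finite $\bk$-combination of the words obtained by interleaving an $x$-atom from $\{x,\lfloor x\rfloor\}$, a $y$-atom from $\{y,\lfloor y\rfloor\}$ and finitely many $\lfloor 1\rfloor$'s, in either of the two orders of the atoms. Excluding the nested shapes (for instance $\lfloor x\rfloor\lfloor\lfloor y\rfloor\rfloor$, which is allowed by (a) and (b)) is the first genuinely delicate point, and it is where termination of $\Pi_\phi(Z)$ is used most heavily; one may also invoke \cite[Theorem~5.7]{GSZ13}, which guarantees that a differential type $\phi$ admits a monomial order with leading monomial $\lfloor xy\rfloor$, to keep the bookkeeping under control.

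The second step is to impose condition (c) as the confluence requirement of $\Pi_\phi(Z)$, which is precisely condition (a) of Theorem~\ref{Thm: verify a GS OPI}. For generic $u,v,w$ (none a product) the element $N(uv,w)-N(u,vw)$ is already in $\phi$-normal form, so (c) forces it to vanish; comparing coefficients of each monomial yields the \emph{linear} relations among the coefficients of $N$. The \emph{nonlinear} relations --- such as $a^{2}=a+bc$ in ($\calu$1) --- emerge when one of $u,v$ is itself taken to be a product: a factor $\lfloor uv\rfloor$ produced by an $\lfloor x\rfloor$-atom in $N(uv,w)$ is then a redex and is rewritten by a second, nested application of $\lfloor uv\rfloor\to N(u,v)$, and matching the two fully reduced sides forces quadratic identities on the coefficients. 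Tracking these reductions carefully, joinability of the two reduction sequences becomes equivalent to the whole system of polynomial identities holding.

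The last step is to solve this infinite but finitely generated polynomial system. The natural split is according to whether the coefficients of the $\lfloor x\rfloor$- and $\lfloor y\rfloor$-atoms vanish. If both vanish, only the flat monomials $\lfloor 1\rfloor^{i}\,x\,y\,\lfloor 1\rfloor^{j}$ and their $y\,x$ analogues survive, and the remaining constraints cut these down to the family ($\calu$6); if not, the quadratic relations rigidify everything into the one- or two-parameter families ($\calu$1)--($\calu$5). I expect this last step to be the main obstacle: because the number of $\lfloor 1\rfloor$-factors is a priori unbounded, one cannot reduce to a finite-dimensional linear-algebra problem at the outset, so one must first prove an a priori bound on the ``$\lfloor 1\rfloor$-width'' of $N$ in every case other than ($\calu$6), and only then carry out the still sizeable finite case analysis without dropping a branch. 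Identifying each surviving class with one of ($\calu$1)--($\calu$6) up to the equivalence of Definition~\ref{Def: equivalent OPIs} completes the proof.
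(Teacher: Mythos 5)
This statement is a \emph{conjecture} (Conjecture 4.7 of Guo--Sit--Zheng), and the paper does not prove it: it only records that the list is conjectured to be complete and that the original authors established it under the additional hypothesis $N(x,y)\in \frakM_2(X)$, i.e.\ when the bracket-nesting depth of $N$ is at most two. So there is no ``paper's own proof'' to compare against, and a correct review of your proposal has to measure it against the standard of an actual proof of an open statement.

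Judged that way, your proposal is a plausible strategy outline but not a proof, and the places where it is vague are exactly the places where the conjecture is open. In the first step you say you ``would show'' that no variable sits under more than one bracket and that no nested shapes like $\lfloor x\rfloor\lfloor\lfloor y\rfloor\rfloor$ survive; but eliminating arbitrary nesting depth is precisely the obstruction that forced the restriction $N(x,y)\in\frakM_2(X)$ in the known partial result, and no argument is given --- appealing to termination of $\Pi_\phi(Z)$ or to \cite[Theorem~5.7]{GSZ13} is circular here, since that theorem applies only \emph{after} one knows $\phi$ is of differential type and does not by itself bound the shapes of the monomials of $N$. Likewise, in the last step you flag the need for an a priori bound on the number of $\lfloor 1\rfloor$-factors outside the family ($\calu$6) and then defer it (``I expect this last step to be the main obstacle''). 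Deferring both of these is not a gap in exposition; it is the entire mathematical content that is missing. The intermediate step (extracting linear relations from generic $u,v,w$ and quadratic relations such as $a^2=a+bc$ from products) is sound in spirit and matches how the known cases are verified, but it only becomes a finite, checkable computation once the first and third steps are genuinely established. As written, the proposal reproduces the shape of the partial proof in \cite{GSZ13} without supplying the new ideas needed to remove the hypothesis $N(x,y)\in\frakM_2(X)$, so it does not prove the statement.
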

Guo, Sit and Zheng   proved this conjecture under the  condition $N(x, y)\in \frakM_2(X)$  in \cite{GSZ13}.

In the following result, we will replace each OPI (except possibly Type $\left(\calu6\right)$)  defined in Conjecture~\ref{Ex: list of OPIs of diff  type}   by an equivalent one in the sense of Definition~\ref{Def: equivalent OPIs}.
Notice that we  only consider   $\left(\calu6\right)$  type in the case $\lambda_{ij}=0$ unless $i+j\leq1$.

\begin{thm}\label{Lemma: equivalent OPIs of unital diff type}
	 In  the following statements, $\lambda, \mu, \nu  \in \bk$ are  scalars:
	\begin{itemize}
\item[(a)]  The  OPI ($\calu$1) is equivalent to one of
 $$\begin{array}{ll} (\calu1'_a)  &\lfloor x\rfloor\lfloor y\rfloor-\lambda (x\lfloor y\rfloor+\lfloor x\rfloor y)+ \mu\lfloor xy\rfloor+\nu xy, \ \mathrm{where}\   \lambda^{2}=\lambda \mu+\nu, \\
    	 (\calu1'_b) & x\lfloor y\rfloor-\lfloor xy\rfloor+\lfloor x\rfloor y-\lambda x y,	\\	
		 (\calu1'_c) & \lfloor x\rfloor-\lambda x;
\end{array}$$

\item[(b)] the  OPI ($\calu$2) is equivalent to one of
$$\begin{array}{ll}
		 (\calu2'_a)   &\lfloor x\rfloor\lfloor y\rfloor -\lambda(x\lfloor y\rfloor+\lfloor x\rfloor y)+\mu\lfloor yx\rfloor+\lambda^2 x y-\lambda \mu y x,\\
		(\calu2'_b)   &\lfloor x\rfloor-\lambda x;
\end{array}$$
\item[(c)] the  OPIs ($\calu$3)  and  ($\calu$4) are equivalent   to
$$\begin{array}{ll}
 (\calu 3^\prime) =( \calu 4 ^\prime ) & \lfloor x\rfloor+\lambda x\lfloor 1\rfloor-(\lambda+1)\lfloor 1\rfloor x;
	
\end{array}$$
\item[(d)] the  OPI ($\calu$5) is equivalent to one of
$$\begin{array}{ll}
	 ( \calu5'_a )   &x\lfloor y\rfloor-\lfloor xy\rfloor+\lfloor x\rfloor y-\lambda x y,\\
		 ( \calu5'_b )   &x\lfloor y\rfloor-\lfloor xy\rfloor+\lfloor x\rfloor y-x\lfloor 1\rfloor y,\\
		 ( \calu5'_c )   &x;
\end{array}$$
\item[(e)] the  OPI ($\calu$6) is equivalent to one of
$$\begin{array}{ll}
    	  (\calu6'_a )   &\lfloor x\rfloor+\lambda x\lfloor 1\rfloor-(\lambda+1)\lfloor 1\rfloor x,\\
    	 ( \calu6'_b )  &\lfloor x\rfloor-\lambda x,\\
		 ( \calu6'_c )  &x.
\end{array}$$
\end{itemize}
\end{thm}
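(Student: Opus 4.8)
The plan is to treat each of the six families separately, in each case exhibiting an explicit isomorphism between the operated ideals generated by $S_\Phi(\frakM(Z))$ and $S_{\Phi'}(\frakM(Z))$ for every set $Z$, where $\Phi$ is the original OPI and $\Phi'$ the proposed replacement. Concretely, for a given OPI $\phi$, one first substitutes generic elements $u,v\in\frakM(Z)$ (and, where the identity involves $\lfloor 1\rfloor$, also the value $1$) to see which ``components'' of $\phi$ become forced. The key mechanism is the following: specializing $y=1$ (resp. $x=1$) in $\phi(x,y)$ produces a new relation of strictly smaller bracket-complexity — typically an expression relating $\lfloor x\rfloor$, $x\lfloor 1\rfloor$, $\lfloor 1\rfloor x$ and $x$ — which already lies in $\langle S_\phi(\frakM(Z))\rangle_{\uOpAlg}$. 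Feeding this derived relation back into $\phi$ then rewrites $\phi$ modulo the smaller relation into the claimed normal form $\phi'$; conversely one checks $\phi$ is recovered from $\phi'$ plus the derived relation, so the two generate the same operated ideal. This is exactly the ``change to an equivalent OPI'' strategy announced after Question~\ref{Ques: GS basis for free algebras over algebras}.

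First I would dispose of the case analysis that produces the multiple alternatives $(\calu1'_a),(\calu1'_b),(\calu1'_c)$, etc. These come from the constraint equations on the structure constants: for $(\calu1)$ one has $a^2=a+bc$, and the behaviour of the OPI degenerates according to whether $b\ne 0$, or $b=0$ but $a\ne 0,1$ (giving the two subcases with a genuine bracket term), or $b=0$ with $a\in\{0,1\}$ (forcing $\lfloor x\rfloor=\lambda x$, the ``trivial operator'' case $(\calu1'_c)$). So the first step is a purely bookkeeping split into subcases for each of $(\calu1),(\calu2),(\calu5),(\calu6)$, with $(\calu3),(\calu4)$ being uniform. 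For each subcase I rescale the bracket (replace $P$ by a scalar multiple of $P$ where permissible) to bring the leading coefficient to the normalized value, and rename the surviving free parameters $\lambda,\mu,\nu$.

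Then for each subcase the core verification is the two-way ideal containment. In one direction: substitute $y:=1$ and/or $x:=1$ into $\phi$ to extract the derived lower-complexity relation $r\in\langle S_\phi(\frakM(Z))\rangle_{\uOpAlg}$; then compute $\phi - (\text{an operated combination of instances of } r) = \phi'$ up to scalar, proving $\phi'\in\langle S_\phi\rangle_{\uOpAlg}$ and hence $\langle S_{\phi'}\rangle_{\uOpAlg}\subseteq\langle S_\phi\rangle_{\uOpAlg}$. In the other direction: show $r$ itself is an operated combination of instances of $\phi'$ (for the $(\calu\,'_a)$ cases this uses that $\phi'$ still has $\lfloor x\rfloor\lfloor y\rfloor$ or $x\lfloor y\rfloor$ as a term, whose specializations at $1$ recover $r$), and then $\phi = \phi' + (\text{combination of } r)$ gives the reverse inclusion. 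For $(\calu3)$ and $(\calu4)$ specifically, substituting $y:=1$ into $(\calu3)$ and $x:=1$ into $(\calu4)$ should each directly yield the symmetric relation $\lfloor x\rfloor+\lambda x\lfloor 1\rfloor-(\lambda+1)\lfloor 1\rfloor x$, and one checks the original OPIs are consequences of this single relation — explaining why $(\calu3')=(\calu4')$.

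The main obstacle I anticipate is not any single computation but keeping the bookkeeping honest across all the degenerate subcases, and in particular making sure the ``derived relation'' $r$ genuinely lies in the operated ideal for \emph{every} set $Z$ (not just for $Z=X$) — i.e. that the substitutions $x,y\mapsto u,v\in\frakM(Z)$ and $x\mapsto 1$ are legitimate within $\langle S_\phi(\frakM(Z))\rangle_{\uOpAlg}$, which they are since that ideal contains $\phi(u,v)$ for all $u,v$ including $u$ or $v$ equal to $1$. A secondary subtlety is the rescaling of $P$: this is only an equivalence of OPIs when the leading coefficient is a unit, so the subcase split must be arranged precisely so that in each branch the coefficient being inverted is nonzero. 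I would also flag $(\calu6)$ as the case where we restrict to $\lambda_{ij}=0$ for $i+j\ge 2$, as noted before the theorem, so that the infinite sum truncates and the same substitution-at-$1$ argument applies.
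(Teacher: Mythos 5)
Your overall strategy --- split into subcases according to which structure constants vanish, specialize $x$ or $y$ to $1$ to extract a lower-complexity relation already lying in the operated ideal, and then verify both inclusions of operated ideals by explicit operated combinations --- is exactly the paper's proof. Two specific points in your write-up would need repair before the details go through.

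First, the proposed normalization ``replace $P$ by a scalar multiple of $P$'' is not an admissible move. Equivalence of OPIs (Definition~\ref{Def: equivalent OPIs}) means literal equality of the operated ideals $\langle S_{\Phi}(\frakM(Z))\rangle_{\uOpAlg}$ for every $Z$; rescaling the operator changes these ideals whenever the OPI is inhomogeneous in $P$-degree, which $(\calu1)$ and $(\calu2)$ are (they contain monomials with $0$, $1$ and $2$ brackets). Fortunately no such rescaling is needed: one simply divides the whole OPI by the nonzero coefficient of $\lfloor x\rfloor\lfloor y\rfloor$ (namely $-b$ for $(\calu1)$ and $-a$ for $(\calu2)$), which manifestly preserves the generated ideal, and then renames the surviving parameters. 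Second, your subcase bookkeeping for $(\calu1)$ is off: when $b=0$ the constraint $a^2=a+bc$ forces $a^2=a$, so the branch ``$b=0$ and $a\neq 0,1$'' is empty; the two remaining branches are $a=1$, which yields the differential-type form $(\calu1'_b)$ (not the trivial-operator case), and $a=0$, which yields $(\calu1'_c)$. With these corrections the rest of your plan --- e.g.\ for $(\calu3),(\calu4)$ obtaining $(\calu4')$ by setting $x=1$, and recovering $(\calu4)$ as the instance of $(\calu4')$ at $xy$ minus $(\calu4')$ multiplied by $y$ on the right; and the analogous manipulations with the derived relations $(a+1)x\lfloor1\rfloor+bx$ for $(\calu5)$ and $(1-\lambda_{10}-\lambda_{01})\lfloor1\rfloor-\lambda_{00}$ for the truncated $(\calu6)$ --- is precisely what the paper carries out, including your observation that the substitutions at $1$ are legitimate because $S_{\phi}(\frakM(Z))$ contains $\phi(u,v)$ for all $u,v\in\frakM(Z)$, the identity included.
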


\begin{proof}
	(a) For OPI ($\calu$1), if $b=0$, we have $a^2=a$. If $a=0$, we get ($\calu1'_c$) with $\lambda=c$; if $a=1$, we get ($\calu1'_b$) still with $\lambda=c$.

Now we assume $b\neq0$.  Then ($\calu1$) is equivalent to
	$$\lfloor x\rfloor\lfloor y\rfloor+\frac{a}{b}(\lfloor x\rfloor y+x\lfloor y\rfloor)-\frac{1}{b}\lfloor xy\rfloor+\frac{c}{b}xy,$$
	with $a^{2}=a+bc$. Replacing $\frac{a}{b},\frac{1}{b}$ and $\frac{c}{b}$ by $-\lambda,-\mu$ and $\nu$ respectively,   we get ($\calu1'_a$).
	\smallskip
	
	(b) For OPI ($\calu2$), if $a=0$, we get ($\calu2'_b$) by replacing $b$ with $\lambda$, which is the same as ($\calu1'_c$). If $a\neq0$, then ($\calu2$) is equivalent to
	$$\lfloor x\rfloor\lfloor y\rfloor -b(x\lfloor y\rfloor+\lfloor x\rfloor y)-\frac{1}{a}\lfloor yx\rfloor+b^2 x y+\frac{b}{a} y x.$$
	Replace $b$ and $-\frac{1}{a}$   by $\lambda$ and $ \mu$   respectively, then we get ($\calu2'_a$).
	\smallskip
	
	(c) For OPI ($\calu4$), take $x=1$ and replace $y$ by $x$, we get the OPI ($\calu4'$):
		$$
			\lfloor x\rfloor +ax\lfloor 1\rfloor -(a+1)\lfloor 1\rfloor x.	$$
	Conversely,  given ($\calu4'$), by replacing $x$ by $xy$, we get
	$$
		\lfloor xy\rfloor+axy\lfloor 1\rfloor -(a+1)\lfloor 1\rfloor xy,
		$$
	and by multiplying ($\calu4'$)   by $y$ on the right, we get
		$$
	\lfloor x\rfloor y +ax\lfloor 1\rfloor y-(a+1)\lfloor 1\rfloor xy;$$
the difference of  the above two formulae  is just the OPI ($\calu4$):
	$$
	\lfloor xy\rfloor -\lfloor x\rfloor y -a(x\lfloor 1\rfloor y-xy\lfloor 1\rfloor).$$
	So ($\calu$4) and ($\calu$4$^\prime$) are equivalent.

 Similarly, the OPI ($\calu$3) is equivalent to the OPI ($\calu4'$).

	(d) 	For the OPI ($\calu$5), take $y=1$, then we get
	\begin{align}
		\label{tilde5}\tag{R1}
		(a+1) x\lfloor 1\rfloor +b x .
	\end{align}

	If $a= -1$ and $b\neq 0$, the OPI  \eqref{tilde5} is reduced  to $x$ which is the OPI ($\calu5'_c$) .  It is obvious that the OPI ($\calu5'_c$)  induces ($\calu5$).

	If $a= -1$ and $b= 0$,  the original OPI becomes
	$$\lfloor xy\rfloor-x\lfloor y\rfloor-\lfloor x\rfloor y+x\lfloor 1\rfloor y,$$
which is exactly ($\calu5'_b$).

	Now we assume $a\neq-1$, then  $$(\calu5)+\frac{a}{a+1}\eqref{tilde5}\cdot y=\lfloor xy\rfloor-x\lfloor y\rfloor-\lfloor x\rfloor y+\lambda x y,$$
	where $\lambda =-\frac{b}{a+1}$, which   gives ($\calu5'_a$). The other direction follows from the fact that
by imposing $y=1$, ($\calu5'_a$) becomes $  x\lfloor 1\rfloor -\lambda x $, which is exactly \eqref{tilde5}.
We have shown that  for $a\neq-1$, ($\calu5$) is equivalent to ($\calu5'_a$).
	\smallskip

	(e) For the OPI $(\calu6)$, we only consider  the case $\lambda_{ij}=0$ unless $i+j\leq1$.

 Now the OPI $(\calu6)$ $$\lfloor x y\rfloor- \lambda_{10}\lfloor 1\rfloor x y- \lambda_{01}x y\lfloor 1\rfloor- \lambda_{00} xy$$ is equivalent to $$\lfloor x\rfloor- \lambda_{10}\lfloor 1\rfloor x- \lambda_{01}x\lfloor 1\rfloor- \lambda_{00} x.$$ Take $x=1$, then we get
	\begin{align}
		\label{tilde6}\tag{R2}
		(1-\lambda_{10}-\lambda_{01})\lfloor 1\rfloor- \lambda_{00}.
	\end{align}
	If $\lambda_{10}+\lambda_{01}\neq 1$, by taking $\lambda=\frac{\lambda_{00}}{1-\lambda_{01}-\lambda_{10}}$, \eqref{tilde6} is equivalent to $\lfloor 1\rfloor- \lambda$. Now it is easy to see $(\calu6)$ is equivalent to $\lfloor x\rfloor- \lambda x$, which is the OPI ($\calu6'_b$).

	If $\lambda_{10}+\lambda_{01}=1$ and $\lambda_{00}=0$, by taking $\lambda_{10}=-\lambda$, we have $\lambda_{01}=\lambda+1$ and $$\lfloor x\rfloor- \lambda_{10}\lfloor 1\rfloor x- \lambda_{01}x\lfloor 1\rfloor- \lambda_{00} x=\lfloor x\rfloor+\lambda\lfloor 1\rfloor x- (\lambda+1)x\lfloor 1\rfloor,$$	 which is the OPI ($\calu6'_a$).

	If $\lambda_{10}+\lambda_{01}=1$ and $\lambda_{00}\neq0$, the OPI $(\calu6)$ is equivalent to  $x$ which is  ($\calu6'_c$) .
\end{proof}

\subsection{Nonunital differential type OPIs}\

We will also consider the nonunital version of differential OPIs and the corresponding  classification problem.

\begin{defn}
	An OPI $\phi\in \bk \frakS(X )$ is said to be of nonunital differential type if $\phi$ is of the form $\lfloor x y\rfloor-N(x, y)$, where $N(x, y)$ satisfies the following conditions:
	\begin{itemize}
		\item[(a)] $N(x, y)$ is linear in $x$ and $y$;
		\item[(b)] no monomial of $N(x, y)$ contains any subword of the form $\lfloor uv\rfloor$ for any $u,v\in\frakS(X) $;
		\item[(c)] for any set $Z$ and $u, v, w \in\frakS(Z) $, $$N(u v, w)-N(u, v w)\stackrel{\ast}\rightarrow_{\Pi_{\phi}(Z)}0,$$
		where $\Pi_{\phi}(Z):= \left\{q|_{\lfloor uv\rfloor}\rightarrow q|_{N(u,v)}~|~u,v \in \frakS(Z),~q\in\frakS^\star(Z)\right\}$.
	\end{itemize}
\end{defn}

Since any  nonunital differential type OPI  can be seen as a unital differential type OPI, we can also propose the following conjecture:
\begin{Conj} \label{Ex: list of OPIs of nonunital diff  type}
	Each nonunital differential type OPI $\phi$  is one of the OPIs in the following list:
	
	\begin{itemize}
		\item[($\caln$1)] $\lfloor x y\rfloor-a(x\lfloor y\rfloor+\lfloor x\rfloor y)-b\lfloor x\rfloor\lfloor y\rfloor-cxy$ where $a^{2}=a+bc$,
		\item[($\caln$2)] $\lfloor x y\rfloor-a b^{2} y x-b x y-a\lfloor y\rfloor\lfloor x\rfloor+ab(y\lfloor x\rfloor+\lfloor y\rfloor x)$,
		\item[($\caln$3)]$\lfloor x y\rfloor-x\lfloor y\rfloor $,
		\item[($\caln$4)]  $\lfloor x y\rfloor-\lfloor x\rfloor y $,
	\end{itemize}
where  $a,b,c,\lambda_{ij}\in\bk,~i, j \geq0$.
\end{Conj}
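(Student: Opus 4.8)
The plan is to derive the nonunital classification from the unital one rather than attack it from scratch. Since Conjecture~\ref{Ex: list of OPIs of diff  type} has so far only been established by Guo, Sit and Zheng under the hypothesis $N(x,y)\in\frakM_2(X)$, this route will only prove Conjecture~\ref{Ex: list of OPIs of nonunital diff  type} under the matching hypothesis $N(x,y)\in\frakS_2(X)$, which I take to be the realistic target. The first step is to verify the assertion made just before the statement: a nonunital differential type OPI $\phi=\lfloor xy\rfloor-N(x,y)$ with $N(x,y)\in\bk\frakS(X)$ is automatically of unital differential type. Multilinearity is clear; condition (b) of the unital definition holds because $N$ contains no occurrence of $\lfloor1\rfloor$, so every bracketed subword $\lfloor uv\rfloor$ of $N$ already has $u,v\in\frakS(X)$. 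For the cocycle condition (c), I would invoke the description $\frakM(Z)\setminus\{1\}=\frakS(Z\sqcup\{\lfloor1\rfloor\})$ from Remark~\ref{Rem: another construction of m(X)}: running the nonunital condition over the set $Z'=Z\sqcup\{\lfloor1\rfloor\}$ yields exactly $N(uv,w)-N(u,vw)\stackrel{\ast}{\rightarrow}_{\Pi_\phi(Z)}0$ for all $u,v,w\in\frakM(Z)\setminus\{1\}$, since under this identification the two term-rewriting systems coincide.

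Granting this and the proven form of Conjecture~\ref{Ex: list of OPIs of diff  type}, any such $\phi$ is one of $(\calu1)$--$(\calu6)$, and it remains to intersect that list with $\bk\frakS(X)$, i.e.\ to keep only the members whose $N(x,y)$ is free of $\lfloor1\rfloor$. Going through them: $(\calu1)$ and $(\calu2)$ never involve $\lfloor1\rfloor$ and reproduce $(\caln1)$ and $(\caln2)$; $(\calu3)$ and $(\calu4)$ lie in $\bk\frakS(X)$ precisely when $a=0$, yielding $(\caln3)$ and $(\caln4)$; $(\calu5)$ lies in $\bk\frakS(X)$ precisely when $a=0$, and then $\lfloor xy\rfloor-x\lfloor y\rfloor-\lfloor x\rfloor y-bxy$ is the member of $(\caln1)$ obtained by choosing the $(\caln1)$-coefficients $a=1$, $b=0$, $c=b$ (so $a^{2}=a+bc$ reads $1=1$); $(\calu6)$ lies in $\bk\frakS(X)$ precisely when $\lambda_{ij}=0$ for $(i,j)\neq(0,0)$, giving $\lfloor xy\rfloor-\lambda_{00}xy$, which is $(\caln1)$ with $a=b=0$, $c=\lambda_{00}$. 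This exhausts the possibilities, and comparing the coefficients of $x\lfloor y\rfloor$ and of $\lfloor x\rfloor y$ shows that neither $(\caln3)$ nor $(\caln4)$ is a specialisation of $(\caln1)$ or $(\caln2)$, so all four items are genuinely needed. For the converse, that each of $(\caln1)$--$(\caln4)$ really is of nonunital differential type, I would just observe that they occur among $(\calu1)$--$(\calu6)$ and that the nonunital cocycle condition is merely the restriction of the unital one to test elements lying in $\frakS(Z)\subseteq\frakM(Z)\setminus\{1\}$.

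The main obstacle is exactly the dependence built into this plan: everything is bootstrapped off Conjecture~\ref{Ex: list of OPIs of diff  type}, which itself remains open outside the range $N(x,y)\in\frakM_2(X)$. A self-contained proof valid for $N(x,y)\in\frakS_2(X)$ would have to redo, in the nonunital setting, the combinatorial analysis of \cite{GSZ13}: parametrise a general multilinear $N(x,y)\in\frakS_2(X)$ with no $\lfloor uv\rfloor$-subword --- the candidate monomials being $xy$, $yx$, $x\lfloor y\rfloor$, $\lfloor x\rfloor y$, $y\lfloor x\rfloor$, $\lfloor y\rfloor x$, $\lfloor x\rfloor\lfloor y\rfloor$, $\lfloor y\rfloor\lfloor x\rfloor$ together with the nested ones such as $\lfloor\lfloor x\rfloor\rfloor y$ --- then expand $N(uv,w)-N(u,vw)\stackrel{\ast}{\rightarrow}_{\Pi_\phi(Z)}0$ for generic $u,v,w\in\frakS(Z)$ and solve the resulting system of quadratic relations among the coefficients. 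It is this bookkeeping, not any conceptual subtlety, that I expect to be the hard part, just as in \cite{GSZ13}.
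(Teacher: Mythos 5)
The statement you are proving is labelled a \emph{Conjecture} in the paper, and the paper offers no proof of it: the authors merely remark, in one sentence before the statement, that any nonunital differential type OPI can be seen as a unital one, and, in the remark after it, that the list is obtained from $(\calu1)$--$(\calu6)$ by discarding the members involving $\lfloor 1\rfloor$ (with $(\calu5)$, $(\calu6)$ collapsing into special cases of $(\caln1)$). So there is no paper proof to compare against; your proposal is precisely the reduction the authors gesture at, carried out in detail.

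On its own merits your argument is sound but, as you say yourself, conditional: it establishes the conjecture only on the locus where the unital classification of Guo--Sit--Zheng is known, i.e.\ for $N(x,y)\in\bk\frakM_2(X)$, which contains $\bk\frakS_2(X)$; outside that range it proves nothing, so the statement as posed remains open under your approach exactly as it does in the paper. The two points worth tightening if you write this up are the ones you already flag implicitly. First, the passage from the nonunital cocycle condition over $Z'=Z\sqcup\{\lfloor1\rfloor\}$ to the unital one over $Z$ needs the observation that $\frakM^\star(Z)=\frakS^\star(Z')$ and that a subword $\lfloor uv\rfloor$ of an element of $\frakS(Z')$ forces $u,v\in\frakS(Z')$, so the two rewriting systems genuinely coincide under the identification of Remark~\ref{Rem: another construction of m(X)}; this is true but should be said. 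Second, for the converse direction (that $(\caln1)$--$(\caln4)$ are of nonunital differential type) the restriction argument needs the same observation in reverse --- a unital rewriting sequence starting from $N(uv,w)-N(u,vw)\in\bk\frakS(Z)$ never leaves $\bk\frakS(Z)$ because no monomial there contains $\lfloor1\rfloor$ --- or, more simply, a direct two-line verification of condition (c) for each of the four types, which is what I would recommend since it is unconditional. With those two details supplied, your proposal is a correct conditional proof, and it is the same route the paper's phrasing suggests; it does not, and cannot at present, upgrade the conjecture to a theorem.
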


\begin{remark} Notice that the cases ($\caln$1) (resp. ($\caln$2)) are exactly ($\calu$1) (resp. ($\calu$2)).
	The cases ($\caln$3) (resp. ($\caln$4)) are obtained from  ($\calu$3) (resp. ($\calu$4)) by deleting monomials containing  $\lfloor 1\rfloor$.
	However, when all $\lfloor 1\rfloor$ disappear, the cases ($\caln$5) and ($\caln$6)  become    special cases of ($\caln$1).
	
\end{remark}

 We also have a nonunital version of Theorem~\ref{Lemma: equivalent OPIs of unital diff type}, whose proof is similar and omitted. Notice that there is some difference in each case, since we can not take $x$ or $y$ to be $1$.

\begin{thm}\label{Lemma: equivalent OPIs of nonunital diff type}
	We have the following statements:
	\begin{itemize}
		\item[(a)]  The  OPI ($\caln$1) is equivalent to one of
		$$\begin{array}{ll} (\caln1'_a)  &\lfloor x\rfloor\lfloor y\rfloor-\lambda (x\lfloor y\rfloor+\lfloor x\rfloor y)+ \mu\lfloor xy\rfloor+\nu xy, \ \mathrm{where}\   \lambda^{2}=\lambda \mu+\nu, \\
			(\caln1'_b) & x\lfloor y\rfloor-\lfloor xy\rfloor+\lfloor x\rfloor y-\lambda x y,	\\	
			(\caln1'_c) & \lfloor xy\rfloor-\lambda xy;
		\end{array}$$
		
		\item[(b)] the  OPI ($\caln$2) is equivalent to one of
		$$\begin{array}{ll}
			(\caln2'_a)   &\lfloor x\rfloor\lfloor y\rfloor -\lambda(x\lfloor y\rfloor+\lfloor x\rfloor y)+\mu\lfloor yx\rfloor+\lambda^2 x y-\lambda \mu y x,\\
			(\caln2'_b)   &\lfloor xy\rfloor-\lambda xy;
		\end{array}$$
		\item[(c)] the  OPI ($\caln$3)  is equivalent   to
		$$\begin{array}{ll}
			(\caln 3^\prime) & x\lfloor y\rfloor-\lfloor x y\rfloor;
		\end{array}$$
			\item[(d)] the  OPI ($\caln$4) is equivalent   to
	$$\begin{array}{ll}
		( \caln 4 ^\prime ) & \lfloor x\rfloor y-\lfloor x y\rfloor.
	\end{array}$$
	\end{itemize}
\end{thm}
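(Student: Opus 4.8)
The plan is to follow the proof of Theorem~\ref{Lemma: equivalent OPIs of unital diff type} case by case, verifying each asserted equivalence straight from Definition~\ref{Def: equivalent OPIs}, i.e.\ by checking that the two operated ideals $\langle S_{\Phi_i}(\frakS(Z))\rangle_\OpAlg$ agree for every set $Z$. I would begin by isolating three elementary facts about this equivalence relation: (i) if $0\neq k\in\bk$ then $S_{\{k\phi\}}(\frakS(Z))=k\,S_{\{\phi\}}(\frakS(Z))$ spans the same $\bk$-subspace, so rescaling an OPI by a nonzero scalar gives an equivalent OPI; (ii) $S_{\{\phi(y,x)\}}(\frakS(Z))=S_{\{\phi\}}(\frakS(Z))$ as sets, so interchanging the two variables preserves equivalence; (iii) hence any invertible $\bk$-linear recombination of the monomials of an OPI with constant coefficients yields an equivalent OPI. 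The one structural difference from the unital argument is that here one may not substitute $x=1$ or $y=1$, so the unital reductions to an OPI of the form $\lfloor x\rfloor-\lambda x$ are no longer available; this is precisely why $(\caln1'_c)$ and $(\caln2'_b)$ must be recorded as $\lfloor xy\rfloor-\lambda xy$, and why parts (c) and (d) will require nothing.

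For part (a) I would split according to whether $b=0$. When $b=0$ the constraint $a^2=a$ forces $a\in\{0,1\}$: for $a=0$ the OPI $(\caln1)$ already equals $\lfloor xy\rfloor-cxy=(\caln1'_c)$, and for $a=1$ its negative is $(\caln1'_b)$. When $b\neq0$ I would divide $(\caln1)$ by $-b$ and put $\lambda=-a/b$, $\mu=-1/b$, $\nu=c/b$ to reach $(\caln1'_a)$; here I must confirm that, after clearing denominators, $\lambda^2=\lambda\mu+\nu$ is exactly the hypothesis $a^2=a+bc$, and that the step is reversible because $-b$ is a unit. Part (b) runs in parallel: $a=0$ gives $(\caln2'_b)$ with $\lambda=b$, while for $a\neq0$ I would divide $(\caln2)$ by $-a$, apply the variable interchange of (ii), and set $\lambda=b$, $\mu=-1/a$ to obtain $(\caln2'_a)$, checking in passing that the coefficients $\lambda^2$ and $-\lambda\mu$ come out right.

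For parts (c) and (d) I expect no computation: since $(\caln3)$ and $(\caln4)$ contain no $\lfloor1\rfloor$-term, $(\caln3')$ is literally $-(\caln3)$ and $(\caln4')$ is literally $-(\caln4)$, so the equivalence is immediate from fact (i). I do not foresee any genuine obstacle in this theorem; the argument is almost entirely bookkeeping of signs and scalars. The one point that does call for care is of a negative nature: one should audit the unital proof of Theorem~\ref{Lemma: equivalent OPIs of unital diff type} to locate where a unit is used and confirm that none of the reductions above quietly invokes one — which is exactly the reason the nonunital statements $(\caln1'_c)$ and $(\caln2'_b)$ take the shape $\lfloor xy\rfloor-\lambda xy$ rather than $\lfloor x\rfloor-\lambda x$, and the reason the unital types $(\calu5)$ and $(\calu6)$ do not survive in the nonunital list but collapse into $(\caln1)$, as already observed in the remark following Conjecture~\ref{Ex: list of OPIs of nonunital diff  type}.
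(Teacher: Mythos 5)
Your proposal is correct and takes essentially the same route as the paper's (omitted) proof, which is just the unital argument of Theorem~\ref{Lemma: equivalent OPIs of unital diff type} with the substitutions $x=1$, $y=1$ removed: the same case split on $b=0$ versus $b\neq 0$ and $a=0$ versus $a\neq 0$ with the same rescalings and change of parameters (and the variable swap for $(\caln 2'_a)$), together with the observation that $(\caln 3')$ and $(\caln 4')$ are literally the negatives of $(\caln 3)$ and $(\caln 4)$. The only caveat is your auxiliary fact (iii) --- an arbitrary invertible linear recombination of the \emph{monomials} of a single OPI does not in general preserve the generated operated ideal --- but since your argument only ever invokes rescaling (i) and variable interchange (ii), this imprecision has no effect on the proof.
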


\section{Operated GS bases of free nounital differential type algebras} \label{Section: Free nonunital differential type algebras over algebras}

In this  section, we   apply  Theorem~\ref{Thm: GS basis for free nonunital Phi algebra over nonunital alg}   to OPIs displayed in Theorem~\ref{Lemma: equivalent OPIs of nonunital diff type},  under the monomial order $\leq_{\dl}$, $\leq_{\dl'}$ or  $\leq_{\db}$, we   obtain an operated GS basis as well as a linear basis  for  the free nonunital   differential type algebra over a nonunital   $\bf{k}$-algebra.
However, this method can not be applied to OPIs of type $(\caln1'_c)=(\caln2'_b)$ in Theorem~\ref{Lemma: equivalent OPIs of nonunital diff type}, so we deal with this case directly; see Theorem~\ref{thm:GS basis of nonunital phi4} and Corollary~\ref{Cor: linear basis for type phi4}.

\



\subsection{Nonunital GS OPIs of differential type} \

We first deal with the   types $(\caln1'_b)$  and $(\caln3')$. Our result reads as follows:
\begin{prop}\label{Lemma: Nonunital differential OPI is gronber basis in  free nonunital operated algebra}
	Let $\phi\in \bk\frakS(X)$ be a nonunital differential OPI of type $(\caln1'_b)$ or $(\caln3')$ in Theorem~\ref{Lemma: equivalent OPIs of nonunital diff type}. Let $Z$ be a set.  Then $S_{\phi}(\frakS(Z))$ is an operated
	GS basis in $\bk\frakS(Z)$ with respect to $\leq_{\mathrm{dl}}$.
\end{prop}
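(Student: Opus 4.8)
The plan is to apply Theorem~\ref{Thm: verify a GS OPI} (adapted to the nonunital setting, i.e. working in $\bk\frakS(Z)$ rather than $\bk\frakM(Z)$) to the single OPI $\phi$. For both types, one first identifies the leading monomial of $\phi$ with respect to $\leq_{\dl}$. For $(\caln3')$, $\phi = x\lfloor y\rfloor - \lfloor xy\rfloor$, and Proposition~\ref{Lemma: inequal relation}(a) gives $\lfloor xy\rfloor <_{\udl} x\lfloor y\rfloor$ (restricting to $\frakS(X)$, so $\leq_{\udl}$ agrees with $\leq_{\dl}$ on the bracketed words not involving $\lfloor 1\rfloor$), hence $\overline{\phi} = x\lfloor y\rfloor$ and $R(\phi) = \lfloor xy\rfloor$, which is in $\phi$-normal form since it contains no subword of the form $u\lfloor v\rfloor$. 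For $(\caln1'_b)$, $\phi = x\lfloor y\rfloor - \lfloor xy\rfloor + \lfloor x\rfloor y - \lambda xy$; among the four monomials, $x\lfloor y\rfloor$ has $P$-degree $1$ and is the $\leq_{\dl}$-largest among the $P$-degree-$1$ monomials $x\lfloor y\rfloor, \lfloor xy\rfloor, \lfloor x\rfloor y$ by Proposition~\ref{Lemma: inequal relation}(a), and it dominates $xy$ (which has $P$-degree $0$) by $\leq_{\dgp}$; so again $\overline{\phi} = x\lfloor y\rfloor$ and $R(\phi) = \lfloor xy\rfloor - \lfloor x\rfloor y + \lambda xy$, which is in $\phi$-normal form.

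Next I would verify the two hypotheses of Theorem~\ref{Thm: verify a GS OPI}. For condition (b): if $\overline{\phi(\mathfrak u)} = u_1\lfloor u_2\rfloor = q|_{\overline{\phi(\mathfrak v)}} = q|_{v_1\lfloor v_2\rfloor}$ for some $\star\neq q$, then the subword $v_1\lfloor v_2\rfloor$ of $u_1\lfloor u_2\rfloor$ must lie either entirely inside $u_1$ or inside the bracketed part $\lfloor u_2\rfloor$; a short case analysis on the position of the hole shows $v_1\lfloor v_2\rfloor$ is a subword of $u_1$ or of $u_2$, as required (the only potentially problematic placement, $q = \star \lfloor u_2\rfloor$ with $v_1\lfloor v_2\rfloor$ straddling, is impossible since $\star$ occurs once and $u_1\lfloor u_2\rfloor$ has breadth $2$). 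For condition (a): the only overlap ambiguities of $\overline{\phi(\mathfrak u)} = u_1\lfloor u_2\rfloor$ and $\overline{\phi(\mathfrak v)} = v_1\lfloor v_2\rfloor$ as $\alpha\beta = u_1\lfloor u_2\rfloor$, $\beta\gamma = v_1\lfloor v_2\rfloor$ force $\beta$ to be a proper suffix of $u_1\lfloor u_2\rfloor$ and a proper prefix of $v_1\lfloor v_2\rfloor$; since these words end in a bracket $\lfloor\,\rfloor$ which cannot be split, the overlap $\beta$ must consist of letters inside $u_1$, so this reduces to an overlap in $\calm(Z)$ together with the trailing $\lfloor u_2\rfloor = \lfloor v_2\rfloor$ — but then one checks directly that $R(\phi(\mathfrak u))\gamma$ and $\alpha R(\phi(\mathfrak v))$ rewrite to a common element via $\Pi_\phi(Z)$, using the associativity-type identity $N(uv,w) - N(u,vw) \stackrel{\ast}{\to}_{\Pi_\phi} 0$ built into the definition of (nonunital) differential type (this is exactly where condition (c) of the definition of nonunital differential type OPI is used, and $(\caln1'_b), (\caln3')$ are such OPIs because they are equivalent reformulations of the genuinely differential-type $(\caln1)$ and $(\caln3)$).

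Finally I must confirm that $\Pi_\phi(Z)$ is compatible with $\leq_{\dl}$, i.e. $\overline{\phi(\mathfrak u)} >_{\dl} \overline{R(\phi(\mathfrak u))}$ for all $\mathfrak u$; but this is immediate from $q|_u <_{\dl} q|_v$ whenever $u <_{\dl} v$ (the monomial-order property of $\leq_{\dl}$, Theorem~\ref{Thm:The well order dl is a monomial order}) together with the monomial-by-monomial comparisons $\lfloor u_1u_2\rfloor, \lfloor u_1\rfloor u_2, u_1u_2 <_{\dl} u_1\lfloor u_2\rfloor$ established above via Proposition~\ref{Lemma: inequal relation}(a) and $\leq_{\dgp}$. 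Once both conditions of Theorem~\ref{Thm: verify a GS OPI} hold, that theorem yields that $\phi$ is GS with respect to $\leq_{\dl}$, i.e. $S_\phi(\frakS(Z))$ is an operated GS basis in $\bk\frakS(Z)$, which is the claim. I expect the main obstacle to be the bookkeeping in condition (a): one must carefully enumerate the ways $u_1\lfloor u_2\rfloor$ and $v_1\lfloor v_2\rfloor$ can overlap in $\frakS(Z)$ and, in each case, run the rewriting steps on both sides to exhibit a common descendant — the ``associativity'' identity does the real work, but matching it up with the correct choices of $q$ and of the arguments requires care.
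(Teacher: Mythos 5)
Your overall strategy --- identify $x\lfloor y\rfloor$ as the leading monomial under $\leq_{\dl}$ via Proposition~\ref{Lemma: inequal  relation} and reduce everything to a confluence property of the rewriting system $\Pi_\phi(Z)$ --- is the same as the paper's, which proves a bespoke criterion (Proposition~\ref{Proposition: Nonunital differential OPI xdy is GSB in  free nonunital operated algebra}) and then verifies its joinability hypothesis (Lemma~\ref{Lemma: Nonunital differential OPI is GS}). However, your execution has two genuine gaps.

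First, condition (b) of Theorem~\ref{Thm: verify a GS OPI} actually \emph{fails} for an OPI whose leading monomial is $x\lfloor y\rfloor$, so that theorem cannot be invoked as stated. Take $q=p\star$ with $p$ a nonempty prefix of $u_1$, say $u_1=ps$ with $s\in\frakS(Z)$; then $u_1\lfloor u_2\rfloor=q|_{s\lfloor u_2\rfloor}=q|_{\overline{\phi(s,u_2)}}$, and $s\lfloor u_2\rfloor$ is a subword of neither $u_1$ nor $u_2$. This is exactly the configuration you declare ``impossible,'' and it is the crux of the whole proof: it is the one composition whose triviality genuinely needs the associativity input. (In the unital formulation of \cite{GaoGuo17} this configuration is absorbed into condition (a) by allowing $\gamma=1$, but you explicitly work nonunitally, where $\gamma$ must be nonempty, and in any case you rule the configuration out rather than treat it.) Your analysis of condition (a) is also internally inconsistent: a nonempty suffix $\beta$ of $u_1\lfloor u_2\rfloor$ necessarily ends with the letter $\lfloor u_2\rfloor$, so it cannot ``consist of letters inside $u_1$,'' and in a genuine intersection (nonempty $\gamma$) the letter $\lfloor u_2\rfloor$ lies inside $v_1$, so one does \emph{not} have $\lfloor u_2\rfloor=\lfloor v_2\rfloor$. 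The paper instead sorts the inclusions by the position of $\star$ in $q$ into three cases, applies $R(ts,v)\downarrow_{\Pi_{\phi}(Z)}tR(s,v)$ precisely in the case $q=t\star$, and reduces each intersection composition to two inclusions via the auxiliary element $h=\phi(bv_1,v_2)$.

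Second, the joinability $R(uv,w)\downarrow_{\Pi_{\phi}(Z)}uR(v,w)$ does not follow from the fact that $(\caln1'_b)$ and $(\caln3')$ are equivalent to the differential type OPIs $(\caln1)$ and $(\caln3)$. Equivalence in the sense of Definition~\ref{Def: equivalent OPIs} only says the generated operated ideals agree; it does not transfer confluence, because passing to the equivalent form changes the leading monomial (from $\lfloor xy\rfloor$ to $x\lfloor y\rfloor$), the remainder, and hence the rewriting system itself. This must be checked directly for the new $R$, as the paper does in Lemma~\ref{Lemma: Nonunital differential OPI is GS}; e.g.\ for $(\caln1'_b)$ one computes $uR(v,w)=u\lfloor vw\rfloor-u\lfloor v\rfloor w+cuvw\rightarrow\lfloor uvw\rfloor-\lfloor uv\rfloor w+cuvw=R(uv,w)$. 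Your identification of the leading monomials and the compatibility of $\Pi_\phi(Z)$ with $\leq_{\dl}$ are correct; the two points above are what need repair.
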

 We deduce this result  from Proposition~\ref{Proposition: Nonunital differential OPI xdy is GSB in  free nonunital operated algebra} and Lemma~\ref{Lemma: Nonunital differential OPI is GS}.

For an OPI $\phi$ of type $(\caln1'_b)$ or $(\caln3')$ appearing in  Theorem~\ref{Lemma: equivalent OPIs of nonunital diff type},  we may write $$\phi(x,y)=x \lfloor y\rfloor-R(x,y).$$  By Theorem~\ref{Thm:The well order udl is a monomial order} and Proposition~\ref{Lemma: inequal  relation},
  for each set $Z$ and   for all $u,v\in \frakS (Z)$,  $\phi(u,v) $   vanishes or its  leading monomial is $u\lfloor v\rfloor$ with respect to the monomial order $\leq_{\mathrm{dl}}$ on $\frakS(Z)$.

\begin{prop}\label{Proposition: Nonunital differential OPI xdy is GSB in  free nonunital operated algebra}
	Let $\phi=x \lfloor y\rfloor -R(x,y) \in\bk \frakS(X )$ be an OPI.
Let $Z$ be a set such that $\frakS (Z)$ is  endowed with a monomial order.  Assume that for all $u,v\in \frakS (Z)$,  $\phi(u,v) $   vanishes or its  leading monomial is $u\lfloor v\rfloor$.  Denote the rewriting system  $$\Pi_{\phi}(Z):= \{q|_{u\lfloor v\rfloor}\rightarrow q|_{R(u,v)}~|~u,v \in \frakS(Z),~ q\in\frakS^\star(Z)\}.$$ If
	$R(uv,w)\downarrow_{\Pi_{\phi}(Z)}uR(v,w)$, for any $u,v,w\in\frakS(Z)$, then $S_{\phi}(\frakS(Z))$ is an operated	GS basis in $\bk\frakS(Z)$ with respect to the monomial order.
\end{prop}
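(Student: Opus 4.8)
The plan is to apply the Composition–Diamond Lemma machinery: to show $S_\phi(\frakS(Z))$ is an operated GS basis, I must verify that every intersection composition and every inclusion composition of pairs of elements of $S_\phi(\frakS(Z))$ is trivial modulo $(S_\phi(\frakS(Z)),w)$. The leading monomial of $\phi(u,v)$ is $u\lfloor v\rfloor$ (by hypothesis), which has breadth $2$: it is the product of $u$ (itself a word in $\frakS(Z)$, possibly of large breadth) and the single letter $\lfloor v\rfloor\in\lfloor\frakS(Z)\rfloor$. I would first catalogue the possible overlaps of two such leading monomials $\overline{\phi(u_1,u_2)}=u_1\lfloor u_2\rfloor$ and $\overline{\phi(v_1,v_2)}=v_1\lfloor v_2\rfloor$.

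\textbf{Inclusion compositions.} Here $w=\overline{\phi(u_1,u_2)}=q|_{\overline{\phi(v_1,v_2)}}$ for some $\star\neq q\in\frakS^\star(Z)$, i.e.\ $u_1\lfloor u_2\rfloor = q|_{v_1\lfloor v_2\rfloor}$. Since the outermost structure of $u_1\lfloor u_2\rfloor$ is a product whose last factor is the bracketed letter $\lfloor u_2\rfloor$, the subword $v_1\lfloor v_2\rfloor$ either sits entirely inside $u_1$, or sits inside the subword $u_2$ (under the bracket $\lfloor u_2\rfloor$), or the matching is "$q=\star$'' which is excluded. In the first case $u_1 = q'|_{v_1\lfloor v_2\rfloor}$ for the part of $q$ living in $u_1$, so we can rewrite $u_1$ using $\phi(v_1,v_2)$; in the second case $u_2 = q''|_{v_1\lfloor v_2\rfloor}$, and again we rewrite under the bracket. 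In either subcase the composition $\phi(u_1,u_2)-q|_{\phi(v_1,v_2)}$ is, up to the rewriting rules, a combination of $q$-instances of $\phi$-elements with strictly smaller leading monomials (using that $\leq$ is a monomial order and that $\overline{\phi(u,v)}=u\lfloor v\rfloor$ under the substitution), hence trivial modulo $(S_\phi(\frakS(Z)),w)$. I expect this part to be essentially bookkeeping.

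\textbf{Intersection compositions.} This is where the hypothesis $R(uv,w)\downarrow_{\Pi_\phi(Z)} uR(v,w)$ enters, and it is the main obstacle. One has $w=\overline{\phi(u_1,u_2)}a = b\,\overline{\phi(v_1,v_2)}$ with $\max\{|\overline{\phi(u_1,u_2)}|,|\overline{\phi(v_1,v_2)}|\}<|w|<|\overline{\phi(u_1,u_2)}|+|\overline{\phi(v_1,v_2)}|$. Writing out $u_1\lfloor u_2\rfloor\,a = b\,v_1\lfloor v_2\rfloor$ and comparing the last letters, the bracketed letter $\lfloor u_2\rfloor$ must lie inside $b v_1$ (strictly, not be the final letter, since $|w|<|\overline{\phi(u_1,u_2)}|+|\overline{\phi(v_1,v_2)}|$ rules out disjointness at the ends). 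The only genuine overlap possibility is thus $b$ a prefix, $v_1 = (\text{suffix of } u_1\lfloor u_2\rfloor)\cdot(\text{prefix of }a)$; carrying this through, after writing $u_1\lfloor u_2\rfloor$ as a product, the overlap forces $w = u_1\lfloor u_2\rfloor w'$ where $w'$ begins with $\lfloor v_2\rfloor$, i.e.\ $v_1 = \lfloor u_2\rfloor w''$ for suitable $w''$ — but this is exactly an instance where one leading monomial appears inside the other's factor. The key reduction I would carry out: show every intersection composition of two $\phi$-elements reduces (after performing the obvious one-step rewritings) to comparing $R(uv,w)$ with $uR(v,w)$ for appropriate $u,v,w\in\frakS(Z)$ — precisely because the only way two copies of $x\lfloor y\rfloor$ can overlap with breadth between $2$ and $3$ is the "associativity'' overlap $u_1\lfloor u_2\rfloor$ against $(u_1 u_2')\lfloor u_2''\rfloor$ type configurations, whose resolution is governed by the Leibniz-type compatibility $R(uv,w)\downarrow_{\Pi_\phi(Z)} uR(v,w)$. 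Then Lemma~\ref{Lemma:joinable gives trivial modulo} (in its $\Pi_\phi$ form) converts joinability into triviality modulo $(S_\phi(\frakS(Z)),w)$, and since the rewriting system $\Pi_\phi(Z)$ is compatible with the monomial order (each rule $q|_{u\lfloor v\rfloor}\to q|_{R(u,v)}$ strictly decreases the leading monomial, as $\overline{\phi(u,v)}=u\lfloor v\rfloor$), all the side conditions $q_i|_{\bar s_i}<w$ hold automatically. Assembling these pieces gives that $S_\phi(\frakS(Z))$ is an operated GS basis with respect to the given monomial order.
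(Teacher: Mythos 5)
Your overall strategy --- check intersection and inclusion compositions for the Composition--Diamond Lemma, use the hypothesis $R(uv,w)\downarrow_{\Pi_{\phi}(Z)}uR(v,w)$ to resolve the associativity-type overlap, and invoke Lemma~\ref{Lemma:joinable gives trivial modulo} to turn joinability into triviality --- is the same as the paper's. But your case analysis has a genuine hole, located exactly where the hypothesis must be used. When $w=\overline{\phi(u_1,u_2)}=q|_{\overline{\phi(v_1,v_2)}}$ there is a third possibility besides ``$\overline{\phi(v_1,v_2)}$ sits entirely inside $u_1$'' and ``sits inside $u_2$'': namely $q=t\star$ with $t\in\frakS(Z)$, i.e.\ $v_2=u_2$ and $u_1=tv_1$, so that $\overline{\phi(v_1,v_2)}=v_1\lfloor u_2\rfloor$ is a proper right factor of $u_1\lfloor u_2\rfloor$ straddling the final bracket. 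You explicitly dismiss only ``$q=\star$'', but $q=t\star$ with $t$ a nonempty word is a legitimate inclusion composition, it is not covered by either of your two cases, and it is precisely the one resolved by $R(tv_1,u_2)\downarrow_{\Pi_{\phi}(Z)}tR(v_1,u_2)$ --- so the inclusion step is not ``essentially bookkeeping''. Nor is this configuration rescued by your intersection-composition section: since $u,v$ in the definition of an intersection composition range over $\frakS(Z)$ (no empty word), a configuration with $w=\bar f$ fails $\max\{|\bar f|,|\bar g|\}<|w|$ and is not an intersection composition at all. You have, in effect, filed the crucial overlap under the wrong heading and then described it only loosely there.

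For the genuine intersection compositions $w=u_1\lfloor u_2\rfloor a=b\,v_1\lfloor v_2\rfloor$ (where $\lfloor u_2\rfloor$ is a letter strictly inside $v_1$), you assert that they ``reduce to comparing $R(uv,w)$ with $uR(v,w)$'' but supply no mechanism, and your structural claim that $v_1$ begins with $\lfloor u_2\rfloor$ is only a special case ($b$ may be a proper prefix of $u_1$, so $v_1=s\lfloor u_2\rfloor a'$ with $s$ a nonempty suffix of $u_1$). The paper's proof supplies the missing mechanism: introduce the auxiliary element $h=\phi(bv_1,v_2)$, whose leading monomial is $w$ itself, and write
$$(f,g)^{a,b}_w=fa-bg=-\left(h,f\right)^{\star a}_w+\left(h,g\right)^{b\star}_w,$$
a difference of two inclusion compositions already shown trivial --- the first by the ``subword inside $u_1$'' computation, the second by the $q=t\star$ case via the hypothesis. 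Without this decomposition (or an equivalent explicit rewriting argument) the intersection case is not closed, so the proposal as written has a real gap even though its guiding intuition about where the Leibniz-type compatibility enters is correct.
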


\begin{proof}
We prove the statement by checking all inclusion compositions and intersection compositions are trivial modulo $(S_{\phi}(\frakS(Z)) ,w)$.

We firstly consider inclusion compositions. Let $f=\phi(u,v),g\in  S_{\phi}(\frakS(Z))$ for $u,v\in\frakS(Z)$, such that $w=\bar f=q|_{\bar{g}} $ for some $w\in\frakS(Z)$ and $q\in\frakS^\star(Z)$. We have the following three cases:
\begin{itemize}
	\item [(a)] If $q$ has a right factor $\lfloor v'\rfloor$ with $v'\in\frakS(Z) $. Since $\bar f=u\lfloor v\rfloor$, we have $v=v'$ and thus
	there  exists $q'\in \frakS^\star(Z)$ such that $q=q'\lfloor v\rfloor$ and $ u=q'|_{\bar{g}} $.    So we have
	$$\begin{aligned}
	\left( f,g\right) ^{q}_w&=f-q|_{g}\\
	&=\phi(u,v)-\phi(q'|_{g},v)+\phi(q'|_{g},v)-q|_{g}\\
	&=\phi(q'|_{\bar{g}},v)-\phi(q'|_{g},v)+q'|_{g}\lfloor v\rfloor-R(q'|_{g},v)-q|_{g}\\
	&=\phi(q'|_{\bar{g}-g},v)-R(q'|_{g},v).
	\end{aligned}$$
  By the monomial order, we have $\overline{\phi(q'|_{\bar{g}-g},v)}<\overline{\phi(q'|_{\bar{g}},v)}=w$ and  $\overline{R(q'|_{g},v)}<\overline{\phi(q'|_{g},v)}=w$. Then since $\phi(q'|_{\bar{g}-g},v)$ is trivial modulo $(S_{\phi}(\frakS(Z)) ,w)$ and  $g\in S_{\phi}(\frakS(Z))$,  the inclusion composition $\left( f,g\right) ^{q}_w$
   is trivial modulo $(S_{\phi}(\frakS(Z)) ,w)$.
  \item [(b)] If $q$ has a right factor $\lfloor v'\rfloor$ with $v'\in\frakS^\star(Z)$. Since $\bar f=u\lfloor v\rfloor$, we have $v=v'|_{\bar{g}}$ and $q=u\lfloor v'\rfloor$, thus
   	$$\begin{aligned}
   	\left( f,g\right) ^{q}_w&=f-q|_{g}\\
   &=\phi(u,v'|_{\bar{g}})-u\lfloor v'|_{g}\rfloor\\
   	&=\phi(u,v'|_{\bar{g}})-\left( u\lfloor v'|_{g}\rfloor-R(u,v'|_{g})\right)-R(u,v'|_{g}) \\
   	&= \phi(u,v'|_{\bar{g}})- \phi(u,v'|_{g})-R(u,v'|_{g})\\
   	&=\phi(u,v'|_{\bar{g}-g})-R(u,v'|_{g}),
   \end{aligned}$$
   which is trivial modulo $(S_{\phi}(\frakS(Z)) ,w)$.

	\item [(c)] If $q$ has no right factor of the form $\lfloor v'\rfloor$, then $\star$ is a right factor of $q$ and $\overline{g}$ has the right factor $\lfloor v\rfloor$, i.e., $q=t\star$ and $\overline{g}=s\lfloor v\rfloor$ for some $t,s\in\frakS(Z)$. So we have $t\overline{g}=\overline{f}$ and $g=\phi(s,v)$.  By assumption, we have $R(ts,v)\downarrow_{\Pi_{\phi}(Z)}tR(s,v)$ and by Lemma~\ref{Lemma:joinable gives trivial modulo} the inclusion composition  $\left( f,g\right) ^{q}_w$ is trivial modulo $(S_{\phi}(\frakS(Z)) ,w)$.
\end{itemize}

Next we consider the intersection compositions. Given $f=\phi(u_1,u_2)$ and $g=\phi(v_1,v_2)$ with $u_1,u_2,v_1,v_2\in \frakS(Z)$, assume there exist $a,b,w\in \frakS(Z) $ such that $w=\overline{f}a=b\overline{g}$, i.e., $w=u_1 \lfloor u_2\rfloor a=bv_1 \lfloor v_2\rfloor$ and max$\left\lbrace |\bar{f}| ,|\bar{g}|   \right\rbrace < \left| w\right| < |\bar{f}| +|\bar{g}|$.
We will write the intersection composition $\left(f,g\right) ^{a,b}_w$ into two inclusion compositions.
Consider the polynomial $h=\phi(bv_1,v_2)$. We have $w=\bar h=q_1|_{\bar f}=q_2|_{\bar g}$, where $q_1=\star a$ and $q_2=b\star$. As we have shown above, the inclusion compositions $\left( h,f\right) ^{\star a}_w$ and $\left( h,g\right) ^{b\star}_w$   are trivial modulo $(S_{\phi}(\frakS(Z)) ,w)$.
And since $$\left(f,g\right) ^{a,b}_w=fa-bg=-(h-fa)+(h-bg)=-\left( h,f\right) ^{\star a}_w+\left( h,g\right) ^{b\star}_w,$$
the intersection composition $\left(f,g\right) ^{a,b}_w$ is trivial modulo $(S_{\phi}(\frakS(Z)) ,w)$. 	
\end{proof}

We need a technical lemma which enables us to apply    Proposition~\ref{Proposition: Nonunital differential OPI xdy is GSB in  free nonunital operated algebra}.
\begin{lem}\label{Lemma: Nonunital differential OPI is GS}
	Let  $\phi(x,y)=x \lfloor y\rfloor-R(x,y)$ be a nonunital differential OPI of type $(\caln1'_b)$ or $(\caln3')$ given in Theorem~\ref{Lemma: equivalent OPIs of nonunital diff type}.  Let $Z$ be a set. Consider  the rewriting system $$\Pi_{\phi}(Z):= \{q|_{u\lfloor v\rfloor}\rightarrow q|_{R(u,v)}~|~u,v \in \frakS(Z),~ q\in\frakS^\star(Z)\}.$$ Then  $R(uv,w)\downarrow_{\Pi_{\phi}(Z)}uR(v,w)$, for any $u,v,w\in\frakS(Z)$.
\end{lem}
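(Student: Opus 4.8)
The plan is to prove the joinability directly by exhibiting an explicit common reduct, which will turn out to be $R(uv,w)$ itself. First I would record the two shapes of $R$ permitted by the hypothesis: for type $(\caln 3^\prime)$ one has $R(x,y)=\lfloor xy\rfloor$, and for type $(\caln1'_b)$ one has $R(x,y)=\lfloor xy\rfloor-\lfloor x\rfloor y+\lambda xy$. It is convenient to treat both uniformly by writing $R(x,y)=\lfloor xy\rfloor+\alpha\lfloor x\rfloor y+\beta xy$, with $(\alpha,\beta)=(0,0)$ in the first case and $(\alpha,\beta)=(-1,\lambda)$ in the second. The only features of these data that the argument will use are the two scalar identities $\alpha+\alpha^{2}=0$ and $\beta(1+\alpha)=0$, both of which hold in each of the two cases; this is where the specific form of $R$ enters, and nothing else about it matters.

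Next, using associativity of multiplication in $\frakS(Z)$, substitution gives
$$R(uv,w)=\lfloor uvw\rfloor+\alpha\lfloor uv\rfloor w+\beta uvw,\qquad uR(v,w)=u\lfloor vw\rfloor+\alpha\, u\lfloor v\rfloor w+\beta uvw.$$
I would then rewrite $uR(v,w)$ in (at most) two steps of $\Pi_{\phi}(Z)$. In the first step, apply the rule to the monomial $u\lfloor vw\rfloor$, which has the form $q|_{a\lfloor b\rfloor}$ with $q=\star$, $a=u$ and $b=vw\in\frakS(Z)$; it is replaced by $R(u,vw)=\lfloor uvw\rfloor+\alpha\lfloor u\rfloor vw+\beta uvw$. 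In the second step, apply the rule to the monomial $u\lfloor v\rfloor w$, which has the form $q|_{a\lfloor b\rfloor}$ with $q=\star w\in\frakS^{\star}(Z)$, $a=u$ and $b=v$; it is replaced by $R(u,v)\,w=\lfloor uv\rfloor w+\alpha\lfloor u\rfloor vw+\beta uvw$. Collecting coefficients over the basis $\frakS(Z)$, the coefficient of $\lfloor u\rfloor vw$ in the outcome is $\alpha+\alpha^{2}=0$ and the coefficient of $uvw$ is $2\beta+\alpha\beta=\beta$ by the identity $\beta(1+\alpha)=0$, so what survives is precisely $\lfloor uvw\rfloor+\alpha\lfloor uv\rfloor w+\beta uvw=R(uv,w)$. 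Hence $uR(v,w)\stackrel{\ast}{\rightarrow}_{\Pi_{\phi}(Z)}R(uv,w)$, and since trivially $R(uv,w)\stackrel{\ast}{\rightarrow}_{\Pi_{\phi}(Z)}R(uv,w)$, we get $R(uv,w)\downarrow_{\Pi_{\phi}(Z)}uR(v,w)$. For type $(\caln 3^\prime)$ the second step is vacuous ($\alpha=0$) and the whole argument collapses to the single rewriting $u\lfloor vw\rfloor\to\lfloor uvw\rfloor$.

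This is a short bookkeeping computation rather than a genuine obstacle; the two points deserving care are, first, confirming that the two monomials we rewrite really do have the form $q|_{a\lfloor b\rfloor}$ with $a,b\in\frakS(Z)$ and $q\in\frakS^{\star}(Z)$ (they do, with $q=\star$ and $q=\star w$ respectively, and the coefficient $\alpha$ is merely carried along in the second step), and second, resisting the urge to reduce to $\phi$-normal forms: $R(uv,w)$ need not be in $\phi$-normal form — for instance if $w$ is itself a bracketed word — but joinability asks only for a common reduct, and $R(uv,w)$ trivially reduces to itself. With this lemma established, the hypotheses of Proposition~\ref{Proposition: Nonunital differential OPI xdy is GSB in  free nonunital operated algebra} are met (its leading-monomial condition being furnished by Theorem~\ref{Thm:The well order udl is a monomial order} together with Proposition~\ref{Lemma: inequal  relation}), which yields Proposition~\ref{Lemma: Nonunital differential OPI is gronber basis in  free nonunital operated algebra}.
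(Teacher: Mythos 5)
Your proof is correct and follows essentially the same route as the paper: in both cases one rewrites the monomials $u\lfloor vw\rfloor$ and $u\lfloor v\rfloor w$ inside $uR(v,w)$ via the rules of $\Pi_{\phi}(Z)$ and observes that the result is exactly $R(uv,w)$, which trivially reduces to itself. Your uniform parametrization $R(x,y)=\lfloor xy\rfloor+\alpha\lfloor x\rfloor y+\beta xy$ with the identities $\alpha+\alpha^{2}=0$ and $\beta(1+\alpha)=0$ is only a cosmetic repackaging of the paper's two case-by-case computations.
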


\begin{proof}
	  For $\phi$ of type $(\caln1'_b)$, the OPI $\phi$ can be written as $x\lfloor y\rfloor-R(x,y)$, where $R(x,y)=\lfloor xy\rfloor-\lfloor x\rfloor y+cx y$.
	We have
   	$$\begin{aligned}
		uR(v,w)
		&=~u\lfloor vw\rfloor-u\lfloor v\rfloor w+cuvw\\
		&\rightarrow\lfloor uvw\rfloor-\lfloor u\rfloor vw+cuvw-(\lfloor uv\rfloor  -\lfloor u\rfloor v +cuv )w+cuvw\\
		&=~\lfloor uvw\rfloor-\lfloor uv\rfloor w+cuvw\\
		&=~R(uv,w).
   \end{aligned}$$
	
	  For the type $(\caln3')$ in Theorem~\ref{Lemma: equivalent OPIs of nonunital diff type}, $\phi(x,y)=x \lfloor y\rfloor-\lfloor xy\rfloor$ with $R(x,y)=\lfloor xy\rfloor$. We have
	$$	
	uR(v,w)=u\lfloor vw\rfloor
	\rightarrow\lfloor uvw\rfloor=R(uv,w).$$

	So in either case, $R(uv,w)\downarrow_{\Pi_{\phi}(Z)}uR(v,w)$, for any $u,v,w\in\frakS(Z)$.
\end{proof}

Proposition~\ref{Proposition: Nonunital differential OPI xdy is GSB in  free nonunital operated algebra} and Lemma~\ref{Lemma: Nonunital differential OPI is GS}  implies immediately Proposition~\ref{Lemma: Nonunital differential OPI is gronber basis in  free nonunital operated algebra}.
This finishes our discussion for the   types $(\caln1'_b)$  and $(\caln3')$.

\medskip

With a   proof similar to that of  Type $(\caln3')$ under the order $\leq_{\mathrm{dl'}}$ given in Remark~\ref{remark: dl'},  one can treat the case $(\caln4')$.
\begin{lem}
	For the OPI $\phi(x,y)=\lfloor x\rfloor y-\lfloor xy\rfloor $ of
	type $(\caln4')$  in Theorem~\ref{Lemma: equivalent OPIs of nonunital diff type},
	   $S_{\phi}(\frakS(Z))$ is an operated	GS basis in $\bk\frakS(Z)$ with respect to the monomial order $\leq_{\mathrm{dl'}}$ for any set $Z$.
\end{lem}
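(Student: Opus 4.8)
The plan is to reproduce, with ``left'' and ``right'' interchanged, the argument used for type $(\caln3')$ --- that is, Proposition~\ref{Proposition: Nonunital differential OPI xdy is GSB in  free nonunital operated algebra} together with Lemma~\ref{Lemma: Nonunital differential OPI is GS} --- but working with the order $\leq_{\mathrm{dl'}}$ of Remark~\ref{remark: dl'} instead of $\leq_{\mathrm{dl}}$. The first thing to record is that, because the modified $G$-degree in $\leq_{\mathrm{dl'}}$ counts half-brackets from the right, the first inequality of Proposition~\ref{Lemma: inequal  relation}(a) is reversed: for all $u,v\in\frakS(Z)$ one has $\deg_G(\lfloor u\rfloor v)-\deg_G(\lfloor uv\rfloor)=\deg_X(v)>0$, hence $\lfloor u\rfloor v>_{\mathrm{dl'}}\lfloor uv\rfloor$. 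In particular, for $\phi(x,y)=\lfloor x\rfloor y-\lfloor xy\rfloor$ and any $u,v\in\frakS(Z)$, the element $\phi(u,v)$ is nonzero and its leading monomial with respect to $\leq_{\mathrm{dl'}}$ is $\lfloor u\rfloor v$. (This is precisely why one must pass to $\leq_{\mathrm{dl'}}$: under $\leq_{\mathrm{dl}}$ the leading monomial would be $\lfloor xy\rfloor$, and the corresponding rewriting system would not be compatible with a monomial order.)

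Next I would prove the ``left'' counterpart of Proposition~\ref{Proposition: Nonunital differential OPI xdy is GSB in  free nonunital operated algebra}: if $\phi=\lfloor x\rfloor y-R(x,y)\in\bk\frakS(X)$ is such that, for some monomial order on $\frakS(Z)$, each $\phi(u,v)$ with $u,v\in\frakS(Z)$ vanishes or has leading monomial $\lfloor u\rfloor v$, and if $R(u,vw)\downarrow_{\Pi_\phi(Z)}R(u,v)w$ for all $u,v,w\in\frakS(Z)$ --- where $\Pi_\phi(Z)=\{\,q|_{\lfloor u\rfloor v}\to q|_{R(u,v)}\mid u,v\in\frakS(Z),\ q\in\frakS^\star(Z)\,\}$ --- then $S_\phi(\frakS(Z))$ is an operated GS basis in $\bk\frakS(Z)$ with respect to that order. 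Its proof is the mirror image of the proof of Proposition~\ref{Proposition: Nonunital differential OPI xdy is GSB in  free nonunital operated algebra}, with the roles of the two arguments of $\phi$ exchanged and every one-sided statement read from the other side. For an inclusion composition $(\phi(u,v),g)^q_w$ with $w=\lfloor u\rfloor v=q|_{\bar g}$ one distinguishes three cases according to the leftmost factor of $q$: if it is $\lfloor p\rfloor$ with $p\in\frakS^\star(Z)$ (so $\bar g$ lies inside the bracketed argument $u$), the composition rewrites to $\phi(p|_{\bar g-g},v)-R(p|_g,v)$; if it is $\lfloor u\rfloor$ followed by some $q'$ containing $\star$ (so $\bar g$ lies inside the tail $v=q'|_{\bar g}$), it rewrites to $\phi(u,q'|_{\bar g-g})-R(u,q'|_g)$; and if it is $\star$ itself, then $v=v''t$ with $g=\phi(u,v'')$ and the composition equals $R(u,v'')t-R(u,v''t)$. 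The first two are readily trivial modulo $(S_\phi(\frakS(Z)),w)$ using the reversed inequality of the first paragraph and $\overline{R(a,b)}<\lfloor a\rfloor b=\overline{\phi(a,b)}$, and the third is trivial by the joinability hypothesis together with Lemma~\ref{Lemma:joinable gives trivial modulo}. An intersection composition $(\phi(u_1,u_2),\phi(v_1,v_2))^{a,b}_w$ with $w=\lfloor u_1\rfloor u_2a=b\lfloor v_1\rfloor v_2$ is handled as before by introducing $h=\phi(u_1,u_2a)$ and writing $(\phi(u_1,u_2),\phi(v_1,v_2))^{a,b}_w=-(h,\phi(u_1,u_2))^{\star a}_w+(h,\phi(v_1,v_2))^{b\star}_w$, which reduces it to two inclusion compositions already treated.

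Finally I would check the joinability hypothesis for $\phi$ of type $(\caln4')$, where $R(x,y)=\lfloor xy\rfloor$: here $R(u,vw)=\lfloor uvw\rfloor$ and $R(u,v)w=\lfloor uv\rfloor w$, and the rule of $\Pi_\phi(Z)$ arising from $\phi(uv,w)$ gives $\lfloor uv\rfloor w\to\lfloor uvw\rfloor$, so $R(u,v)w\stackrel{\ast}{\rightarrow}_{\Pi_\phi(Z)}R(u,vw)$ and hence $R(u,vw)\downarrow_{\Pi_\phi(Z)}R(u,v)w$ for all $u,v,w\in\frakS(Z)$; this is the mirror of the $(\caln3')$ part of Lemma~\ref{Lemma: Nonunital differential OPI is GS}. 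Feeding this into the left counterpart of Proposition~\ref{Proposition: Nonunital differential OPI xdy is GSB in  free nonunital operated algebra} yields that $S_\phi(\frakS(Z))$ is an operated GS basis in $\bk\frakS(Z)$ with respect to $\leq_{\mathrm{dl'}}$, as claimed. I do not expect a genuine obstacle here --- the whole point is that $(\caln4')$ is the left--right reflection of $(\caln3')$ --- and the only real care needed is in keeping the ``left versus right'' bookkeeping straight throughout the composition analysis and in verifying that passing to $\leq_{\mathrm{dl'}}$ is exactly what makes $\lfloor x\rfloor y$ the leading monomial while keeping all terms produced in the reductions strictly below $w$; both points follow immediately from the reversed inequality $\lfloor u\rfloor v>_{\mathrm{dl'}}\lfloor uv\rfloor$ and the fact, recorded in Remark~\ref{remark: dl'}, that $\leq_{\mathrm{dl'}}$ is a monomial order.
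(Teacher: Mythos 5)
Your proposal is correct and follows exactly the route the paper intends: the paper states this lemma with the remark that the proof is ``similar to that of Type $(\caln3')$ under the order $\leq_{\mathrm{dl'}}$,'' i.e.\ the left--right mirror of Proposition~\ref{Proposition: Nonunital differential OPI xdy is GSB in  free nonunital operated algebra} combined with the $(\caln3')$ part of Lemma~\ref{Lemma: Nonunital differential OPI is GS}, which is precisely what you carry out. Your computation $\deg_G(\lfloor u\rfloor v)-\deg_G(\lfloor uv\rfloor)=\deg_X(v)>0$ for the modified $G$-degree, the three-case inclusion analysis, the reduction of intersection compositions via $h=\phi(u_1,u_2a)$, and the joinability check $\lfloor uv\rfloor w\to\lfloor uvw\rfloor$ are all accurate, and in fact supply more detail than the paper itself records.
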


\begin{prop}
	Consider an OPI  $\phi$ of type $(\caln1'_a)$ (resp. $(\caln2'_a)$, $(\caln1'_c)=(\caln2'_b)$). For each set $Z$,   $S_{\phi}(\frakS(Z))$ is an operated	GS basis in $\bk\frakS(Z)$ with respect to $\leq_{\mathrm{db}}$
\end{prop}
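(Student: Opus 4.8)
The plan is to verify directly, from the definition of an operated GS basis, that every intersection and inclusion composition of elements of $S_{\phi}(\frakS(Z))$ is trivial modulo the relevant overlap word. Write $\phi(x,y)=\overline{\phi}-N(x,y)$, where $\overline{\phi}=\lfloor x\rfloor\lfloor y\rfloor$ for types $(\caln1'_a)$ and $(\caln2'_a)$ and $\overline{\phi}=\lfloor xy\rfloor$ for type $(\caln1'_c)=(\caln2'_b)$; in every case each monomial of $N(x,y)$ has strictly smaller $P$-degree than $\overline{\phi}$. Since $\leq_{\db}$ refines the $P$-degree preorder $\leq_{\dgp}$ and restricts to a monomial order on $\frakS(Z)$ (Proposition~\ref{GSB for operated algebras considered as GSB for unital operated algebras}), for all $u_1,u_2\in\frakS(Z)$ the element $\phi(u_1,u_2)$ is nonzero with leading monomial $\overline{\phi}(u_1,u_2)$; consequently the rewriting system
\[
\Pi_{\phi}(Z)=\{\,q|_{\overline{\phi}(u_1,u_2)}\to q|_{N(u_1,u_2)}\ \mid\ u_1,u_2\in\frakS(Z),\ q\in\frakS^\star(Z)\,\},
\]
which is the system of Lemma~\ref{Lemma:joinable gives trivial modulo} for $\Phi=\{\phi\}$, is compatible with $\leq_{\db}$. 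By that lemma it then suffices, for each composition with overlap word $w$, to contract $w$ by the two rules producing the ambiguity and to show that the two one-step reducts are $\Pi_{\phi}(Z)$-joinable; their leading monomials are automatically $<w$ by the $P$-degree drop.

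Consider first type $(\caln1'_c)$. Its leading monomial $\lfloor xy\rfloor$ has breadth one, so there are no intersection compositions. For an inclusion composition, $\overline{\phi(u_1,u_2)}=\lfloor u_1u_2\rfloor=q|_{\lfloor v_1v_2\rfloor}$ forces either $q=\star$, whence $u_1u_2=v_1v_2$ and the composition $\phi(u_1,u_2)-\phi(v_1,v_2)=\lambda(v_1v_2-u_1u_2)=0$; or $q=\lfloor q'\rfloor$ with $u_1u_2=q'|_{\lfloor v_1v_2\rfloor}$. In the latter case put $m:=q'|_{v_1v_2}$, of breadth $\ge 2$; using $\lfloor v_1v_2\rfloor=\phi(v_1,v_2)+\lambda v_1v_2$ and $\lfloor m'm''\rfloor=\phi(m',m'')+\lambda m'm''$ for a factorisation $m=m'm''$, the composition reduces after cancellation to $\lambda(\phi(m',m'')-q'|_{\phi(v_1,v_2)})$, which is trivial modulo $(S_{\phi}(\frakS(Z)),w)$ since $\overline{\phi(m',m'')}=\lfloor m\rfloor<\lfloor u_1u_2\rfloor=w$ and $\overline{q'|_{\phi(v_1,v_2)}}=u_1u_2<w$. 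This settles $(\caln1'_c)=(\caln2'_b)$; note that the reason the present argument does not go through Theorem~\ref{Thm: GS basis for free nonunital Phi algebra over nonunital alg} is precisely the failure, recorded before Theorem~\ref{thm:GS basis of nonunital phi4}, of that theorem's subword hypothesis for this OPI.

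For types $(\caln1'_a)$ and $(\caln2'_a)$ the leading monomial $\lfloor u_1\rfloor\lfloor u_2\rfloor$ has breadth two, so a breadth count leaves only the single family of intersection compositions with $w=\lfloor u_1\rfloor\lfloor u_2\rfloor\lfloor v_2\rfloor$ and $v_1=u_2$, together with the inclusion compositions in which $\lfloor v_1\rfloor\lfloor v_2\rfloor$ sits inside $u_1$ or inside $u_2$. For such an inclusion composition, say $u_1=q_1|_{\lfloor v_1\rfloor\lfloor v_2\rfloor}$ (the other case being symmetric), one computes that it equals $\lfloor q_1|_{N(v_1,v_2)}\rfloor\lfloor u_2\rfloor-N(u_1,u_2)$, and both summands rewrite under $\Pi_{\phi}(Z)$ to $N(q_1|_{N(v_1,v_2)},u_2)$---the first by contracting each redex $\lfloor q_1|_{m}\rfloor\lfloor u_2\rfloor$ with $m$ a monomial of $N(v_1,v_2)$, the second by contracting, inside the occurrence of $u_1$ in each monomial of $N(u_1,u_2)$, the redex $\lfloor v_1\rfloor\lfloor v_2\rfloor$; hence they are joinable and Lemma~\ref{Lemma:joinable gives trivial modulo} applies. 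There remains exactly one critical pair, from the intersection composition: contracting $w$ on the left by $\lfloor u_1\rfloor\lfloor u_2\rfloor\to N(u_1,u_2)$ and on the right by $\lfloor u_2\rfloor\lfloor v_2\rfloor\to N(u_2,v_2)$, triviality of the composition reduces to the joinability
\[
N(u_1,u_2)\,\lfloor v_2\rfloor\ \downarrow_{\Pi_{\phi}(Z)}\ \lfloor u_1\rfloor\,N(u_2,v_2)\qquad(u_1,u_2,v_2\in\frakS(Z)).
\]

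I expect this last joinability to be the main obstacle. It is the analogue, for the equivalent OPIs of Theorem~\ref{Lemma: equivalent OPIs of nonunital diff type} under the order $\leq_{\db}$, of the weak-associativity/confluence condition built into the definition of differential type, and checking it is a finite but bookkeeping-heavy computation: one reduces $\lfloor u_1\rfloor\lfloor u_2\rfloor\lfloor v_2\rfloor$ along each of the two one-step reducts $N(u_1,u_2)\lfloor v_2\rfloor$ and $\lfloor u_1\rfloor N(u_2,v_2)$ and checks that the two results coincide, the surviving terms matching exactly because of the coefficient constraint $\lambda^2=\lambda\mu+\nu$ for $(\caln1'_a)$, and because of the analogous relations among the coefficients of $(\caln2'_a)$ (where the swapped monomials $\lfloor yx\rfloor$ and $yx$ lengthen the bookkeeping). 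Once this is verified, every composition of elements of $S_{\phi}(\frakS(Z))$ is trivial modulo its overlap word, so $S_{\phi}(\frakS(Z))$ is an operated GS basis in $\bk\frakS(Z)$ with respect to $\leq_{\db}$; by the Composition-Diamond Lemma (Theorem~\ref{Thm: nonunital CD}) one also obtains the linear basis $\Irr(S_{\phi}(\frakS(Z)))$ of $\bk\frakS(Z)\slash\langle S_{\phi}(\frakS(Z))\rangle_{\OpAlg}$.
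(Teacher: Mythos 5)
Your overall strategy is the right one and is essentially the paper's: the paper disposes of types $(\caln1'_a)$ and $(\caln2'_a)$ by citing Lemmas~\ref{Lem: differential type 1a is GS} and~\ref{Lem: differential type 2a is GS}, which verify precisely the two conditions of Theorem~\ref{Thm: verify a GS OPI} --- condition (b) being your observation that a proper inclusion forces the subword to sit inside $u_1$ or $u_2$, and condition (a) being exactly the critical-pair joinability $N(u_1,u_2)\lfloor v_2\rfloor\downarrow_{\Pi_\phi(Z)}\lfloor u_1\rfloor N(u_2,v_2)$ that you isolate --- and then handles $(\caln1'_c)=(\caln2'_b)$ by direct inspection, as you do. Your direct verification of $(\caln1'_c)$ is correct (the breadth count ruling out intersection compositions, and the reduction of the inclusion composition to $\lambda(\phi(m',m'')-q'|_{\phi(v_1,v_2)})$ with both leading monomials of strictly smaller $P$-degree than $w$, are sound), and your reduction of the intersection compositions for the other two types to the single critical pair is also correct.

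The gap is that the one step carrying the actual mathematical content is not performed: you explicitly defer the confluence check $N(u_1,u_2)\lfloor v_2\rfloor\downarrow_{\Pi_\phi(Z)}\lfloor u_1\rfloor N(u_2,v_2)$, saying only that you ``expect'' the surviving terms to match because of $\lambda^2=\lambda\mu+\nu$. That computation is the entire proof of Lemma~\ref{Lem: differential type 1a is GS}: one must reduce both sides inside $\bk\frakS(Z)$, observe that the coefficient of $uv\lfloor w\rfloor$ on the left-hand reduct is $\lambda^2-\lambda\mu-\nu$ (which vanishes exactly by the hypothesis on the scalars), and check that the remaining six monomials agree; the analogous computation for $(\caln2'_a)$, with the reversed monomials $\lfloor vu\rfloor$, $vu$, etc., must be done separately since the coefficient pattern is different. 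Without writing out at least one of these reductions, the proposal asserts rather than proves the proposition for the two main types, and a reader cannot tell whether the constraint on $(\lambda,\mu,\nu)$ really suffices or whether an extra relation is needed. Your instinct about where the difficulty lies and why it resolves is accurate, but the proof is incomplete until that verification is written down.
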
	
\begin{proof}
 The case  $(\caln1'_a)$ (resp. $(\caln2'_a)$) follows from  Lemma~\ref{Lem: differential type 1a is GS} (resp. Lemma~\ref{Lem: differential type 2a is GS}) and the case $(\caln1'_c)=(\caln2'_b)$ can be obtained  by direct inspection.
\end{proof}
	
We have proved the following result:
\begin{thm}\label{Thm: nonunital differential type are GS}
	Each OPI in Theorem~\ref{Lemma: equivalent OPIs of nonunital diff type} is  nonunital GS with respect to either  $\leq_{\rm{dl}}$, $\leq_{\rm{dl'}}$ or $\leq_{\rm{db}}$.
\end{thm}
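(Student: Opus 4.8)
The statement is a roll-up of the three intermediate results just proved, so the plan is to \emph{assemble} them rather than to prove anything genuinely new. I would organize the argument around the list in Theorem~\ref{Lemma: equivalent OPIs of nonunital diff type}, case by case, since each nonunital differential type OPI has been rewritten there into one of a handful of normal forms $(\caln1'_a)$, $(\caln1'_b)$, $(\caln1'_c)=(\caln2'_b)$, $(\caln2'_a)$, $(\caln3')$, $(\caln4')$. For each normal form I would point to exactly one of the three monomial orders $\leq_{\dl}$, $\leq_{\dl'}$, $\leq_{\db}$ and cite the corresponding proposition or lemma.

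\textbf{Key steps.} First, types $(\caln1'_b)$ and $(\caln3')$: by Proposition~\ref{Lemma: Nonunital differential OPI is gronber basis in  free nonunital operated algebra}, for any set $Z$ the set $S_{\phi}(\frakS(Z))$ is an operated GS basis with respect to $\leq_{\dl}$, hence $\phi$ is nonunital GS with respect to $\leq_{\dl}$. Second, type $(\caln4')$: by the lemma immediately following Proposition~\ref{Lemma: Nonunital differential OPI is gronber basis in  free nonunital operated algebra}, $S_{\phi}(\frakS(Z))$ is an operated GS basis with respect to $\leq_{\dl'}$, so $\phi$ is nonunital GS with respect to $\leq_{\dl'}$. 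Third, types $(\caln1'_a)$, $(\caln2'_a)$ and $(\caln1'_c)=(\caln2'_b)$: by the preceding proposition (invoking Lemmas~\ref{Lem: differential type 1a is GS} and \ref{Lem: differential type 2a is GS} for the first two and direct inspection for the last), $S_{\phi}(\frakS(Z))$ is an operated GS basis with respect to $\leq_{\db}$, so each of these is nonunital GS with respect to $\leq_{\db}$. Finally, I would note that by Theorem~\ref{Lemma: equivalent OPIs of nonunital diff type} every OPI in Conjecture~\ref{Ex: list of OPIs of nonunital diff  type} --- i.e. $(\caln1)$ through $(\caln4)$ --- is equivalent (Definition~\ref{Def: equivalent OPIs}) to one of the above normal forms, and being nonunital GS is stable under this equivalence because the defining property of being GS is phrased in terms of the operated ideal $\langle S_{\Phi}(\frakS(Z))\rangle_{\OpAlg}$, which equivalence preserves; combining the three steps then exhausts the list.

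\textbf{Main obstacle.} Conceptually there is no obstacle left once the three component results are in hand --- the proof is a bookkeeping argument. The one point requiring care is the last sentence of the assembly: one must make sure that "nonunital GS" really does transfer along the equivalence of Definition~\ref{Def: equivalent OPIs}, since a priori "GS-ness" is stated for a fixed generating family $S_{\phi}(\frakS(Z))$, not for the ideal it generates. This is exactly why Theorem~\ref{Lemma: equivalent OPIs of nonunital diff type} was stated with "equivalent" in the strong sense of equal operated ideals for every $Z$; with that in place the transfer is immediate. I therefore expect the proof to be a two-or-three-sentence citation, with the only substantive remark being that the three orders $\leq_{\dl}$, $\leq_{\dl'}$, $\leq_{\db}$ together cover all the normal forms, and hence all nonunital differential type OPIs.
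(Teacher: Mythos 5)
Your proposal matches the paper's proof: the theorem is stated there simply as a summary (``We have proved the following result'') of exactly the three component results you cite, assigned to the orders $\leq_{\mathrm{dl}}$, $\leq_{\mathrm{dl'}}$ and $\leq_{\mathrm{db}}$ in precisely the way you describe, so the case-by-case assembly is the intended argument.

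One caveat about your final assembly step. Transferring GS-ness from the primed normal forms back to the original OPIs $(\caln1)$--$(\caln4)$ via the equivalence of Definition~\ref{Def: equivalent OPIs} is both unnecessary and, as justified, incorrect. It is unnecessary because the theorem only asserts GS-ness for the OPIs appearing in Theorem~\ref{Lemma: equivalent OPIs of nonunital diff type}, i.e.\ the primed normal forms, and that is how the result is used afterwards. It is incorrect because being an operated GS basis is a property of the specific generating family $S_{\Phi}(\frakS(Z))$ (all of its compositions must be trivial modulo that family), not a property of the operated ideal $\langle S_{\Phi}(\frakS(Z))\rangle_{\OpAlg}$ alone; two families generating the same ideal need not both be GS bases, so equality of ideals does not transport the GS property. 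Indeed, the raison d'\^etre of the passage to equivalent primed forms is precisely that the original forms fail to be GS with respect to orders satisfying the hypotheses of Theorem~\ref{Thm: GS basis for free nonunital Phi algebra over nonunital alg}. Dropping that last sentence leaves a proof identical to the paper's.
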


\subsection{Operated GS bases of free nonunital differential type algebras over algebras} \

 By Theorem~\ref{Thm: nonunital differential type are GS},  Theorem~\ref{Thm: GS basis for free nonunital Phi algebra over nonunital alg} can be applied directly  to all differential type OPIs  in Theorem~\ref{Lemma: equivalent OPIs of nonunital diff type} (except Type $(\caln1'_c)=(\caln2'_b)$) to obtain operated GS bases for free nonunital  differential algebras over algebras.
\begin{thm}\label{thm: GS basis for nonunital differential type}
	Let $Z$ be a set, $A=\bk \cals(Z)\slash I_A$ an algebra with a GS  basis $G$ with respect to $\leq_{\rm{dlex}}$.
	\begin{itemize}
	 \item[(a)] Let $\phi_1$ be an OPI of type $(\caln1'_a)$ or $(\caln2'_a)$.
	Then $S_{\phi_1}(\frakS(Z))\cup G$ is an operated GS  basis of $\langle S_{\phi_1}(\frakS(Z))\cup I_A\rangle_\OpAlg$ in $\bk\frakS(Z) $ with respect to $\leq_{\rm{db}}$. 
	
	\item[(b)] Let $\phi_2$ be an OPI of type $(\caln1'_b)$ or $(\caln3')$.  	Then $S_{\phi_2}(\frakS(Z))\cup G$ is an operated GS  basis of $\langle S_{\phi_2}(\frakS(Z))\cup I_A\rangle_\OpAlg$ in $\bk\frakS(Z) $ with respect to $\leq_{\rm{dl}}$. 
		
	\item[(c)] Let $\phi_3$ be an OPI of type $(\caln4')$. 	Then $S_{\phi_3}(\frakS(Z))\cup G$ is an operated GS  basis of $\langle S_{\phi_3}(\frakS(Z))\cup I_A\rangle_\OpAlg$ in $\bk\frakS(Z) $ with respect to $\leq_{\rm{dl'}}$. 
		\end{itemize}
\end{thm}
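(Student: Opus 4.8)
The plan is to invoke Theorem~\ref{Thm: GS basis for free nonunital Phi algebra over nonunital alg} once for each of (a), (b), (c), with $X=\{x,y\}$, $\Phi=\{\phi_i\}$, and the monomial order $\leq$ taken to be $\leq_{\db}$ in (a), $\leq_{\dl}$ in (b), and $\leq_{\dl'}$ in (c). That theorem has four hypotheses to be verified: that $\Phi$ is operated GS on $Z$ with respect to $\leq$; that $G$ is a GS basis of $I_A$ with respect to the restriction of $\leq$ to $\cals(Z)$; that the leading monomial of each $\phi_i$ contains no subword in $\cals(X)\setminus X$; and that for all $u,v\in\frakS(Z)$ the element $\phi_i(u,v)$ is either zero or has leading monomial $\overline{\phi_i}(u,v)$. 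Once all four are in place the three conclusions follow verbatim.

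First I would clear the first two hypotheses, which cost nothing. That $\Phi$ is operated GS with respect to the prescribed order is exactly Theorem~\ref{Thm: nonunital differential type are GS} (types $(\caln1'_a)$, $(\caln2'_a)$ with $\leq_{\db}$; types $(\caln1'_b)$, $(\caln3')$ with $\leq_{\dl}$; type $(\caln4')$ with $\leq_{\dl'}$). For the second, I would observe that on $\cals(Z)=\frakS_0(Z)$ every auxiliary pre-order feeding into $\leq_{\db}$, $\leq_{\dl}$ and $\leq_{\dl'}$ --- the $P$-degree, the $P$-breadth and the $G$-degree --- is identically zero, so each of these three orders restricts on $\cals(Z)$ to the plain degree-lexicographical order $\leq_{\mathrm{dlex}}$; since $G$ is assumed to be a GS basis of $I_A$ with respect to $\leq_{\mathrm{dlex}}$, this is precisely what Theorem~\ref{Thm: GS basis for free nonunital Phi algebra over nonunital alg} demands.

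Next I would settle the two leading-monomial hypotheses by a short case split along Theorem~\ref{Lemma: equivalent OPIs of nonunital diff type}. For $(\caln1'_b)$ and $(\caln3')$ the leading monomial of $\phi_i$ is $x\lfloor y\rfloor$, and for every $u,v\in\frakS(Z)$ the element $\phi_i(u,v)$ is nonzero with leading monomial $u\lfloor v\rfloor$; both facts are already recorded just before Proposition~\ref{Proposition: Nonunital differential OPI xdy is GSB in  free nonunital operated algebra} via Theorem~\ref{Thm:The well order udl is a monomial order} and Proposition~\ref{Lemma: inequal  relation}. For $(\caln4')$ the identical argument, carried out with the variant order $\leq_{\dl'}$ of Remark~\ref{remark: dl'} (which forces $\lfloor uv\rfloor<_{\dl'}\lfloor u\rfloor v$), shows the leading monomial of $\phi_3$ is $\lfloor x\rfloor y$ and that of $\phi_3(u,v)$ is $\lfloor u\rfloor v$. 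For $(\caln1'_a)$ and $(\caln2'_a)$ I would argue directly: among the monomials of $\phi_1(u,v)$, only $\lfloor u\rfloor\lfloor v\rfloor$ has $P$-degree $\deg_P(u)+\deg_P(v)+2$, every other monomial having $P$-degree at most $\deg_P(u)+\deg_P(v)+1$; since $\leq_{\db}$ compares $P$-degree first, $\overline{\phi_1(u,v)}=\lfloor u\rfloor\lfloor v\rfloor$ and $\phi_1(u,v)$ never vanishes. In all cases the leading monomial of $\phi_i$ is one of $x\lfloor y\rfloor$, $\lfloor x\rfloor y$, $\lfloor x\rfloor\lfloor y\rfloor$, in each of which the two generators are separated by a half-bracket, so no subword of it lies in $\cals(X)\setminus X$. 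Plugging these verifications into Theorem~\ref{Thm: GS basis for free nonunital Phi algebra over nonunital alg} yields (a), (b) and (c).

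The only genuinely delicate point is the bookkeeping in this last step: one must be sure that the monomial order used to certify the GS property of $\Phi$ in Theorem~\ref{Thm: nonunital differential type are GS} is literally the same order under which the leading-monomial conditions are checked --- this is exactly why (a) is stated for $\leq_{\db}$ rather than $\leq_{\dl}$, and (c) for $\leq_{\dl'}$ rather than $\leq_{\dl}$ --- and that the restriction of that order to $\cals(Z)$ really is the $\leq_{\mathrm{dlex}}$ with respect to which $G$ is assumed GS. No new compositions need to be computed; all the combinatorial work has been absorbed into Theorem~\ref{Thm: nonunital differential type are GS} and Theorem~\ref{Thm: GS basis for free nonunital Phi algebra over nonunital alg}.
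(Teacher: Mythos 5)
Your proposal is correct and follows exactly the route the paper takes: the paper derives this theorem in one sentence by combining Theorem~\ref{Thm: nonunital differential type are GS} with Theorem~\ref{Thm: GS basis for free nonunital Phi algebra over nonunital alg}, and your verifications of the remaining hypotheses (the restriction of each order to $\cals(Z)$ being $\leq_{\rm dlex}$, the absence of subwords in $\cals(X)\setminus X$ in the leading monomials, and the stability of the leading monomial under substitution via Proposition~\ref{Lemma: inequal  relation} and the $P$-degree comparison) are precisely the details the paper leaves implicit.
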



Then by Theorem~\ref{Thm: nonunital CD}, we obtain linear bases for these algebras.
In the statement of the following result, write $\lfloor u\rfloor^{(1)}=\lfloor u\rfloor$ and $\lfloor u\rfloor^{(k+1)}=\lfloor\lfloor u\rfloor^{(k)}\rfloor$ for $k\geq1$.  For a subset $G\subset \cals(Z)$, we also introduce two notations
$$\Irr_{\cals}(G):=\cals(Z)\backslash\lbrace u\bar{g}v~|~g\in G, u,v\in\calm(Z)\rbrace$$
and
$$\Irr_{\calm}(G):=\calm(Z)\backslash\lbrace u\bar{g}v~|~g\in G, u,v\in\calm(Z)\rbrace.$$
Note that $\Irr_{\calm}(G)=\Irr_{\cals}(G)\cup \{1\}$.
In the case that $G$ is a GS basis of $\cals(Z)$ (resp.$\calm(Z) $),  $\Irr_{\cals}(G)$ (resp. $\Irr_{\calm}(G)$) is a linear basis of
$A=\bk \cals(Z)/(G)$  (resp. $A=\bk \calm(Z)/(G)$).

\begin{thm}\label{thm: linear basis of nonunital differential type algebras over algebras}
	With the same setup as in Theorem~\ref{thm: GS basis for nonunital differential type},  we have the following statements:
\begin{itemize}
	\item[(a)] 	Let $B_0=\Irr(G)$ and for   $n\geq 1$, define the set $B_{n}$  inductively as
	$$
		B_0\bigcup  \bigcup_{r\geq1}\Big\lbrace  u_{0}\lfloor v_{1}\rfloor u_{1}    \cdots\lfloor v_{r} \rfloor u_{r}~\big|~u_0,u_r\in\Irr_{\calm}(G),
		 	  u_i\in\Irr_{\cals}(G),1\leq i\leq r-1,  v_j\in B_{n-1}, 1\leq j\leq r  \Big\rbrace .  	$$
	Then the set
	$B= \bigcup_{n\geq0}B_n$ is a  linear basis of the free $\phi_1$-algebra $\calf^{\phi_1\zhx\Alg}_{\Alg}(A)$ over $A$.

	\item[(b)] The union of $\Irr(G)$ with
$$  \bigcup_{r\geq1}\left\lbrace {\lfloor\lfloor\cdots\lfloor\lfloor u_{r}\rfloor^{(k_r)} u_{r-1}\rfloor^{(k_{r-1})} \cdots u_{1}\rfloor^{(k_1)}  u_{0}~\big|~u_{0}\in\Irr_{\calm}(G), u_i\in\Irr_{\cals}(G), k_i\geq1, 1\leq i\leq r}\right\rbrace
	 $$
	is a linear basis of the free $\phi_2$-algebra $\calf^{\phi_2\zhx\Alg}_{\Alg}(A)$ over $A$.
	
	\item[(c)] The union of $\Irr(G)$ with
	$$\bigcup_{r\geq1}\left\lbrace { u_{0} \lfloor u_{1}  \cdots \lfloor u_{r-1}\lfloor u_{r}\rfloor^{k_r}\rfloor^{k_{r-1}}\cdots\rfloor^{k_1}~\big|~u_{0}\in\Irr_{\calm}(G), u_i\in\Irr_{\cals}(G), k_i\geq1, 1\leq i\leq r}\right\rbrace$$
	is a linear  basis of the free $\phi_3$-algebra $\calf^{\phi_3\zhx\Alg}_{\Alg}(A)$ over $A$.
\end{itemize}		
\end{thm}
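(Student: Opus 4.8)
The statement is a direct consequence of the Composition--Diamond Lemma in the nonunital setting (Theorem~\ref{Thm: nonunital CD}) applied to the operated GS bases produced in Theorem~\ref{thm: GS basis for nonunital differential type}. For each of the three cases, the strategy is the same: identify the set $\Irr(S_\phi(\frakS(Z))\cup G)$ of monomials in $\frakS(Z)$ that contain no subword equal to the leading monomial of any element of $S_\phi(\frakS(Z))\cup G$, and check that this set coincides with the explicitly described set $B$ (resp.\ the two unions in (b) and (c)). Since by Theorem~\ref{thm: GS basis for nonunital differential type} the relevant set is an operated GS basis of $\langle S_\phi(\frakS(Z))\cup I_A\rangle_\OpAlg$ with respect to the appropriate monomial order ($\leq_{\dl}$, $\leq_{\dl'}$, or $\leq_{\db}$), Theorem~\ref{Thm: nonunital CD} then gives that $\Irr(S_\phi(\frakS(Z))\cup G)$ is a $\bk$-basis of $\bk\frakS(Z)/\langle S_\phi(\frakS(Z))\cup I_A\rangle_\OpAlg$, which by Proposition~\ref{Prop: from algebra to Phi algebra with relations}(a) is exactly the free $\phi$-algebra $\calf^{\phi\zhx\Alg}_{\Alg}(A)$ over $A$.

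\textbf{Identification of the irreducible monomials.} The combinatorial heart of each case is to describe $\Irr(S_\phi(\frakS(Z))\cup G)$ concretely. First, being irreducible with respect to $G$ (as a subset of $\cals(Z)$, hence the associative leading monomials) means the ``letters'' occurring between consecutive $\lfloor\ \rfloor$'s, as well as the outermost prefix/suffix, are words in $\Irr_{\cals}(G)$ or $\Irr_{\calm}(G)$ — this explains the appearance of $\Irr_{\cals}(G)$ and $\Irr_{\calm}(G)$ in the statement, with $\Irr_{\calm}$ (allowing the empty word) used precisely at the two ends. Second, being irreducible with respect to $S_\phi(\frakS(Z))$ forbids the leading monomial of $\phi$ as a subword. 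In case (b), for $\phi$ of type $(\caln1'_b)$ or $(\caln3')$ the leading monomial is $u\lfloor v\rfloor$ (by Proposition~\ref{Lemma: inequal relation} and the discussion preceding Proposition~\ref{Proposition: Nonunital differential OPI xdy is GSB in free nonunital operated algebra}); forbidding any subword of the form $u\lfloor v\rfloor$ forces every bracket to occur at the very left of whatever stands immediately to its left, which recursively produces nested left-justified brackets $\lfloor\lfloor\cdots\lfloor u_r\rfloor^{(k_r)}u_{r-1}\rfloor^{(k_{r-1})}\cdots u_1\rfloor^{(k_1)}u_0$. In case (c), $\phi$ of type $(\caln4')$ has leading monomial $\lfloor x\rfloor y$ under $\leq_{\dl'}$ (Remark~\ref{remark: dl'}), and the mirror-image argument yields the right-justified nested form. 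In case (a), $\phi$ of type $(\caln1'_a)$ or $(\caln2'_a)$ has leading monomial $\lfloor x\rfloor\lfloor y\rfloor$ under $\leq_{\db}$, so the forbidden pattern is two adjacent brackets; hence between any two consecutive brackets there must be at least one letter of $Z$, which is exactly the constraint $u_i\in\Irr_{\cals}(G)$ (nonempty) for the interior indices $1\le i\le r-1$, while the nesting in the $v_j\in B_{n-1}$ is handled by the inductive definition of $B_n$. I would carry this out by a careful induction on the $P$-degree (number of brackets), at each stage peeling off the outermost layer.

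\textbf{Assembling the argument.} For each case: (i) invoke Theorem~\ref{thm: GS basis for nonunital differential type} to get the operated GS basis; (ii) apply Theorem~\ref{Thm: nonunital CD}(a)$\Rightarrow$(b) to conclude $\Irr(S_\phi(\frakS(Z))\cup G)$ is a linear basis of $\bk\frakS(Z)/\langle S_\phi(\frakS(Z))\cup I_A\rangle_\OpAlg$; (iii) identify this quotient with $\calf^{\phi\zhx\Alg}_{\Alg}(A)$ via Proposition~\ref{Prop: from algebra to Phi algebra with relations}(a), using that $G$ is a GS basis of $I_A$ so that $\Irr_{\cals}(G)$ is a basis of $A$; (iv) show that the explicit set in the statement equals $\Irr(S_\phi(\frakS(Z))\cup G)$ by the bracket-placement analysis above, checking the two inclusions — every monomial of the described shape avoids all leading monomials, and conversely every irreducible monomial can be written uniquely in the described shape. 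The uniqueness of the expression $u=u_0\lfloor u_1^*\rfloor u_1\cdots\lfloor u_r^*\rfloor u_r$ recalled before Definition~\ref{Def: deg-lex order} guarantees the parametrization is injective. The main obstacle is step (iv) in case (b)/(c): verifying that forbidding the single subword $u\lfloor v\rfloor$ (with $u,v$ arbitrary nonempty) is genuinely equivalent to the ``all brackets left-justified, nested'' description requires an honest induction, because one must check both that no $u\lfloor v\rfloor$ subword hides inside a deeper bracket and that the nesting exponents $k_i\ge 1$ exhaust all possibilities; the bookkeeping of where $\Irr_{\calm}$ versus $\Irr_{\cals}$ applies (only the single outermost/rightmost factor $u_0$ may be empty) is the delicate point. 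Case (a) is comparatively easier since the forbidden pattern is local (two adjacent brackets), but one still needs the recursion in $B_n$ to match the nesting.
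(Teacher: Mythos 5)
Your proposal is correct and follows exactly the route the paper intends (the paper merely states ``the proof is straightforward and is left to the reader'' after noting that Theorem~\ref{Thm: nonunital CD} applies): invoke the operated GS bases of Theorem~\ref{thm: GS basis for nonunital differential type}, apply the Composition--Diamond Lemma to get that $\Irr(S_{\phi}(\frakS(Z))\cup G)$ is a linear basis, and match that irreducible set with the explicit descriptions by analyzing the forbidden leading monomials $\lfloor u\rfloor\lfloor v\rfloor$, $u\lfloor v\rfloor$, and $\lfloor u\rfloor v$ respectively. Your identification of the leading monomials in each case and of where $\Irr_{\calm}(G)$ versus $\Irr_{\cals}(G)$ enters is accurate, so nothing further is needed.
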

The proof is straightforward and is left to the reader.

\begin{remark}
	Notice that the linear basis of a free nonunital differential type algebra  over a nonunital algebra  given in  Theorem~\ref{thm: linear basis of nonunital differential type algebras over algebras} (a) has the same form as that of  free nonunital Rota-Baxter type algebras over algebras, because  the corresponding GS bases share the same set of leading monomials,  consisting of all $\lfloor u\rfloor\lfloor v\rfloor$ and $\bar g$ with $u,v\in \frakS(Z), g\in G$. The phenomenon also occurs for the unital case; see Theorem~\ref{thm: linear basis of unital differential type algebra over algebra} (a) and  \cite[Theorem~6.7]{QQWZ}.
	
	As a special case of Rota-Baxter type, the same  construction of linear bases for the   free  nonunital Nijenhuis algebras over algebras appeared in \cite{LeiGuo}.
\end{remark}

Notice that one cannot  apply Theorem~\ref{Thm: GS basis for free nonunital Phi algebra over nonunital alg} to  $\lfloor xy\rfloor-\lambda xy$ which is of type $(\caln1'_c)=(\caln2'_b)$ in Theorem~\ref{Lemma: equivalent OPIs of nonunital diff type}, as each term   has a subword $xy$. Nevertheless,
we could deal  with  this case  directly.

We need to introduce the  notions of linearly self-reduced sets and bases.

\begin{defn}[{\cite[Definition~1.2.1.2]{BremnerDotsenko}}]
	Let $V$ be a vector space with a chosen   well ordered basis. Each nonzero element of $V$, expressed as  a linear combination of basis elements,  has a leading basis element   which  is maximal among all basis elements having nonzero coefficients in this linear combination.

Let $S$ be a subset of   $V$.
 A basis element  is said to be linearly reduced with respect to $S$ if  it is not a leading basis element of any element of $S$. More generally, an element $f \in V$ is said to be linearly reduced with respect to $S$, if  all  basis monomials  having nonzero coefficients in $f$   are linearly reduced with respect to $S$.
	
	A subset $S \subset V$ is said to be linearly self-reduced if  each element $s \in S$ has  its   leading basis element with  coefficient $1$ and  is linearly reduced with respect to $S \backslash\{s\}$.
\end{defn}

\begin{prop}[{\cite[Proposition~1.2.1.6]{BremnerDotsenko}}]\label{prop: each subspace has a linearly self-reduced basis}
	Let $V$ be a vector space  with a chosen well ordered basis. Every  finite dimensional subspace $S\subset V$ has a unique linearly self-reduced basis.
\end{prop}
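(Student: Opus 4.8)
# Proof Proposal for Proposition~\ref{prop: each subspace has a linearly self-reduced basis}

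The plan is to prove existence and uniqueness of a linearly self-reduced basis for a finite-dimensional subspace $S \subset V$ by induction on $\dim S$, exploiting the well order on the chosen basis of $V$ to perform Gaussian-style elimination.

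\textbf{Existence.} I would argue by induction on $d = \dim S$. If $d = 0$ the empty set works vacuously. Suppose $d \geq 1$. Consider the (nonempty, finite) collection of leading basis elements $\{\bar{s} \mid s \in S, s \neq 0\}$; since the ambient basis is well ordered, this set has a well-defined maximum element, say $e$. Pick any $s_0 \in S$ with $\bar{s}_0 = e$ and rescale so its leading coefficient is $1$. Let $S' = \{s \in S \mid e \notin \operatorname{Supp}(s)\}$, which is a subspace, and I claim $S = \bk s_0 \oplus S'$: given $t \in S$ with coefficient $c$ of $e$, the element $t - c s_0$ lies in $S'$ (its $e$-coefficient vanishes, and no larger basis element can appear by maximality of $e$), so $\dim S' = d - 1$. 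By induction $S'$ has a linearly self-reduced basis $B'$; note every element of $B'$ has leading basis element strictly less than $e$ and does not involve $e$ at all. Now reduce $s_0$ against $B'$: repeatedly, while some basis monomial $m$ appearing in $s_0$ equals the leading basis element of some $b \in B'$, replace $s_0$ by $s_0 - (\text{coeff of } m) \cdot b$. This terminates because each such step removes the occurrence of some basis element and, by the defining property of the well order and the fact that the leading basis elements of elements of $B'$ are all $< e$, cannot introduce any basis element $\geq$ the current maximum non-reduced monomial; more carefully, one runs the reduction from the largest offending monomial downward, and the descending chain condition of the well order guarantees termination. Let $\tilde{s}_0$ be the result: its leading basis element is still $e$ with coefficient $1$ (untouched since $e$ is larger than every leading element in $B'$), and $\tilde{s}_0$ is linearly reduced with respect to $B'$. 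Set $B = \{\tilde{s}_0\} \cup B'$. Then $B$ spans $S$ (it still spans $\bk s_0 \oplus S'$ since the change was by elements of $\operatorname{span} B' = S'$), is linearly independent (distinct leading basis elements), and is linearly self-reduced: $\tilde{s}_0$ is reduced with respect to $B' = B \setminus \{\tilde{s}_0\}$ by construction, and each $b \in B'$ is reduced with respect to $B' \setminus \{b\}$ by the inductive hypothesis and remains reduced with respect to $\{\tilde{s}_0\}$ because the only leading basis element among $B \setminus B'$ is $e$, which does not occur in any $b \in B'$.

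\textbf{Uniqueness.} Suppose $B$ and $C$ are both linearly self-reduced bases of $S$. First, the set of leading basis elements $L = \{\bar{b} \mid b \in B\}$ depends only on $S$, not on the basis: indeed $L$ equals $\{\bar{s} \mid s \in S, s \neq 0\}$, since $\supseteq$ is clear and $\subseteq$ follows because any $s \in S$ is a linear combination $\sum c_i b_i$ whose largest involved $\bar{b}_i$ cannot cancel. Hence $B$ and $C$ have the same multiset of leading basis elements, and since leading basis elements within a self-reduced set are distinct, there is a bijection matching $b \in B$ with the unique $c \in C$ having $\bar{c} = \bar{b}$. Fix such a pair and consider $f = b - c \in S$. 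If $f \neq 0$, its leading basis element $\bar{f}$ lies in $L$, so $\bar{f} = \bar{b}'$ for some $b' \in B$. But $\bar{f} < \bar{b} = \bar{c}$ (the leading terms cancel), and since $b$ has leading coefficient $1$ and is linearly reduced with respect to $B \setminus \{b\}$, the monomial $\bar{f}$ cannot be the leading basis element of any element of $B \setminus \{b\}$ while appearing in $b$ with nonzero coefficient; symmetrically for $c$ and $C \setminus \{c\}$. Examining which of $b, c$ actually contains $\bar{f}$ with nonzero coefficient forces a contradiction with linear reducedness in $B$ or in $C$ (whichever side that monomial sits on, it is a leading basis element of the corresponding matched partner, violating self-reducedness). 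Therefore $f = 0$, i.e.\ $b = c$, and $B = C$.

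\textbf{Main obstacle.} The delicate point is the termination and well-definedness of the reduction process in the existence half: one must verify that reducing $s_0$ against $B'$ terminates and does not disturb the leading term, and in the uniqueness half the careful bookkeeping of which side a surviving monomial of $b - c$ lies on. Both rest on the well order (descending chain condition) and on the precise meaning of ``linearly reduced with respect to a set''; everything else is routine linear algebra. I would present the termination argument by ordering the monomials to be cleared in decreasing order, so that clearing one never resurrects a larger one, invoking Proposition~\ref{prop: each subspace has a linearly self-reduced basis}'s hypothesis that the basis is well ordered.
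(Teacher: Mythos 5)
Your proof is correct. Note, however, that the paper does not prove this proposition at all: it is quoted verbatim from Bremner--Dotsenko \cite[Proposition~1.2.1.6]{BremnerDotsenko}, so there is no in-paper argument to compare against; your write-up supplies the standard Gaussian-elimination proof that the reference itself uses. Both halves of your argument are sound: the splitting $S=\bk s_0\oplus S'$ off the maximal leading monomial, the termination of the reduction of $s_0$ against $B'$ via the descending chain condition (each step replaces the largest offending monomial by strictly smaller ones), and the uniqueness argument via the intrinsic set of leading monomials of $S$ and the observation that a nonzero difference $b-c$ would exhibit a monomial of $b$ or of $c$ that is the leading monomial of another basis element, contradicting self-reducedness. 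The only step you assert without justification is that the set $\{\bar{s}\mid s\in S,\ s\neq 0\}$ is finite (so that its maximum $e$ exists); this follows because elements of $S$ with pairwise distinct leading basis elements are linearly independent, whence this set has at most $\dim S$ elements. With that one-line remark added, the proof is complete.
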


Now we can consider OPIs of the type $(\caln1'_c)= (\caln2'_b)$.
\begin{thm}\label{thm:GS basis of nonunital phi4}
	Let $Z$ be a set and  $A=\bk \cals(Z)\slash I_A$ an algebra with a finite  GS  basis $G$ with respect to $\leq_{\rm{dlex}}$.
 Let $\phi_4=\lfloor xy\rfloor-\lambda xy$ be an OPI of type $(\caln1'_c)= (\caln2'_b)$ in Theorem~\ref{Lemma: equivalent OPIs of nonunital diff type}. Denote by $\pi:\bk\cals(Z)\twoheadrightarrow \bk Z \hookrightarrow\bk\cals(Z)$ the canonical projection. Then $\pi(G)$ is just the set of linear parts of elements of $G$.  Let $\bk\pi(G)$ be the subspace of $\bk Z$ spanned by  $\pi(G)$.

 Let $G^{(1)}$ be the unique  linearly self-reduced basis of $\bk\pi(G)$ and define $$\calg'=\lbrace \lfloor g\rfloor-\lambda g ~\big|~ g\in  G^{(1)}  \rbrace.$$
		Then $S_{\phi_4}(\frakS(Z))\cup G\cup \calg'$ is an operated GS  basis of $\langle S_{\phi_4}(\frakS(Z))\cup I_A\rangle_\OpAlg$ in $\bk\frakS(Z) $ with respect to $\leq_{\rm{db}}$.
\end{thm}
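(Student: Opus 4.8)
The plan is to enlarge the generating family to $\calg:=S_{\phi_4}(\frakS(Z))\cup G\cup\calg'$ without changing the operated ideal it generates, and then to check directly, through the Composition--Diamond Lemma (Theorem~\ref{Thm: nonunital CD}), that $\calg$ is an operated GS basis with respect to $\leq_{\mathrm{db}}$. For the ideal identity it suffices to show $\calg'\subseteq\langle S_{\phi_4}(\frakS(Z))\cup I_A\rangle_{\OpAlg}$; since $G^{(1)}$ spans $\bk\pi(G)$, by linearity this reduces to checking that $\lfloor\pi(g)\rfloor-\lambda\pi(g)$ lies in that operated ideal for each $g\in G$. Writing $g=\pi(g)+g'$ with $g'$ the part of $g$ supported on words of breadth $\geq2$, and factoring each such word $w=w_1w_2$, one gets $\lfloor w\rfloor-\lambda w=\phi_4(w_1,w_2)\in S_{\phi_4}(\frakS(Z))$, so $\lfloor g'\rfloor\equiv\lambda g'$; moreover $\lfloor g\rfloor\equiv0$ (an operated ideal is closed under $\lfloor\cdot\rfloor$ and contains $g\in I_A$) and $g'\equiv-\pi(g)$ modulo $I_A$, whence $\lfloor\pi(g)\rfloor=\lfloor g\rfloor-\lfloor g'\rfloor\equiv-\lambda g'\equiv\lambda\pi(g)$ modulo $\langle S_{\phi_4}(\frakS(Z))\cup I_A\rangle_{\OpAlg}$. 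After this, by Proposition~\ref{Prop: from algebra to Phi algebra with relations}(a) it is enough to prove the GS-basis statement for $\calg$.

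Next I record leading monomials. Under $\leq_{\mathrm{db}}$, which refines $\leq_{\mathrm{dgp}}$, we have $\overline{\phi_4(u,v)}=\lfloor uv\rfloor$ for all $u,v\in\frakS(Z)$; since $\leq_{\mathrm{db}}$ restricts to $\leq_{\mathrm{dlex}}$ on $\cals(Z)$, $G$ is already (by Proposition~\ref{GS-basis for nonunital algebras considered as GS-basis for nonunital operated algebras}) an operated GS basis in $\bk\frakS(Z)$ for $\leq_{\mathrm{db}}$, with bracket-free leading monomials; and $\overline{\lfloor h\rfloor-\lambda h}=\lfloor z_h\rfloor$ for $h\in G^{(1)}$, where $z_h$ is the largest generator occurring in $h$ for the well order on $Z$. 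Two breadth-$1$ monomials admit no proper overlap, so there is no intersection composition inside $S_{\phi_4}(\frakS(Z))$, inside $\calg'$, or between them; a single bracketed letter neither overlaps nor is contained in a bracket-free word, so the only compositions pairing $G$ with an element having a bracket in its leading monomial are inclusions where $\overline g$ lies strictly inside the outermost bracket of $\overline{\phi_4(u,v)}$, or $\overline g$ equals some $z_h$. The $G$--$G$ compositions are trivial because $G$ is an operated GS basis, and a $\calg'$--$\calg'$ inclusion would force a bracketed letter to nest compatibly inside another, hence $z_h=z_{h'}$ and (by self-reducedness of $G^{(1)}$) $h=h'$, so that composition vanishes.

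It remains to treat the inclusions of $S_{\phi_4}(\frakS(Z))$ into itself, of $S_{\phi_4}(\frakS(Z))$ into $\calg'$, and of $G$ into $S_{\phi_4}(\frakS(Z))$ or into $\calg'$. Each is handled the same way: subtract the trivial multiple of the inner relation that cancels the top term $w$; bracket-, left- and right-compatibility of $\leq_{\mathrm{db}}$ keep every new monomial strictly below $w$, and one is left with a residue of the form $\lfloor t\rfloor-\lambda t$, where $t$ is a $\bk$-combination of monomials $q'|_{f_0}$ each of which is either of breadth $\geq2$ (then $\lfloor q'|_{f_0}\rfloor-\lambda q'|_{f_0}\in S_{\phi_4}(\frakS(Z))$ with leading monomial $<_{\mathrm{db}}w$) or, only when $q'=\star$, a single generator $z$. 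In the latter case these $z$ range over the support of a linear element of $\bk\pi(G)$ attached to the composition, so the corresponding part of $\lfloor t\rfloor-\lambda t$ is, by the computation in the first paragraph together with expansion in the basis $G^{(1)}$, a $\bk$-combination $\sum_jd_j(\lfloor h_j\rfloor-\lambda h_j)$ of elements of $\calg'$; since $\leq_{\mathrm{db}}$ compares insides of brackets by $\leq_{\mathrm{dlex}}$, each $\lfloor z_{h_j}\rfloor$ occurring is $<_{\mathrm{db}}w$, so the residue is trivial modulo $(\calg,w)$. Applying Theorem~\ref{Thm: nonunital CD} then finishes the proof.

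The main obstacle is this last point in the collision case $\overline g=z_h\in Z$, where $w=\lfloor\overline g\rfloor$: one must ensure the element of $G^{(1)}$ with leading letter $z_h$ does not reappear in the residue, so that only $h_j$ with $z_{h_j}<z_h$ remain and the residue genuinely lies below $w$. This holds because then $g$ is itself linear and the top term of the added relation $\lfloor h\rfloor-\lambda h\in\calg'$ cancels the leading term of $g$ (both monic with leading letter $z_h$), the self-reducedness of $G^{(1)}$ guaranteeing that $z_h$ occurs in no other member of $G^{(1)}$. This is precisely why one must pass from $G$ to $G\cup\calg'$ and take $G^{(1)}$ to be the linearly self-reduced basis, and it is where the finiteness of $G$ (ensuring, via Proposition~\ref{prop: each subspace has a linearly self-reduced basis}, the existence and uniqueness of $G^{(1)}$) and the specific order $\leq_{\mathrm{db}}$ are used.
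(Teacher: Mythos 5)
Your proposal is correct and follows essentially the same route as the paper's proof: first showing $\calg'\subseteq\langle S_{\phi_4}(\frakS(Z))\cup I_A\rangle_{\OpAlg}$ via the linear parts $\pi(g)$, then observing that the three families of leading monomials ($\lfloor uv\rfloor$, bracket-free words, $\lfloor z\rfloor$) admit no intersection compositions, and finally checking the same three inclusion-composition cases, with the linearly self-reduced basis $G^{(1)}$ used exactly where you use it (to guarantee the residue's bracketed letters stay strictly below $w$, in particular in the collision case $\overline g=z_h$). The only cosmetic difference is that your residues also carry a term $-\lambda q'|_g$ (resp.\ $-\lambda g$) beyond the $\lfloor t\rfloor-\lambda t$ part, which is trivially handled, as in the paper.
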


\begin{proof}
	We first prove that $S_{\phi_4}(\frakS(Z))\cup G\cup \calg' $ is a subset of $\langle S_{\phi_4}(\frakS(Z))\cup I_A\rangle_\OpAlg$.
	Since $G^{(1)}\subset \bk\pi(G)$, any element in $\calg' =\lbrace \lfloor g\rfloor-\lambda g ~\big|~ g\in  G^{(1)}  \rbrace$ can be written as a linear combination of the elements in the set $\left\lbrace \lfloor\pi(g)\rfloor-\lambda \pi(g) ~|~ g\in G \right\rbrace$.
	And for any $g\in G$,
	$$\lfloor\pi(g)\rfloor-\lambda \pi(g)=\left( \lfloor\pi(g)-g\rfloor-\lambda (\pi(g)-g)\right) +\lfloor g\rfloor-\lambda g\in\langle S_{\phi_4}(\frakS(Z))\cup I_A\rangle_\OpAlg, $$
	since $\lfloor\pi(g)-g\rfloor-\lambda (\pi(g)-g)\in S_{\phi_4}(\frakS(Z)) $ and $\lfloor g\rfloor,g\in \langle I_A\rangle_\OpAlg$. It follows that  $\calg'  \subset\langle S_{\phi_4}(\frakS(Z))\cup I_A\rangle_\OpAlg.$
	
	Now we show $S_{\phi_4}(\frakS(Z))\cup G\cup \calg' $ is an operated GS basis. 	Notice that for any $u,v\in\frakS(Z)$, the leading monomial  of  $\phi_4(u,v)=\lfloor uv\rfloor-\lambda uv$   is $\lfloor uv\rfloor$ with respect to $\leq_{\rm{db}}$.
	For any elements $a_1\in S_{\phi_4}(\frakS(Z))$, $a_2\in G$ and $ a_3\in \calg'$, the leading monomials are of the forms $\overline a_1=\lfloor uv\rfloor$, $\overline a_2=w$ and $\overline a_3=\lfloor z\rfloor$, where $u,v\in\frakS(Z),~w\in G\subset\cals(Z)$ and $z\in\calg'\subset Z$. Hence, these three leading monomials have no overlap with each other.
	And since each of the sets $S_{\phi_4}(\frakS(Z))$, $G$ and $ \calg' $ is an operated GS basis, we only need to check the triviality of any inclusion composition $\left( f,g\right)^q_{w}$    modulo $(S_{\phi_4}(\frakS(Z))\cup G\cup \calg', w)$, for the cases $(f\in S_{\phi_4}(\frakS(Z)),~ g\in G )$,  $(f\in S_{\phi_4}(\frakS(Z)),~ g\in \calg')$ and $(f\in \calg',~ g\in G)$.
	
	(a) For the case $w=\overline{f}=q|_{\overline{g}}$ with $f\in S_{\phi_4}(\frakS(Z)),~ g\in G $. There exists $q'\in\frakS^\star(Z)$ such that $q=\lfloor q'\rfloor$ and $f=\lfloor q'|_{\overline{g}}\rfloor-\lambda q'|_{\overline{g}}$.
	
	If  $g\in  \bk Z$, there exists a factor of $q'$ in  $\cals(Z)$. Then we have
	$$\left( f,g\right)^q_{w}=f-q|_g=\lfloor q'|_{\overline{g}}\rfloor-\lambda q'|_{\overline{g}}-\lfloor  q'|_{ g}\rfloor=(\lfloor q'|_{\overline{g}-g}\rfloor-\lambda q'|_{\overline{g}-g})-\lambda q'|_g.$$
	It is trivial modulo  $(S_{\phi_4}(\frakS(Z))\cup G\cup \calg',w)$ since $\lfloor q'|_{\overline{g}-g}\rfloor-\lambda q'|_{\overline{g}-g}$ is trivial modulo $(S_{\phi_4}(\frakS(Z)) ,w)$ and $g\in G$.
	
	If $g\in G\backslash \bk Z$, it can be  written as $g=g_1+\pi(g)\in \bk(\cals(Z)\backslash Z) \oplus \bk Z$ and we have $\overline g=\overline g_1$. The inclusion composition $\left( f,g\right)^q_{w}$ can be written as
	$$\begin{aligned}
		\left( f,g\right)^q_{w}&=f-q|_g\\
		&=\lfloor q'|_{\overline{g}}\rfloor-\lambda q'|_{\overline{g}}-\lfloor  q'|_{g_1+\pi(g)}\rfloor\\
		&=\lfloor q'|_{\overline{g}}\rfloor-(\lambda q'|_{\overline{g}}+\lambda q'|_{g-\overline g})+\lambda q'|_{g-\overline g}-(\lfloor  q'|_{g_1}\rfloor+\lfloor  q'|_{\pi(g)}\rfloor)\\
		&=(\lfloor q'|_{\overline{g}}\rfloor-\lfloor  q'|_{g_1}\rfloor)-(\lambda q'|_{\overline{g}}+\lambda q'|_{g-\overline g})+\lambda q'|_{g-g_1}+\lambda q'|_{g_1-\overline g}-\lfloor  q'|_{\pi(g)}\rfloor\\
		&=-(\lfloor q'|_{g_1-\overline g}\rfloor-\lambda q'|_{g_1-\overline g})-\lambda q'|_g-(\lfloor q'|_{\pi(g)}\rfloor-\lambda q'|_{\pi(g)}).
	\end{aligned}$$
	Since $\lfloor q'|_{g_1-\bar g}\rfloor -\lambda q'|_{g_1-\bar g}$ is trivial modulo $(S_{\phi_4}(\frakS(Z)) ,w)$ and $g\in G$, the first two terms are trivial modulo $(S_{\phi_4}(\frakS(Z))\cup G\cup \calg',w)$. Now we only need to consider the third term.
	Since $G^{(1)}$ is the linearly self-reduced basis of $\bk\pi(G)$, we have $ \pi(g)=\sum_{i=1}^n k_ig_i$ where $g_i\in G^{(1)}$. Notice that we have $\bar g_i\leq\overline{\pi(g)}$ for $1\leq i\leq n$. Otherwise, there exist $g_i,g_j\in G^{(1)}$ such that $\overline g_i=\overline g_j$, which contradicts to that $G^{(1)}$ is linearly self-reduced. If $q'$ has a factor  in $\cals(Z)$, we have $\lfloor q'|_{\pi(g)}\rfloor-\lambda q'|_{\pi(g)}$ is trivial modulo $(S_{\phi_4}(\frakS(Z)) ,w)$; otherwise, $q'=\star$ and thus $\lfloor q'|_{\pi(g)}\rfloor-\lambda q'|_{\pi(g)}=\sum_{i=1}^n k_i(\lfloor g_i\rfloor -\lambda g_i)$ is trivial modulo $( \calg',w)$. So we have shown that the inclusion composition $\left( f,g\right)^q_{w}$ is trivial modulo  $(S_{\phi_4}(\frakS(Z))\cup G\cup \calg',w)$.

	(b) For the case $w=\overline{f}=q|_{\overline{g}}$ with $f\in S_{\phi_4}(\frakS(Z)),~ g\in \calg'$. There exists $q'\in\frakS^\star(Z)$ such that $q=\lfloor q'\rfloor$ and $f=\lfloor q'|_{\overline{g}}\rfloor-\lambda q'|_{\overline{g}}$. Since $\overline{g}\in \lfloor Z\rfloor$, there exists a factor of $q'$ in  $\cals(Z)$. Thus we have
	$$\left( f,g\right)^q_{w}=f-q|_g=\lfloor q'|_{\overline{g}}\rfloor-\lambda q'|_{\overline{g}}-\lfloor  q'|_{ g}\rfloor=(\lfloor q'|_{\overline{g}-g}\rfloor-\lambda q'|_{\overline{g}-g})-\lambda q'|_g,$$
	which is trivial modulo  $(S_{\phi_4}(\frakS(Z))\cup G\cup \calg',w)$ since $\lfloor q'|_{\overline{g}-g}\rfloor-\lambda q'|_{\overline{g}-g} $ is trivial modulo $(S_{\phi_4}(\frakS(Z)) ,w)$ and $g\in \calg'$.
	
	(c) For the case $w=\overline{f}=q|_{\overline{g}}$ with $f\in \calg',~ g\in G$. Write $f=\lfloor f_1\rfloor-\lambda f_1$, where $f_1\in G^{(1)}$. Since $\overline{f}=q|_{\overline{g}}\in \lfloor Z\rfloor$, we have $q=\lfloor \star\rfloor$, $\bar f_1=\bar g$ and $g=\pi(g)\in \pi(G)$.  Since $G^{(1)}$ is the linearly self-reduced basis of $\bk\pi(G)$, we have $f_1-g=\sum_{i=1}^n k_ig_i$, where $g_i\in G^{(1)}$ and $\bar g_i<\bar f_1$ for $1\leq i\leq n$. So the inclusion composition
	$$\left( f,g\right)^q_{w}=\lfloor f_1\rfloor-\lambda f_1-\lfloor g\rfloor=\left(\lfloor f_1-g\rfloor-\lambda (f_1-g)\right)-\lambda g$$
	is trivial modulo $(S_{\phi_4}(\frakS(Z))\cup G\cup \calg',w)$, since $\lfloor f_1-g\rfloor-\lambda (f_1-g)=\sum_{i=1}^n k_i(\lfloor g_i\rfloor-\lambda g_i)$ is trivial modulo $(\calg',w)$ and $g\in G$.
	
	This  finishes the proof.
\end{proof}

By Theorem~\ref{Thm: nonunital CD}, we obtain a linear basis.
\begin{thm}\label{Cor: linear basis for type phi4}
	Within the same setup as in Theorem~\ref{thm:GS basis of nonunital phi4},   the union of $ \Irr(G)$ with
		$$\bigcup_{r\geq1}\left\lbrace {u_{0}\lfloor v_{1}\rfloor^{(k_1)} u_{1}    \cdots\lfloor v_{r} \rfloor^{(k_r)} u_{r}~|~u_i\in\Irr(G)\cup\{1\},0\leq i\leq r,  v_j\in Z\backslash \lbrace \bar{g}~|~g\in G^{(1)} \rbrace, k_j\geq1,  1\leq j\leq r }\right\rbrace
	 $$
	  is a linear basis of the free $\phi_4$-algebra $\calf^{\phi_4\zhx\Alg}_{\Alg}(A)$ over $A$.
\end{thm}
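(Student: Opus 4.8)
The plan is to deduce the statement directly from Theorem~\ref{thm:GS basis of nonunital phi4} by invoking the Composition--Diamond Lemma, the only substantive part being a purely combinatorial description of the relevant set of irreducible monomials.

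First I would set up the reduction. By Proposition~\ref{Prop: from algebra to Phi algebra with relations}(a) one has $\calf^{\phi_4\zhx\Alg}_{\Alg}(A)=\bk\frakS(Z)/\langle S_{\phi_4}(\frakS(Z))\cup I_A\rangle_\OpAlg$, and by Theorem~\ref{thm:GS basis of nonunital phi4} the set $\calg:=S_{\phi_4}(\frakS(Z))\cup G\cup\calg'$ is an operated GS basis of the operated ideal $\langle S_{\phi_4}(\frakS(Z))\cup I_A\rangle_\OpAlg$ with respect to $\leq_{\rm{db}}$. Hence Theorem~\ref{Thm: nonunital CD} applies and tells us that $\Irr(\calg)$ is a $\bk$-basis of $\calf^{\phi_4\zhx\Alg}_{\Alg}(A)$; so it suffices to show that $\Irr(\calg)$ coincides with the set displayed in the statement. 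To do this I would begin by reading off the leading monomials, with respect to $\leq_{\rm{db}}$, of the three families constituting $\calg$: for $S_{\phi_4}(\frakS(Z))$ these are the words $\lfloor uv\rfloor$ with $u,v\in\frakS(Z)$; for $G$ they are the $\bar g\in\cals(Z)$ with $g\in G$; and for $\calg'$ they are the $\lfloor\bar h\rfloor$ with $h\in G^{(1)}$, where $\bar h\in Z$ and the leading coefficient is $1$ since $G^{(1)}$ is linearly self-reduced.

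Next comes the identification of $\Irr(\calg)$. A monomial $w\in\frakS(Z)$ is irreducible exactly when no $\bar s$, $s\in\calg$, is a subword of $w$. Forbidding every $\lfloor uv\rfloor$ forces each bracketed factor of $w$ to enclose a word of breadth one, and unwinding this recursively shows that every bracket of $w$ is a unary tower $\lfloor z\rfloor^{(k)}$ with $z\in Z$ and $k\ge 1$; thus $w$ has the canonical shape $w=u_0\lfloor z_1\rfloor^{(k_1)}u_1\cdots\lfloor z_r\rfloor^{(k_r)}u_r$ with $u_i\in\calm(Z)$. Since each $\bar g$ ($g\in G$) is bracket-free, an occurrence of $\bar g$ in $w$ must lie inside one of the segments $u_i$, except possibly when $\bar g$ is a single generator appearing as some $z_j$; but if $\bar g\in Z$ then all monomials of $g$ are $\leq_{\rm dlex}\bar g$, hence of degree one, so $g\in\bk Z$, whence $g\in\bk\pi(G)$ and $\bar g$ is a pivot of the linearly self-reduced basis $G^{(1)}$, a value already excluded by the last condition. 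Consequently forbidding $G$ is equivalent to $u_i\in\Irr_{\calm}(G)=\Irr(G)\cup\{1\}$ for all $i$, and forbidding $\calg'$ is equivalent to $z_j\notin\{\bar h\mid h\in G^{(1)}\}$ for all $j$. Conversely, any monomial of this shape visibly has no $\bar s$, $s\in\calg$, as a subword, hence is irreducible. Separating off the case $r=0$, which contributes exactly $\Irr(G)$, from the cases $r\ge1$, we obtain precisely the set in the statement; applying Theorem~\ref{Thm: nonunital CD} then finishes the proof.

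The hard part will be this last combinatorial step. Two points need genuine care: the recursive unwinding showing that forbidding all $\lfloor uv\rfloor$ collapses every bracket of an irreducible monomial to a unary tower $\lfloor z\rfloor^{(k)}$; and the verification that forbidding a bracket-free leading monomial $\bar g$ of $G$ imposes conditions only on the segments $u_i$, which relies on the inclusion $\{\,\bar g\mid g\in G\,\}\cap Z\subseteq\{\,\bar h\mid h\in G^{(1)}\,\}$ established above. Neither point is deep, but both must be spelled out for the argument to be rigorous.
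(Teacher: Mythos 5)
Your proposal is correct and follows essentially the same route as the paper: apply the Composition--Diamond Lemma to the operated GS basis $S_{\phi_4}(\frakS(Z))\cup G\cup \calg'$ from Theorem~\ref{thm:GS basis of nonunital phi4} and then identify the irreducible monomials combinatorially. The paper only sketches this last identification, whereas you spell out the two delicate points (the collapse of brackets to unary towers, and the fact that any $g\in G$ with $\bar g\in Z$ has its leading term already among the pivots of $G^{(1)}$), both of which check out.
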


\begin{proof}By Theorem~\ref{thm:GS basis of nonunital phi4}, the set $$
		 \Irr(S_{\phi_4}(\frakS(Z))\cup G\cup \calg')=\frakS(Z)\backslash \left\lbrace q|_{\bar{s}}, q|_{\lfloor uv\rfloor}~|~s\in G\cup\calg',q\in\frakS^\star(Z),u,v\in\frakS(Z) \right\rbrace
 $$
 is a linear basis of $\calf^{\phi_4\zhx\Alg}_{\Alg}(A)$.
 Besides  elements of $ \Irr(G)$,  for a monomial $u_{0}\lfloor v_{1}'\rfloor u_{1} \lfloor v_{2}'\rfloor    \cdots\lfloor v_{r}' \rfloor   u_{r}$  with $r\geq 1, u_i\in\Irr(G)\cup\{1\},0\leq i\leq r,  v_j'\in  \cals(Z),   1\leq j\leq r$,
 we should avoid all subwords of the form $\lfloor xy \rfloor, x, y\in \cals(Z)$ and $  \lfloor z \rfloor$ for $z\in Z$ being the leading basis element of an element of $G^{(1)}\subset \bk Z$. The result can be  deduced from this observation.
\end{proof}

\section{Operated GS bases of free unital differential type algebras}

In this section, we will show that under some monomial  order, all unital differential type OPIs in Theorem~\ref{Lemma: equivalent OPIs of unital diff type} are GS  and satisfy the conditions in Theorem~\ref{Thm: GS basis for free unital Phi algebra over unital alg}. As a consequence, we can obtain an operated GS basis and a  linear basis for  each  free unital differential type algebra over a unital algebra with a given GS basis.

\subsection{Unital GS OPIs of differential type} \

\begin{lem}\label{Lem: differential type 1a is GS}
	The OPIs of type $(\calu1'_a)$ in Theorem~\ref{Lemma: equivalent OPIs of unital diff type} are unital   GS with respect to the monomial order $\leq_{\rm{db}}$.
\end{lem}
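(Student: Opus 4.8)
The plan is to invoke the rewriting criterion of Theorem~\ref{Thm: verify a GS OPI}, applied to the single OPI $\phi=\phi(x,y)=\lfloor x\rfloor\lfloor y\rfloor-\lambda x\lfloor y\rfloor-\lambda\lfloor x\rfloor y+\mu\lfloor xy\rfloor+\nu xy$ with $\lambda^2=\lambda\mu+\nu$, and to the order $\leq_{\db}$. First I would fix a set $Z$ and record the elementary facts that, for any $\mathfrak{u}=(u_1,u_2)\in\frakM(Z)^2$, the leading monomial of $\phi(\mathfrak{u})$ with respect to $\leq_{\db}$ is $\lfloor u_1\rfloor\lfloor u_2\rfloor$, and that $R(\phi)=\lambda x\lfloor y\rfloor+\lambda\lfloor x\rfloor y-\mu\lfloor xy\rfloor-\nu xy$ is in $\phi$-normal form. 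Both follow from the observation that $\leq_{\db}$ compares the $P$-degree first and $\deg_P(\lfloor u_1\rfloor\lfloor u_2\rfloor)=\deg_P(u_1)+\deg_P(u_2)+2$ strictly exceeds the $P$-degree of every other monomial of $\phi(\mathfrak{u})$ (and none of $x\lfloor y\rfloor$, $\lfloor x\rfloor y$, $\lfloor xy\rfloor$, $xy$ contains a subword of the form $\lfloor a\rfloor\lfloor b\rfloor$). The same $P$-degree bound shows each rule $q|_{\lfloor a\rfloor\lfloor b\rfloor}\to q|_{R(\phi(a,b))}$ of $\Pi_\phi(Z)$ strictly lowers its monomial, so $\Pi_\phi(Z)$ is compatible with $\leq_{\db}$.

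Condition~(b) of Theorem~\ref{Thm: verify a GS OPI} I would dispose of by a breadth argument: if $\lfloor u_1\rfloor\lfloor u_2\rfloor=q|_{\lfloor v_1\rfloor\lfloor v_2\rfloor}$ with $\star\neq q\in\frakM^\star(Z)$, then, since $\lfloor u_1\rfloor\lfloor u_2\rfloor$ has breadth $2$ and is the product of the two single letters $\lfloor u_1\rfloor,\lfloor u_2\rfloor$ while $\lfloor v_1\rfloor\lfloor v_2\rfloor$ already has breadth $2$, the placeholder $\star$ cannot sit at the top level of $q$; it must be nested inside one of the two brackets, whence $\lfloor v_1\rfloor\lfloor v_2\rfloor$ is a subword of $u_1$ or of $u_2$, as required.

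The core of the proof is condition~(a). Because $\lfloor u_1\rfloor\lfloor u_2\rfloor$ and $\lfloor v_1\rfloor\lfloor v_2\rfloor$ are each a concatenation of two single letters, the only nontrivial overlap $\overline{\phi(\mathfrak{u})}=\alpha\beta$, $\overline{\phi(\mathfrak{v})}=\beta\gamma$ is $\beta=\lfloor u_2\rfloor=\lfloor v_1\rfloor$, that is, $u_2=v_1$, with $\alpha=\lfloor u_1\rfloor$ and $\gamma=\lfloor v_2\rfloor$; so I must show $R(\phi(u_1,u_2))\lfloor v_2\rfloor\downarrow_{\Pi_\phi(Z)}\lfloor u_1\rfloor R(\phi(u_2,v_2))$. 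I would expand both sides, rewrite every subword of the form $\lfloor a\rfloor\lfloor b\rfloor$ that occurs (on the left these come from the terms $u_1\lfloor u_2\rfloor\lfloor v_2\rfloor$ and $\lfloor u_1u_2\rfloor\lfloor v_2\rfloor$, on the right from $\lfloor u_1\rfloor\lfloor u_2\rfloor v_2$ and $\lfloor u_1\rfloor\lfloor u_2v_2\rfloor$), collect the resulting linear combinations of normal-form monomials, and check that they coincide; the relation $\lambda^2=\lambda\mu+\nu$ is exactly what makes the coefficients of $u_1u_2\lfloor v_2\rfloor$, $\lfloor u_1u_2\rfloor v_2$ and the remaining monomials match.

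I expect the bookkeeping in condition~(a) to be the main obstacle, aggravated by the degenerate instances in which one of $u_1,u_2,v_2$ equals $1$: the symbol $\lfloor 1\rfloor$ then appears and some monomials that are in normal form for generic arguments acquire new $\lfloor\cdot\rfloor\lfloor\cdot\rfloor$-subwords (for instance $\lfloor u_1\rfloor u_2\lfloor v_2\rfloor$ collapses to $\lfloor u_1\rfloor\lfloor v_2\rfloor$ when $u_2=1$), so these sub-cases need a short separate check along the same lines. Once~(a) and~(b) are verified, Theorem~\ref{Thm: verify a GS OPI} yields that $\phi$ is GS with respect to $\leq_{\db}$, i.e.\ $S_\phi(\frakM(Z))$ is an operated GS basis for every set $Z$, which is the assertion of the lemma.
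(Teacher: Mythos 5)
Your proposal is correct and follows essentially the same route as the paper: verify the two conditions of Theorem~\ref{Thm: verify a GS OPI} for $\leq_{\db}$, with the only nontrivial overlap being $\beta=\lfloor u_2\rfloor=\lfloor v_1\rfloor$, and the identity $\lambda^2=\lambda\mu+\nu$ killing the offending coefficients in the joinability computation. The degenerate sub-cases with an argument equal to $1$ that you flag are in fact harmless, since both sides of the overlap rewrite to the \emph{same} linear combination, and joinability does not require that combination to be in normal form.
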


\begin{proof}
	We will prove the lemma by checking such OPIs satisfy the conditions in  Theorem~\ref{Thm: verify a GS OPI}.
	According to the monomial order $\leq_{\rm{db}}$, the leading monomial of such an OPI is $\lfloor x\rfloor\lfloor y\rfloor$. Consider the rewriting system defined by
	$$\Pi_\phi(Z):=\left\{~\lfloor u\rfloor\lfloor v\rfloor\rightarrow \lambda \lfloor u\rfloor v+\lambda u\lfloor v\rfloor-\mu \lfloor uv\rfloor-\nu  uv~|~u,v\in\frakM(Z)~\right\}.$$
	For the first condition in  Theorem~\ref{Thm: verify a GS OPI}, assume there are $u, v, v^\prime, w, \alpha, \beta, \gamma\in\mathfrak{M}(Z)$, such that
	$\lfloor u\rfloor\lfloor v\rfloor=\alpha\beta$ and $\lfloor v^\prime\rfloor\lfloor w\rfloor=\beta\gamma$. Then we only need to consider the case $\alpha=\lfloor u\rfloor$, $\beta=\lfloor v\rfloor=\lfloor v^\prime\rfloor$ and $\gamma=\lfloor w\rfloor$.
	
	On  one hand, we have
	$$	\begin{array}{lll}\vspace{1ex}
		&R(\phi(u,v)) \gamma \\\vspace{1ex}
		=&\lambda \left\lfloor u\right\rfloor v\left\lfloor w\right\rfloor+\lambda  u\left\lfloor v\right\rfloor\left\lfloor w\right\rfloor-\mu \left\lfloor uv\right\rfloor\left\lfloor w\right\rfloor-\nu uv\left\lfloor w\right\rfloor\\\vspace{1ex}
		\rightarrow&\lambda \left\lfloor u\right\rfloor v\left\lfloor w\right\rfloor+\lambda  u(\lambda \left\lfloor v\right\rfloor w+\lambda  v\left\lfloor w\right\rfloor-\mu \left\lfloor vw\right\rfloor-\nu  vw)\\\vspace{1ex}
		&-b (\lambda \left\lfloor uv\right\rfloor w+\lambda  uv\left\lfloor w\right\rfloor-\mu \left\lfloor uvw\right\rfloor-\nu  uvw)-\nu  uv\left\lfloor w\right\rfloor\\\vspace{1ex}
		=&\lambda \left\lfloor u\right\rfloor v\left\lfloor w\right\rfloor+\lambda ^2 u\left\lfloor v\right\rfloor w+\lambda ^2 uv\left\lfloor w\right\rfloor-\lambda  \mu  u\left\lfloor vw\right\rfloor-\lambda  \nu  uvw\\\vspace{1ex}
		&-\lambda  \mu \left\lfloor uv\right\rfloor w-\lambda  \mu  uv\left\lfloor w\right\rfloor+\mu ^2\left\lfloor uvw\right\rfloor+\mu  \nu  uvw-\nu  uv\left\lfloor w\right\rfloor\\\vspace{1ex}
		=&\lambda \left\lfloor u\right\rfloor v\left\lfloor w\right\rfloor+\lambda ^2 u\left\lfloor v\right\rfloor w-\lambda  \mu  u\left\lfloor vw\right\rfloor\\\vspace{1ex}
		&-\lambda  \mu \left\lfloor uv\right\rfloor w+\mu ^2\left\lfloor uvw\right\rfloor-(\lambda -\mu ) \nu  uvw,
	\end{array}	$$
	where the last equation holds since the coefficient of $uv\left\lfloor w\right\rfloor$ is $\lambda ^{2}-\lambda  \mu -\nu $, which vanishes  by definition.
	
	On the other hand,
	$$	\begin{array}{lll}\vspace{1ex}
		&\alpha R(\phi(v,x))\\\vspace{1ex}
		=&\lambda \left\lfloor u\right\rfloor\left\lfloor v\right\rfloor w+\lambda  \left\lfloor u\right\rfloor v\left\lfloor w\right\rfloor-\mu \left\lfloor u\right\rfloor\left\lfloor vw\right\rfloor-\nu \left\lfloor u\right\rfloor vw\\\vspace{1ex}
		\rightarrow&\lambda  (\lambda \left\lfloor u\right\rfloor v+\lambda  u\left\lfloor v\right\rfloor-\mu \left\lfloor uv\right\rfloor-\nu  uv)w+\lambda  \left\lfloor u\right\rfloor v\left\lfloor w\right\rfloor\\\vspace{1ex}
		&-\mu  (\lambda \left\lfloor u\right\rfloor vw+\lambda  u\left\lfloor vw\right\rfloor-\mu \left\lfloor uvw\right\rfloor-\nu  uvw)-\nu \left\lfloor u\right\rfloor vw\\\vspace{1ex}
		=&\lambda ^2\left\lfloor u\right\rfloor vw+\lambda ^2 u\left\lfloor v\right\rfloor w-\lambda  \mu \left\lfloor uv\right\rfloor w-\lambda  \nu  uvw+\lambda  \left\lfloor u\right\rfloor v\left\lfloor w\right\rfloor\\\vspace{1ex}
		&-\lambda  \mu \left\lfloor u\right\rfloor vw-\lambda  \mu  u\left\lfloor vw\right\rfloor+\mu ^2\left\lfloor uvw\right\rfloor+\mu  \nu  uvw-\nu \left\lfloor u\right\rfloor vw\\\vspace{1ex}
		=&\lambda \left\lfloor u\right\rfloor v\left\lfloor w\right\rfloor+\lambda ^2 u\left\lfloor v\right\rfloor w-\lambda  \mu  u\left\lfloor vw\right\rfloor\\\vspace{1ex}
		&-\lambda  \mu \left\lfloor uv\right\rfloor w+\mu ^2\left\lfloor uvw\right\rfloor-(\lambda -\mu ) \nu  uvw\quad (\text{by~} \lambda ^{2}=\lambda  \mu +\nu ).
	\end{array}	$$
	We have shown that  $R(\phi(u,v)) \gamma  \downarrow_{\Pi_\phi(Z)} \alpha R(\phi(v,x))$. For the second condition in Theorem~\ref{Thm: verify a GS OPI}, assume that there are $q\neq \star   \in \mathfrak{M}^{\star}(Z)$ and $u_1, u_2, v_1, v_2 \in\mathfrak{M}(Z)$, such that $\lfloor u_1\rfloor\lfloor u_2\rfloor=q|_{\lfloor v_1\rfloor\lfloor v_2\rfloor}$, then $\lfloor v_1\rfloor\lfloor v_2\rfloor$ is a subword of $u_1$ or $u_2$.

We are done.
\end{proof}

With a similar proof, we have the following result:
\begin{lem}\label{Lem: differential type 2a is GS}
	The OPIs of type $(\calu2'_a)$ in Theorem~\ref{Lemma: equivalent OPIs of unital diff type} are unital operated GS with respect to the monomial order $\leq_{\rm{db}}$.
\end{lem}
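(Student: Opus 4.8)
The plan is to imitate the proof of Lemma~\ref{Lem: differential type 1a is GS}: I would show that every OPI $\phi$ of type $(\calu2'_a)$ satisfies the two conditions of Theorem~\ref{Thm: verify a GS OPI}. Write
$$\phi(x,y)=\lfloor x\rfloor\lfloor y\rfloor-\lambda(x\lfloor y\rfloor+\lfloor x\rfloor y)+\mu\lfloor yx\rfloor+\lambda^2 xy-\lambda\mu yx.$$
Under $\leq_{\db}$ the monomial $\lfloor x\rfloor\lfloor y\rfloor$ has $P$-degree $2$ while every other monomial of $\phi$ has $P$-degree at most $1$, so $\overline{\phi}=\lfloor x\rfloor\lfloor y\rfloor$ and
$$R(\phi(p,q))=\lambda p\lfloor q\rfloor+\lambda\lfloor p\rfloor q-\mu\lfloor qp\rfloor-\lambda^2 pq+\lambda\mu qp$$
is already in $\phi$-normal form. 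Since $\leq_{\dgp}$ is the first component of $\leq_{\db}$, the term-rewriting system $\Pi_\phi(Z)=\bigl\{\,\lfloor u\rfloor\lfloor v\rfloor\to R(\phi(u,v))\ \big|\ u,v\in\frakM(Z)\,\bigr\}$ is compatible with $\leq_{\db}$ (each monomial on the right has strictly smaller $P$-degree), so in particular it terminates.

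Condition (b) is the easy part. If $\lfloor u_1\rfloor\lfloor u_2\rfloor=q|_{\lfloor v_1\rfloor\lfloor v_2\rfloor}$ for some $\star\neq q\in\frakM^\star(Z)$, then, since $\lfloor u_1\rfloor\lfloor u_2\rfloor$ has breadth $2$ with top-level letters $\lfloor u_1\rfloor,\lfloor u_2\rfloor\in\lfloor\frakM(Z)\rfloor$ and $\lfloor v_1\rfloor\lfloor v_2\rfloor$ also has breadth $2$, the (necessarily nested) occurrence of $\lfloor v_1\rfloor\lfloor v_2\rfloor$ must lie inside one of these two brackets, i.e.\ be a subword of $u_1$ or of $u_2$; this is exactly what (b) demands.

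The heart of the proof is condition (a). Because $\overline{\phi(u,v)}=\lfloor u\rfloor\lfloor v\rfloor$ has breadth $2$, the only overlap $\overline{\phi(u,v)}=\alpha\beta$, $\overline{\phi(v',w)}=\beta\gamma$ satisfying the breadth constraint of an intersection composition is $\alpha=\lfloor u\rfloor$, $\beta=\lfloor v\rfloor=\lfloor v'\rfloor$, $\gamma=\lfloor w\rfloor$. I would then rewrite both $R(\phi(u,v))\lfloor w\rfloor$ and $\lfloor u\rfloor R(\phi(v,w))$: on the first side the subwords $\lfloor v\rfloor\lfloor w\rfloor$ (inside $\lambda u\lfloor v\rfloor\lfloor w\rfloor$) and $\lfloor vu\rfloor\lfloor w\rfloor$ (inside $-\mu\lfloor vu\rfloor\lfloor w\rfloor$) get replaced by $R(\phi(v,w))$ and $R(\phi(vu,w))$; on the second side $\lfloor u\rfloor\lfloor v\rfloor$ (inside $\lambda\lfloor u\rfloor\lfloor v\rfloor w$) and $\lfloor u\rfloor\lfloor wv\rfloor$ (inside $-\mu\lfloor u\rfloor\lfloor wv\rfloor$) get replaced by $R(\phi(u,v))$ and $R(\phi(u,wv))$. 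Expanding the products and collecting coefficients, one checks that after this single pass both expressions coincide, equalling
$$\lambda\lfloor u\rfloor v\lfloor w\rfloor+\lambda^2 u\lfloor v\rfloor w-\lambda\mu u\lfloor wv\rfloor-\lambda\mu\lfloor vu\rfloor w+\mu^2\lfloor wvu\rfloor-\lambda^3 uvw+\lambda^2\mu uwv+\lambda^2\mu vuw-\lambda\mu^2 wvu;$$
hence $R(\phi(u,v))\lfloor w\rfloor\downarrow_{\Pi_\phi(Z)}\lfloor u\rfloor R(\phi(v,w))$. (Unlike the $(\calu1'_a)$ case, no side relation between $\lambda$ and $\mu$ is needed: the two sides are \emph{literally equal} as elements of $\bk\frakM(Z)$, so a fortiori joinable.) With (a) and (b) established, Theorem~\ref{Thm: verify a GS OPI} yields that $\phi$ is GS with respect to $\leq_{\db}$, and the same verbatim argument applies over every set $Z$.

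The only real obstacle is the bookkeeping in (a): the right-hand side $R(\phi)$ carries five monomials (versus four in Lemma~\ref{Lem: differential type 1a is GS}), and the "flipped" terms $\lfloor qp\rfloor$ and $qp$ generate words such as $\lfloor wvu\rfloor$ and $wvu$ whose coefficients must be matched carefully between the two branches. No new idea beyond the $(\calu1'_a)$ computation is required, and all cancellations are exact; this is why the statement of Lemma~\ref{Lem: differential type 2a is GS} can be justified with "a similar proof".
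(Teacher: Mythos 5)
Your proposal is correct and follows exactly the route the paper takes: the paper's (omitted, "similar") proof also verifies the two conditions of Theorem~\ref{Thm: verify a GS OPI} for $\phi=\lfloor x\rfloor\lfloor y\rfloor-R(x,y)$ with the rewriting system $\lfloor u\rfloor\lfloor v\rfloor\to R(u,v)$, and its computation of $R(\phi(u,v))\lfloor w\rfloor$ and $\lfloor u\rfloor R(\phi(v,w))$ lands on the same nine-term expression you display, with the two sides coinciding after one rewriting pass and no relation between $\lambda$ and $\mu$ required. Your bookkeeping of the flipped terms $\lfloor wv\rfloor$, $\lfloor wvu\rfloor$, $wvu$ and the cancellations of $\lambda^2 uv\lfloor w\rfloor$ and $\lambda\mu vu\lfloor w\rfloor$ checks out.
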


Before dealing with OPI of type $(\calu1'_b)=(\calu5'_a)$, we introduce  the following lemma.

\begin{lem}\label{cor: Nonunital differential OPI is gronber basis in  free unital operated algebra}
	Let $\phi\in \bk\frakS(X)$ be a   differential OPI of type $(\calu1'_b)$  in Theorem~\ref{Lemma: equivalent OPIs of unital diff type}.  Then $S_{\phi}(\frakM(Z)\backslash\{1\})$ is an operated
	GS  in $\bk\frakM(Z)$ with respect to $\leq_{\mathrm{udl}}$.
\end{lem}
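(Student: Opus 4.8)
The plan is to reduce the statement to the nonunital result already proved, Proposition~\ref{Lemma: Nonunital differential OPI is gronber basis in  free nonunital operated algebra}, by means of the identification $\frakM(Z)\backslash\{1\}=\frakS(Z')$ supplied by Remark~\ref{Rem: another construction of m(X)}. Recall that $(\calu1'_b)$ is the OPI $\phi(x,y)=x\lfloor y\rfloor-\lfloor xy\rfloor+\lfloor x\rfloor y-\lambda xy$, which is formally the same polynomial as $(\caln1'_b)$. Writing $Z'=Z\sqcup\{\dagger\}$ with $\dagger$ understood as $\lfloor1\rfloor$, Remark~\ref{Rem: another construction of m(X)} gives $\frakM(Z)=\frakS(Z')\sqcup\{1\}$, the operator of $\frakM(Z)$ restricted to $\frakM(Z)\backslash\{1\}$ coincides with that of $\frakS(Z')$, and by Theorem~\ref{Thm:The well order udl is a monomial order} the order $\leq_{\mathrm{udl}}$ restricts on $\frakS(Z')$ to the order $\leq_{\mathrm{dl}}$ of $\frakS(Z')$. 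Hence $S_\phi(\frakM(Z)\backslash\{1\})=S_\phi(\frakS(Z'))$ as subsets of $\bk\frakS(Z')\subseteq\bk\frakM(Z)$, and Proposition~\ref{Lemma: Nonunital differential OPI is gronber basis in  free nonunital operated algebra} applied to the set $Z'$ shows that $\calg:=S_\phi(\frakS(Z'))$ is an operated GS basis in $\bk\frakS(Z')$ with respect to $\leq_{\mathrm{dl}}$.

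It then remains to transfer this from $\bk\frakS(Z')$ to $\bk\frakM(Z)$, and the point is that no genuinely new compositions arise. First, $\frakM^\star(Z)=\frakS^\star(Z')$: an element of $\frakM^\star(Z)$ contains $\star$ exactly once, hence is not the empty word, so $\frakM^\star(Z)\subseteq\frakM(Z\cup\star)\backslash\{1\}=\frakS((Z\cup\star)\sqcup\{\dagger\})=\frakS(Z'\cup\star)$, and conversely. Therefore every inclusion composition $(f,g)^q_w$ of elements $f,g\in\calg$ formed in $\bk\frakM(Z)$ has $q\in\frakS^\star(Z')$ and $w=\bar f\in\frakS(Z')$, so it is literally an inclusion composition in $\bk\frakS(Z')$, and likewise triviality modulo $(\calg,w)$ means the same thing in both settings. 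For an intersection composition $(f,g)^{u,v}_w$ with $w=\bar fu=v\bar g$, the breadth constraint $\max\{|\bar f|,|\bar g|\}<|w|<|\bar f|+|\bar g|$ forces $u\neq1$ and $v\neq1$, whence $u,v,w\in\frakS(Z')$ and the composition again coincides with one in $\bk\frakS(Z')$. Since all these compositions are trivial modulo $(\calg,w)$ by the first paragraph, $\calg$ is an operated GS basis in $\bk\frakM(Z)$ with respect to $\leq_{\mathrm{udl}}$, as claimed.

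The only delicate point is this bookkeeping, namely verifying cleanly that passing from the free semigroup $\frakS(Z')$ to the free monoid $\frakM(Z)$ produces neither new $\star$-words nor new overlaps; once one has $\frakM^\star(Z)=\frakS^\star(Z')$ and the observation that the empty word cannot occupy an essential slot of an intersection composition, there is nothing left to prove. If a self-contained argument is preferred, an alternative is to re-run the proof of Proposition~\ref{Proposition: Nonunital differential OPI xdy is GSB in  free nonunital operated algebra} verbatim over $\bk\frakM(Z)$: one checks via Proposition~\ref{Lemma: inequal  relation} that $\overline{\phi(u,v)}=u\lfloor v\rfloor$ for all $u,v\in\frakM(Z)\backslash\{1\}$, one repeats the computation of Lemma~\ref{Lemma: Nonunital differential OPI is GS} to obtain $R(uv,w)\downarrow_{\Pi_\phi(Z)}uR(v,w)$ for $R(x,y)=\lfloor xy\rfloor-\lfloor x\rfloor y+\lambda xy$ and the rewriting system $\Pi_\phi(Z):=\{q|_{u\lfloor v\rfloor}\to q|_{R(u,v)}\mid u,v\in\frakM(Z)\backslash\{1\},\ q\in\frakM^\star(Z)\}$, and then the three-case analysis of inclusion compositions and the factorization of each intersection composition into two inclusion compositions carry over without change.
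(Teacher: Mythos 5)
Your proof is correct and follows essentially the same route as the paper: the paper's own proof simply applies the nonunital Proposition~\ref{Lemma: Nonunital differential OPI is gronber basis in  free nonunital operated algebra} to $Z'=Z\sqcup\{\dagger\}$ and notes that $\leq_{\mathrm{udl}}$ restricts to $\leq_{\mathrm{dl}}$ on $\frakM(Z)\backslash\{1\}=\frakS(Z')$, concluding that "the assertion follows." Your version supplies the bookkeeping the paper leaves implicit (the identification $\frakM^\star(Z)=\frakS^\star(Z')$ and the fact that the empty word cannot occur in an essential slot of an intersection composition), which is a welcome amplification rather than a different argument.
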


\begin{proof}
Replacing $Z$ by $Z\sqcup  \dagger$ in Theorem~\ref{Lemma: Nonunital differential OPI is gronber basis in  free nonunital operated algebra},     $S_{\phi}(\frakS(Z\sqcup  \dagger))$ is an operated
GS basis  in $\bk\frakS(Z\sqcup  \dagger)$ with respect to $\leq_{\mathrm{dl}}$. Then by the definition of the order $\leq_{\mathrm{udl}}$  on $\frakM(Z)$, which  is induced by $\leq_{\mathrm{dl}}$ on $\frakM(Z)\backslash\{1\}=\frakS(Z\sqcup  \dagger)$, the assertion follows.
\end{proof}

We will not distinguish $\dagger$ from $\lfloor1\rfloor$ in the following lemma.

\begin{lem}\label{Lem: differential type 1b is GS}
	The OPIs of type $(\calu1'_b)=(\calu5'_a)$ in Theorem~\ref{Lemma: equivalent OPIs of unital diff type} are unital operated GS with respect to the monomial order $\leq_{\rm{udl}}$.
\end{lem}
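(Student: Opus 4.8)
The OPI of type $(\calu1'_b)=(\calu5'_a)$ is $\phi(x,y)=x\lfloor y\rfloor-\lfloor xy\rfloor+\lfloor x\rfloor y-\lambda xy$; write $\phi(x,y)=x\lfloor y\rfloor-R(x,y)$ with $R(x,y)=\lfloor xy\rfloor-\lfloor x\rfloor y+\lambda xy$. Since the leading monomial here is $x\lfloor y\rfloor$, Theorem~\ref{Thm: verify a GS OPI} cannot be applied directly (there are inclusion compositions in which $\overline{\phi(\mathfrak v)}$ is merely a suffix, not a nested subword, of $\overline{\phi(\mathfrak u)}$), so the plan is instead to build on the nonunital statement Lemma~\ref{cor: Nonunital differential OPI is gronber basis in  free unital operated algebra} and then absorb the finitely many new types of composition created by allowing an argument of $\phi$ to be $1$. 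First I would pin down the leading monomials with respect to $\leq_{\mathrm{udl}}$: for $u,v\in\frakM(Z)\backslash\{1\}$ one gets $\overline{\phi(u,v)}=u\lfloor v\rfloor$ by Proposition~\ref{Lemma: inequal  relation}(a) (the term $\lambda uv$ having smaller $P$-degree), while the degenerate substitutions give $\phi(1,v)=\lfloor 1\rfloor v-\lambda v$, $\phi(u,1)=u\lfloor 1\rfloor-\lambda u$ and $\phi(1,1)=\lfloor 1\rfloor-\lambda$, with leading monomials $\lfloor 1\rfloor v$, $u\lfloor 1\rfloor$ and $\lfloor 1\rfloor$ by Proposition~\ref{Lemma: inequal  relation}(d),(e) and the definition of $\leq_{\mathrm{udl}}$. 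Thus
$$S_{\phi}(\frakM(Z))=S_{\phi}\big(\frakM(Z)\backslash\{1\}\big)\ \cup\ \calg',\qquad \calg':=\big\{\,\lfloor 1\rfloor v-\lambda v,\ u\lfloor 1\rfloor-\lambda u\ \big|\ u,v\in\frakM(Z)\,\big\},$$
and $S_{\phi}(\frakM(Z)\backslash\{1\})$ is exactly $S_{\phi}(\frakS(Z\sqcup\dagger))$ under the identification $\frakM(Z)\backslash\{1\}=\frakS(Z\sqcup\dagger)$ of Remark~\ref{Rem: another construction of m(X)}.

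By Lemma~\ref{cor: Nonunital differential OPI is gronber basis in  free unital operated algebra}, $S_{\phi}(\frakM(Z)\backslash\{1\})$ is already an operated GS basis in $\bk\frakM(Z)$ with respect to $\leq_{\mathrm{udl}}$, so every composition of two of its members is trivial modulo $(S_{\phi}(\frakM(Z)),w)$. It remains to verify triviality for the compositions having at least one member in $\calg'$. For these I would follow the proof of Proposition~\ref{Proposition: Nonunital differential OPI xdy is GSB in  free nonunital operated algebra}: each intersection composition $w=\bar f a=b\bar g$ is rewritten, via an auxiliary element $\phi(bv_1,v_2)$, as a combination of inclusion compositions (one of which may be internal to the GS subset and hence already trivial), so it suffices to treat inclusion compositions $w=\bar f=q|_{\bar g}$, split into cases by the position of $\star$ in $q$ relative to the $\lfloor 1\rfloor$'s and the honest brackets, exactly as in cases (a)--(c) there. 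The new ingredient is the pair of rules $q|_{\lfloor 1\rfloor u}\to q|_{\lambda u}$ and $q|_{u\lfloor 1\rfloor}\to q|_{\lambda u}$ coming from $\calg'$. In each case the composition either vanishes identically --- which happens whenever the overlap of $\bar f$ and $\bar g$ is a single letter $\lfloor 1\rfloor$ met by $q$ only through top-level multiplication; for example the intersection composition of $u\lfloor 1\rfloor-\lambda u$ with $\lfloor 1\rfloor v-\lambda v$ is $u\lfloor 1\rfloor v-\lambda uv-u\lfloor 1\rfloor v+\lambda uv=0$ --- or it rewrites, by means of these two rules together with the identity $R(uv,w)\downarrow_{\Pi_{\phi}(Z)}uR(v,w)$ of Lemma~\ref{Lemma: Nonunital differential OPI is GS}, to a pair of elements joinable under $\Pi_{\phi}(Z)$, the two sides coinciding because $\lfloor 1\rfloor$ rewrites exactly like the scalar $\lambda$. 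In the borderline case $\bar f=\bar g$ --- which occurs when $\phi(1,v''\lfloor v'\rfloor)$ and $\phi(\dagger v'',v')$ share the leading monomial $\lfloor 1\rfloor v''\lfloor v'\rfloor$ --- the difference rewrites to $-\lambda\,\phi(v'',v')$, whose leading monomial is strictly smaller. One then concludes by (the $\frakS(Z\sqcup\dagger)$-version of) Lemma~\ref{Lemma:joinable gives trivial modulo}, which converts joinability into triviality modulo $(S_{\phi}(\frakM(Z)),w)$.

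The hard part is the bookkeeping in that last step: one must list the overlap patterns that place a letter $\lfloor 1\rfloor$ next to an honest bracket and check that applying $\lfloor 1\rfloor u\to\lambda u$ or $u\lfloor 1\rfloor\to\lambda u$ never clashes with rewriting an overlapping subword $u'\lfloor v'\rfloor$ --- i.e. that the relevant sub-rewriting-system is confluent --- thereby making the slogan ``$\lfloor 1\rfloor$ acts as the scalar $\lambda$'' precise and uniform over the case division inherited from Proposition~\ref{Proposition: Nonunital differential OPI xdy is GSB in  free nonunital operated algebra}. Each individual case is a one-line computation, so the only real effort is organizational.
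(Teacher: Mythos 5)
Your setup coincides with the paper's: both start from the nonunital result (that $S_{\phi}(\frakS(Z\sqcup\dagger))$ is an operated GS basis in $\bk\frakM(Z)$ for $\leq_{\mathrm{udl}}$) and both isolate the degenerate substitutions $\phi(1,v)=\lfloor 1\rfloor v-\lambda v$, $\phi(u,1)=u\lfloor 1\rfloor-\lambda u$, $\phi(1,1)=\lfloor 1\rfloor-\lambda$ with the leading monomials you state. After that the routes genuinely diverge. You propose to verify by hand that the full set $S_{\phi}(\frakM(Z))=S_{\phi}(\frakS(Z'))\cup\calg'$ is closed under compositions, where $\calg'$ is the entire two-parameter family $\{\lfloor 1\rfloor v-\lambda v,\ u\lfloor 1\rfloor-\lambda u\}$. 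The paper instead adjoins only the single element $\lfloor 1\rfloor-\lambda$: since its leading monomial is the one-letter word $\dagger$, it admits no intersection compositions, and the only new compositions are the inclusion compositions $(f,g)^q_w$ with $f=\phi(u,v)$, $u,v\in\frakS(Z')$, and $\dagger$ occurring inside $u$ or inside $v$ --- two short cases. It then observes that $S_{\phi}(\frakS(Z'))\cup\{\lfloor 1\rfloor-\lambda\}$ and $S_{\phi}(\frakM(Z))$ generate the same operated ideal and have the same set of irreducible monomials, so one application of the Composition--Diamond Lemma produces a linear basis and a second application transfers the GS property to $S_{\phi}(\frakM(Z))$. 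That transfer is exactly what makes the overlap analysis you defer unnecessary: none of the intersections of $\lfloor 1\rfloor v$ and $u\lfloor 1\rfloor$ with $u'\lfloor v'\rfloor$ or with each other, nor your borderline case $\overline{\phi(1,v''\lfloor v'\rfloor)}=\overline{\phi(\dagger v'',v')}$, ever has to be listed.

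Your route would work --- the sample computations you give are correct, and the heuristic that $\lfloor 1\rfloor$ rewrites like the scalar $\lambda$ is sound --- but as written the argument stops precisely where its content begins. The triviality of every composition involving $\calg'$ \emph{is} the claim being proved, and asserting that each case is a one-line computation is not the same as exhibiting the finite list of overlap shapes and checking that the mixed rewriting system is confluent on each of them. If you want to keep this approach, either enumerate those shapes explicitly, or prove once, as a lemma, that $q|_{\dagger h}-\lambda q|_{h}$ and $q|_{h\dagger}-\lambda q|_{h}$ are always trivial modulo $(S_{\phi}(\frakM(Z)),q|_{\dagger h})$ and deduce all mixed compositions from it. Otherwise, the paper's two-step device (adjoin $\lfloor 1\rfloor-\lambda$ alone, then compare ideals and irreducible sets) is the cleaner way to close the gap.
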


\begin{proof}
	


Let $\phi(x,y)=x\lfloor y\rfloor-R(x,y)$ be the OPI of type $(\calu1'_b)=(\calu5'_a)$ and denote $Z^\prime=Z\cup \lfloor 1\rfloor$.

Firstly, we will show that $ S_{\phi}(\frakS(Z^\prime))\cup\{  \lfloor 1\rfloor-\lambda \} $ is an operated GS basis in $\bk\frakM(Z)$ with respect to $\leq_{\rm{udl}}$. By Lemma~\ref{cor: Nonunital differential OPI is gronber basis in  free unital operated algebra}, $S_{\phi}(\frakS(Z^\prime))$ is an operated GS basis in $\bk\frakM(Z)$.
So we only need to consider the inclusion composition of  $f\in S_{\phi}(\frakS(Z^\prime))$ and $g=\lfloor 1\rfloor-\lambda$. Let  $w=\bar{f}=u\lfloor v\rfloor=q|_{\lfloor1\rfloor}$ where $u,v\in\frakS(Z^\prime)$, then $q$ has a left factor $u$ or a right factor $\lfloor v\rfloor$, i.e., $q=u\lfloor q'\rfloor$ or $q=q'\lfloor v\rfloor$ with $q'\in \frakM^\star(Z)$.

 For the first case, we have $v=q'|_{\lfloor 1\rfloor}$, and the inclusion composition
$$\begin{aligned}
	\left( f,g\right)^q_{w}&=f-q|_g\\
	&=\phi(u,q'|_{\lfloor 1\rfloor}) -u\lfloor q'|_{\lfloor1\rfloor}\rfloor+u\lfloor q'|_{\lambda}\rfloor\\
	&=-R(u,q'|_{\lfloor 1\rfloor-\lambda})+\lambda\left( u\lfloor q'|_{1}\rfloor- R(u,q'|_{1})   \right)\\
	&=-R(u,q'|_{\lfloor 1\rfloor-\lambda})+\lambda\phi(u,q'|_{1}).
\end{aligned}$$
If $q'$ has a factor in $\cals(Z)$, then $\phi(u,q'|_{1})$ is trivial modulo $(S_{\phi}(\frakS(Z^\prime))  ,w)$; otherwise we have  $q'=\star$, thus $\phi(u,q'|_{1})= u\lfloor 1\rfloor-\lambda u$ is trivial modulo $( \{  \lfloor 1\rfloor-\lambda \} ,w)$.  And since  $R(u,q'|_{\lfloor 1\rfloor-\lambda})$ is trivial modulo $( \{  \lfloor 1\rfloor-\lambda \} ,w)$, it follows that $\left( f,g\right)^q_{w}$ is trivial modulo $(S_{\phi}(\frakS(Z^\prime))\cup\{  \lfloor 1\rfloor-\lambda \} ,w)$.

For the second case, we have $u=q'|_{\lfloor 1\rfloor}$, and similar to the first case, the inclusion composition
$$	\left( f,g\right)^q_{w}=-R(q'|_{\lfloor1\rfloor-\lambda},v)+\lambda\phi(q'|_{1},v)$$
is trivial modulo $(S_{\phi}(\frakS(Z^\prime))\cup\{  \lfloor 1\rfloor-\lambda \} ,w)$.
So we have shown that $ S_{\phi}(\frakS(Z^\prime))\cup\{  \lfloor 1\rfloor-\lambda \} $ is an operated GS basis in $\bk\frakM(Z)$.

Now we show that $S_{\phi}(\frakM(Z))$ is also an operated GS basis in $\bk\frakM(Z)$.
Since $\frakM(Z)=\frakS(Z^\prime)\cup\{1\}$, one can write
$$	\begin{aligned}
	S_{\phi}(\frakM(Z))
	&=\{\phi(u,v)\ |\ u,v\in \frakM(Z)\}\\
	&=\{\phi(u,v)\ |\ u,v\in \frakS(Z^\prime)\}\cup\{\phi(u,1), \phi(1,u), \phi(1,1)\ |\ u\in \frakS(Z^\prime)\}\\
	&=S_{\phi}(\frakS(Z^\prime))\cup\{u\lfloor 1\rfloor-\lambda u, \lfloor 1\rfloor u-\lambda u, \lfloor 1\rfloor-\lambda~|~u\in\frakS(Z^\prime)\}.
\end{aligned}	$$
So it is easy to see $ \langle S_{\phi}(\frakS(Z^\prime)) \cup\{  \lfloor 1\rfloor-\lambda \}\rangle_{\uOpAlg}=\langle S_{\phi}(\frakM(Z))\rangle_{\uOpAlg}$ and
$\Irr( S_{\phi}(\frakS(Z^\prime)) \cup\{  \lfloor 1\rfloor-\lambda \})=\Irr( S_{\phi}(\frakM(Z)))$.  By Theorem~\ref{Thm: unital CD}, the set $ \Irr( S_{\phi}(\frakS(Z^\prime))\cup\{  \lfloor 1\rfloor-\lambda \}) $  is a linear basis of the operated algebra $\bk\frakM(Z)/\langle S_{\phi}(\frakS(Z^\prime)) \cup\{  \lfloor 1\rfloor-\lambda \}\rangle_{\uOpAlg}$, i.e., $ \Irr( S_{\phi}(\frakM(Z)))$  is a linear basis of $\bk\frakM(Z)/\langle S_{\phi}(\frakM(Z))\rangle_{\uOpAlg}$.
By  Theorem~\ref{Thm: unital CD} again,
the set $S_{\phi}(\frakM(Z))$ is also an operated GS basis in $\bk\frakM(Z)$ with respect to $\leq_{\rm{udl}}$.
\end{proof}

To deal with OPI of type $(\calu5'_b)$, we need a unital version of Proposition~\ref{Proposition: Nonunital differential OPI xdy is GSB in  free nonunital operated algebra}, whose  proof is similar, so we omit it.
\begin{prop}\label{Proposition: unital differential OPI xdy is GSB in  free unital operated algebra}
	Let $\phi=x \lfloor y\rfloor -R(x,y) \in\bk \frakM(X )$ be an OPI. Let $Z$ be a set such that $\frakM (Z)$ is  endowed with a monomial order.  Assume that for all $u,v\in \frakM (Z)$,  $\phi(u,v) $   vanishes or its  leading monomial is still $u\lfloor v\rfloor$. Let $$\Pi_{\phi}(Z):= \{q|_{u\lfloor v\rfloor}\rightarrow q|_{R(u,v)}~|~u,v \in \frakM(Z),~ q\in\frakM^\star(Z)\}.$$ If
	$R(uv,w)\downarrow_{\Pi_{\phi}(Z)}uR(v,w)$, for any $u,v,w\in\frakM(Z)$, then $S_{\phi}(\frakM(Z))$ is an operated	GS basis in $\bk\frakM(Z)$ with respect to $\leq$.
\end{prop}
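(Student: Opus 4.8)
The plan is to transcribe the proof of Proposition~\ref{Proposition: Nonunital differential OPI xdy is GSB in  free nonunital operated algebra} almost verbatim, replacing $\frakS$ by $\frakM$ and $\frakS^\star$ by $\frakM^\star$ throughout, and then to account for the extra bookkeeping caused by the presence of the empty word $1$ in $\frakM(Z)$. By definition of an operated GS basis, one must show that every inclusion composition $(f,g)^q_w$ and every intersection composition $(f,g)^{u,v}_w$ of elements $f,g$ of $S_\phi(\frakM(Z))$ is trivial modulo $(S_\phi(\frakM(Z)),w)$. The hypothesis on $\phi$ enters repeatedly in the form: for all $a,b\in\frakM(Z)$, either $\phi(a,b)=0$, whence the corresponding summand disappears, or $\overline{\phi(a,b)}=a\lfloor b\rfloor$; combined with the monomial-order axioms on $\leq$, this controls all the leading monomials appearing below. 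One also uses the evident unital analogue of Lemma~\ref{Lemma:joinable gives trivial modulo}, which has the same proof with $\frakM$ in place of $\frakS$.

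First consider the inclusion compositions. Write $f=\phi(u,v)$ and take $g\in S_\phi(\frakM(Z))$ with $w=\overline f=q|_{\overline g}$ for some $q\in\frakM^\star(Z)$. Since $\overline f=u\lfloor v\rfloor$, unique factorization of an element of $\frakM(Z)$ into letters of $Z\cup\lfloor\frakM(Z)\rfloor$ puts us in exactly one of three configurations for the position of the hole $\star$ of $q$: (i) $q=q'\lfloor v\rfloor$ with $q'\in\frakM^\star(Z)$ and $u=q'|_{\overline g}$; (ii) $q=u\lfloor q'\rfloor$ with $q'\in\frakM^\star(Z)$ and $v=q'|_{\overline g}$; (iii) $q=t\star$ with $\overline g=s\lfloor v\rfloor$, $g=\phi(s,v)$ and $ts=u$. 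In cases (i) and (ii) the telescoping computation of the nonunital proof rewrites $(f,g)^q_w$ as $\phi(q'|_{\overline g-g},v)-R(q'|_g,v)$, respectively $\phi(u,q'|_{\overline g-g})-R(u,q'|_g)$; expanding $\overline g-g$ as a combination of monomials strictly below $\overline g$ and using right and bracket compatibility of $\leq$, one sees that every element occurring there is $0$ or has leading monomial strictly below $w$, so the composition is trivial. In case (iii), $(f,g)^q_w=tR(s,v)-R(ts,v)$, and the hypothesis that $R(ts,v)$ and $tR(s,v)$ are joinable under $\Pi_\phi(Z)$ together with Lemma~\ref{Lemma:joinable gives trivial modulo} yields triviality at once.

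Now turn to the intersection compositions. Write $f=\phi(u_1,u_2)$, $g=\phi(v_1,v_2)$ and suppose $w=\overline f a=b\overline g$ with $\max\{|\overline f|,|\overline g|\}<|w|<|\overline f|+|\overline g|$. These inequalities force $|\overline f|\ge2$ and $|\overline g|\ge2$, hence $u_1\ne1$ and $v_1\ne1$; since $\overline f=u_1\lfloor u_2\rfloor$ and $\overline g=v_1\lfloor v_2\rfloor$, the overlap forces $a$ to end in $\lfloor v_2\rfloor$. Put $h=\phi(bv_1,v_2)$, which is nonzero with $\overline h=bv_1\lfloor v_2\rfloor=w$; then $w=\overline h=(\star a)|_{\overline f}=(b\star)|_{\overline g}$, so $(h,f)^{\star a}_w$ and $(h,g)^{b\star}_w$ are inclusion compositions already handled, and the identity $(f,g)^{a,b}_w=-(h,f)^{\star a}_w+(h,g)^{b\star}_w$ completes the argument.

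The content is genuinely routine once set up; the only thing to watch --- the main obstacle, such as it is --- is the bookkeeping of the empty word, since in the unital setting several of the words $u,v,s,t,v_1,v_2$ may a priori equal $1$. One checks case by case that such degenerate substitutions either fall outside the relevant configuration, or are still covered by the hypothesis, or produce vanishing (hence harmless) terms, and that the trichotomy of overlap patterns for a monomial $u\lfloor v\rfloor$ remains exhaustive in $\frakM(Z)$.
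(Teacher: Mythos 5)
Your proposal is correct and is exactly the route the paper intends: the paper omits this proof, stating only that it is "similar to" the nonunital Proposition~\ref{Proposition: Nonunital differential OPI xdy is GSB in  free nonunital operated algebra}, and your argument is precisely that adaptation (same three-case analysis of inclusion compositions, same reduction of intersection compositions via $h=\phi(bv_1,v_2)$, same use of the joinability hypothesis through the unital analogue of Lemma~\ref{Lemma:joinable gives trivial modulo}). Your explicit attention to the degenerate substitutions involving the empty word is the only genuinely new content required, and you handle it correctly.
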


\begin{lem}\label{Lem: differential type 4b is GS}
	The OPI  of type $(\calu5'_b)$ in Theorem~\ref{Lemma: equivalent OPIs of unital diff type} is  unital operated GS with respect to the monomial order $\leq_{\rm{udl}}$.
\end{lem}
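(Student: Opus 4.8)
The plan is to derive this from Proposition~\ref{Proposition: unital differential OPI xdy is GSB in  free unital operated algebra}, applied with the monomial order $\leq_{\rm{udl}}$. Write the OPI of type $(\calu5'_b)$ as $\phi(x,y)=x\lfloor y\rfloor-R(x,y)$, where $R(x,y)=\lfloor xy\rfloor-\lfloor x\rfloor y+x\lfloor 1\rfloor y$. By that proposition it suffices to check, for every set $Z$: \textbf{(i)} for all $u,v\in\frakM(Z)$ the element $\phi(u,v)$ vanishes or has leading monomial $u\lfloor v\rfloor$ with coefficient $1$; and \textbf{(ii)} $R(uv,w)\downarrow_{\Pi_{\phi}(Z)}uR(v,w)$ for all $u,v,w\in\frakM(Z)$, where $\Pi_{\phi}(Z)=\{\,q|_{u\lfloor v\rfloor}\to q|_{R(u,v)}\mid u,v\in\frakM(Z),\ q\in\frakM^\star(Z)\,\}$. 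In contrast with type $(\calu1'_b)=(\calu5'_a)$, a direct computation gives $\phi(1,1)=\phi(1,v)=\phi(u,1)=0$, so no auxiliary relation such as the $\lfloor 1\rfloor-\lambda$ introduced in Lemma~\ref{Lem: differential type 1b is GS} is needed here and the proposition applies verbatim.

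For \textbf{(i)}: if $u=1$ or $v=1$ then $\phi(u,v)=0$ by the remark just made, and if $u,v\in\frakM(Z)\backslash\{1\}$ then Proposition~\ref{Lemma: inequal  relation} gives
$$u\lfloor 1\rfloor v<_{\rm{udl}}\lfloor u\rfloor v<_{\rm{udl}}\lfloor uv\rfloor<_{\rm{udl}}u\lfloor v\rfloor,$$
the first inequality being part~(b) and the remaining two part~(a); hence the four monomials of $\phi(u,v)$ are pairwise distinct and $u\lfloor v\rfloor$ is the largest, with coefficient $1$. (In particular $R(x,y)$ is in $\phi$-normal form.)

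For \textbf{(ii)}: expand $uR(v,w)=u\lfloor vw\rfloor-u\lfloor v\rfloor w+uv\lfloor 1\rfloor w$. Assuming first $u,v,w\neq 1$, rewrite $u\lfloor vw\rfloor$ by the rule indexed by $(u,vw)$ with $q=\star$, obtaining $\lfloor uvw\rfloor-\lfloor u\rfloor vw+u\lfloor 1\rfloor vw$, and then rewrite $u\lfloor v\rfloor w$ by the rule indexed by $(u,v)$ with $q=\star w$, obtaining $\lfloor uv\rfloor w-\lfloor u\rfloor vw+u\lfloor 1\rfloor vw$. After these two one-step rewrites the $\mp\lfloor u\rfloor vw$ terms cancel, the $\pm u\lfloor 1\rfloor vw$ terms cancel, and one is left with $\lfloor uvw\rfloor-\lfloor uv\rfloor w+uv\lfloor 1\rfloor w=R(uv,w)$. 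Thus $uR(v,w)\stackrel{\ast}{\rightarrow}_{\Pi_{\phi}(Z)}R(uv,w)$, so $R(uv,w)\downarrow_{\Pi_{\phi}(Z)}uR(v,w)$ with common descendant $R(uv,w)$ itself. The degenerate cases are immediate: if $w=1$ both sides equal $uv\lfloor 1\rfloor$; if $v=1$ the left side equals $R(u,w)$ and the right side $u\lfloor w\rfloor$ rewrites to it in one step; if $u=1$ both sides equal $R(v,w)$.

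With \textbf{(i)} and \textbf{(ii)} in hand, Proposition~\ref{Proposition: unital differential OPI xdy is GSB in  free unital operated algebra} gives that $S_{\phi}(\frakM(Z))$ is an operated GS basis in $\bk\frakM(Z)$ with respect to $\leq_{\rm{udl}}$ for every $Z$, which is exactly the statement that $\phi$ is unital operated GS with respect to $\leq_{\rm{udl}}$. I do not expect a real obstacle: the only point needing care is the bookkeeping in step \textbf{(ii)} — checking that each one-step rewrite used is a genuine element of $\Pi_{\phi}(Z)$ (i.e., that the arguments plugged into $\phi$ are all $\neq 1$) and that the rewriting is compatible with $\leq_{\rm{udl}}$ — and both facts follow immediately from the inequalities of Proposition~\ref{Lemma: inequal  relation}.
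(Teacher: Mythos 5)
Your proposal is correct and follows essentially the same route as the paper: reduce to Proposition~\ref{Proposition: unital differential OPI xdy is GSB in  free unital operated algebra}, verify the leading-monomial condition via Proposition~\ref{Lemma: inequal  relation} together with $\phi(u,1)=\phi(1,v)=0$, and establish $R(uv,w)\downarrow_{\Pi_\phi(Z)} uR(v,w)$ by the same two one-step rewrites. Your explicit treatment of the degenerate cases $u=1$, $v=1$, $w=1$ in the joinability check is slightly more careful than the paper's, but the argument is otherwise identical.
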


\begin{proof}
	For OPI $\phi=x\lfloor y\rfloor-(\lfloor xy\rfloor-\lfloor x\rfloor y+x\lfloor 1\rfloor  y)$ of type $(\calu5'_b)$,
	let $R(x,y)=\lfloor xy\rfloor-\lfloor x\rfloor y+x\lfloor 1\rfloor y$. Consider the rewriting system $$\Pi_{\phi}(Z):= \{q|_{u\lfloor v\rfloor}\rightarrow q|_{R(u,v)}~|~u,v \in \frakM(Z),~ q\in\frakM^\star(Z)\}.$$
	Firstly we show $ R(uv,w)\downarrow_{\Pi_\phi(Z)}uR(v,w)$.
	We have
	$$	\begin{array}{lll}\vspace{1ex}
		&uR(v,w) \\\vspace{1ex}
		=&u\lfloor vw\rfloor-u\lfloor v\rfloor w+ uv\lfloor 1\rfloor w\\\vspace{1ex}
		\rightarrow&\lfloor uvw\rfloor-\lfloor u\rfloor vw+ u\lfloor 1\rfloor vw-(\lfloor uv\rfloor w-\lfloor u\rfloor vw+ u\lfloor 1\rfloor vw)+ uv\lfloor 1\rfloor w\\\vspace{1ex}
		=&\lfloor uvw\rfloor-\lfloor uv\rfloor w+ uv\lfloor 1\rfloor w\\\vspace{1ex}
		=&R(uv,w).
	\end{array}	$$

If $u=1$ or $v=1$, then $\phi(u,v)=0$, and for $u,v\in \frakM(Z)\backslash 1$, $u\lfloor v\rfloor$ is always the leading monomial of $\phi(u,v)$ by Proposition~\ref{Lemma: inequal  relation}. It follows from Proposition~\ref{Proposition: unital differential OPI xdy is GSB in  free unital operated algebra} that $S_{\phi}(\frakM(Z))$ is an operated  GS basis in $\bk\frakM(Z)$.
\end{proof}

Notice that we can not apply Proposition~\ref{Proposition: unital differential OPI xdy is GSB in  free unital operated algebra} to the proof of Lemma~\ref{Lem: differential type 1b is GS}, since for the OPI $\phi$ of type $(\calu1'_b)=(\calu5'_a)$ in Theorem~\ref{Lemma: equivalent OPIs of unital diff type}, $\overline{\phi(u,v)}\neq u\lfloor v\rfloor$ if we take $u=1$ and $v\neq1$. In fact, $\overline{\phi(1,v)}=\lfloor 1\rfloor v\neq\lfloor v\rfloor$.

\begin{lem}\label{Lem: differential type with one variable is GS}
	Each OPI of the types $(\calu1'_c)=(\calu2'_b)=(\calu6'_b)$, $(\calu3')=(\calu4')=(\calu6'_a)$ and $(\calu5'_c)=(\calu6'_c)$ in Theorem~\ref{Lemma: equivalent OPIs of unital diff type} is unital operated GS with respect to the monomial order $\leq_{\rm{db}}$.
\end{lem}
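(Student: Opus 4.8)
The plan is to verify that each of these one‑variable OPIs satisfies the two hypotheses of Theorem~\ref{Thm: verify a GS OPI}, which is the standard machinery for checking that a single OPI is operated GS. The three families are, after the normalizations in Theorem~\ref{Lemma: equivalent OPIs of unital diff type}: $\phi_{c}=\lfloor x\rfloor-\lambda x$ (type $(\calu1'_c)$ etc.), $\phi_{a}=\lfloor x\rfloor+\lambda x\lfloor 1\rfloor-(\lambda+1)\lfloor 1\rfloor x$ (type $(\calu3')=(\calu4')=(\calu6'_a)$), and $\phi_{0}=x$ (type $(\calu5'_c)=(\calu6'_c)$). In each case I would first identify, with respect to $\leq_{\rm{db}}$, the leading monomial and write $\phi=\overline{\phi}-R(\phi)$ with $R(\phi)$ in $\phi$‑normal form: for $\phi_{c}$ the leading monomial is $\lfloor x\rfloor$ and $R=\lambda x$; for $\phi_{a}$, using Proposition~\ref{Lemma: inequal  relation}(d) together with the structure of $\leq_{\rm{db}}$ on $P$‑degree and breadth, the leading monomial is $\lfloor 1\rfloor x$ and $R(x)=\tfrac{1}{\lambda+1}\lfloor x\rfloor+\tfrac{\lambda}{\lambda+1}x\lfloor 1\rfloor$ when $\lambda\neq -1$ (and if $\lambda=-1$ the OPI degenerates and must be treated as its own sub‑case, but then it reads $\lfloor x\rfloor - x\lfloor 1\rfloor$ with leading term $\lfloor x\rfloor$); for $\phi_{0}=x$ there is nothing to normalize — the ``OPI'' simply says $x=0$ in the quotient.

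The key computational step is condition~(a) of Theorem~\ref{Thm: verify a GS OPI}: whenever $\overline{\phi(\mathfrak u)}=\alpha\beta$ and $\overline{\phi(\mathfrak v)}=\beta\gamma$ genuinely overlap, the two rewritings $R(\phi(\mathfrak u))\gamma$ and $\alpha R(\phi(\mathfrak v))$ must be joinable in $\Pi_{\phi}(Z)$. Here the crucial observation — and the reason these one‑variable cases are easy — is that the leading monomials are $\lfloor u\rfloor$, or $\lfloor 1\rfloor u$, or $u$; these are words in a single bracketed block (or a single letter), so there simply are \emph{no} proper overlaps $\alpha\beta=\overline{\phi(\mathfrak u)}$, $\beta\gamma=\overline{\phi(\mathfrak v)}$ with $\beta$ a nonempty common part other than an entire leading monomial sitting inside the other. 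I would spell this out: for $\phi_{0}=x$ the leading word has breadth one, so no overlap; for $\phi_{c}$ the leading word $\lfloor u\rfloor$ is a single element of $Z\cup\lfloor\frakM(Z)\rfloor$, hence cannot overlap another $\lfloor v\rfloor$ except by coincidence of the whole, which is an \emph{inclusion} not an intersection; similarly $\lfloor 1\rfloor u$ for $\phi_{a}$ — any overlap would force one leading monomial to be a subword of the other. Thus condition~(a) is vacuous in every case.

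Condition~(b) is then immediate for the same structural reason: if $\overline{\phi(\mathfrak u)}=q|_{\overline{\phi(\mathfrak v)}}$ with $q\neq\star$, then the inner leading monomial is a proper subword of the outer one, and since the outer leading monomial is $\lfloor u\rfloor$ (resp.\ $\lfloor 1\rfloor u$, resp.\ $u$), the inner $\overline{\phi(\mathfrak v)}$ must lie inside the single argument slot — that is, it is a subword of the $u_i$ (here $i=1$). So both hypotheses of Theorem~\ref{Thm: verify a GS OPI} hold, and each OPI is operated GS with respect to $\leq_{\rm{db}}$. The only real care needed — and the one place I expect to have to think rather than compute — is checking that $\leq_{\rm{db}}$ indeed picks out the claimed leading monomial in the $\phi_a$ case, i.e.\ that $\lfloor 1\rfloor x >_{\rm db} \lfloor x\rfloor$ and $\lfloor 1\rfloor x >_{\rm db} x\lfloor 1\rfloor$; this follows from the definition of $\leq_{\db}$ via $\leq_{\dgp,\mathrm{brp},\mathrm{lex}}$ (equal $P$‑degree and breadth, then the $\mathrm{lex}_{n}$ comparison on the starred parts), so I would just record that comparison explicitly and then invoke Theorem~\ref{Thm: verify a GS OPI}.
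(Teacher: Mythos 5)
Your overall strategy --- checking the two hypotheses of Theorem~\ref{Thm: verify a GS OPI} --- is exactly what the paper's one-line proof invokes, and your treatment of $\lfloor x\rfloor-\lambda x$ and of $x$ is essentially fine (for $\phi_0=x$ the leading word $\overline{\phi_0(u)}=u$ certainly need not have breadth one, so overlaps do occur, but since $R(\phi_0)=0$ every overlap is trivially joinable and the conclusion stands). The genuine problem is the case $(\calu3')=(\calu4')=(\calu6'_a)$, $\phi_a=\lfloor x\rfloor+\lambda x\lfloor 1\rfloor-(\lambda+1)\lfloor 1\rfloor x$. You identify its leading monomial under $\leq_{\rm{db}}$ as $\lfloor 1\rfloor x$, citing Proposition~\ref{Lemma: inequal  relation}(d); but that proposition concerns $\leq_{\rm{udl}}$, where $\lfloor 1\rfloor=\dagger$ is a letter that does not count toward $P$-degree, not $\leq_{\rm{db}}$. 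Under $\leq_{\rm{db}}$ the three monomials $\lfloor x\rfloor$, $x\lfloor 1\rfloor$, $\lfloor 1\rfloor x$ all have $P$-degree $1$ and $P$-breadth $1$, and the $\mathrm{lex}$ step compares the bracketed components \emph{first}, giving the tuples $(x,1,1)$, $(1,x,1)$, $(1,1,x)$ respectively; hence $\lfloor x\rfloor>_{\rm{db}}x\lfloor 1\rfloor>_{\rm{db}}\lfloor 1\rfloor x$. Your claimed leading monomial is in fact the \emph{smallest} of the three terms, and the division by $\lambda+1$ (with the separate sub-case $\lambda=-1$) is both wrong and unnecessary.

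This is not merely cosmetic. With leading word $\lfloor 1\rfloor u$ your "no overlaps" claim fails: $\lfloor 1\rfloor uv=(\lfloor 1\rfloor u)\,v$ exhibits one such leading word inside another without being contained in the argument slot, so condition~(b) of Theorem~\ref{Thm: verify a GS OPI} would be violated as you have set things up. Moreover, even after correcting $\overline{\phi_a}$ to $\lfloor x\rfloor$, the substituted instances are not uniform: if $|u|_P\geq 1$ then $|u\lfloor 1\rfloor|_P=|\lfloor 1\rfloor u|_P=|u|_P+1\geq 2>1=|\lfloor u\rfloor|_P$, and since $\leq_{\rm{db}}$ compares $P$-breadth before the lexicographic step, $\overline{\phi_a(u)}$ is then $u\lfloor 1\rfloor$ or $\lfloor 1\rfloor u$ rather than $\lfloor u\rfloor$. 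Consequently the leading monomials of $S_{\phi_a}(\frakM(Z))$ are not all single bracketed blocks, condition~(a) is not vacuous (for instance $u\lfloor 1\rfloor$ and $\lfloor 1\rfloor v$ overlap inside $u\lfloor 1\rfloor v$), and a genuine joinability computation is required for this type. You would need either to carry out those overlap checks explicitly, or to work with an order in which $\lfloor 1\rfloor$ does not contribute to $P$-degree and $P$-breadth so that $\overline{\phi_a(u)}=\lfloor u\rfloor$ uniformly. As written, the $(\calu3')$ case is not established; the other two families are.
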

\begin{proof}
 It is easy to see each of these OPIs satisfies the conditions in Theorem~\ref{Thm: verify a GS OPI}, so the assertion follows.
\end{proof}

We have considered all  OPI in Theorem~\ref{Lemma: equivalent OPIs of unital diff type}.
\begin{thm}\label{Thm: differential type are GS}
	Each OPI in Theorem~\ref{Lemma: equivalent OPIs of unital diff type} is unital operated GS with respect to either  the monomial order $\leq_{\rm{udl}}$ or $\leq_{\rm{db}}$.
\end{thm}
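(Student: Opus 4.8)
The plan is to assemble the case-by-case analysis already carried out in the lemmas of this subsection, the only real work being to check that the list of OPIs produced by Theorem~\ref{Lemma: equivalent OPIs of unital diff type} is covered in its entirety. First I would recall that, by Theorem~\ref{Lemma: equivalent OPIs of unital diff type}, every OPI among $(\calu1)$--$(\calu6)$ (with the standing restriction $\lambda_{ij}=0$ whenever $i+j\geq 2$ in the $(\calu6)$ case) is equivalent in the sense of Definition~\ref{Def: equivalent OPIs} to one of the primed OPIs $(\calu1'_a)$, $(\calu1'_b)$, $(\calu1'_c)$, $(\calu2'_a)$, $(\calu2'_b)$, $(\calu3')=(\calu4')$, $(\calu5'_a)$, $(\calu5'_b)$, $(\calu5'_c)$, $(\calu6'_a)$, $(\calu6'_b)$, $(\calu6'_c)$, and that several of these coincide: $(\calu1'_b)=(\calu5'_a)$, $(\calu1'_c)=(\calu2'_b)=(\calu6'_b)$, $(\calu3')=(\calu4')=(\calu6'_a)$, $(\calu5'_c)=(\calu6'_c)$. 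So it suffices to prove that each OPI on this reduced list is unital operated GS under $\leq_{\rm{udl}}$ or $\leq_{\rm{db}}$.

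Second, I would run through the reduced list and invoke the corresponding lemma. The types $(\calu1'_a)$ and $(\calu2'_a)$ are GS with respect to $\leq_{\rm{db}}$ by Lemma~\ref{Lem: differential type 1a is GS} and Lemma~\ref{Lem: differential type 2a is GS}. The two-term types $(\calu1'_b)=(\calu5'_a)$ and $(\calu5'_b)$ are GS with respect to $\leq_{\rm{udl}}$ by Lemma~\ref{Lem: differential type 1b is GS} and Lemma~\ref{Lem: differential type 4b is GS}. Finally, the one-variable types $(\calu1'_c)=(\calu2'_b)=(\calu6'_b)$, $(\calu3')=(\calu4')=(\calu6'_a)$ and $(\calu5'_c)=(\calu6'_c)$ are GS with respect to $\leq_{\rm{db}}$ by Lemma~\ref{Lem: differential type with one variable is GS}. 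Since this exhausts the list, every OPI in Theorem~\ref{Lemma: equivalent OPIs of unital diff type} is unital operated GS with respect to $\leq_{\rm{udl}}$ or $\leq_{\rm{db}}$, which is the claim.

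The only step demanding care --- and the place where an error could slip in, since there is essentially no new computation at the level of the theorem itself --- is the bookkeeping: verifying that the identifications of primed OPIs across the types $(\calu1)$--$(\calu6)$ are correct, and that no primed OPI produced by Theorem~\ref{Lemma: equivalent OPIs of unital diff type} has been omitted from the invocation. One should in particular not lose track of the restriction that the $(\calu6)$ case is handled only under the hypothesis $\lambda_{ij}=0$ unless $i+j\leq 1$, so that the present theorem inherits the same implicit restriction; and one should check that the trivial OPIs $(\calu5'_c)=x$ and $(\calu6'_c)=x$ are genuinely covered by Lemma~\ref{Lem: differential type with one variable is GS} --- they are, vacuously, as $\overline{x}=x$ admits no self-overlap and hence no nontrivial composition. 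All the substantive verifications (the composition diamonds, the rewriting joinability arguments via Theorem~\ref{Thm: verify a GS OPI} and Proposition~\ref{Proposition: unital differential OPI xdy is GSB in  free unital operated algebra}) have already been discharged inside the lemmas, so the proof of the theorem is genuinely just a matter of collecting them.
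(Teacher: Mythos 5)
Your proposal is correct and coincides with the paper's own treatment: the theorem is precisely a summary of Lemmas~\ref{Lem: differential type 1a is GS}, \ref{Lem: differential type 2a is GS}, \ref{Lem: differential type 1b is GS}, \ref{Lem: differential type 4b is GS} and \ref{Lem: differential type with one variable is GS}, and the identifications among the primed types that you list exhaust the cases of Theorem~\ref{Lemma: equivalent OPIs of unital diff type}. The paper gives no further argument beyond this bookkeeping, so nothing is missing.
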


\subsection{Operated GS bases of free unital differential type algebras over algebras} \

By Theorem~\ref{Thm: differential type are GS}, Theorem~\ref{Thm: GS basis for free unital Phi algebra over unital alg} implies immediately the following result.

\begin{thm}\label{Prop: GS basis for unital differential type 1}
	Let $Z$ be a set and $A=\bk \calm(Z)\slash I_A$ a unital algebra with a GS  basis $G$ with respect to $\leq_{\rm{dlex}}$.	
		\begin{itemize}
		\item[(a)] Let $\phi_1$ be an OPI of type $(\calu1'_a)$ or $(\calu2'_a)$.
		Then $S_{\phi_1}(\frakM(Z))\cup G$ is an operated GS  basis of $\langle S_{\phi_1}(\frakM(Z))\cup I_A\rangle_\uOpAlg$ in $\bk\frakM(Z) $ with respect to $\leq_{\rm{db}}$.
		
	\item[(b)] Let $\phi_2$ be an OPI of type $(\calu1'_b)=(\calu5'_a)$ and $(\calu5'_b)$.
	Then $S_{\phi_2}(\frakM(Z))\cup G$ is an operated GS  basis of $\langle S_{\phi_2}(\frakM(Z))\cup I_A\rangle_\uOpAlg$ in $\bk\frakM(Z) $ with respect to $\leq_{\rm{udl}}$.

	\item[(c)] Let $\phi_3$ be an OPI of type  $(\calu3')=(\calu4')=(\calu6'_a)$.
	Then $S_{\phi_3}(\frakM(Z))\cup G$ is an operated GS  basis of $\langle S_{\phi_3}(\frakM(Z))\cup I_A\rangle_\uOpAlg$ in $\bk\frakM(Z) $ with respect to $\leq_{\rm{db}}$.

\item[(d)]   Let $\phi_4$ be an OPI of type $(\calu1'_c)=(\calu2'_b)=(\calu6'_b)$. Then the free  unital $\phi_4$-algebra $\calf^{\phi_4\zhx\uAlg}_{\uAlg}(A)$ is isomorphic to $A$.

	\item[(e)] Let $\phi_5$ be an OPI of type $(\calu5'_c)=(\calu6'_c)$.
	Then the free  unital $\phi_5$-algebra  $\calf^{\phi_5\zhx\uAlg}_{\uAlg}(A)$ vanishes.
	\end{itemize}
\end{thm}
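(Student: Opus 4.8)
The plan is to treat parts (a)--(c) uniformly and then dispatch (d) and (e) separately. For parts (a), (b), (c): by Theorem~\ref{Thm: differential type are GS}, each OPI $\phi_i$ in question is unital operated GS with respect to the indicated monomial order ($\leq_{\rm{db}}$ for (a) and (c), $\leq_{\rm{udl}}$ for (b)); hence $\Phi=\{\phi_i\}$ is operated GS on $Z$. Since $G$ is a GS basis of $I_A$ in $\bk\calm(Z)$ with respect to the restriction of $\leq_{\rm{dlex}}$ (which agrees with the restriction of $\leq_{\rm{db}}$ and $\leq_{\rm{udl}}$ to $\calm(Z)$, as all these orders extend $\leq_{\rm{dlex}}$), the only thing left to invoke Theorem~\ref{Thm: GS basis for free unital Phi algebra over unital alg} is to check its two hypotheses: (i) the leading monomial of $\phi_i(x,y)$ (or $\phi_i(x)$) has no subword in $\calm(X)\setminus X$; (ii) for all $u_1,\dots,u_n\in\frakM(Z)$, $\phi_i(u_1,\dots,u_n)$ vanishes or its leading monomial equals $\overline{\phi_i}(u_1,\dots,u_n)$. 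For (i): the leading monomials are, case by case, $\lfloor x\rfloor\lfloor y\rfloor$ (types $(\calu1'_a)$, $(\calu2'_a)$), $x\lfloor y\rfloor$ (types $(\calu1'_b)=(\calu5'_a)$, $(\calu5'_b)$), and $\lfloor x\rfloor$ (type $(\calu3')=(\calu4')=(\calu6'_a)$); in each case the leading monomial has breadth $\leq 2$ with each factor lying in $X\cup\lfloor\frakM(X)\rfloor$, so there is no subword in $\calm(X)\setminus X$. For (ii): in each case this is exactly the stability statement established while proving the relevant lemma (Lemma~\ref{Lem: differential type 1a is GS}, Lemma~\ref{Lem: differential type 2a is GS}, Lemma~\ref{Lem: differential type 1b is GS}, Lemma~\ref{Lem: differential type 4b is GS}, Lemma~\ref{Lem: differential type with one variable is GS}) using Proposition~\ref{Lemma: inequal  relation}; for types $(\calu1'_b)=(\calu5'_a)$ and $(\calu5'_b)$ the relevant observation was already recorded before Lemma~\ref{Lem: differential type 1b is GS} (namely $\overline{\phi(u,v)}=u\lfloor v\rfloor$ when $u,v\neq1$, and $\phi(u,v)=0$ when $u=1$ or $v=1$ for type $(\calu5'_b)$; for type $(\calu1'_b)$ one checks directly that $\phi(u,v)$ either vanishes or has leading monomial the largest term, using that $\lfloor 1\rfloor v$ is still the leading monomial of $\phi(1,v)$). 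Thus Theorem~\ref{Thm: GS basis for free unital Phi algebra over unital alg} applies verbatim and yields (a), (b), (c).

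For part (d), $\phi_4=\lfloor x\rfloor-\lambda x$ (the reduced form of $(\calu1'_c)=(\calu2'_b)=(\calu6'_b)$ after eliminating the $xy$-subword obstruction, i.e.\ $\lfloor xy\rfloor-\lambda xy$ is equivalent under $\bk\frakM$ to $\lfloor x\rfloor-\lambda x$ — this equivalence is already in the proof of Theorem~\ref{Lemma: equivalent OPIs of unital diff type}(e)). The plan is: the relation $\lfloor u\rfloor=\lambda u$ in $\calf^{\phi_4\zhx\uAlg}_{\uAlg}(A)$ forces the operator $P$ to be the scalar multiple $\lambda\cdot\id$ on all of $\bk\frakM(Z)$, hence the operated ideal $\langle S_{\phi_4}(\frakM(Z))\cup I_A\rangle_\uOpAlg$ collapses every bracketed expression, and one shows by an easy induction on $P$-degree that $\bk\frakM(Z)/\langle S_{\phi_4}(\frakM(Z))\cup I_A\rangle_\uOpAlg\cong \bk\calm(Z)/I_A=A$ as a unital operated algebra (with operator $\lambda\cdot\id$). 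Equivalently, use Theorem~\ref{Thm: unital CD}: $S_{\phi_4}(\frakM(Z))\cup G$ (possibly enlarged with the linearly self-reduced correction as in Theorem~\ref{thm:GS basis of nonunital phi4}, here in the form $\lfloor g\rfloor-\lambda g$ for $g$ in the self-reduced basis of $\bk\pi(G)$) is an operated GS basis, and $\Irr$ of this basis is exactly $\Irr(G)$, a linear basis of $A$; comparing dimensions gives the isomorphism. I would present the direct universal-property argument as it is cleaner: any unital operated $\bk$-algebra satisfying $\phi_4$ is just a unital algebra with the distinguished operator $\lambda\cdot\id$, so the free object over $A$ is $A$ itself.

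For part (e), $\phi_5=x$ is the one-variable identity asserting that every element is zero; then $\calf^{\phi_5\zhx\uAlg}_{\uAlg}(A)=\bk\frakM(Z)/\langle S_{\phi_5}(\frakM(Z))\cup I_A\rangle_\uOpAlg$, and since $S_{\phi_5}(\frakM(Z))=\frakM(Z)$ the operated ideal is all of $\bk\frakM(Z)$, so the quotient is the zero algebra; alternatively, any $\phi_5$-algebra has underlying space $0$, so the free such algebra over any $A$ is $0$. The main obstacle in the whole argument is verifying hypothesis (ii) of Theorem~\ref{Thm: GS basis for free unital Phi algebra over unital alg} for the mixed-bracket types, especially $(\calu1'_b)=(\calu5'_a)$ where $\phi(1,v)$ behaves differently and one must be careful that the leading-monomial-stability still holds on all of $\frakM(Z)$; but since Lemma~\ref{Lem: differential type 1b is GS} and the discussion following it already settle precisely this point via the order $\leq_{\rm{udl}}$ and Proposition~\ref{Lemma: inequal  relation}, the proof reduces to citing those results and checking hypothesis (i), which is immediate from the explicit leading monomials.
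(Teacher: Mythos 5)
Your overall route for (a)--(c) --- cite Theorem~\ref{Thm: differential type are GS} for the GS-ness of $S_{\phi_i}(\frakM(Z))$ and then feed this into Theorem~\ref{Thm: GS basis for free unital Phi algebra over unital alg} --- is exactly the paper's proof, which consists of that single sentence; your direct treatments of (d) and (e) (the operator is forced to be $\lambda\cdot\mathrm{id}$, resp.\ the algebra is zero) are correct and supply details the paper omits.

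There is, however, a genuine gap in your verification of hypothesis (ii) for part (b) in the case of type $(\calu1'_b)=(\calu5'_a)$, and your own parenthetical exposes it. Hypothesis (ii) of Theorem~\ref{Thm: GS basis for free unital Phi algebra over unital alg} requires that the leading monomial of $\phi(u,v)$ equal the \emph{substitution instance} $\overline{\phi}(u,v)$ of the leading monomial of $\phi$; for $\phi$ of type $(\calu1'_b)$ and $u=1$, $v\neq 1$ one has $\phi(1,v)=\lfloor 1\rfloor v-\lambda v\neq 0$ with leading monomial $\lfloor 1\rfloor v$, whereas $\overline{\phi}(1,v)=1\cdot\lfloor v\rfloor=\lfloor v\rfloor$. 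Saying that $\phi(1,v)$ ``has leading monomial the largest term'' is a tautology and is not what (ii) asserts, so the theorem does \emph{not} apply verbatim to this type. (The paper is explicitly aware of this failure --- see the remark following Lemma~\ref{Lem: differential type 4b is GS} --- and Lemma~\ref{Lem: differential type 1b is GS} circumvents it for the GS-ness of $S_\phi(\frakM(Z))$ alone by replacing that set with $S_{\phi}(\frakS(Z'))\cup\{\lfloor 1\rfloor-\lambda\}$, whose elements do have leading monomials of the prescribed shapes; the paper's own one-line proof of the present theorem glosses over the same point.) To close the gap you must either rerun the argument of Theorem~\ref{Thm: GS basis for free unital Phi algebra over unital alg} with the modified generating set $S_{\phi}(\frakS(Z'))\cup\{\lfloor 1\rfloor-\lambda\}\cup G$ and transfer back via the equality of the corresponding sets $\Irr(-)$ as in Lemma~\ref{Lem: differential type 1b is GS}, or directly check the additional mixed compositions between the elements $\phi(1,v)$ (leading monomial $\lfloor 1\rfloor v$) and elements of $G$; both are routine, but neither is ``verbatim''.
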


By Theorem~\ref{Thm: unital CD}, we obtain linear bases for these algebras with a simple proof.
\begin{thm}\label{thm: linear basis of unital differential type algebra over algebra}
	With the same setup as in Theorem~\ref{thm: GS basis for nonunital differential type} then we have the following statements:
	\begin{itemize}
		\item[(a)]
		Let $B_0=\Irr_{\calm}(G)$ and for any $n\geq 1$, define the set $B_{n}$  inductively as follows
	$$\bigcup_{r\geq0}\left\lbrace {u_{0}\lfloor v_{1}\rfloor u_{1}    \cdots\lfloor v_{r} \rfloor u_{r}~|~u_0,u_r\in\Irr_{\calm}(G) , u_i\in\Irr_{\cals}(G),1\leq i\leq r-1,  v_j\in B_{n-1},1\leq j\leq r }\right\rbrace.
	$$
		Then the set
	$B= \bigcup_{n\geq0}B_n$ is a  linear basis of the free  unital $\phi_1$-algebra $\calf^{\phi_1\zhx\uAlg}_{\uAlg}(A)$ over the unital algebra $A$.
		
		\item[(b)] The set
		$$\bigcup_{r\geq0}\left\lbrace {\lfloor\cdots\lfloor\lfloor u_{r}\rfloor^{(k_r)} u_{r-1}\rfloor^{(k_{r-1})} \cdots u_{1}\rfloor^{(k_1)}  u_{0}~\Big|~u_{0}, u_i\in\Irr_{\calm}(G), k_i\geq1, 1\leq i\leq r}\right\rbrace$$
		is a linear  basis of the free  unital $\phi_2$-algebra $\calf^{\phi_2\zhx\uAlg}_{\uAlg}(A)$ over $A$.
		
		\item[(c)] The set
	$$\bigcup_{r\geq0}\left\lbrace {u_{0}\lfloor 1\rfloor^{k_1} u_{1} \lfloor 1\rfloor^{k_2}   \cdots\lfloor1 \rfloor^{k_r} u_{r}~\Big|~u_0, u_r\in\Irr_{\calm}(G) , u_i\in \Irr_{\cals}(G),1\leq i\leq r-1, k_j\geq 1, 1\leq j\leq r}\right\rbrace
	$$
	is a  linear basis of the free  unital $\phi_3$-algebra $\calf^{\phi_3\zhx\uAlg}_{\uAlg}(A)$ over $A$.
	\end{itemize}		
\end{thm}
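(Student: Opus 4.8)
The plan is to read each linear basis off the Composition--Diamond Lemma, applied to the operated GS bases already produced. Concretely: by Proposition~\ref{Prop: from algebra to Phi algebra with relations}(b) the free unital $\phi_i$-algebra over $A$ is $\bk\frakM(Z)\slash\langle S_{\phi_i}(\frakM(Z))\cup I_A\rangle_\uOpAlg$, and Theorem~\ref{Prop: GS basis for unital differential type 1} says that $S_{\phi_i}(\frakM(Z))\cup G$ is an operated GS basis of this operated ideal with respect to $\leq_{\rm{db}}$ in cases (a) and (c), and $\leq_{\rm{udl}}$ in case (b). Hence Theorem~\ref{Thm: unital CD} gives at once that $\Irr(S_{\phi_i}(\frakM(Z))\cup G)$ is a $\bk$-basis of $\calf^{\phi_i\zhx\uAlg}_{\uAlg}(A)$. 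So the whole task reduces to checking that each of the three explicitly displayed sets coincides with this irreducible set.

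To do that, for each type I would first pin down the leading monomials of all elements of $S_{\phi_i}(\frakM(Z))$, then describe the monomials avoiding them as subwords. In case (a) every $\phi_1(u,v)$ ($u,v\in\frakM(Z)$) has leading monomial $\lfloor u\rfloor\lfloor v\rfloor$, being the unique monomial of top $P$-degree, so the forbidden subwords are exactly all $\lfloor u\rfloor\lfloor v\rfloor$ together with the $\bar g$, $g\in G$; an induction on the bracket-nesting height then shows that avoiding these is equivalent to the recursive shape $u_0\lfloor v_1\rfloor u_1\cdots\lfloor v_r\rfloor u_r$ with the interior $u_1,\dots,u_{r-1}$ forced to be nonempty (so no two brackets become adjacent) and $G$-irreducible, the extremal $u_0,u_r$ in $\Irr_{\calm}(G)$, and each $v_j$ in the previous layer $B_{n-1}$ --- precisely $B=\bigcup_n B_n$. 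In case (b) one uses Proposition~\ref{Lemma: inequal  relation} to see $\overline{\phi_2(u,v)}=u\lfloor v\rfloor$ for $u,v\neq1$, so no bracket may be immediately preceded by a factor at its own level; solving this constraint level by level yields the left-nested shape $\lfloor\cdots\lfloor\lfloor u_r\rfloor^{(k_r)}u_{r-1}\rfloor^{(k_{r-1})}\cdots u_1\rfloor^{(k_1)}u_0$, after which $G$-reducibility simply confines the $u_i$ to $\Irr_{\calm}(G)$. Case (c) is analogous for the reformulation $(\calu3')=(\calu4')=(\calu6'_a)$: the only brackets that survive reduction are iterated bare $\lfloor1\rfloor$'s sandwiched between $G$-irreducible words, giving $u_0\lfloor1\rfloor^{k_1}u_1\cdots\lfloor1\rfloor^{k_r}u_r$. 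In every case the two families of rules decouple, since the leading monomials of $G$ lie in $\calm(Z)$ whereas those of $S_{\phi_i}(\frakM(Z))$ always contain a bracket: a monomial is irreducible iff its bracket skeleton avoids the $\phi_i$-patterns and each of its maximal $\calm(Z)$-blocks is $G$-irreducible.

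I expect the real difficulty to be the bookkeeping in case (b) around the empty word: whether $\phi_2(1,v)$ and $\phi_2(u,1)$ vanish --- which holds for the representative $(\calu5'_b)$ but not for $(\calu1'_b)=(\calu5'_a)$, where instead $\lfloor1\rfloor-\lambda$ and $u\lfloor1\rfloor-\lambda u$ become leading relations --- changes exactly which occurrences of $\lfloor1\rfloor$ are admissible in the nested normal form, and one has to keep the normal form neither too large nor too small. Here I would re-use the analysis in the proof of Lemma~\ref{Lem: differential type 1b is GS} (reducing $\leq_{\rm{udl}}$ on $\frakM(Z)$ to $\leq_{\rm{dl}}$ on $\frakS(Z\sqcup\dagger)$) rather than a naive forbidden-subword count. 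The remaining verifications --- that the displayed sets span and have the stated shape --- are the same routine combinatorics as for the nonunital statement following Theorem~\ref{thm: linear basis of nonunital differential type algebras over algebras}, and I would present them in the same telegraphic style. (The types $\phi_4$, $\phi_5$ are absent from the list because Theorem~\ref{Prop: GS basis for unital differential type 1}(d),(e) already identify those free algebras with $A$ and with $0$.)
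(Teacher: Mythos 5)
Your proposal follows exactly the paper's (essentially omitted) argument: the paper simply invokes Theorem~\ref{Thm: unital CD} on the operated GS bases produced in Theorem~\ref{Prop: GS basis for unital differential type 1} and declares the identification of $\Irr(S_{\phi_i}(\frakM(Z))\cup G)$ with the displayed sets to be ``a simple proof.'' Your leading-monomial analysis and forbidden-subword combinatorics, including the flagged bookkeeping around $\lfloor 1\rfloor$ for the two sub-cases of $\phi_2$ in part (b), supply precisely the details the paper suppresses.
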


\begin{exam} By  taking $a=1$  and $c=0$  in
	   the  type ($\calu$1) of   Conjecture~\ref{Ex: list of OPIs of diff  type},  we get the    OPI:
	$$\phi_\Dif(x,y)=\left\lfloor xy\right\rfloor-x\left\lfloor y\right\rfloor-\left\lfloor x\right\rfloor y-b\left\lfloor x\right\rfloor\left\lfloor y\right\rfloor.$$
	A $\phi_\Dif$-algebra is called a differential algebra of weight $b$. Given a unital algebra $A$ with a GS basis $G$,
	then  we get a linear basis for  the free unital differential algebra of nonzero weight over $A$   by Theorem~\ref{thm: linear basis of unital differential type algebra over algebra} (a) and by Theorem~\ref{thm: linear basis of unital differential type algebra over algebra} (b)
in weight zero case.
\end{exam}

\begin{remark}\label{Rem: Guo and Li's work} In \cite{GuoLi21}, Guo and Li introduced the notion of differential GS bases in order to study  free differential algebras over algebras.
	   They    showed that  the free differential algebra $\calf_{\uAlg}^{\phi_\Dif\zhx\uAlg}(A)$ over a unital $\bk$-algebra $A=\bk\calm(X)\slash I_A$ is the same as  the free differential algebra over the set $X$ modulo the differential ideal generated by the ideal $I_A$ \cite[Proposition 2.7]{GuoLi21}, which can be deduced from  Proposition~\ref{Prop: from algebra to Phi algebra with relations} as well. They also gave  a linear basis of $\calf_{\uAlg}^{\phi_\Dif\zhx\uAlg}(A)$    \cite[Proposition 3.2]{GuoLi21}.

Our method is completely different from theirs.
	
\end{remark}

\bigskip

 \textbf{Acknowledgements:}   The authors were  supported   by  NSFC (No. 12071137) and by STCSM (No. 18dz2271000).

The authors would like to express their sincere gratitude to Xing Gao, Li Guo, Yunnan Li, Shanghua Zheng for giving useful comments.


\begin{thebibliography}{99}







 \bibitem{BN} F.~Baader, T.~Nipkow, \emph{Term Rewriting and All That}, Cambridge U. P., Cambridge, 1998.






\bibitem{BokutChen14} L.~A.~Bokut and Y.~Chen, \emph{Gr\"{o}bner-Shirshov bases and their calculations}, Bull.
Math. Sci. \textbf{4} (2014),  325-395.

 \bibitem{BokutChenQiu} L.~A.~Bokut, Y.~Chen   and J.~Qiu,  \emph{Gr\"{o}bner-Shirshov bases for associative algebras with multiple operators and free Rota-Baxter algebras}, J. Pure Appl. Algebra \textbf{214} (2010) 89-110.

\bibitem{BokutChen20} L.~A.~Bokut, Y.~Chen,   K.~Kalorkoti, P.~Kolesnikov  and V.~Lopatkin, \emph{Gr\"obner-Shirshov Bases:
Normal Forms, Combinatorial and Decision Problems in Algebra}, World Scientific 2020.

\bibitem{BremnerDotsenko}
M.R. Bremner and V. Dotsenko,
\emph{Algebraic operads: an algorithmic companion},
Chapman and Hall/CRC, 2016.



\bibitem{Buc} B.~Buchberger,\emph{ An algorithm for finding a basis for the residue class ring of a
zero-dimensional polynomial ideal}, Ph.D. thesis, University of Innsbruck (1965) (in
German).













\bibitem{CLZZ20} D.~Chen, Y.-F.~Luo, Y.~Zhang, Y.-Y.~Zhang,  \emph{Free $\Omega$-Rota-Baxter algebras and Gr\"{o}bner-Shirshov bases}, Internat. J. Algebra Comput. \textbf{30} (2020), no. 7, 1359-1373.





\bibitem{Das} A.~Das, \emph{Leibniz algebras with derivations,} J. Homotopy Relat. Struct. \textbf{16} (2021), no. 2, 245-274.


\bibitem{DL} M.~Doubek and T.~Lada, \emph{Homotopy derivations}, J. Homotopy Relat. Struct. \textbf{11} (2016), no. 3, 599-630.

 \bibitem{ERG08a}  K.~Ebrahimi-Fard and  L.~Guo, \emph{Free Rota-Baxter algebras and rooted trees,} J. Algebra Appl.  \textbf{7}  (2008),  no. 2, 167-194.


\bibitem{GaoGuo17} X.~Gao and L.~Guo, \emph{Rota's Classification Problem, rewriting systems and Gr\"obner-Shirshov bases}, J. Algebra  \textbf{470} (2017), 219-253.





 \bibitem{GGZ21}  X.~Gao, L.~Guo and H.~Zhang, \emph{Rota's program on algebraic operators, rewriting systems and Gr\"{o}bner-Shirshov bases},
arXiv:2108.11823.

















\bibitem{Green} E.~Green, \emph{An introduction to noncommutative Groebner bases}, Lect. Notes Pure Appl. Math., \textbf{151} (1993), 167-190.







\bibitem{Guo09a}  L.~Guo, \emph{Operated semigroups, Motzkin paths and rooted trees}, J. Algebra Comb. \textbf{29}
(2009), 35-62.




\bibitem{GuoLi21} L.~Guo and Y.~Li, \emph{Construction of free differential algebras by extending Gr\"{o}bner-Shirshov bases},  J. Symbolic Comput. \textbf{107} (2021), 167-189.






 \bibitem{GSZ13}  L.~Guo, W.~Y.~Sit and R.~Zhang, \emph{Differential type operators and Gr\"obner-Shirshov bases}, J. Symbolic Comput. \textbf{52} (2013), 97-123.








\bibitem{Kolchin} E.~R.~Kolchin, \emph{Differential Algebras and Algebraic Groups}, Academic Press, New York, 1973.











\bibitem{LeiGuo}P.~Lei and L.~Guo, \emph{Nijenhuis algebras, NS algebras, and N-dendriform algebras}, Front. Math. China  \textbf{7}  (2012),  no. 5, 827-846.



    \bibitem{Loday}
J.-L.~Loday,  \emph{On the operad of associative algebras with derivation},  Georgian Math. J.  \textbf{17} (2010), 347-372.

\bibitem{Magid} A. R. Magid, \emph{Lectures on differential Galois theory}, University Lecture Series \textbf{7}, American Mathematical
Society, 1994.

\bibitem{Malbos} P.~Malbos, \emph{Lectures on
Algebraic Rewriting}, available at http://math.univ-lyon1.fr/~malbos/ens.html.





\bibitem{Shirshov}  A. I. Shirshov, \emph{Some algorithmic problem for $\epsilon$-algebras}, Sibirsk. Mat. Z. \textbf{3} (1962), 132-137. (in Russian)

     \bibitem{Poi} L.~Poinsot,  \emph{Differential (Lie) algebras from a functorial point of view},  Adv. Appl. Math.  \textbf{72} (2016), 38-76.

 \bibitem{Poi1} L.~Poinsot,  \emph{Differential (monoid) algebra and more}, In: Post-Proceedings of Algebraic and Algorith-mic Differential and Integral Operators Session, AADIOS 2012,   Lecture Notes in Comput. Sci. \textbf{8372}, Springer, 2014, 164-189.

\bibitem{QQWZ} Z.~Qi, Y.~Qin, K.~Wang and G.~Zhou, \emph{Free objects and Gr\" obner-Shirshov bases in operated contexts}, J. Algebra \textbf{584} (2021), 89-124.

\bibitem{Ritt50} J.~F. ~Ritt, \emph{Differential Algebra }, Amer. Math. Sco. Colloq. Pub. \textbf{33} (1950), Amer. Math. Soc., New York.

\bibitem{Ritt34} J.~F. ~Ritt, \emph{Differential equations from the algebraic standpoint }, Amer. Math. Sco. Colloq. Pub. 14 (1934), Amer. Math. Soc., New York.

 \bibitem{Rota} G.~C.~Rota, \emph{Baxter algebras and combinatorial identities I, II}, Bull. Amer. Math.
 Soc. \textbf{75} (1969) 325-329, pp. 330-334.



  \bibitem{TFS}  R.~Tang, Y.~Fr\'egier and Y.~Sheng, \emph{Cohomologies of a Lie algebra with a derivation and applications,} J. Algebra \textbf{534} (2019), 65-99.


\bibitem{Shirshov} A.~I.~Shirshov,   \emph{Some algorithmic problems for $\epsilon$-algebras}, Sibirsk. Mat. $\breve{Z}$. \textbf{3}  (1962), 132-137.


\bibitem{Xu} S.~Xu, \emph{Cohomology, derivations and abelian extensions of $3$-Lie algebras}, J. Algebra Appl. \textbf{18} (2019), no. 7, 1950130, 26 pp.

 \bibitem{vdPS}  M.~van der Put and M.~Singer, \emph{Galois Theory of Linear Differential Equations}, Grundlehren der Mathematischen Wissenschaften \textbf{328}. Springer, Berlin, 2003.


 	\bibitem{ZhangGao18} J.~Zhang and  X.~Gao,  \emph{Free operated monoids and rewriting systems,} Semigroup Forum \textbf{ 97}  (2018),  no. 3, 435-456.

 \bibitem{ZhangGaoGuo21}T.~Zhang, X.~Gao and L.~Guo,  \emph{Reynolds algebras and their free objects from bracketed words and rooted trees},    J. Pure Appl. Algebra \textbf{225} (2021), no. 12, Paper No. 106766, 28 pp.

\bibitem{ZhangGao19} Y.~Zhang and  X.~Gao,   \emph{Free Rota-Baxter family algebras and (tri)dendriform family algebras}, Pacific J. Math. \textbf{301} (2019), no. 2, 741-766.


\bibitem{ZhuZhangGao21} Z.~Zhu. H.~Zhang and X.~Gao,  \emph{Free weighted (modified) differential algebras, free (modified) Rota-Baxter algebras and Gr\"{o}bner-Shirshov bases,}
arXiv:2108.03563.


\bibitem{ZhangGaoGuo} H.~Zhang, X.~Gao and L.~Guo,  \emph{Rota's program on algebraic operators for Lie algebras}, in preparation.

 \bibitem{GGSZ14} S.~Zheng, X.~Gao, L.~Guo and W.~Sit,  \emph{Rota-Baxter type operators, rewriting systems and Gr\"obner-Shirshov bases},  J. Symb. Comput. (2021), in press,  arXiv:1412.8055v1.
	
\end{thebibliography}
\end{document}